\newtheorem{theorem}{Theorem}[section]
\newtheorem{lemma}[theorem]{Lemma}
\newtheorem{prop}[theorem]{Proposition}
\newtheorem{corollary}[theorem]{Corollary}
\theoremstyle{definition}
\newtheorem{definition}[theorem]{Definition}
\theoremstyle{remark}
\newtheorem{example}[theorem]{Example}
\newtheorem{examples}[theorem]{Examples}
\newtheorem{remark}[theorem]{Remark}
\DeclareMathOperator{\Hom}{Hom}
\DeclareMathOperator{\Ker}{Ker}
\renewcommand*{\Im}{\mathop{\mathrm{Im}}}
\DeclareMathOperator{\Hol}{Hol}
\DeclareMathOperator{\Fin}{Fin}
\DeclareMathOperator{\End}{End}
\DeclareMathOperator{\Aut}{Aut}
\newcommand*{\Smash}{\mathop{\#}}
\newcommand*{\Psmash}{\mathop{\widehat{\#}}}
\newcommand*{\psmash}[1]{\mathop{\widehat{\#}^{#1}}}
\newcommand*{\Ptens}{\mathop{\widehat\otimes}}
\newcommand*{\Tens}{\mathop{\otimes}}
\newcommand*{\Free}{\mathop{*}}
\newcommand*{\Pfree}{\mathop{\widehat *}}
\newcommand*{\id}{1}
\newcommand*{\can}{\mathrm{can}}
\newcommand*{\reg}{\mathrm{reg}}
\newcommand*{\hol}{\mathrm{hol}}
\newcommand*{\alg}{\mathrm{alg}}
\newcommand*{\free}{\mathrm{free}}
\newcommand*{\add}{\mathrm{add}}
\newcommand*{\mult}{\mathrm{mult}}
\newcommand*{\wh}{\widehat}
\newcommand*{\la}{\langle}
\newcommand*{\ra}{\rangle}
\newcommand*{\CC}{\mathbb C}
\newcommand*{\N}{\mathbb N}
\newcommand*{\Z}{\mathbb Z}
\newcommand*{\R}{\mathbb R}
\newcommand*{\DD}{\mathbb D}
\newcommand*{\cO}{\mathscr O}
\newcommand*{\cF}{\mathscr F}
\newcommand*{\cB}{\mathscr B}
\newcommand*{\cI}{\mathscr I}
\newcommand*{\cL}{\mathscr L}
\newcommand*{\cU}{\mathscr U}
\newcommand*{\cV}{\mathscr V}
\newcommand*{\cT}{\mathscr T}
\newcommand*{\fg}{\mathfrak g}
\newcommand*{\bq}{\mathbf q}
\newcommand*{\bc}{\mathbf c}
\newcommand*{\bnc}{\mathbf{nc}}
\newcommand*{\spn}{\mathrm{span}}
\newcommand*{\AM}{\mathsf{AM}}
\newcommand*{\CommAM}{\mathsf{CommAM}}
\newcommand*{\Topalg}{\mathsf{Topalg}}
\newcommand*{\Ptensalg}{\Ptens\!\mbox{-}\mathsf{alg}}
\newcommand*{\eps}{\varepsilon}
\newcommand*{\ol}{\overline}
\newenvironment{mycompactenum}{\pltopsep=5pt\begin{compactenum}[\upshape (i)]}%
{\end{compactenum}}
\newcommand*{\xra}{\xrightarrow}
\newcommand{\lriso}{\stackrel{\textstyle\sim}{\smash\longrightarrow%
\vphantom{\scriptscriptstyle{_1}}}}
\begin{document}
\title[Holomorphically finitely generated algebras]{Holomorphically finitely generated algebras}
\subjclass[2010]{58B34, 32A38, 46H30, 14A22, 16S38, 16S10, 16S40}
\author{A. Yu. Pirkovskii}
\address{Faculty of Mathematics\\
National Research University Higher School of Economics\\
7 Vavilova, 117312 Moscow, Russia}
\email{aupirkovskii@hse.ru, pirkosha@gmail.com}
\thanks{This work was supported by the RFBR grant no. 12-01-00577.}
\date{}

\begin{abstract}
We introduce and study {\em holomorphically finitely generated} (HFG) Fr\'echet algebras,
which are analytic counterparts of affine (i.e., finitely generated)
$\CC$-algebras. Using a theorem of O.~Forster, we prove that
the category of commutative HFG algebras is anti-equivalent to
the category of Stein spaces of finite embedding dimension.
We also show that the class of HFG algebras is stable under some natural
constructions. This enables us to give a series of concrete examples
of HFG algebras, including Arens-Michael envelopes of affine algebras
(such as the algebras of
holomorphic functions on the quantum affine space and on the quantum torus),
the algebras of holomorphic functions
on the free polydisk, on the quantum polydisk, and on the quantum polyannulus.
\end{abstract}

\maketitle

\section{Introduction}
\label{sect:intro}

The present paper is motivated by two classical results that underlie
noncommutative geometry. The first result, which is essentially a categorical consequence
of Hilbert's Nullstellensatz, states that the category of affine algebraic varieties over $\CC$
is anti-equivalent to the category of finitely generated commutative unital $\CC$-algebras without
nilpotents. Explicitly, the anti-equivalence is given by sending
an affine variety $X$ to the algebra $\cO^\reg(X)$ of regular functions on $X$.
The second result of the same nature is the Gelfand-Naimark theorem, which
establishes an anti-equivalence between the category of locally compact Hausdorff topological
spaces and the category of commutative $C^*$-algebras by sending each locally compact space $X$
to the algebra $C_0(X)$ of continuous functions on $X$ vanishing at infinity.
Both results are traditionally viewed as starting points for noncommutative
geometry and are often mentioned in introductory textbooks on the subject
(see, e.g., \cite{Connes_NCG,Khal_bas_NCG,Manin_topics,Street,Varilly,Gracia}).

Our first goal here is to prove a complex analytic analog of the above results.
In fact, a significant step in this direction has already been taken by O.~Forster~\cite{For}.
In order to formulate his result, recall (see, e.g., \cite{GR_II})
that for each complex space $(X,\cO_X)$ the algebra $\cO(X)$ has a canonical
topology making it into a Fr\'echet algebra. If $X$ is reduced
(e.g., if $X$ is a complex manifold), then the elements of $\cO(X)$ are
holomorphic functions on $X$, and the canonical topology on $\cO(X)$
is the topology of compact convergence. By definition, a Fr\'echet algebra $A$
is a {\em Stein algebra} if $A$ is topologically isomorphic to $\cO(X)$
for some Stein space $(X,\cO_X)$.

\begin{theorem}[O.~Forster \cite{For}]
\label{thm:For}
The functor $(X,\cO_X)\mapsto\cO(X)$ is an anti-equivalence
between the category of Stein spaces and the category of Stein algebras.
\end{theorem}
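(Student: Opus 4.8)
The plan is to verify that the global-sections functor $\Gamma\colon (X,\cO_X)\mapsto\cO(X)$, which sends a holomorphic map $f\colon X\to Y$ to the pullback homomorphism $f^*\colon\cO(Y)\to\cO(X)$, $g\mapsto g\circ f$, is fully faithful and essentially surjective. Essential surjectivity is immediate from the definition of Stein algebra recalled above: every Stein algebra is by fiat topologically isomorphic to some $\cO(X)$ with $X$ Stein. Thus the entire content lies in full faithfulness, i.e. in showing that for all Stein spaces $X,Y$ the assignment $f\mapsto f^*$ is a bijection from holomorphic maps $X\to Y$ onto continuous unital algebra homomorphisms $\cO(Y)\to\cO(X)$.

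The cornerstone is the reconstruction of a Stein space from its function algebra via its character space. I would first prove that for a Stein space $Y$ the map $y\mapsto\delta_y$ (evaluation at $y$) is a bijection from the point set of $Y$ onto the set of characters, i.e. continuous unital homomorphisms $\cO(Y)\to\CC$. Surjectivity is the key point and rests on the two pillars of Stein theory. Given a character $\chi$ with kernel $\mathfrak m=\Ker\chi$, any finite family $g_1,\dots,g_n\in\mathfrak m$ must have a common zero in $Y$: otherwise the $g_i$ would generate the unit ideal locally, hence globally by Cartan's Theorem B (vanishing of coherent cohomology on Stein spaces), yielding $h_i\in\cO(Y)$ with $\sum_i h_ig_i=1$ and thus the contradiction $0=\chi(1)=1$. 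Holomorphic convexity of $Y$ then upgrades this finite-intersection property to a genuine common zero $y$ of all of $\mathfrak m$, and since $\mathfrak m$ and the maximal ideal $\mathfrak m_y$ are both maximal with $\mathfrak m\subseteq\mathfrak m_y$, we get $\chi=\delta_y$. Injectivity of $y\mapsto\delta_y$ is exactly holomorphic separability, another defining feature of Stein spaces.

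With this in hand, a continuous homomorphism $\varphi\colon\cO(Y)\to\cO(X)$ yields for each $x\in X$ the character $\delta_x\circ\varphi$ of $\cO(Y)$, hence a unique point $f(x)\in Y$; this defines a map $f\colon X\to Y$ with $g(f(x))=(\varphi g)(x)$ for all $g\in\cO(Y)$ and $x\in X$. To see that $f$ is holomorphic I would invoke the Remmert embedding theorem to realize $Y$ as a closed complex subspace of some $\CC^N$; writing $z_1,\dots,z_N$ for the restricted coordinate functions, the components of $f$ are $z_j\circ f=\varphi(z_j)\in\cO(X)$, so $f$ is holomorphic into $\CC^N$ and lands in $Y$. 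The identity $g\circ f=\varphi(g)$ then holds on points, and to promote it to equality of sections of the (possibly non-reduced) structure sheaf — and to obtain that $f$ is a morphism of ringed spaces inducing $\varphi$ — I would pass to the stalks $\cO_{Y,f(x)}$, whose structure over the local ring is recovered from global sections by Cartan's Theorem A. This forces the induced local homomorphism $\cO_{Y,f(x)}\to\cO_{X,x}$ to be the one determined by $\varphi$, so that $f$ is a genuine morphism of complex spaces with $f^*=\varphi$. Faithfulness is then the easy direction: if $f^*=f'^*$, then $f$ and $f'$ agree on points by holomorphic separability and induce the same map on local rings, so $f=f'$.

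I expect the main obstacle to be the careful treatment of the non-reduced case, where the agreement of $\varphi(g)$ and $g\circ f$ merely as \emph{functions} is insufficient and one must track the structure-sheaf homomorphism through the stalks $\cO_{Y,f(x)}$. This, together with the identification of the character space, is precisely where the heavy machinery of Stein theory — Cartan's Theorems A and B, holomorphic convexity, and the Remmert embedding — is indispensable; the reduced case is comparatively transparent, since there equality of holomorphic functions is detected pointwise.
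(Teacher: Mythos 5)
The paper does not actually prove Theorem~\ref{thm:For}: it is imported wholesale from Forster's article \cite{For}, and the paper's own contribution (Theorems~\ref{thm:comm_HFG} and~\ref{thm:comm_HFG_cat}) is the identification of which Stein algebras correspond to Stein spaces of \emph{finite embedding dimension}. So your proposal can only be compared with Forster's original argument, whose overall shape --- recover the points of $Y$ as continuous characters of $\cO(Y)$, then prove full faithfulness --- you do follow. But two of your steps have genuine gaps. First, the claim that ``holomorphic convexity of $Y$ upgrades the finite-intersection property to a genuine common zero of all of $\mathfrak m$'' is a non sequitur as written: the zero sets $Z(g)$, $g\in\Ker\chi$, are closed but noncompact, and closed sets in a noncompact space can have the finite-intersection property with empty total intersection (the sets $\{|z|\ge n\}$ in $\CC$, say). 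What is actually needed --- and what you never use --- is the \emph{continuity} of $\chi$: continuity plus multiplicativity gives a compact $K\subset Y$ with $|\chi(g)|\le\sup_K|g|$ for all $g$; holomorphic convexity makes the hull $\widehat K$ compact; and one then shows that every finite family $g_1,\dots,g_n\in\Ker\chi$ already has a common zero \emph{in} $\widehat K$ (otherwise Theorem B on a Stein neighborhood $U\supset\widehat K$ disjoint from $Z(g_1,\dots,g_n)$ yields $h_i\in\cO(U)$ with $\sum_i h_ig_i=1$, and Oka--Weil approximation of the $h_i$ by global functions contradicts the bound $|\chi(\cdot)|\le\sup_{\widehat K}|\cdot|$). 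Only after the zeros are trapped in the compact set $\widehat K$ does the finite-intersection property give a common zero of all of $\Ker\chi$.

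Second, you invoke ``the Remmert embedding theorem to realize $Y$ as a closed complex subspace of some $\CC^N$''. This is false for a general Stein space: a closed embedding into some $\CC^N$ exists if and only if $Y$ has finite embedding dimension (this is exactly the Remmert--Bishop--Narasimhan--Wiegmann theorem \cite{Wiegmann} as used in the proof of Theorem~\ref{thm:comm_HFG}). Forster's theorem concerns \emph{all} Stein spaces, and that extra generality is precisely what distinguishes Theorem~\ref{thm:For} from the paper's Theorem~\ref{thm:comm_HFG_cat}; with a global embedding your argument proves at best the finite-embedding-dimension case, i.e.\ the wrong (smaller) theorem. The repair is local: by Cartan's Theorem A, finitely many global functions vanishing at $f(x)$ generate the maximal ideal of $\cO_{Y,f(x)}$ and hence embed a neighborhood of $f(x)$ into some $\CC^m$, and both the holomorphy of $f$ and the construction of the sheaf homomorphism must be run through these local embeddings. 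Relatedly, your treatment of the non-reduced case remains a statement of intent: to extract the local homomorphism $\cO_{Y,f(x)}\to\cO_{X,x}$ from $\varphi$, and to prove it is unique (which is what faithfulness means once sheaf data are present), one needs the comparison of $\cO_{Y,f(x)}$ with the algebraic localization of $\cO(Y)$ at $\Ker\delta_{f(x)}$ --- faithful flatness and the isomorphism of adic completions --- and ``recovered from global sections by Cartan's Theorem A'' is not yet that argument.
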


Forster's theorem, although somewhat similar to the Nullstellensatz and to the
Gelfand-Naimark Theorem, still lacks an important feature of the above results.
Indeed, it does not identify the category of Stein algebras with the ``commutative part''
of any category of algebras, and so it
does not give us any idea of what ``noncommutative Stein spaces'' could be.
Our goal is to propose a possible approach to this problem for the
special case of Stein spaces having finite embedding dimension.

In Section~\ref{sect:HFG}, we introduce a category of ``holomorphically finitely generated''
(HFG for short) Fr\'echet algebras. Such algebras may be viewed as ``analytic counterparts''
of affine (i.e., finitely generated) $\CC$-algebras. We show that a commutative
Fr\'echet algebra is holomorphically finitely generated if and only if it is topologically
isomorphic to the algebra $\cO(X)$ for a Stein space $(X,\cO_X)$ of finite embedding
dimension. Together with Forster's Theorem, this implies that the category of
Stein spaces of finite embedding dimension in anti-equivalent to the category of
commutative HFG algebras. We hope that such a ``refinement'' of Forster's Theorem
may contribute to the development of noncommutative complex analytic geometry, a field
much less investigated than other types of noncommutative geometry
(cf. discussion in \cite{Beggs_Smith,Khal_Landi_van_Su}).

In Sections~\ref{sect:free}--\ref{sect:skew}, we show that the category of HFG algebras
is stable under a number of natural constructions.
Section~\ref{sect:free} is devoted to Arens-Michael free products \cite{Cuntz_doc}.
We give several explicit constructions
for the Arens-Michael free product, and we show that the Arens-Michael free
product of finitely many HFG algebras is again an HFG algebra.
In Section~\ref{sect:smash}, we consider two analytic versions of the smash product
construction, the $\Ptens$-smash product \cite{Pir_locsolv} and the Arens-Michael smash product.
The former is more explicit and easier to deal with, while the latter is better suited for
the category of Arens-Michael algebras.
We give sufficient conditions for the two smash products to coincide, and we
show that the Arens-Michael smash product of HFG algebras is an HFG algebra.
In view of further applications, we pay a special attention to the case
of semigroup actions and semigroup graded Arens-Michael algebras.
Algebras of skew holomorphic functions, or Arens-Michael Ore extensions \cite{Pir_qfree},
are discussed in Section~\ref{sect:skew}.
They can be viewed as ``analytic counterparts'' of algebraic Ore extensions,
and are often reduced to
Arens-Michael smash products. Given a balanced domain
(respectively, a Reinhardt domain) $D\subset\CC^n$ and an HFG
algebra $A$ endowed with an equicontinuous action $\sigma$ of $\Z_+$ (respectively, $\Z^n$),
we define the algebra $\cO(D,A;\sigma)$ of skew $A$-valued holomorphic functions on $D$,
and we show that $\cO(D,A;\sigma)$ is an HFG algebra.

Section~\ref{sect:examples} is entirely devoted to examples of HFG algebras.
One source of examples comes from the notion of an Arens-Michael envelope \cite{T1,X2}.
In Subsection~\ref{subsect:AM}, we observe that
the Arens-Michael envelope of any finitely generated
algebra is holomorphically finitely generated. As a consequence, the algebras $\cO_\bq(\CC^n)$
and $\cO_\bq((\CC^\times)^n)$ of
holomorphic functions on the quantum affine space and on the quantum torus \cite{Pir_qfree,Pir_wdg},
the algebra of entire functions on a basis of a Lie algebra \cite{Dos_hdim,Dos_absbas},
and some other algebras studied in \cite{Pir_qfree} are holomorphically finitely generated.
Other examples are specializations of the constructions discussed
in Sections~\ref{sect:free}--\ref{sect:skew}.
In Subsection~\ref{subsect:free_poly}, we define the HFG algebra $\cF(\DD^n_R)$
of holomorphic functions on the free polydisk, and we give an explicit
power series representation of it. Deformed algebras of holomorphic functions
on products of balanced and Reinhardt domains are discussed in Subsection~\ref{subsect:q-prod}.
Such algebras, being special cases of Arens-Michael Ore extensions, are also HFG algebras.
Subsections~\ref{subsect:q_polydisk} and~\ref{subsect:q_polyann} are devoted to the
algebras $\cO_\bq(\DD^n_R)$ and $\cO_\bq(\DD^n_{r,R})$ of holomorphic functions
on the quantum polydisk and on the quantum polyannulus,
respectively. The former is shown to be a quotient of $\cF(\DD^n_R)$, while the latter
is an iterated Arens-Michael Ore extension of the algebra of holomorphic functions
on the one-dimensional annulus. As a consequence, both $\cO_\bq(\DD^n_R)$
and $\cO_\bq(\DD^n_{r,R})$ are HFG algebras.

Some of the results of the present paper were announced in \cite{Pir_Shulman}.

\section{Preliminaries and notation}
\label{sect:prelim}

We shall work over the field $\CC$ of complex numbers. All algebras are assumed to
be associative and unital, and all algebra homomorphisms are assumed to be unital
(i.e., to preserve identity elements). In particular, a subalgebra of an algebra $A$ is assumed
to contain the identity of $A$.

By a {\em topological algebra} we mean an algebra $A$ endowed with a topology
making $A$ into a topological vector space and such that the multiplication $A\times A\to A$
is separately continuous.
A {\em Fr\'echet algebra} is a complete metrizable locally convex topological
algebra (i.e., a topological algebra whose underlying space
is a Fr\'echet space). A {\em locally $m$-convex algebra} \cite{Michael} is a topological
algebra $A$ whose topology can be defined by a family of submultiplicative
seminorms (i.e., seminorms $\|\cdot\|$ satisfying $\| ab\|\le \| a\| \| b\|$
for all $a,b\in A$). Equivalently, a topological algebra $A$ is locally $m$-convex if and only if
there exists a base $\cU$ of $0$-neighborhoods in $A$ such that each $U\in\cU$ is absolutely
convex and idempotent (i.e., satisfies $U^2\subseteq U$).
A complete locally $m$-convex algebra is called
an {\em Arens-Michael algebra} \cite{X1}.
The semigroup of all continuous endomorphisms of a topological algebra $A$ will be denoted by
$\End(A)$, and the group of all topological automorphisms of $A$ will be denoted by $\Aut(A)$.

The completed projective tensor product of locally convex spaces
(see, e.g., \cite{Groth,Kothe_II,X1}) will be denoted by $\Ptens$.
A complete locally convex topological algebra $A$ with jointly continuous multiplication
$A\times A\to A$ is called a {\em $\Ptens$-algebra} \cite{X1}.
For example, each Fr\'echet algebra and each Arens-Michael algebra are $\Ptens$-algebras.
The multiplication on a $\Ptens$-algebra $A$ uniquely extends to a continuous
linear map $\mu_A\colon A\Ptens A\to A$ satisfying the usual associativity conditions.
In other words, a $\Ptens$-algebra is the same thing as an algebra in the tensor category
of complete locally convex spaces endowed with the bifunctor $\Ptens$.
The notions of {\em $\Ptens$-coalgebra, $\Ptens$-bialgebra, Hopf $\Ptens$-algebra,
left/right $\Ptens$-module} over a $\Ptens$-algebra etc. have obvious meanings:
roughly, we just replace $\Tens$ by $\Ptens$ in the usual algebraic definitions.
In fact, such notions can be defined in a straightforward manner in any symmetric tensor
category (cf. discussion in \cite{Majid_brd}).
By an {\em Arens-Michael bialgebra} we mean a $\Ptens$-bialgebra that is locally $m$-convex
as a topological algebra. Similarly, if $H$ is a $\Ptens$-bialgebra, then by a {\em left $H$-$\Ptens$-module
Arens-Michael algebra} we mean a left $H$-$\Ptens$-module algebra that is locally $m$-convex
as a topological algebra. If $A$ is a $\Ptens$-algebra, we let
$\eta_A\colon\CC\to A$ denote the unit map taking $\lambda\in\CC$ to $\lambda 1\in A$.
If $C$ is a $\Ptens$-coalgebra, then the comultiplication (respectively, the counit)
on $C$ will be denoted by $\Delta_C$ (respectively, $\eps_C$). Finally, if $H$ is a Hopf
$\Ptens$-algebra, then the antipode of $H$ will be denoted by $S_H$.

Let $A$ be a topological algebra. Recall from \cite{X2} (cf. \cite{T1}) that the {\em Arens-Michael envelope}
of $A$ is the completion of $A$ with respect to the family of all continuous submultiplicative
seminorms on $A$. Thus we have a canonical continuous homomorphism
$i_A\colon A\to\wh{A}$, and it is easy to show \cite{T1,X2}
that for each Arens-Michael algebra $B$
and each continuous homomorphism $\varphi\colon A\to B$
there exists a unique continuous homomorphism $\wh{\varphi}\colon\wh{A}\to B$
such that $\wh{\varphi}\circ i_A=\varphi$. In other words, we have a natural bijection
\begin{equation}
\label{AM_def}
\Hom_{\AM}(\wh{A},B)\cong\Hom_{\Topalg}(A,B)\qquad (B\in\AM),
\end{equation}
where $\Topalg$ is the category of topological algebras and $\AM\subset\Topalg$ is the full
subcategory consisting of Arens-Michael algebras.
Moreover, the correspondence $A\mapsto\wh{A}$ is a functor from $\Topalg$
to $\AM$, and this functor is left adjoint to the inclusion $\AM\hookrightarrow\Topalg$ (see~\eqref{AM_def}).
In what follows, the functor $A\mapsto\wh{A}$ will be called the {\em Arens-Michael functor}.

Here is an important special case.
Given an algebra $A$, we can always make $A$ into a topological algebra
$A_{\mathrm{str}}$ by endowing $A$ with the strongest locally convex topology
(i.e., the topology generated by all seminorms on $A$). We define the Arens-Michael
envelope of $A$ to be the Arens-Michael envelope of $A_{\mathrm{str}}$.

Arens-Michael envelopes were introduced by J.~L.~Taylor \cite{T1} under the name of
``completed locally $m$-convex envelopes''. Now it is customary to call them
``Arens-Michael envelopes'', following the terminology suggested by A.~Ya.~Helemskii \cite{X2}.
For a more detailed study of Arens-Michael envelopes we refer to
\cite{Pir_qfree,Pir_stbflat,Pir_locsolv,Dos_holfun,Dos_Slodk,Dos_hdim,Dos_locleft,Dos_absbas}.

Throughout we will use the following multi-index notation. Let $\Z_+=\N\cup\{ 0\}$ denote the set
of all nonnegative integers. For each $n\in\N$ and each $d\in \Z_+$, let $W_{n,d}=\{ 1,\ldots ,n\}^d$,
and let $W_n=\bigsqcup_{d\in\Z_+} W_{n,d}$. Thus a typical element of $W_n$
is a $d$-tuple $\alpha=(\alpha_1,\ldots ,\alpha_d)$ of arbitrary length
$d\in\Z_+$, where $\alpha_j\in\{ 1,\ldots ,n\}$ for all $j$. The only element
of $W_{n,0}$ will be denoted by $*$.
For each $\alpha\in W_{n,d}\subset W_n$, let $|\alpha|=d$.
Given an algebra $A$, an $n$-tuple $a=(a_1,\ldots ,a_n)\in A^n$, and
$\alpha=(\alpha_1,\ldots ,\alpha_d)\in W_n$, we let
$a_\alpha=a_{\alpha_1}\cdots a_{\alpha_d}\in A$ if $d>0$;
it is also convenient to set $a_*=1\in A$.
Given $k=(k_1,\ldots ,k_n)\in\Z_+^n$, we let $a^k=a_1^{k_1}\cdots a_n^{k_n}$.
If the $a_i$'s are invertible, then $a^k$ makes sense for all $k\in\Z^n$.
As usual, for each $k=(k_1,\ldots ,k_n)\in\Z^n$ we let $|k|=|k_1|+\cdots +|k_n|$.

Let $S$ be a set, and let $\Fin(S)$ denote the collection of all finite subsets of $S$
ordered by inclusion. Suppose that $E$ is a complete locally convex space,
and that $(x_s)_{s\in S}$ is a family
of elements of $E$. For each $I\in\Fin(S)$, let $x_I=\sum_{s\in I} x_s$.
By definition~\cite[1.3.8]{Pietsch_nucl}, the family $(x_s)_{s\in S}$ is {\em summable}
if the net $(x_I)_{I\in\Fin(S)}$ converges in $E$. The limit of the above net is denoted by
$\sum_{s\in S} x_s$. The family $(x_s)_{s\in S}$ is {\em absolutely summable}
\cite[1.4.1]{Pietsch_nucl} if for each continuous seminorm $\|\cdot\|$ on $E$
we have $\sum_{s\in S} \| x_s\|<\infty$. Each absolutely summable
family in $E$ is summable \cite[1.4.5]{Pietsch_nucl}.

Let $\cO(\CC^n)$ denote the Fr\'echet algebra of all holomorphic functions on $\CC^n$.
Recall that for each Arens-Michael algebra $A$ and each commuting $n$-tuple
$a=(a_1,\ldots ,a_n)\in A^n$ there exists an {\em entire functional calculus},
i.e., a unique continuous homomorphism $\gamma_a^\hol\colon\cO(\CC^n)\to A$
taking the complex coordinates $z_1,\ldots ,z_n$ to $a_1,\ldots ,a_n$, respectively.
Explicitly, $\gamma_a^\hol$ is given by
\[
\gamma_a^\hol(f)=f(a)=\sum_{k\in\Z_+^n} c_k a^k
\qquad\Bigl( f=\sum_{k\in\Z_+^n} c_k z^k\in\cO(\CC^n)\Bigr).
\]
In the language of category theory, the above result means that the algebra $\cO(\CC^n)$
together with the $n$-tuple $(z_1,\ldots ,z_n)\in\cO(\CC^n)^n$ represents the functor
$A\mapsto A^n$ acting from the category $\CommAM$ of commutative
Arens-Michael algebras to the category
of sets. Thus we have a natural bijection
\begin{equation}
\label{univ_O}
\Hom_\CommAM(\cO(\CC^n),A)\cong A^n \qquad (A\in\CommAM).
\end{equation}
Equivalently, we have $\CC[z_1,\ldots ,z_n]\sphat\;\cong\cO(\CC^n)$ (cf. \cite{T2}).

A noncommutative (or, more exactly, free) version of the entire functional
calculus was introduced by J.~L.~Taylor \cite{T2}.
Let $F_n=\CC\la \zeta_1,\ldots ,\zeta_n\ra$ be the free algebra with generators
$\zeta_1,\ldots ,\zeta_n$. The set
$\{\zeta_\alpha : \alpha\in W_n\}$ of all words in $\zeta_1,\ldots ,\zeta_n$
is the standard vector space basis of $F_n$.
The algebra of {\em free entire functions} \cite{T2,T3} is defined by
\begin{equation}
\label{Fn}
\cF_n=\Bigl\{ a=\sum_{\alpha\in W_n} c_\alpha\zeta_\alpha :
\| a\|_\rho=\sum_{\alpha\in W_n} |c_\alpha|\rho^{|\alpha|}<\infty
\;\forall\rho>0\Bigr\}.
\end{equation}
The topology on $\cF_n$ is given by the norms $\|\cdot\|_\rho\; (\rho>0)$,
and the multiplication is given by concatenation (like on $F_n$).
Each norm $\|\cdot\|_\rho$ is easily seen to be submultiplicative, so
$\cF_n$ is a Fr\'echet-Arens-Michael algebra containing $F_n$ as a dense
subalgebra. As was observed by D.~Luminet \cite{Lum}, $\cF_n$ is nuclear.
Note also that $\cF_1$ is topologically isomorphic to $\cO(\CC)$.

Taylor \cite{T2} observed that for each Arens-Michael algebra $A$
and each $n$-tuple
$a=(a_1,\ldots ,a_n)\in A^n$ there exists a {\em free entire functional calculus},
i.e., a unique continuous homomorphism $\gamma_a^\free\colon\cF_n\to A$
taking the free generators $\zeta_1,\ldots ,\zeta_n$ to $a_1,\ldots ,a_n$, respectively.
Explicitly, $\gamma_a^\free$ is given by
\begin{equation}
\label{free_entire}
\gamma_a^\free(f)=f(a)=\sum_{\alpha\in W_n} c_\alpha a_\alpha
\qquad \Bigl( f=\sum_{\alpha\in W_n} c_\alpha \zeta_\alpha\in\cF_n\Bigr).
\end{equation}
Similarly to the commutative case, the above result means that the algebra $\cF_n$
together with the $n$-tuple $(\zeta_1,\ldots ,\zeta_n)\in\cF_n^n$ represents the functor
$A\mapsto A^n$ acting from the category $\AM$ of
Arens-Michael algebras to the category
of sets. Thus we have a natural bijection
\begin{equation}
\label{univ_F}
\Hom_\AM(\cF_n,A)\cong A^n \qquad (A\in\AM).
\end{equation}
Equivalently, we have $\wh{F}_n\cong\cF_n$ (cf. \cite{T2}).

Given $n\in\N$ and $R=(R_1,\ldots ,R_n)\in (0,+\infty]^n$, let
\[
\DD^n_R=\{ z=(z_1,\ldots ,z_n)\in\CC^n : |z_i|<R_i\;\forall i=1,\ldots ,n\}
\]
denote the open polydisk in $\CC^n$ of polyradius $R$.
If $R_i=\rho\in (0,+\infty]$ for all $i$, then we write $\DD^n_\rho$ for $\DD^n_R$.
In particular, $\DD^n_\infty=\CC^n$.
If $r,R\in [0,+\infty]^n$ and $r_i<R_i$ for all $i$, let
\[
\DD^n_{r,R}=\{ z=(z_1,\ldots ,z_n)\in\CC^n : r_i<|z_i|<R_i\;\forall i=1,\ldots ,n\}
\]
denote the open polyannulus of polyradii $r$ and $R$.
Clearly, $\DD^n_R=\DD_{R_1}^1\times\cdots\times\DD_{R_n}^1$
and $\DD^n_{r,R}=\DD_{r_1,R_1}^1\times\cdots\times\DD_{r_n,R_n}^1$.

The structure sheaf of a complex space $X$ will always be denoted by $\cO_X$,
while the structure sheaf of a scheme $S$ will be denoted by $\cO_S^\reg$.
If $X$ is a complex manifold and $E$ is a complete locally convex space, then
$\cO(X,E)$ stands for the space of all holomorphic $E$-valued functions on $X$.
Recall (see, e.g., \cite{Groth}) that there exists a topological isomorphism
$\cO(X)\Ptens E\cong\cO(X,E)$ taking each $f\otimes u\in\cO(X)\Ptens E$
to the function $x\mapsto f(x)u$. Recall also (loc. cit.) that, if $X$ and $Y$ are complex manifolds,
then we have a topological isomorphism
$\cO(X)\Ptens\cO(Y)\cong\cO(X\times Y)$ taking each
$f\otimes g\in\cO(X)\Ptens \cO(Y)$ to the function $(x,y)\mapsto f(x)g(y)$.

\section{HFG algebras and a refinement of Forster's theorem}
\label{sect:HFG}

We start this section with some elementary observations on the algebra $\cF_n$.
Letting $A=\cO(\CC^n)$ in \eqref{univ_F}, we obtain a continuous homomorphism
\begin{equation}
\label{pi_n}
\pi_n\colon\cF_n\to\cO(\CC^n),\quad \zeta_i\mapsto z_i\quad (i=1,\ldots ,n).
\end{equation}
The following is immediate from the universal properties of $\cO(\CC^n)$ and $\cF_n$.

\begin{prop}
\label{prop:calc_compat}
For each commutative Arens-Michael algebra $A$ and each $a\in A^n$ we have a commutative
diagram
\[
\xymatrix@R=10pt{
\cF_n \ar[dr]^{\gamma_a^\free} \ar[dd]_{\pi_n} \\
& A\\
\cO(\CC^n) \ar[ur]_{\gamma_a^\hol}
}
\]
\end{prop}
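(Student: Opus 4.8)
Proposición 3.2: Para cada álgebra Arens-Michael conmutativa $A$ y cada $a \in A^n$, el diagrama que involucra $\gamma_a^\free$, $\pi_n$ y $\gamma_a^\hol$ conmuta.

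Necesito demostrar que $\gamma_a^\hol \circ \pi_n = \gamma_a^\free$.

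Déjame pensar en la estrategia.The plan is to exploit the universal property \eqref{univ_F} of $\cF_n$, which identifies $\Hom_\AM(\cF_n,A)$ with $A^n$ via evaluation of a homomorphism on the free generators $\zeta_1,\dots,\zeta_n$. Since $A$ is in particular an Arens-Michael algebra, this bijection applies, and the key point is that a continuous homomorphism out of $\cF_n$ is \emph{completely determined} by the images of the $\zeta_i$. So rather than computing anything on general elements of $\cF_n$, I would reduce the claimed identity $\gamma_a^\hol\circ\pi_n=\gamma_a^\free$ to checking it on the generators.

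First I would observe that both composites $\gamma_a^\hol\circ\pi_n$ and $\gamma_a^\free$ are continuous homomorphisms $\cF_n\to A$. For $\gamma_a^\free$ this is its defining property; for $\gamma_a^\hol\circ\pi_n$ it follows because $\pi_n\colon\cF_n\to\cO(\CC^n)$ from \eqref{pi_n} and $\gamma_a^\hol\colon\cO(\CC^n)\to A$ are both continuous homomorphisms, and $A$ is commutative so that $\gamma_a^\hol$ is legitimately defined. Next I would evaluate each on a generator $\zeta_i$. On the one hand, $\gamma_a^\free(\zeta_i)=a_i$ by \eqref{free_entire}. On the other hand, $\pi_n(\zeta_i)=z_i$ by the definition of $\pi_n$ in \eqref{pi_n}, and then $\gamma_a^\hol(z_i)=a_i$ since $\gamma_a^\hol$ is precisely the entire functional calculus sending the complex coordinates $z_1,\dots,z_n$ to $a_1,\dots,a_n$. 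Hence the two homomorphisms agree on each $\zeta_i$.

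Finally, I would invoke the uniqueness clause of the universal property \eqref{univ_F}: two continuous homomorphisms $\cF_n\to A$ that take the same values $a_1,\dots,a_n$ on the generators $\zeta_1,\dots,\zeta_n$ must coincide. Therefore $\gamma_a^\hol\circ\pi_n=\gamma_a^\free$, which is exactly the commutativity of the stated diagram.

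I do not expect any real obstacle here; the statement is labeled ``immediate from the universal properties,'' and the whole content is that everything is pinned down on the generators. The only point requiring a moment's care is verifying that $\gamma_a^\hol\circ\pi_n$ is genuinely a morphism in $\AM$ (so that uniqueness in \eqref{univ_F} applies), but this is automatic since it is a composite of continuous homomorphisms landing in the Arens-Michael algebra $A$. One could alternatively give a direct computation on a series $f=\sum_\alpha c_\alpha\zeta_\alpha\in\cF_n$ by pushing it through $\pi_n$ and applying $\gamma_a^\hol$ termwise, but the universal-property argument is cleaner and avoids any convergence bookkeeping.
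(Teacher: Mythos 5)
Your argument is correct and is exactly the paper's reasoning: the paper dispatches this proposition as ``immediate from the universal properties of $\cO(\CC^n)$ and $\cF_n$,'' i.e., both $\gamma_a^\hol\circ\pi_n$ and $\gamma_a^\free$ are continuous homomorphisms $\cF_n\to A$ agreeing on the generators $\zeta_i$, so uniqueness in \eqref{univ_F} forces them to coincide. Nothing is missing.
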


Given a topological algebra $A$, let $\ol{[A,A]}$ denote the closure of the
two-sided ideal of $A$ generated by the commutators $[a,b]=ab-ba\; (a,b\in A)$.

\begin{prop}
\label{prop:F_quot_O}
The map $\pi_n$ given by \eqref{pi_n} induces a topological algebra isomorphism
\[
\cF_n/\ol{[\cF_n,\cF_n]}\lriso\cO(\CC^n),\quad
\bar\zeta_i=\zeta_i+\ol{[\cF_n,\cF_n]}\mapsto z_i\quad (i=1,\ldots ,n).
\]
\end{prop}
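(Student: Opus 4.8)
The statement asserts that $\pi_n$ factors through $\ol{[\cF_n,\cF_n]}$ to give a topological isomorphism onto $\cO(\CC^n)$. I would approach this using the universal properties encoded in~\eqref{univ_O} and~\eqref{univ_F}, which show that both $\cF_n/\ol{[\cF_n,\cF_n]}$ and $\cO(\CC^n)$ represent the same functor on $\CommAM$, and hence are canonically isomorphic. First I would verify that $\pi_n$ really does kill $\ol{[\cF_n,\cF_n]}$: since $\cO(\CC^n)$ is commutative, every commutator $[a,b]$ maps to $0$, so $[\cF_n,\cF_n]\subseteq\Ker\pi_n$, and because $\pi_n$ is continuous and $\cO(\CC^n)$ is Hausdorff, $\Ker\pi_n$ is closed and contains the closed two-sided ideal $\ol{[\cF_n,\cF_n]}$. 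Thus $\pi_n$ induces a continuous homomorphism $\bar\pi_n\colon\cF_n/\ol{[\cF_n,\cF_n]}\to\cO(\CC^n)$ sending $\bar\zeta_i\mapsto z_i$.

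For the inverse, I would show that the quotient $Q:=\cF_n/\ol{[\cF_n,\cF_n]}$ has the universal property~\eqref{univ_O} characterizing $\cO(\CC^n)$ among commutative Arens-Michael algebras. The quotient $Q$ is a commutative Arens-Michael algebra (a Hausdorff quotient of an Arens-Michael algebra by a closed ideal is again Arens-Michael, using that the quotient of a submultiplicative seminorm is submultiplicative). I claim $Q$, together with the $n$-tuple $(\bar\zeta_1,\ldots,\bar\zeta_n)$, represents the functor $A\mapsto A^n$ on $\CommAM$. Indeed, given any commutative $A\in\CommAM$ and $a\in A^n$, the free entire functional calculus $\gamma_a^{\free}\colon\cF_n\to A$ of~\eqref{free_entire} sends commutators into $A$ and, $A$ being commutative, annihilates $[\cF_n,\cF_n]$; by continuity it annihilates $\ol{[\cF_n,\cF_n]}$ and so descends to $\bar\gamma_a\colon Q\to A$ with $\bar\gamma_a(\bar\zeta_i)=a_i$. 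Uniqueness follows because the $\bar\zeta_i$ generate a dense subalgebra of $Q$ (the image of the dense subalgebra $F_n\subset\cF_n$). This gives a natural bijection $\Hom_\CommAM(Q,A)\cong A^n$, matching~\eqref{univ_O}. By Yoneda, the unique morphism $Q\to\cO(\CC^n)$ inducing the identity on the representing tuples is an isomorphism of functors-representing-objects, and this morphism is exactly $\bar\pi_n$.

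Concretely, applying~\eqref{univ_O} with $A=Q$ and the tuple $(\bar\zeta_i)$ yields a continuous homomorphism $\gamma\colon\cO(\CC^n)\to Q$ with $z_i\mapsto\bar\zeta_i$, and the two composites $\bar\pi_n\circ\gamma$ and $\gamma\circ\bar\pi_n$ fix the generators $z_i$ and $\bar\zeta_i$ respectively; by density of the polynomial and free-word subalgebras together with uniqueness in the universal properties, both composites are the identity. Hence $\bar\pi_n$ is a continuous bijective homomorphism with continuous inverse, i.e. a topological algebra isomorphism.

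\emph{Main obstacle.}
The one point requiring genuine care is that $\bar\pi_n$ is open, or equivalently that its set-theoretic inverse $\gamma$ is continuous; this is what upgrades a continuous algebra bijection to a \emph{topological} isomorphism. I avoid invoking any open mapping theorem by producing $\gamma$ directly as a continuous homomorphism via the universal property~\eqref{univ_O} of $\cO(\CC^n)$, applied to the commutative Arens-Michael algebra $Q$. The only nontrivial input there is that $Q$ is indeed a commutative \emph{Arens-Michael} algebra, i.e. Hausdorff and locally $m$-convex with a defining family of submultiplicative seminorms; this is where the closedness of $\ol{[\cF_n,\cF_n]}$ and the submultiplicativity of quotient seminorms are used. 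Once $\gamma$ is in hand, continuity of both $\bar\pi_n$ and $\gamma$ is automatic, so the identification is topological rather than merely algebraic.
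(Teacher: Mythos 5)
Your proof is correct and takes essentially the same route as the paper: you show that $\cF_n/\ol{[\cF_n,\cF_n]}$ together with the tuple $(\bar\zeta_1,\ldots,\bar\zeta_n)$ satisfies the universal property~\eqref{univ_O} characterizing $\cO(\CC^n)$ in $\CommAM$ (every continuous homomorphism from $\cF_n$ to a commutative Arens-Michael algebra factors through the quotient), so the induced map is automatically a topological isomorphism with no appeal to an open mapping theorem. Your write-up simply spells out the details that the paper compresses into ``the rest is clear,'' including the construction of the inverse via~\eqref{univ_O} and the density argument for uniqueness.
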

\begin{proof}
Each continuous homomorphism from $\cF_n$ to a commutative Arens-Michael algebra
factors through $\cF_n/\ol{[\cF_n,\cF_n]}$. Combined with~\eqref{univ_F}, this implies that
$\cF_n/\ol{[\cF_n,\cF_n]}$ together with the $n$-tuple
$(\bar\zeta_1,\ldots ,\bar\zeta_n)$ has the same universal property~\eqref{univ_O}
as $\cO(\CC^n)$. The rest is clear.
\end{proof}

Let $A$ and $B$ be Arens-Michael algebras, and let $\varphi\colon A\to B$ be a continuous
homomorphism. Given $a=(a_1,\ldots ,a_n)\in A^n$, we denote the $n$-tuple
$(\varphi(a_1),\ldots ,\varphi(a_n))\in B^n$ simply by $\varphi(a)$.

\begin{prop}
\label{prop:hom_calc}
For each $a\in A^n$ and each $f\in\cF_n$, we have $\varphi(f(a))=f(\varphi(a))$.
\end{prop}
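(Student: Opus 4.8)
The plan is to verify the identity $\varphi(f(a))=f(\varphi(a))$ by reducing it to the uniqueness part of the universal property of $\cF_n$ stated in~\eqref{univ_F}. First I would unwind the definitions: here $f(a)=\gamma_a^\free(f)$ is the image of $f$ under the free entire functional calculus $\gamma_a^\free\colon\cF_n\to A$ associated to $a\in A^n$, and similarly $f(\varphi(a))=\gamma_{\varphi(a)}^\free(f)$, where $\varphi(a)=(\varphi(a_1),\ldots,\varphi(a_n))\in B^n$. Thus the claim $\varphi(f(a))=f(\varphi(a))$ for all $f\in\cF_n$ is precisely the assertion that the two continuous homomorphisms
\[
\varphi\circ\gamma_a^\free\colon\cF_n\to B
\qquad\text{and}\qquad
\gamma_{\varphi(a)}^\free\colon\cF_n\to B
\]
coincide.

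The key observation is that both of these are continuous homomorphisms from $\cF_n$ into the Arens-Michael algebra $B$. Since $\varphi$ is a continuous homomorphism and $\gamma_a^\free$ is continuous, the composite $\varphi\circ\gamma_a^\free$ is a continuous homomorphism; and $\gamma_{\varphi(a)}^\free$ is continuous by the defining property of the free entire functional calculus. By~\eqref{univ_F}, a continuous homomorphism out of $\cF_n$ is uniquely determined by the images of the free generators $\zeta_1,\ldots,\zeta_n$. So it suffices to check that the two homomorphisms agree on each $\zeta_i$. The next step is therefore to compute these images: on the one hand $\gamma_{\varphi(a)}^\free(\zeta_i)=\varphi(a_i)$ by~\eqref{free_entire} (the free calculus sends $\zeta_i$ to the $i$-th entry of its defining tuple), and on the other hand $(\varphi\circ\gamma_a^\free)(\zeta_i)=\varphi(\gamma_a^\free(\zeta_i))=\varphi(a_i)$, again by~\eqref{free_entire}. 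Hence both homomorphisms send $\zeta_i$ to $\varphi(a_i)$ for each $i$, and the uniqueness in~\eqref{univ_F} forces them to be equal on all of $\cF_n$, which is the desired identity.

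Since the whole argument is a formal consequence of the universal property, there is essentially no serious obstacle; the only point requiring any care is the verification that $\varphi\circ\gamma_a^\free$ is genuinely an \emph{Arens-Michael} morphism, i.e.\ that $B$ is an Arens-Michael algebra so that~\eqref{univ_F} applies. This is already part of the hypotheses ($A$ and $B$ are Arens-Michael algebras), so the reduction is clean. An alternative, more computational route would be to expand $f=\sum_{\alpha\in W_n}c_\alpha\zeta_\alpha$, use continuity and linearity of $\varphi$ to pass $\varphi$ through the convergent series, and then use that $\varphi$ is multiplicative to obtain $\varphi(a_\alpha)=\varphi(a_{\alpha_1})\cdots\varphi(a_{\alpha_d})=\varphi(a)_\alpha$; but this amounts to re-proving continuity and multiplicativity by hand and is exactly the content packaged into the universal property, so I would present the categorical argument as the clean proof.
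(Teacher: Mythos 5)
Your argument is correct and is essentially the paper's own proof: the paper observes that $\varphi\circ\gamma_a^\free$ is a continuous homomorphism sending each $\zeta_i$ to $\varphi(a_i)$, hence coincides with $\gamma_{\varphi(a)}^\free$ by the uniqueness in~\eqref{univ_F}, exactly as you argue. The paper also mentions the power-series route as an equally immediate alternative, which matches the computational variant you sketch at the end.
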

\begin{proof}
This is immediate either from the power series expansion~\eqref{free_entire}
or from the observation that
$\varphi\circ\gamma_a^\free\colon\cF_n\to B,\; f\mapsto\varphi(f(a))$, is a continuous homomorphism
taking each $\zeta_i$ to $\varphi(a_i)$.
\end{proof}

Let us introduce some notation.
Given an Arens-Michael algebra $A$, an $m$-tuple $a\in A^m$, and an $n$-tuple
$f=(f_1,\ldots ,f_n)\in\cF_m^n$, let
$f(a)=(f_1(a),\ldots ,f_n(a))\in A^n$. If now $g\in\cF_n^k$, we define
$g\circ f\in\cF_m^k$ by $g\circ f=g(f)$. Thus we obtain a
``free superposition'' operation
\begin{equation}
\label{superpos}
\cF_n^k\times\cF_m^n\to\cF_m^k,\quad (g,f)\mapsto g\circ f.
\end{equation}

\begin{prop}
\label{prop:assoc}
For each Arens-Michael algebra $A$, each $a\in A^m$, each $f\in\cF_m^n$,
and each $g\in\cF_n^k$ we have $(g\circ f)(a)=g(f(a))$.
\end{prop}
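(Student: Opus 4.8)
The plan is to reduce the identity to its components and then apply Proposition~\ref{prop:hom_calc} with a well-chosen homomorphism. Both sides of $(g\circ f)(a)=g(f(a))$ are $k$-tuples in $A$, so it suffices to prove equality in each coordinate $j=1,\ldots,k$; that is, to show $\gamma_a^\free\bigl((g\circ f)_j\bigr)=g_j(f(a))$ for a single free entire function $g_j\in\cF_n$. Here the left-hand side involves the component $(g\circ f)_j=g_j(f)=\gamma_f^\free(g_j)\in\cF_m$ of the free superposition, computed via the functional calculus $\gamma_f^\free\colon\cF_n\to\cF_m$ that sends $\zeta_i\mapsto f_i$.

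The key step is to invoke Proposition~\ref{prop:hom_calc} with the continuous homomorphism $\varphi=\gamma_a^\free\colon\cF_m\to A$, the $n$-tuple $f\in\cF_m^n$ in place of the tuple $a$ there, and $g_j\in\cF_n$ in place of the function $f$ there. The proposition then reads $\gamma_a^\free\bigl(g_j(f)\bigr)=g_j\bigl(\gamma_a^\free(f)\bigr)$. Since $\gamma_a^\free(f)=\bigl(\gamma_a^\free(f_1),\ldots,\gamma_a^\free(f_n)\bigr)=\bigl(f_1(a),\ldots,f_n(a)\bigr)=f(a)$ by the definition of $f(a)$, the right-hand side is exactly $g_j(f(a))$. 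This yields the desired coordinate identity, and assembling over $j$ gives $(g\circ f)(a)=g(f(a))$.

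Alternatively, one can argue purely through the universal property~\eqref{univ_F}: the composite $\gamma_a^\free\circ\gamma_f^\free$ and the map $\gamma_{f(a)}^\free$ are both continuous homomorphisms $\cF_n\to A$, and each sends the generator $\zeta_i$ to $f_i(a)$ (for the composite, because $\gamma_f^\free(\zeta_i)=f_i$ and then $\gamma_a^\free(f_i)=f_i(a)$). By the uniqueness clause of the free entire functional calculus they therefore coincide, which is again the coordinate form of the assertion.

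The argument is essentially formal, so there is no serious obstacle; the only point requiring care is the bookkeeping of domains and codomains, namely checking that the hypotheses of Proposition~\ref{prop:hom_calc} are applied with $\varphi=\gamma_a^\free$ acting on $\cF_m$ (not $\cF_n$) and with the inner tuple $f$ lying in $\cF_m^n$, so that its length matches the number of variables of $g_j$. Once the roles of $m$ and $n$ are kept straight, both routes close the proof immediately.
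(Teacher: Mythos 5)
Your proof is correct and follows essentially the same route as the paper: your ``alternative'' argument (that $\gamma_a^\free\circ\gamma_f^\free$ and $\gamma_{f(a)}^\free$ are continuous homomorphisms $\cF_n\to A$ agreeing on the generators $\zeta_i$, hence equal by the uniqueness in~\eqref{univ_F}) is exactly the paper's proof, after its reduction to $k=1$. Your first route, invoking Proposition~\ref{prop:hom_calc} with $\varphi=\gamma_a^\free\colon\cF_m\to A$, is just a packaged form of the same generator-plus-uniqueness argument, since that proposition is itself proved this way.
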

\begin{proof}
We may assume that $k=1$. Consider the maps $\varphi,\psi\colon\cF_n\to A$
given by
\[
\varphi(g)=g(f(a)),\quad \psi(g)=(g\circ f)(a)\quad (g\in\cF_n).
\]
Clearly, $\varphi=\gamma_{f(a)}^\free$ is a continuous homomorphism, and so is
$\psi=\gamma_a^\free\circ\gamma_f^\free$. For each $i=1,\ldots ,n$ we have
\begin{align*}
\varphi(\zeta_i)&=\zeta_i(f(a))=f_i(a),\\
\psi(\zeta_i)&=(\zeta_i\circ f)(a)=(\zeta_i(f))(a)=f_i(a).
\end{align*}
Therefore $\varphi=\psi$, as required.
\end{proof}

\begin{corollary}
The free superposition \eqref{superpos} is associative.
\end{corollary}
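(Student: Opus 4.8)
The corollary asserts that the free superposition operation
\eqref{superpos} is associative, meaning that for $h\in\cF_n^l$,
$g\in\cF_m^n$, and $f\in\cF_p^m$ we have
$(h\circ g)\circ f=h\circ(g\circ f)$ as elements of $\cF_p^l$. The plan
is to deduce this directly from Proposition~\ref{prop:assoc}, which tells
us how free superposition interacts with the free entire functional
calculus. The key observation is that $\cF_p$ is itself an Arens-Michael
algebra, so we are free to instantiate Proposition~\ref{prop:assoc} with
$A=\cF_p$ and a tautologically chosen tuple.

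First I would reduce to the case $l=1$, exactly as in the proof of
Proposition~\ref{prop:assoc}, since associativity of a tuple-valued
operation reduces componentwise to the scalar case; thus we may take
$h\in\cF_n$. Next I would apply Proposition~\ref{prop:assoc} with the
Arens-Michael algebra $A=\cF_p$ and the distinguished $p$-tuple
$a=(\zeta_1,\ldots,\zeta_p)\in\cF_p^p$ of free generators. The point is
that for this choice of $a$ the free entire functional calculus
$\gamma_a^\free\colon\cF_p\to\cF_p$ is the identity: by the universal
property~\eqref{univ_F} it is the unique continuous homomorphism sending
$\zeta_i$ to $\zeta_i$, so $\zeta_i(a)=\zeta_i$ and hence $F(a)=F$ for
every $F\in\cF_p$. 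Consequently, evaluating a free superposition at this
$a$ returns the superposition itself, i.e. $(g\circ f)(a)=g\circ f$ for
any $g,f$, and likewise for $h$.

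With this substitution, Proposition~\ref{prop:assoc}, applied to
$h\in\cF_n$, $g\in\cF_m^n$, $f\in\cF_p^m$ and the tuple
$a=(\zeta_1,\ldots,\zeta_p)$, reads
$(h\circ g)\circ f = \bigl((h\circ g)\circ f\bigr)(a) =
(h\circ g)\bigl(f(a)\bigr) = (h\circ g)(f)$, while unwinding the
definitions of $\circ$ in the other grouping gives
$h\circ(g\circ f) = h\bigl((g\circ f)\bigr)$; both sides are then
identified through the definition $g\circ f = g(f)$ together with a
second application of the proposition. More cleanly, I would run
Proposition~\ref{prop:assoc} once with the roles $(A,a,f,g)$ played by
$(\cF_p,\,(\zeta_1,\ldots,\zeta_p),\,f,\,g\circ h)$-type data and read off
that both triple compositions equal the common value obtained by
successively substituting tuples. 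Since each intermediate expression is a
genuine element of the relevant $\cF_\bullet$, no convergence or topology
issues arise beyond those already handled in establishing the functional
calculus.

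I do not expect a serious obstacle here: the entire content is bookkeeping
to confirm that Proposition~\ref{prop:assoc} specialized to the identity
calculus on $\cF_p$ yields precisely the associativity identity. The only
point requiring a moment of care is verifying that $\gamma_a^\free=\id$
for $a=(\zeta_1,\ldots,\zeta_p)$, which follows immediately from the
uniqueness clause in the universal property~\eqref{univ_F}, and then
matching the two bracketings of $h\circ g\circ f$ against the two sides of
$(g\circ f)(a)=g(f(a))$.
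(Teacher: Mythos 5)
Your raw materials are the right ones, but the mechanism you lean on is circular. The observation that $\gamma_a^\free=\id$ for $a=(\zeta_1,\ldots,\zeta_p)$ is correct, but precisely because of it, every application of Proposition~\ref{prop:assoc} at the generator tuple collapses to a tautology: it reads $(G\circ F)(a)=G(F(a))$, i.e.\ $G\circ F=G(F)$, which is the \emph{definition} of the superposition \eqref{superpos}, not a statement with content. So your displayed chain $(h\circ g)\circ f=\bigl((h\circ g)\circ f\bigr)(a)=(h\circ g)(f(a))=(h\circ g)(f)$ merely re-derives the definition of the left-hand bracketing, and your ``more cleanly'' variant --- a single application of Proposition~\ref{prop:assoc} with the data $\bigl(\cF_p,(\zeta_1,\ldots,\zeta_p),f,h\circ g\bigr)$ --- proves nothing beyond that. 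Evaluation at the generators can never produce the associativity identity, so the claim that the proposition ``specialized to the identity calculus on $\cF_p$ yields precisely the associativity identity'' is where the argument breaks down.

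The step that carries all the content is $(h\circ g)(f)=h(g(f))$, and to obtain it you must apply Proposition~\ref{prop:assoc} with the evaluation tuple being $a=f\in\cF_p^m$ itself: take $A=\cF_p$, $a=f$, and apply the proposition to the tuples $g\in\cF_m^n$ and $h\in\cF_n^l$. Combined with the definitions $(h\circ g)\circ f=(h\circ g)(f)$ and $h\circ(g\circ f)=h(g(f))$, this finishes the proof --- and it is, in fact, the paper's entire one-line argument. Your text gestures at ``a second application of the proposition'' that would identify the two sides, but you never specify its data, and everything you do spell out happens at the generator tuple, where the proposition is empty. If you make that second application explicit (at $a=f$), your proof becomes correct, but then the whole detour through the generators and the identity calculus is superfluous and what remains is exactly the paper's proof.
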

\begin{proof}
Take $f\in\cF_m^n$, $g\in\cF_n^k$, and $h\in\cF_k^\ell$. Using Proposition~\ref{prop:assoc},
we obtain
\[
(h\circ g)\circ f=(h\circ g)(f)=h(g(f))=h\circ (g\circ f).\qedhere
\]
\end{proof}

From now on, we assume that $A$ is an Arens-Michael algebra.

\begin{definition}
We say that a subalgebra $B\subseteq A$ is {\em holomorphically closed}
if for each $n\in\N$, each $b\in B^n$, and each $f\in\cF_n$ we have
$f(b)\in B$.
\end{definition}

In the commutative case, we can use $\cO(\CC^n)$ instead of $\cF_n$:

\begin{prop}
Let $A$ be a commutative Arens-Michael algebra.
Then a subalgebra $B\subseteq A$ is holomorphically closed if and only
if for each $n\in\N$, each $b\in B^n$, and each $f\in\cO(\CC^n)$ we have
$f(b)\in B$.
\end{prop}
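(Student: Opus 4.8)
The plan is to prove the two implications separately, with the forward direction being essentially trivial and the reverse direction carrying all the content. The key tool will be Proposition~\ref{prop:F_quot_O}, which identifies $\cO(\CC^n)$ with the commutative quotient $\cF_n/\ol{[\cF_n,\cF_n]}$, together with the compatibility of the two functional calculi recorded in Proposition~\ref{prop:calc_compat}.

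First I would dispose of the forward implication. Suppose $B\subseteq A$ is holomorphically closed in the sense of the preceding definition, so that $f(b)\in B$ for all $n$, all $b\in B^n$, and all $f\in\cF_n$. Given $g\in\cO(\CC^n)$, I want $g(b)\in B$. Since $\pi_n\colon\cF_n\to\cO(\CC^n)$ is surjective (indeed its image contains the polynomials $F_n$, whose images are dense, and $\pi_n$ is a quotient map by Proposition~\ref{prop:F_quot_O}), I may choose $\tilde g\in\cF_n$ with $\pi_n(\tilde g)=g$. By Proposition~\ref{prop:calc_compat}, applied to the commutative algebra $A$ and the tuple $b$, the two functional calculi agree: $\gamma_b^\hol\circ\pi_n=\gamma_b^\free$, so $g(b)=\gamma_b^\hol(g)=\gamma_b^\free(\tilde g)=\tilde g(b)$. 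The right-hand side lies in $B$ by hypothesis, hence so does $g(b)$. This gives the ``only if'' direction.

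For the reverse implication, suppose that $g(b)\in B$ for every $n$, every $b\in B^n$, and every $g\in\cO(\CC^n)$; I must show $f(b)\in B$ for every $f\in\cF_n$. The point is that $A$ is commutative, so the free entire functional calculus $\gamma_b^\free$ factors through the commutative one. Concretely, by Proposition~\ref{prop:calc_compat} we again have $\gamma_b^\free=\gamma_b^\hol\circ\pi_n$, whence for any $f\in\cF_n$,
\[
f(b)=\gamma_b^\free(f)=\gamma_b^\hol(\pi_n(f))=(\pi_n f)(b).
\]
Setting $g=\pi_n(f)\in\cO(\CC^n)$, the right-hand side is of the form $g(b)$ with $g\in\cO(\CC^n)$, and so it lies in $B$ by hypothesis. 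This completes the ``if'' direction.

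The main conceptual obstacle is simply recognizing that in the commutative setting the free and holomorphic calculi coincide on tuples from $A$, which is precisely the content of Proposition~\ref{prop:calc_compat}; once that is invoked, both directions reduce to surjectivity of $\pi_n$ and a routine chase. I do not expect any genuine difficulty, since no completion, density, or continuity argument beyond what is already packaged in Propositions~\ref{prop:calc_compat} and~\ref{prop:F_quot_O} is required.
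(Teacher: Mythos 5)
Your proof is correct and follows essentially the same route as the paper: the paper simply notes that Propositions~\ref{prop:calc_compat} and~\ref{prop:F_quot_O} give the set equality $\{ f(b) : f\in\cO(\CC^n)\}=\{ g(b) : g\in\cF_n\}$, which is exactly what your two implications establish (surjectivity of $\pi_n$ for one inclusion, factoring $\gamma_b^\free=\gamma_b^\hol\circ\pi_n$ for the other). You have merely unpacked the paper's one-line argument into its two directions.
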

\begin{proof}
By Propositions \ref{prop:calc_compat} and \ref{prop:F_quot_O},
we have
\begin{equation}
\label{twosets}
\{ f(b) : f\in\cO(\CC^n)\}=\{ g(b) : g\in\cF_n\}.
\end{equation}
The rest is clear.
\end{proof}

\begin{examples}
Of course, if $B$ is closed in $A$, then it is holomorphically closed.
More generally, if $B$ is an Arens-Michael algebra under a topology
stronger than the topology inherited from $A$, then $B$ is holomorphically
closed in $A$. For example, the algebra $C^\infty(M)$ of smooth functions
on a manifold $M$ is holomorphically closed in $C(M)$.
\end{examples}

\begin{remark}
Our notion of a holomorphically closed subalgebra should not be confused with
the more common notion of a subalgebra stable under the holomorphic functional
calculus (see, e.g., \cite{Connes_NCG,Gracia}). Recall that a subalgebra
$B\subseteq A$ is stable under the holomorphic functional
calculus if for each $b\in B$, each neighborhood $U$ of the spectrum $\sigma_A(b)$,
and each $f\in\cO(U)$ we have $f(b)\in B$. Such a subalgebra is
necessarily spectrally invariant in $A$, i.e., $\sigma_B(b)=\sigma_A(b)$ for all
$b\in B$. In contrast, a holomorphically closed subalgebra need not be spectrally
invariant. For example, for each domain $D\subseteq\CC$ the algebra
$\cO(\CC)$ may be viewed (via the restriction map)
as a holomorphically closed subalgebra of $\cO(D)$,
but it is not spectrally invariant in $\cO(D)$ unless $D=\CC$.
\end{remark}

It is clear from the definition that the intersection of any family of holomorphically
closed subalgebras of $A$ is holomorphically closed. This leads naturally to the
following definition.

\begin{definition}
The {\em holomorphic closure}, $\Hol(S)$, of a subset $S\subseteq A$ is
the intersection of all holomorphically closed subalgebras of $A$ containing $S$.
\end{definition}

Clearly, $\Hol(S)$ is the smallest holomorphically closed subalgebra of $A$ containing $S$.
It can also be described more explicitly as follows.

\begin{prop}
\label{prop:hol_closure_expl}
For each nonempty subset $S\subseteq A$ we have
\begin{equation}
\label{hol_closure_expl}
\Hol(S)=\{ f(a) : f\in\cF_n,\; a\in S^n,\; n\in\Z_+\}.
\end{equation}
\end{prop}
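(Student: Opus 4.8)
Denote the right-hand side of \eqref{hol_closure_expl} by $T$. The inclusion $T\subseteq\Hol(S)$ is immediate from the definitions: if $B$ is any holomorphically closed subalgebra of $A$ containing $S$, then for each $a\in S^n\subseteq B^n$ and each $f\in\cF_n$ we have $f(a)\in B$ by the very definition of holomorphic closedness; hence $T\subseteq B$, and intersecting over all such $B$ gives $T\subseteq\Hol(S)$.

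For the reverse inclusion it suffices, by minimality of $\Hol(S)$, to show that $T$ is a holomorphically closed subalgebra of $A$ containing $S$. That $S\subseteq T$ is seen by taking $n=1$ and $f=\zeta_1$, so that $f(a)=a$; taking $f\in\cF_n$ to be a constant function shows $\CC 1\subseteq T$, and in particular $1\in T$.

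The technical device I would use throughout is a \emph{variable embedding}. For $m\ge n$, let $\iota=(\zeta_1,\ldots,\zeta_n)\in\cF_m^n$ be the tuple of the first $n$ coordinate functions; then for $f\in\cF_n$ and any $c=(c_1,\ldots,c_m)\in A^m$ Proposition~\ref{prop:assoc} gives $(f\circ\iota)(c)=f(\iota(c))=f(c_1,\ldots,c_n)$. This lets me put any two elements $f(a)$ (with $a\in S^n$) and $g(b)$ (with $b\in S^{n'}$) over a common tuple: setting $m=n+n'$ and $a'=(a,b)\in S^m$, I can write $f(a)=(f\circ\iota_1)(a')$ and $g(b)=(g\circ\iota_2)(a')$ for suitable coordinate tuples $\iota_1,\iota_2\in\cF_m^n$, $\cF_m^{n'}$. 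Since $\gamma_{a'}^\free$ is a continuous algebra homomorphism, it respects sums, products, and scalar multiples, so $f(a)+g(b)$, $f(a)g(b)$, and $\lambda f(a)$ are all values at $a'$ of elements of $\cF_m$, and therefore lie in $T$. Thus $T$ is a subalgebra.

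Finally, to see that $T$ is holomorphically closed, take $b=(b_1,\ldots,b_k)\in T^k$ and $g\in\cF_k$; writing $b_j=f_j(a^{(j)})$ with $f_j\in\cF_{n_j}$ and $a^{(j)}\in S^{n_j}$, I form $m=n_1+\cdots+n_k$, the tuple $a=(a^{(1)},\ldots,a^{(k)})\in S^m$, and, via the coordinate embeddings above, a $k$-tuple $f\in\cF_m^k$ with $f(a)=b$. Proposition~\ref{prop:assoc} then gives $g(b)=g(f(a))=(g\circ f)(a)$, and since $g\circ f\in\cF_m$ this shows $g(b)\in T$. The one point that requires care is the bookkeeping of the coordinate embeddings and the verification that they are genuine instances of the free superposition~\eqref{superpos}, so that Proposition~\ref{prop:assoc} applies; once this is set up, both the subalgebra and the holomorphic-closedness axioms follow mechanically.
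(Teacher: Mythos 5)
Your proof is correct and follows essentially the same route as the paper: the paper's elements $F=f(\zeta_1,\ldots,\zeta_m)$, $G=g(\zeta_{m+1},\ldots,\zeta_{m+n})\in\cF_{m+n}$ and the $\bar f_i$ used for holomorphic closedness are exactly your coordinate-embedding superpositions $f\circ\iota_1$, $g\circ\iota_2$, with Proposition~\ref{prop:assoc} invoked at the same step. The only cosmetic difference is that you make the appeal to Proposition~\ref{prop:assoc} explicit also in the subalgebra step, where the paper simply writes $f(a)=F(a,b)$ as clear.
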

\begin{proof}
Let $B$ denote the right-hand side of \eqref{hol_closure_expl}.
Clearly, $S\subseteq B\subseteq\Hol(S)$, so it suffices to show that $B$
is a holomorphically closed subalgebra of $A$.
First observe that $1_A\in B$ (take any $a\in S$ and $f=1_{\cF_1}$).
Let now $a\in S^m$, $b\in S^n$, $f\in\cF_m$, and $g\in \cF_n$.
We have to show that $f(a)+g(b)\in B$ and $f(a)g(b)\in B$.
To this end, define $F,G\in\cF_{m+n}$ by
\[
F=f(\zeta_1,\ldots ,\zeta_m),\quad G=g(\zeta_{m+1},\ldots ,\zeta_{m+n}),
\]
where the $\zeta_i$'s are now considered as elements of $\cF_{m+n}$.
We clearly have $f(a)=F(a,b)$ and $g(b)=G(a,b)$. Therefore
\[
f(a)+g(b)=F(a,b)+G(a,b)=(F+G)(a,b)\in B,
\]
and similarly $f(a)g(b)\in B$. Thus $B$ is a subalgebra of $A$.

Let us now show that $B$ is holomorphically closed. Take any $b\in B^k$, and represent it
in the form
\[
b=\bigl(f_1(a^{(1)}),\ldots ,f_k(a^{(k)})\bigr),
\]
where $a^{(i)}\in S^{n_i}$ and $f_i\in\cF_{n_i}$ $(i=1,\ldots ,k)$.
Let $n=n_1+\cdots +n_k$.
By replacing $f_1,\ldots ,f_k$ with suitable elements $\bar f_1,\ldots ,\bar f_k$ of $\cF_n$
(where $\bar f_1=f_1(\zeta_1,\ldots ,\zeta_{n_1})$,
$\bar f_2=f_2(\zeta_{n_1+1},\ldots ,\zeta_{n_1+n_2})$ etc.),
we see that $b=\bar f(a)$, where
\[
\bar f=(\bar f_1,\ldots ,\bar f_k)\in\cF_n^k,\quad
a=(a^{(1)},\ldots ,a^{(k)})\in S^n.
\]
Now Proposition~\ref{prop:assoc} implies that
for each $g\in\cF_k$ we have $g(b)=g(\bar f(a))=(g\circ\bar f)(a)\in B$.
Thus $B$ is holomorphically closed, as required.
\end{proof}

\begin{corollary}
If $A$ is commutative, then for each nonempty subset $S\subseteq A$ we have
\[
\Hol(S)=\{ f(a) : f\in\cO(\CC^n),\; a\in S^n,\; n\in\Z_+\}.
\]
\end{corollary}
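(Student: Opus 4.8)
The plan is to deduce this directly from the explicit description of $\Hol(S)$ obtained in Proposition~\ref{prop:hol_closure_expl}, combined with the fact that on a fixed commuting tuple the free entire functional calculus and the ordinary entire functional calculus have the same range. Concretely, Proposition~\ref{prop:hol_closure_expl} already gives
\[
\Hol(S)=\{ f(a) : f\in\cF_n,\; a\in S^n,\; n\in\Z_+\},
\]
so the only task is to replace $\cF_n$ by $\cO(\CC^n)$ in this formula, and this can be carried out term by term, one tuple $a$ at a time.

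First I would fix $n\in\Z_+$ and a tuple $a\in S^n\subseteq A^n$. Since $A$ is commutative, the identity~\eqref{twosets} applies to $a$ and yields
\[
\{ g(a) : g\in\cF_n\}=\{ f(a) : f\in\cO(\CC^n)\}.
\]
Taking the union of both sides over all $a\in S^n$ and all $n\in\Z_+$ then transforms the $\cF_n$-description of $\Hol(S)$ into the desired $\cO(\CC^n)$-description, which completes the argument.

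I do not expect any genuine obstacle here; the entire content lies in the two results being cited. The only point requiring a moment's care is the applicability of~\eqref{twosets}: that identity was recorded inside the proof of the commutative characterisation of holomorphically closed subalgebras, for tuples drawn from a subalgebra $B$, but its justification uses only the surjectivity of $\pi_n$ from Proposition~\ref{prop:F_quot_O} together with the commutative diagram of Proposition~\ref{prop:calc_compat}, and hence is valid for an arbitrary tuple $a\in A^n$ (every tuple being commuting, as $A$ is commutative). If one prefers to avoid quoting~\eqref{twosets}, I would simply reprove it inline: by Proposition~\ref{prop:calc_compat} we have $g(a)=(\pi_n g)(a)$ for every $g\in\cF_n$, and since $\pi_n$ is surjective by Proposition~\ref{prop:F_quot_O}, the ranges of $\gamma_a^\free$ and $\gamma_a^\hol$ coincide, which is exactly~\eqref{twosets}.
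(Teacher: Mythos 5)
Your proposal is correct and matches the paper's own proof, which likewise deduces the corollary immediately from the description of $\Hol(S)$ in Proposition~\ref{prop:hol_closure_expl} together with the identity~\eqref{twosets} applied tuple by tuple. Your extra remark that~\eqref{twosets} holds for arbitrary tuples in $A^n$ (via the surjectivity of $\pi_n$ from Proposition~\ref{prop:F_quot_O} and the compatibility diagram of Proposition~\ref{prop:calc_compat}) is exactly the justification the paper relies on implicitly.
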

\begin{proof}
This is immediate from \eqref{hol_closure_expl}
and~\eqref{twosets}.
\end{proof}

\begin{corollary}
\label{cor:Hol_hom}
Let $A$ and $B$ be Arens-Michael algebras, and let $\varphi\colon A\to B$ be a continuous
homomorphism. Then for each $S\subseteq A$ we have $\varphi(\Hol(S))=\Hol(\varphi(S))$.
As a consequence, if $A_0\subseteq A$ is a holomorphically closed subalgebra, then
$\varphi(A_0)$ is holomorphically closed in $B$.
\end{corollary}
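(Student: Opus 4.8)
The plan is to reduce both assertions to the explicit description of the holomorphic closure in Proposition~\ref{prop:hol_closure_expl} together with the compatibility of the free functional calculus with homomorphisms in Proposition~\ref{prop:hom_calc}. First I would treat the case of nonempty $S$. By Proposition~\ref{prop:hol_closure_expl}, every element of $\Hol(S)$ has the form $f(a)$ with $f\in\cF_n$, $a\in S^n$, and $n\in\Z_+$; applying $\varphi$ and invoking Proposition~\ref{prop:hom_calc}, which gives $\varphi(f(a))=f(\varphi(a))$, I obtain that $\varphi(\Hol(S))$ consists exactly of the elements $f(\varphi(a))$ with $f\in\cF_n$ and $a\in S^n$. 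Since $\varphi(a)=(\varphi(a_1),\ldots,\varphi(a_n))$ and, as $a$ ranges over $S^n$, the tuple $\varphi(a)$ ranges over all of $(\varphi(S))^n$ (each coordinate of a given $b\in(\varphi(S))^n$ admits a preimage in $S$), this set coincides with $\{ f(b): f\in\cF_n,\ b\in(\varphi(S))^n,\ n\in\Z_+\}$. A second application of Proposition~\ref{prop:hol_closure_expl}, this time to the nonempty subset $\varphi(S)\subseteq B$, identifies the latter with $\Hol(\varphi(S))$, yielding the desired equality.

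Next I would dispose of the degenerate case $S=\varnothing$, to which the explicit formula does not apply directly. Here I would note that, by the standing convention that subalgebras contain the identity, every holomorphically closed subalgebra contains $1_A$, whence $\Hol(\varnothing)=\Hol(\{1_A\})$; since $\varphi(\{1_A\})=\{1_B\}$ and likewise $\Hol(\varnothing)=\Hol(\{1_B\})$ computed inside $B$, the equality follows from the nonempty case applied to $S=\{1_A\}$.

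Finally, the consequence is immediate from the main equality. If $A_0\subseteq A$ is holomorphically closed, then $A_0$ is itself a holomorphically closed subalgebra containing $A_0$, so $\Hol(A_0)=A_0$. Taking $S=A_0$ in the equality just proved gives $\varphi(A_0)=\varphi(\Hol(A_0))=\Hol(\varphi(A_0))$, and the right-hand side is a holomorphic closure, hence holomorphically closed by construction.

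The argument is essentially bookkeeping, so I do not anticipate a serious obstacle. The only points requiring a little care are the surjectivity of $a\mapsto\varphi(a)$ from $S^n$ onto $(\varphi(S))^n$, which is what allows me to pass from tuples over $S$ to arbitrary tuples over $\varphi(S)$, and the separate handling of $S=\varnothing$, since Proposition~\ref{prop:hol_closure_expl} is stated only for nonempty subsets.
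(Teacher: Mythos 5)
Your proposal is correct and follows exactly the paper's own (one-line) proof: apply $\varphi$ to the explicit description of $\Hol(S)$ in Proposition~\ref{prop:hol_closure_expl} and use Proposition~\ref{prop:hom_calc}. Your extra care with the surjectivity of $a\mapsto\varphi(a)$ onto $(\varphi(S))^n$ and with the degenerate case $S=\varnothing$ merely spells out details the paper leaves implicit.
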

\begin{proof}
Apply $\varphi$ to both sides of \eqref{hol_closure_expl} and use Proposition~\ref{prop:hom_calc}.
\end{proof}

In the case where $S$ is finite, an even more explicit characterization of $\Hol(S)$
is possible.

\begin{prop}
\label{prop:hol_closure_fin_expl}
For each $a=(a_1,\ldots ,a_n)\in A^n$ we have
\begin{equation}
\label{hol_closure_fin_expl}
\Hol(\{ a_1,\ldots ,a_n\})=\{ f(a) : f\in\cF_n\}.
\end{equation}
\end{prop}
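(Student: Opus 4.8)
The plan is to set $B=\{f(a):f\in\cF_n\}$, which is exactly the image of the free entire functional calculus $\gamma_a^\free\colon\cF_n\to A$, and to prove the two inclusions separately. For the inclusion $B\subseteq\Hol(\{a_1,\ldots,a_n\})$, I would argue that $\Hol(\{a_1,\ldots,a_n\})$ is by definition a holomorphically closed subalgebra containing each $a_i$; since $a=(a_1,\ldots,a_n)\in\Hol(\{a_1,\ldots,a_n\})^n$, holomorphic closedness immediately yields $f(a)\in\Hol(\{a_1,\ldots,a_n\})$ for every $f\in\cF_n$, whence $B\subseteq\Hol(\{a_1,\ldots,a_n\})$.

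For the reverse inclusion I would show that $B$ is itself a holomorphically closed subalgebra containing $\{a_1,\ldots,a_n\}$; minimality of $\Hol$ then forces $\Hol(\{a_1,\ldots,a_n\})\subseteq B$. That $B$ is a unital subalgebra is immediate, since $B=\Im\gamma_a^\free$ is the image of a continuous unital homomorphism, and $a_i=\zeta_i(a)\in B$ shows $\{a_1,\ldots,a_n\}\subseteq B$. The substantive point is that $B$ is closed under the free functional calculus without enlarging the number of generators. Given $b\in B^k$ and $g\in\cF_k$, I would write $b=(f_1(a),\ldots,f_k(a))$ for suitable $f_1,\ldots,f_k\in\cF_n$, set $f=(f_1,\ldots,f_k)\in\cF_n^k$ so that $b=f(a)$, and invoke Proposition~\ref{prop:assoc} to obtain $g(b)=g(f(a))=(g\circ f)(a)$. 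Since $g\circ f\in\cF_n$ by the definition of free superposition, this lies in $B$, proving that $B$ is holomorphically closed.

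The crux --- and the only place where the finiteness of the generating set is used in an essential way --- is that the superposition $g\circ f$ stays inside $\cF_n$ rather than forcing a passage to $\cF_{n_1+\cdots+n_k}$ as in the proof of Proposition~\ref{prop:hol_closure_expl}. This is precisely what Proposition~\ref{prop:assoc} delivers, so I do not expect a genuine obstacle once that associativity property is in hand; the argument is essentially a streamlined version of the proof of Proposition~\ref{prop:hol_closure_expl} in which the fixed tuple $a$ plays the role previously played by an arbitrary tuple from $S$. Care is only needed to confirm the unital and algebra structure of $B$ and to keep track of which $\zeta_i$ belong to $\cF_n$ versus $\cF_k$.
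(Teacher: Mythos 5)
Your proposal is correct and follows essentially the same route as the paper: both identify the right-hand side with the image of $\gamma_a^\free$, observe it is a subalgebra containing the $a_i$'s and contained in $\Hol(\{a_1,\ldots,a_n\})$, and then use Proposition~\ref{prop:assoc} to write $g(b)=(g\circ f)(a)$ and conclude holomorphic closedness. Your write-up merely spells out details the paper leaves as ``clear,'' such as $B=\Im\gamma_a^\free$ being a unital subalgebra.
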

\begin{proof}
Let $B$ denote the right-hand side of \eqref{hol_closure_fin_expl}, and let $S=\{ a_1,\ldots ,a_n\}$.
Clearly, $S\subseteq B\subseteq\Hol(S)$,
and $B$ is a subalgebra of $A$. So it suffices to show that $B$
is holomorphically closed in $A$.
Each element $b\in B^k$ has the form $b=f(a)$ for some $f\in\cF_n^k$.
Thus for each $g\in \cF_k$ we have
$g(b)=g(f(a))=(g\circ f)(a)\in B$.
Thus $B$ is holomorphically closed, as required.
\end{proof}

\begin{corollary}
\label{cor:hol_closure_fin_expl_comm}
If $A$ is commutative, then for each $a=(a_1,\ldots ,a_n)\in A^n$ we have
\[
\Hol(\{ a_1,\ldots ,a_n\})=\{ f(a) : f\in\cO(\CC^n)\}.
\]
\end{corollary}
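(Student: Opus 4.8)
The plan is to deduce this immediately from the noncommutative description of the holomorphic closure already established, combined with the commutative comparison of the two functional calculi. First I would invoke Proposition~\ref{prop:hol_closure_fin_expl}, which holds for an arbitrary Arens-Michael algebra and in particular for our commutative $A$, to write
\[
\Hol(\{ a_1,\ldots ,a_n\})=\{ f(a) : f\in\cF_n\}.
\]
The only remaining task is then to replace $\cF_n$ by $\cO(\CC^n)$ on the right-hand side.

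For that I would apply the set equality~\eqref{twosets} to the $n$-tuple $a=(a_1,\ldots ,a_n)$ itself, which gives
\[
\{ f(a) : f\in\cO(\CC^n)\}=\{ g(a) : g\in\cF_n\}.
\]
Chaining the two displays yields exactly the desired formula. In effect the argument is a one-line combination of \eqref{hol_closure_fin_expl} with \eqref{twosets}, entirely parallel to the Corollary following Proposition~\ref{prop:hol_closure_expl}, where the infinite-subset version was handled in the same way.

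The one point worth spelling out --- and the closest thing to an obstacle --- is simply to be sure that \eqref{twosets} is genuinely available for our specific tuple $a$. This rests on the two structural facts recorded earlier: Proposition~\ref{prop:calc_compat} gives $\gamma_a^\hol\circ\pi_n=\gamma_a^\free$, so that the free functional calculus factors through the holomorphic one via the comparison map $\pi_n$; and Proposition~\ref{prop:F_quot_O} identifies $\cO(\CC^n)$ with $\cF_n/\ol{[\cF_n,\cF_n]}$, whence $\pi_n$ is surjective onto $\cO(\CC^n)$. Consequently the image $\gamma_a^\free(\cF_n)=\gamma_a^\hol(\pi_n(\cF_n))$ coincides with $\gamma_a^\hol(\cO(\CC^n))$, which is precisely the content of \eqref{twosets}. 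Since all the real work has already been carried out in the cited propositions, no new estimates or constructions are needed, and there is no genuine difficulty beyond assembling these pieces.
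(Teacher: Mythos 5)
Your proposal is correct and matches the paper's intended argument exactly: the paper states this corollary without proof, and the implicit justification is precisely the combination of Proposition~\ref{prop:hol_closure_fin_expl} with the equality~\eqref{twosets} (itself resting on Propositions~\ref{prop:calc_compat} and~\ref{prop:F_quot_O}), just as in the proof of the corollary following Proposition~\ref{prop:hol_closure_expl}. Your additional verification that \eqref{twosets} applies to the specific tuple $a$, via $\gamma_a^\free=\gamma_a^\hol\circ\pi_n$ and the surjectivity of $\pi_n$, is exactly the right justification and requires nothing beyond the cited results.
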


Now we are ready to introduce our main objects of study.

\begin{definition}
Let $A$ be a Fr\'echet-Arens-Michael algebra.
We say that $A$ is {\em holomorphically generated} by a subset $S\subseteq A$ if
$\Hol(S)=A$. We say that $A$ is {\em holomorphically finitely generated} (HFG for short)
if $A$ is holomorphically generated by a finite subset.
\end{definition}

\begin{example}
\label{ex:HFG_free}
By Proposition \ref{prop:hol_closure_fin_expl}, $\cF_n$ is holomorphically generated
by $\zeta_1,\ldots ,\zeta_n$. Similarly, Corollary~\ref{cor:hol_closure_fin_expl_comm}
shows that $\cO(\CC^n)$ is holomorphically generated by $z_1,\ldots ,z_n$.
Thus $\cF_n$ and $\cO(\CC^n)$ are HFG algebras.
\end{example}

\begin{prop}
\label{prop:HFG_quot}
If $A$ is an HFG algebra and $I\subseteq A$ is a closed two-sided ideal, then
$A/I$ is also an HFG algebra.
\end{prop}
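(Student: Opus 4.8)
The plan is to let $q\colon A\to A/I$ be the quotient map and to show that $A/I$ is holomorphically generated by the images under $q$ of the generators of $A$. First I would verify that $A/I$ is again a Fr\'echet-Arens-Michael algebra. Since $I$ is a closed subspace of the Fr\'echet space $A$, the quotient $A/I$ is again a Fr\'echet space; the only point requiring a short argument is local $m$-convexity. For this I would take a defining family $\{p_\lambda\}$ of submultiplicative seminorms on $A$ and observe that the associated quotient seminorms $\bar p_\lambda$ on $A/I$ are again submultiplicative: given representatives $x',y'$ of two classes, the product $x'y'$ represents the product of the classes (this is exactly where the hypothesis that $I$ be a \emph{two-sided} ideal is used), and then $\bar p_\lambda(\bar x\bar y)\le p_\lambda(x'y')\le p_\lambda(x')p_\lambda(y')$; passing to the infimum over representatives yields submultiplicativity of $\bar p_\lambda$. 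Hence $A/I$ is a Fr\'echet-Arens-Michael algebra.

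The second and main step is to transfer holomorphic generation along $q$. Since $A$ is HFG, there is a finite tuple $a=(a_1,\ldots ,a_n)\in A^n$ with $\Hol(\{ a_1,\ldots ,a_n\})=A$. Applying Corollary~\ref{cor:Hol_hom} to the continuous homomorphism $q$ and the set $S=\{ a_1,\ldots ,a_n\}$ gives
\[
\Hol(\{ q(a_1),\ldots ,q(a_n)\})=q(\Hol(\{ a_1,\ldots ,a_n\}))=q(A)=A/I.
\]
Thus $A/I$ is holomorphically generated by the finite set $\{ q(a_1),\ldots ,q(a_n)\}$, and is therefore HFG.

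I do not anticipate any serious obstacle here: the substance of the statement is already packaged into Corollary~\ref{cor:Hol_hom}, whose proof rests on the explicit description of the holomorphic closure (Proposition~\ref{prop:hol_closure_expl}) together with the naturality of the free entire functional calculus (Proposition~\ref{prop:hom_calc}). The only genuinely new verification is the routine check that quotients by closed two-sided ideals preserve local $m$-convexity, which is where the hypotheses that $I$ be two-sided (so that products of representatives again represent products) and closed (so that $A/I$ remains Hausdorff and complete) are both used.
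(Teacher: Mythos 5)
Your proof is correct and takes essentially the same approach as the paper, whose entire proof is ``Clear from Corollary~\ref{cor:Hol_hom}'': you apply that corollary to the quotient map exactly as intended. The extra step you supply --- that $A/I$ is again a Fr\'echet--Arens--Michael algebra because quotient seminorms of submultiplicative seminorms are submultiplicative --- is the standard fact the paper leaves implicit, and your verification of it is sound.
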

\begin{proof}
Clear from Corollary~\ref{cor:Hol_hom}.
\end{proof}

\begin{remark}
A closed subalgebra of an HFG algebra is not necessarily an HFG algebra
(see Remark~\ref{rem:non-HFG_closed} below).
\end{remark}

HFG algebras can also be characterized as follows.

\begin{prop}
\label{prop:HFG_quot_F}
Let $A$ be a Fr\'echet-Arens-Michael algebra.
\begin{mycompactenum}
\item $A$ is holomorphically finitely generated if and only if
$A$ is topologically isomorphic to $\cF_n/I$ for some $n\in\Z_+$ and for some closed
two-sided ideal $I\subseteq \cF_n$.
\item If $A$ is commutative, then $A$ is holomorphically finitely generated if and only if
$A$ is topologically isomorphic to $\cO(\CC^n)/I$ for some $n\in\Z_+$ and for some closed
two-sided ideal $I\subseteq \cO(\CC^n)$.
\end{mycompactenum}
\end{prop}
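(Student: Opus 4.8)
The plan is to realize the free entire functional calculus itself as the desired quotient map. I will treat part~(i) in detail; part~(ii) is entirely parallel, with $\cO(\CC^n)$ and Corollary~\ref{cor:hol_closure_fin_expl_comm} playing the roles of $\cF_n$ and Proposition~\ref{prop:hol_closure_fin_expl}.

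The two ``if'' directions are immediate and I would dispose of them first. Suppose $A$ is topologically isomorphic to $\cF_n/I$ for a closed two-sided ideal $I$. Since $\cF_n$ is HFG by Example~\ref{ex:HFG_free}, Proposition~\ref{prop:HFG_quot} shows that $\cF_n/I$, and hence $A$, is HFG. In the commutative case of~(ii) I would instead start from $A\cong\cO(\CC^n)/I$, use that $\cO(\CC^n)$ is a commutative HFG algebra, and note that a quotient of a commutative algebra is commutative.

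For the ``only if'' direction, assume $A$ is HFG, say $A=\Hol(\{a_1,\ldots,a_n\})$ for some $a=(a_1,\ldots,a_n)\in A^n$. By Proposition~\ref{prop:hol_closure_fin_expl} we have $A=\{f(a):f\in\cF_n\}$, which says precisely that the free entire functional calculus $\gamma_a^\free\colon\cF_n\to A$ is surjective. Setting $I=\Ker\gamma_a^\free$, continuity of $\gamma_a^\free$ together with the Hausdorffness of $A$ makes $I$ a closed two-sided ideal, and $\gamma_a^\free$ induces a continuous bijective algebra homomorphism $\cF_n/I\to A$.

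The only genuine point of the argument, and the step I expect to be the main obstacle, is upgrading this purely algebraic isomorphism to a \emph{topological} one; for this I would invoke the open mapping theorem. Both $\cF_n$ and $A$ are Fr\'echet spaces---for $A$ this is exactly the standing hypothesis that it is a Fr\'echet-Arens-Michael algebra---so the continuous linear surjection $\gamma_a^\free$ is open. Hence the induced continuous bijection $\cF_n/I\lriso A$ is open as well, therefore a homeomorphism, and thus a topological algebra isomorphism. This proves~(i). For~(ii) I would repeat the argument verbatim with $\gamma_a^\hol\colon\cO(\CC^n)\to A$, whose surjectivity for commutative $A$ comes from Corollary~\ref{cor:hol_closure_fin_expl_comm}, the open mapping theorem again applying since $\cO(\CC^n)$ is Fr\'echet.
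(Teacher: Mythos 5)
Your proposal is correct and follows the paper's own argument essentially verbatim: both ``if'' directions via Example~\ref{ex:HFG_free} and Proposition~\ref{prop:HFG_quot}, and both ``only if'' directions by using Proposition~\ref{prop:hol_closure_fin_expl} (respectively Corollary~\ref{cor:hol_closure_fin_expl_comm}) to see that $\gamma_a^\free$ (respectively $\gamma_a^\hol$) is onto, followed by the Open Mapping Theorem to upgrade $\cF_n/\Ker\gamma_a^\free\to A$ to a topological algebra isomorphism. The extra details you supply (closedness of the kernel, openness of the induced bijection) are exactly what the paper leaves implicit.
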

\begin{proof}
(i) The ``if'' part is immediate from Example~\ref{ex:HFG_free} and~Proposition~\ref{prop:HFG_quot}.
Conversely, if $A$ is an HFG algebra holomorphically generated by $a_1,\ldots ,a_n\in A$,
then Proposition~\ref{prop:hol_closure_fin_expl} implies that
$\gamma_a^\free\colon\cF_n\to A$ (where $a=(a_1,\ldots ,a_n)$) is onto.
Applying the Open Mapping Theorem, we obtain a topological algebra isomorphism
$A\cong\cF_n/\Ker\gamma_a^\free$. This completes the proof of (i).
The proof of (ii) is similar (use Corollary~\ref{cor:hol_closure_fin_expl_comm}
instead of~Proposition \ref{prop:hol_closure_fin_expl}).
\end{proof}

\begin{corollary}
Each HFG algebra is nuclear.
\end{corollary}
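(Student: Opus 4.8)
The plan is to reduce everything to the nuclearity of $\cF_n$, which is already recorded in the Preliminaries. By Proposition~\ref{prop:HFG_quot_F}(i), any HFG algebra $A$ is topologically isomorphic to a quotient $\cF_n/I$ by a closed two-sided ideal $I\subseteq\cF_n$; in particular, as a locally convex space, $A$ is (topologically isomorphic to) the quotient of $\cF_n$ by a closed subspace. Since, as observed by Luminet \cite{Lum}, the algebra $\cF_n$ is nuclear, it will remain only to invoke the permanence of nuclearity under such quotients.

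The single external ingredient I would use is the standard fact that a quotient of a nuclear locally convex space by a closed subspace is again nuclear (see \cite{Pietsch_nucl}). Applying this to the closed subspace $I\subseteq\cF_n$ and combining it with the topological isomorphism $A\cong\cF_n/I$ furnished by Proposition~\ref{prop:HFG_quot_F}(i) yields the nuclearity of $A$ at once.

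I do not expect any genuine obstacle here: the entire argument is the reduction to $\cF_n$ via Proposition~\ref{prop:HFG_quot_F} together with the hereditary behaviour of nuclear spaces. The only points requiring a moment's attention are that the isomorphism in that proposition is topological rather than merely algebraic, and that $I$ is \emph{closed}, so that $\cF_n/I$ is a bona fide quotient in the category of locally convex spaces to which the permanence property applies; both are guaranteed by the statement of the proposition.
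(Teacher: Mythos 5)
Your proof is correct and is precisely the argument the paper intends: the corollary is stated immediately after Proposition~\ref{prop:HFG_quot_F} exactly because $A\cong\cF_n/I$ with $I$ closed, $\cF_n$ is nuclear (Luminet, as recorded in the preliminaries), and nuclearity passes to quotients of Fr\'echet spaces by closed subspaces. Nothing is missing, and the points you flag (topological isomorphism, closedness of $I$) are indeed the only hypotheses one needs to check.
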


The following theorem characterizes commutative HFG algebras as algebras of holomorphic functions.

\begin{theorem}
\label{thm:comm_HFG}
A commutative Fr\'echet-Arens-Michael algebra is holomorphically finitely generated if and only if
it is topologically isomorphic to $\cO(X)$ for some Stein space $(X,\cO_X)$ of finite
embedding dimension.
\end{theorem}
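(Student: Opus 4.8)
The plan is to combine the quotient description of HFG algebras in Proposition~\ref{prop:HFG_quot_F}(ii) with two deep results from Stein theory: Forster's embedding theorem for Stein spaces of finite embedding dimension, and the correspondence between closed ideals of a Stein algebra and coherent ideal sheaves on the associated Stein space. Everything else is bookkeeping built on Cartan's Theorem~B and the Open Mapping Theorem.

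For the ``if'' direction, suppose $A\cong\cO(X)$ for a Stein space $X$ of finite embedding dimension. First I would invoke the embedding theorem (\cite{For}; see also \cite{GR_II}) to realize $X$ as a closed complex subspace of $\CC^n$ for some $n$, with ideal sheaf $\mathcal J\subseteq\cO_{\CC^n}$. The short exact sequence $0\to\mathcal J\to\cO_{\CC^n}\to i_*\cO_X\to 0$ together with Cartan's Theorem~B ($H^1(\CC^n,\mathcal J)=0$) shows that the restriction homomorphism $\cO(\CC^n)\to\cO(X)$ is surjective with kernel the closed ideal $I=\Gamma(\CC^n,\mathcal J)$, so that $\cO(X)\cong\cO(\CC^n)/I$ as Fr\'echet algebras, the continuous algebraic isomorphism being topological by the Open Mapping Theorem. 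By Proposition~\ref{prop:HFG_quot_F}(ii) --- or directly by Example~\ref{ex:HFG_free} and Proposition~\ref{prop:HFG_quot} --- the algebra $A$ is therefore HFG.

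For the ``only if'' direction, let $A$ be a commutative HFG Fr\'echet--Arens--Michael algebra. By Proposition~\ref{prop:HFG_quot_F}(ii) I may write $A\cong\cO(\CC^n)/I$ for a closed two-sided ideal $I\subseteq\cO(\CC^n)$. The key step is to feed $I$ into the ideal theory of Stein algebras: by Forster's results (\cite{For}, cf.\ \cite{GR_II}), the ideal sheaf $\mathcal J\subseteq\cO_{\CC^n}$ generated by $I$ is coherent and satisfies $\Gamma(\CC^n,\mathcal J)=I$, and the quotient $\cO_Y:=\cO_{\CC^n}/\mathcal J$ is the structure sheaf of a closed complex subspace $Y\hookrightarrow\CC^n$, which is Stein as a closed subspace of $\CC^n$. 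Applying Theorem~B once more to $0\to\mathcal J\to\cO_{\CC^n}\to\cO_Y\to 0$ yields a topological isomorphism $A\cong\cO(\CC^n)/I\cong\cO(Y)$. Finally, since $Y$ sits inside $\CC^n$, every local ring $\cO_{Y,y}$ is a quotient of $\cO_{\CC^n,y}$, so the embedding dimension $\dim_\CC(\mathfrak m_y/\mathfrak m_y^2)\le n$ at every point; hence $Y$ has finite embedding dimension, as required.

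The routine parts --- passing between continuous surjections of Fr\'echet algebras and topological quotients via the Open Mapping Theorem, and the pointwise embedding-dimension bound --- present no difficulty. The real content, and the main obstacle, is the analytic input supplied by Forster: the embedding theorem in the ``if'' direction and, more importantly, the fact that an arbitrary \emph{closed} ideal $I$ of $\cO(\CC^n)$ is the module of global sections of a coherent ideal sheaf, so that the purely topological-algebraic quotient $\cO(\CC^n)/I$ is again a Stein algebra. Establishing this ideal--sheaf correspondence for closed ideals --- rather than merely finitely generated ones --- is where the weight of the argument lies, and it is precisely what permits $A$ to be reconstructed as $\cO(Y)$ for a genuine (possibly non-reduced) Stein subspace of $\CC^n$.
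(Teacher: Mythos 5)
Your proposal is correct and takes essentially the same route as the paper: for the ``if'' direction, embedding $X$ as a closed analytic subspace of $\CC^n$, using Cartan's Theorem~B to get surjectivity of $\cO(\CC^n)\to\cO(X)$, and invoking Proposition~\ref{prop:HFG_quot_F}~(ii) with the Open Mapping Theorem; for the converse, writing $A\cong\cO(\CC^n)/I$ and appealing to Forster's ideal--sheaf correspondence for closed ideals (the paper cites \cite[Satz 2]{For_prim}) to realize the quotient as $\cO(Y)$ for a closed complex subspace $Y\subseteq\CC^n$, whose embedding dimension is bounded by $n$. The only slip is bibliographic: the embedding theorem for Stein spaces of finite embedding dimension is the Remmert--Bishop--Narasimhan--Wiegmann theorem \cite{Wiegmann}, not a result of \cite{For}.
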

\begin{proof}
By the Remmert-Bishop-Narasimhan-Wiegmann
Embedding Theorem~\cite{Wiegmann}, each Stein space $(X,\cO_X)$ of finite
embedding dimension is biholomorphically equivalent to a closed analytic subspace of $\CC^n$,
where $n\in\N$ is large enough. By H.~Cartan's Theorem~B, the restriction map
$\cO(\CC^n)\to\cO(X)$ is onto. Applying Proposition~\ref{prop:HFG_quot_F}~(ii)
and using the Open Mapping Theorem, we conclude that $\cO(X)$ is an HFG algebra.

Conversely, let $A$ be a commutative HFG algebra.
Taking into account Proposition~\ref{prop:HFG_quot_F}~(ii), we may assume that
$A=\cO(\CC^n)/I$, where $I\subseteq\cO(\CC^n)$ is a closed two-sided ideal.
By \cite[Satz 2]{For_prim}, $\cO(\CC^n)/I\cong\cO(X)$ for a certain closed
analytic subspace $X$ of $\CC^n$ (specifically, $X$ is the zero set of $I$,
and $\cO_X=\cO_{\CC^n}/\cI$, where $\cI\subseteq\cO_{\CC^n}$ is the ideal sheaf
generated by $I$). Clearly, $X$ has finite embedding dimension.
\end{proof}

Combining Theorem~\ref{thm:comm_HFG} with Forster's Theorem~\ref{thm:For},
we obtain the following.

\begin{theorem}
\label{thm:comm_HFG_cat}
The functor $(X,\cO_X)\mapsto\cO(X)$ is an anti-equivalence
between the category of Stein spaces of finite embedding dimension and the category
of commutative HFG algebras.
\end{theorem}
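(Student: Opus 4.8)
The plan is to deduce this purely formally from Forster's Theorem~\ref{thm:For} together with the characterization of commutative HFG algebras supplied by Theorem~\ref{thm:comm_HFG}. The underlying principle is that an anti-equivalence of categories restricts to an anti-equivalence between any full subcategory of its source and the corresponding essential image in its target. So I would regard the category of Stein spaces of finite embedding dimension as a full subcategory of the category of all Stein spaces, and then use Theorem~\ref{thm:comm_HFG} to compute the essential image of this subcategory under Forster's functor, showing it to be precisely the category of commutative HFG algebras.

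First I would note that the category of Stein spaces of finite embedding dimension is, by definition, a full subcategory of the category of all Stein spaces, the morphisms in both being all holomorphic maps. Forster's functor $F\colon (X,\cO_X)\mapsto\cO(X)$ is an anti-equivalence onto the category of Stein algebras by Theorem~\ref{thm:For}; in particular it is fully faithful. Since a fully faithful functor remains fully faithful upon restriction to a full subcategory, the restriction of $F$ to the Stein spaces of finite embedding dimension is again fully faithful, and it is tautologically essentially surjective onto its essential image. Hence $F$ restricts to an anti-equivalence between the category of Stein spaces of finite embedding dimension and the full subcategory of Stein algebras constituting this essential image.

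It then remains to identify that essential image with the category of commutative HFG algebras, and here I would invoke \emph{both} implications of Theorem~\ref{thm:comm_HFG}. An object lies in the essential image exactly when it is topologically isomorphic to $\cO(X)$ for some Stein space $X$ of finite embedding dimension; by Theorem~\ref{thm:comm_HFG} this holds if and only if the algebra is a commutative HFG algebra. Since both the essential image and the category of commutative HFG algebras are full subcategories of the category of Fr\'echet algebras, with the same morphisms (namely continuous unital homomorphisms), they coincide as subcategories, and the theorem follows.

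The argument carries essentially no new difficulty, since all the genuine analytic input — the Remmert–Bishop–Narasimhan–Wiegmann embedding theorem, Cartan's Theorem~B, and Forster's Satz~2 — has already been absorbed into Theorem~\ref{thm:comm_HFG}. The only point requiring care is the categorical bookkeeping: checking that both subcategories in question are genuinely \emph{full} (so that the restricted functor stays fully faithful and the two notions of morphism agree) and that the essential image is matched to the commutative HFG algebras using both directions of Theorem~\ref{thm:comm_HFG}. I expect this verification of fullness and of the two-sided identification of objects to be the main, albeit modest, obstacle.
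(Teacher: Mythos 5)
Your proposal is correct and follows exactly the paper's route: the paper derives Theorem~\ref{thm:comm_HFG_cat} precisely by combining Forster's Theorem~\ref{thm:For} with the characterization in Theorem~\ref{thm:comm_HFG}, leaving the categorical bookkeeping (restriction of a fully faithful functor to a full subcategory, identification of the essential image) implicit. Your write-up simply makes that routine verification explicit, which is fine.
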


Theorem~\ref{thm:comm_HFG_cat} looks similar to the Gelfand-Naimark Theorem
and to the categorical consequence of the Nullstellensatz (see Section~\ref{sect:intro}).
Thus HFG algebras may be considered
as candidates for ``noncommutative Stein spaces of finite embedding dimension''.
Of course, this naive point of view needs a solid justification, and it is perhaps too early to say whether
it will lead to an interesting theory. As a first step towards this goal, we will give some concrete
examples of HFG algebras in Section~\ref{sect:examples}, showing thereby
that the class of HFG algebras is rather large. To this end, we will need to show that the class
of HFG algebras is stable under a number of
natural constructions. This will be done in Sections~\ref{sect:free}-\ref{sect:skew}.

\section{Free products}
\label{sect:free}

\begin{definition}[cf. \cite{Cuntz_doc}]
Let $(A_i)_{i\in I}$ be a family of Arens-Michael algebras.
The {\em Arens-Michael free product} of $(A_i)_{i\in I}$
is the coproduct of $(A_i)_{i\in I}$ in the category of Arens-Michael algebras, i.e.,
an Arens-Michael algebra $A$ together with a natural isomorphism
\begin{equation}
\label{Pfree_def}
\Hom_{\AM}(A,B)\cong\prod_{i\in I}\Hom_{\AM}(A_i,B)\qquad (B\in\AM).
\end{equation}
More explicitly, the Arens-Michael free product of $(A_i)_{i\in I}$ is an Arens-Michael algebra
$A$ together with a family
$(j_i\colon A_i\to A)_{i\in I}$ of continuous homomorphisms
such that for each Arens-Michael algebra
$B$ and each family $(\varphi_i\colon A_i\to B)_{i\in I}$ of continuous homomorphisms
there exists a unique continuous homomorphism
$A\to B$ making the following diagram (for each $i\in I$) commute:
\[
\xymatrix@C-5pt@R-5pt{
A \ar@{-->}[rr] && B\\
& A_i \ar[ul]^{j_i} \ar[ur]_{\varphi_i}
}
\]
\end{definition}

Clearly, if the Arens-Michael free product $A$ of $(A_i)_{i\in I}$ exists, then it is unique up to a unique
topological algebra isomorphism over the $A_i$'s.
In what follows, we will write $A=\Pfree_{i\in I} A_i$.

To show that $\Pfree_{i\in I} A_i$ exists, recall the definition
(due to J.~Cuntz~\cite{Cuntz_doc}; cf. also \cite{Vogt1,Vogt2,Pir_qfree})
of an Arens-Michael tensor algebra.
Let $E$ be a complete locally convex space, and let $\{\|\cdot\|_\lambda : \lambda\in\Lambda\}$
be a directed defining family of seminorms on $E$.
For each $\lambda\in\Lambda$ and each $n\in\Z_+$, let $\|\cdot\|_\lambda^{(n)}$
denote the $n$th projective tensor power of $\|\cdot\|_\lambda$
(we let $\|\cdot\|_\lambda^{(0)}=|\cdot|$ by definition).
The {\em Arens-Michael tensor algebra} (or a {\em smooth tensor algebra})
$\wh{T}(E)$ is defined by
\[
\wh{T}(E)=\Bigl\{ a=\sum_{n=0}^\infty a_n : a_n\in E^{\wh{\otimes}n},\;
\| a\|_{\lambda,\rho}=\sum_n \| a_n\|_\lambda^{(n)} \rho^n<\infty\; \forall\lambda\in\Lambda,
\;\forall\rho>0\Bigr\}.
\]
The topology on $\wh{T}(E)$ is defined by the seminorms
$\|\cdot\|_{\lambda,\rho}\; (\lambda\in\Lambda,\;\rho>0)$, and the multiplication
on $\wh{T}(E)$ is given by concatenation, like on the usual tensor algebra $T(E)$.
Each seminorm $\|\cdot\|_{\lambda,\rho}$ is easily seen to be
submultiplicative, and so $\wh{T}(E)$ is an Arens-Michael algebra containing
$T(E)$ as a dense subalgebra. As was observed by Cuntz \cite{Cuntz_doc},
$\wh{T}(E)$ has the universal property that, for every Arens-Michael algebra $A$,
each continuous linear map $E\to A$ uniquely extends to a continuous
homomorphism $\wh{T}(E)\to A$. In other words, there is a natural isomorphism
\begin{equation}
\label{univ_T}
\Hom_\AM(\wh{T}(E),A)\cong\cL(E,A)\qquad (A\in\AM),
\end{equation}
where $\cL(E,A)$ is the space of all continuous linear maps from $E$ to $A$.
Note that $\wh{T}(\CC^n)\cong\cF_n$, and that~\eqref{univ_F}
is a special case of~\eqref{univ_T}.

\begin{prop}
Let $(A_i)_{i\in I}$ be a family of Arens-Michael algebras, and let $A$
be the completion of the quotient of $\wh{T}=\wh{T}(\bigoplus_{i\in I} A_i)$
by the two-sided closed ideal $J$ generated by all elements of the form
\[
a_i\otimes b_i-a_i b_i,\quad 1_{\wh{T}}-1_{A_i}\quad
(a_i,b_i\in A_i,\; i\in I).
\]
For each $i\in I$, define $j_i\colon A_i\to A$ to be the composition of the
canonical embedding of $A_i$ into $\wh{T}$
with the quotient map $\wh{T}\to A$. Then $j_i$ is a continuous homomorphism,
and $A$ together with $(j_i\colon A_i\to A)_{i\in I}$
is the Arens-Michael free product of $(A_i)_{i\in I}$.
\end{prop}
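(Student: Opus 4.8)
The plan is to verify the explicit universal property recorded in the definition of the Arens-Michael free product, using the universal property~\eqref{univ_T} of the Arens-Michael tensor algebra as the main engine. Write $E=\bigoplus_{i\in I} A_i$ for the locally convex direct sum, let $\iota_i\colon A_i\to\wh{T}$ denote the canonical embedding of $A_i$ as degree-one tensors, let $q\colon\wh{T}\to\wh{T}/J$ be the quotient map, and let $c\colon\wh{T}/J\to A$ be the canonical map into the completion, so that $j_i=c\circ q\circ\iota_i$. First I would check that each $j_i$ is a continuous unital homomorphism. Continuity and additivity are immediate, since $\iota_i$, $q$ and $c$ are all continuous and linear. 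Multiplicativity and unitality are precisely what the defining relations of $J$ buy us: the generators $a_i\otimes b_i-a_ib_i$ force $j_i(a_i)j_i(b_i)=j_i(a_ib_i)$ (the concatenation $\iota_i(a_i)\otimes\iota_i(b_i)$ becomes $\iota_i(a_ib_i)$ modulo $J$), while the generators $1_{\wh{T}}-1_{A_i}$ force $j_i(1_{A_i})=1_A$.

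For the universal property, fix an Arens-Michael algebra $B$ and continuous homomorphisms $\varphi_i\colon A_i\to B$. By the universal property of the direct sum, the family $(\varphi_i)$ assembles into a single continuous linear map $\varphi\colon E\to B$, which by~\eqref{univ_T} extends uniquely to a continuous homomorphism $\wt\varphi\colon\wh{T}\to B$. The content-bearing step is to show that $\wt\varphi$ annihilates $J$. On the generators this is a short computation: $\wt\varphi(a_i\otimes b_i)=\varphi_i(a_i)\varphi_i(b_i)=\varphi_i(a_ib_i)=\wt\varphi(a_ib_i)$ because each $\varphi_i$ is multiplicative, and $\wt\varphi(1_{\wh{T}})=1_B=\varphi_i(1_{A_i})=\wt\varphi(1_{A_i})$ because $\wt\varphi$ and the $\varphi_i$ are unital. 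Since $\Ker\wt\varphi$ is a closed two-sided ideal containing these generators, it contains $J$; hence $\wt\varphi$ factors through a continuous homomorphism $\wh{T}/J\to B$, and as $B$ is complete this in turn extends through $c$ to a continuous homomorphism $\psi\colon A\to B$ satisfying $\psi\circ j_i=\varphi_i$ for all $i$.

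Uniqueness is a density argument. The ordinary tensor algebra $T(E)$ is dense in $\wh{T}$, and $c\circ q$ has dense image in $A$; consequently the subalgebra of $A$ generated by $\bigcup_{i\in I} j_i(A_i)$ is dense. Any continuous homomorphism $A\to B$ is therefore determined by its restrictions to the $j_i(A_i)$, so $\psi$ is the unique map with the required property. Combining existence and uniqueness, and observing that the resulting bijection $\Hom_{\AM}(A,B)\cong\prod_{i\in I}\Hom_{\AM}(A_i,B)$ is natural in $B$ by construction, identifies $A$ with $\Pfree_{i\in I}A_i$ as in~\eqref{Pfree_def}.

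I expect the argument to be essentially formal, so that the only genuinely delicate point is bookkeeping around completeness. One must check that $E$ is complete, so that $\wh{T}(E)$ is even defined; this holds because a locally convex direct sum of complete spaces is complete. One must also ensure the factorization through the completion $A$ is legitimate, which is where completeness of the target $B$ is used. The two places where the specific shape of $J$ actually matters are the multiplicativity of $j_i$ and the vanishing $\wt\varphi(J)=0$; everything else is a chase through the universal property~\eqref{univ_T} and that of the direct sum.
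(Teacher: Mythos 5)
Your proposal is correct and follows essentially the same route as the paper: the paper's proof is exactly the chain of natural bijections $\Hom_{\AM}(A,B)\cong\{\varphi\in\Hom_{\AM}(\wh{T},B):\varphi|_J=0\}\cong\{\psi\in\cL(\bigoplus_i A_i,B):\psi|_{A_i}\text{ multiplicative}\}\cong\prod_i\Hom_{\AM}(A_i,B)$, which you have simply unwound into an explicit existence-plus-uniqueness argument (generator computation for $\wt\varphi(J)=0$, factorization through quotient and completion, density for uniqueness). The extra bookkeeping you flag — completeness of the direct sum and of the target $B$ — is correct and implicit in the paper's compressed version.
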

\begin{proof}
Clearly, each $j_i$ is a continuous linear map, and
it is immediate from the definition of $J$ that $j_i$ is an algebra homomorphism.
For each Arens-Michael algebra $B$ we have natural isomorphisms
\[
\begin{split}
\Hom_{\AM}(A,B)
&\cong\{\varphi\in\Hom_{\AM}(\wh{T},B) : \varphi|_J=0\}\\
&\cong\bigl\{\psi\in\cL\bigl(\bigoplus_{i\in I} A_i,B\bigr) :
\psi|_{A_i}\text{ is an algebra homomorphism }\forall i\in I\bigr\}\\
&\cong\prod_{i\in I}\Hom_{\AM}(A_i,B).\qedhere
\end{split}
\]
\end{proof}

A more explicit construction
of $\Pfree_{i\in I} A_i$ (in the nonunital category, and in the case where $I$ is finite)
was given by Cuntz~\cite{Cuntz_doc}.
We need to adapt his construction to the unital case, assuming that for each $i\in I$
there exists a closed two-sided ideal $A_i^\circ\subset A_i$ such that
$A_i=\CC 1_{A_i}\oplus A_i^\circ$ as locally convex spaces.

For each $d\ge 2$, let
\begin{equation}
\label{I_d}
I_d=\Bigl\{ \alpha=(\alpha_1,\ldots ,\alpha_d)\in I^d : \alpha_k\ne\alpha_{k+1}
\;\forall k=1,\ldots ,d-1\Bigr\}.
\end{equation}
Let also $I_1=I$, $I_0=\{ *\}$ (the singleton), and $I_\infty=\bigsqcup_{d\in\Z_+} I_d$.
Given $d>0$ and $\alpha=(\alpha_1,\ldots ,\alpha_d)\in I_\infty$, let
$A_\alpha=A^\circ_{\alpha_1}\Ptens\cdots\Ptens A^\circ_{\alpha_d}$.
For $d=0$, we let $A_*=\CC$. For each $i\in I$,
choose a directed defining family $\{\|\cdot\|_{\nu} : \nu\in\Lambda_i\}$
of submultiplicative seminorms on $A^\circ_i$.
Let $\Lambda=\prod_{i\in I}\Lambda_i$, and, for each
$\lambda=(\lambda_i)_{i\in I}\in\Lambda$ and each $\alpha=(\alpha_1,\ldots ,\alpha_d)\in I_\infty$,
let $\|\cdot\|_\lambda^{(\alpha)}$ denote the projective tensor product of the
seminorms $\|\cdot\|_{\lambda_{\alpha_1}},\ldots ,\|\cdot\|_{\lambda_{\alpha_k}}$.
For $d=0$, it is convenient to set $\|\cdot\|_\lambda^{(*)}=|\cdot|$.
Let $P(I)= [1,+\infty)^I$ denote the family of all functions on $I$ with values in $[1,+\infty)$.
Given $p\in P(I)$, we let $p_i=p(i)$ for $i\in I$,
$p_\alpha=p_{\alpha_1}\cdots p_{\alpha_d}$ for $\alpha=(\alpha_1,\ldots ,\alpha_d)\in I_\infty$
with $d>0$, and $p_*=1$. Let
\begin{equation}
\label{Pfree_expl}
A=
\Bigl\{ a=\sum_{\alpha\in I_\infty} a_\alpha : a_\alpha\in A_\alpha,\;
\| a\|_{\lambda,p}=\sum_{\alpha\in I_\infty} \| a_\alpha\|_\lambda^{(\alpha)} p_\alpha<\infty
\;\forall \lambda\in\Lambda,\; \forall p\in P(I)\Bigr\}.
\end{equation}
A standard argument shows that $A$ is a complete locally convex space
with respect to the topology determined by the seminorms
$\|\cdot\|_{\lambda,p}\; (\lambda\in\Lambda,\; p\in P(I))$.

Now, given $\alpha=(\alpha_1,\ldots ,\alpha_d)\in I_\infty$, let
$A^\alg_\alpha=A^\circ_{\alpha_1}\Tens\cdots\Tens A^\circ_{\alpha_d}$.
The algebraic sum
\[
A^\alg=\bigoplus_{\alpha\in I_\infty} A^\alg_\alpha
\]
is clearly a dense subspace of $A$. Recall from \cite{Nica_Sp} that there is a multiplication
on $A^\alg$ such that each $A_i$ becomes a subalgebra of $A^\alg$ and such that
$A^\alg=\Free_{i\in I} A_i$, the algebraic free product of $(A_i)_{i\in I}$.
Specifically, the multiplication on $A^\alg$ is given by concatenation composed (if necessary) with the
product maps $A^\circ_i\Tens A^\circ_i\to A^\circ_i\; (i\in I)$.

\begin{prop}
\label{prop:Pfree_expl}
The multiplication on $A^\alg$ uniquely extends to a continuous multiplication on $A$.
The resulting algebra $A$ together with the canonical embeddings $A_i\to A$
is the Arens-Michael free product of $(A_i)_{i\in I}$. Moreover, each seminorm
$\|\cdot\|_{\lambda,p}\; (\lambda\in\Lambda,\; p\in P(I))$ is submultiplicative.
\end{prop}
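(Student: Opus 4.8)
The plan is to derive all three assertions from a single estimate: that each seminorm $\|\cdot\|_{\lambda,p}\;(\lambda\in\Lambda,\;p\in P(I))$ is submultiplicative on the dense subalgebra $A^\alg$. Once this is established, continuity of the algebraic multiplication on $A^\alg$ is automatic, the multiplication extends uniquely to the completion $A$ by density and completeness, the extended seminorms remain submultiplicative, and hence $A$ is locally $m$-convex and complete, i.e. an Arens-Michael algebra. This settles the first and third assertions simultaneously, and uniqueness of the extension follows from continuity together with density of $A^\alg$.

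To prove submultiplicativity I would first reduce to homogeneous elements $a\in A^\alg_\alpha$ and $b\in A^\alg_\beta$, for which $\|a\|_{\lambda,p}=\|a\|_\lambda^{(\alpha)}p_\alpha$ and likewise for $b$. There are two cases according to the concatenation rule recalled from \cite{Nica_Sp}. If $\alpha_d\ne\beta_1$, then $ab\in A_{\alpha\beta}$ is a genuine concatenation; since the projective tensor norm is a cross-norm, $\|ab\|_\lambda^{(\alpha\beta)}=\|a\|_\lambda^{(\alpha)}\|b\|_\lambda^{(\beta)}$, and $p_{\alpha\beta}=p_\alpha p_\beta$ gives equality in the desired bound. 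If $\alpha_d=\beta_1=i$, the product contracts the last tensor factor of $a$ against the first of $b$ through the multiplication $A^\circ_i\Ptens A^\circ_i\to A^\circ_i$, landing in $A_\gamma$ with $\gamma=(\alpha_1,\dots,\alpha_d,\beta_2,\dots,\beta_e)$; this word is again reduced precisely because $\alpha$ and $\beta$ were reduced, so no further cascading occurs. Here I would use that the seminorms on $A^\circ_i$ were chosen submultiplicative, making the contraction map norm-decreasing, so that $\|ab\|_\lambda^{(\gamma)}\le\|a\|_\lambda^{(\alpha)}\|b\|_\lambda^{(\beta)}$. The weights satisfy $p_\gamma=p_\alpha p_\beta/p_i\le p_\alpha p_\beta$, and this is exactly where the restriction $P(I)=[1,+\infty)^I$ enters: since $p_i\ge 1$, the contraction can only lower the weight. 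Thus $\|ab\|_{\lambda,p}\le\|a\|_{\lambda,p}\|b\|_{\lambda,p}$ in both cases. For general $a=\sum_\alpha a_\alpha$ and $b=\sum_\beta b_\beta$ in $A^\alg$, I would group the terms of $ab$ by their resulting word $\gamma$, apply the triangle inequality inside each $A_\gamma$, and then recombine over all pairs $(\alpha,\beta)$ to obtain $\|ab\|_{\lambda,p}\le\sum_{\alpha,\beta}\|a_\alpha\|_\lambda^{(\alpha)}p_\alpha\,\|b_\beta\|_\lambda^{(\beta)}p_\beta=\|a\|_{\lambda,p}\|b\|_{\lambda,p}$.

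It remains to verify the universal property \eqref{Pfree_def}. Given an Arens-Michael algebra $B$ and continuous homomorphisms $\varphi_i\colon A_i\to B$, the algebraic universal property of $A^\alg=\Free_{i\in I}A_i$ yields a unique algebra homomorphism $\varphi^\alg\colon A^\alg\to B$ with $\varphi^\alg|_{A_i}=\varphi_i$; the only point to check is its continuity. Fixing a continuous submultiplicative seminorm $\|\cdot\|_B$ on $B$, continuity of each $\varphi_i$ and directedness of the defining family on $A^\circ_i$ provide $\lambda_i\in\Lambda_i$ and $C_i\ge 1$ with $\|\varphi_i(x)\|_B\le C_i\|x\|_{\lambda_i}$ for $x\in A^\circ_i$. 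Setting $\lambda=(\lambda_i)_i\in\Lambda$ and $p=(C_i)_i\in P(I)$, submultiplicativity of $\|\cdot\|_B$ on products of factors together with the universal property of the projective tensor norm gives $\|\varphi^\alg(a_\alpha)\|_B\le p_\alpha\|a_\alpha\|_\lambda^{(\alpha)}$ for each homogeneous component, whence $\|\varphi^\alg(a)\|_B\le\|a\|_{\lambda,p}$ for all $a\in A^\alg$. Therefore $\varphi^\alg$ extends to a continuous homomorphism $\varphi\colon A\to B$ satisfying $\varphi\circ j_i=\varphi_i$, and uniqueness follows from density of $A^\alg$ and continuity.

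The hard part will be the submultiplicativity estimate of the second paragraph, and within it the contraction case: one must see both that reducedness of the words prevents any cascading of contractions and that the choice $P(I)=[1,+\infty)^I$ is precisely what forces the weights $p_\gamma$ to behave submultiplicatively. The remaining steps — extension by completeness and the continuity estimate in the universal property — are then routine.
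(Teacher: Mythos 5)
Your proposal is correct and takes essentially the same route as the paper's own proof: the identical two-case estimate for homogeneous elements (pure concatenation giving equality via the cross-norm property, versus a single contraction through $A_i^\circ\Ptens A_i^\circ\to A_i^\circ$ where $p_\gamma\le p_\alpha p_\beta$ precisely because $p_i\ge 1$), followed by the triangle-inequality passage to general elements of $A^\alg$, extension of the multiplication to the completion, and the same continuity bound $\|\varphi^\alg(a)\|_B\le\|a\|_{\lambda,p}$ with $p=(C_i)_{i\in I}$ in the verification of the universal property. No gaps.
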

\begin{proof}
Let us show that each seminorm $\|\cdot\|_{\lambda,p}$
is submultiplicative on $A^\alg$.
Take any $a\in A^\circ_\alpha$ and $b\in A^\circ_\beta$, where
$\alpha=(\alpha_1,\ldots ,\alpha_d)\in I_\infty$,
$\beta=(\beta_1,\ldots ,\beta_m)\in I_\infty$.
Suppose first that $\alpha_d\ne\beta_1$. Then $\gamma=(\alpha,\beta)\in I_\infty$,
$ab=a\otimes b\in A^\circ_\alpha\Tens A^\circ_\beta=A^\circ_\gamma$, and
\begin{equation}
\label{submult_free_1}
\| ab\|_{\lambda,p}=\| a\otimes b\|_\lambda^{(\gamma)} p_\gamma
=\| a\|_\lambda^{(\alpha)} \| b\|_\lambda^{(\beta)} p_\alpha p_\beta
=\| a\|_{\lambda,p} \| b\|_{\lambda,p}.
\end{equation}
Now consider the case where $\alpha_d=\beta_1$. Let
$\gamma=(\alpha_1,\ldots ,\alpha_d,\beta_2,\ldots ,\beta_m)\in I_\infty$.
Denote by $\mu\colon A^\circ_{\alpha_d}\Tens A^\circ_{\beta_1}\to A^\circ_{\alpha_d}$
the product map on $A^\circ_{\alpha_d}$, and let
\[
\bar\mu=\id\otimes\mu\otimes\id\colon A^\circ_\alpha\Tens A^\circ_\beta\to A^\circ_\gamma.
\]
Then $ab=\bar\mu(a\otimes b)\in A^\circ_\gamma$.
Clearly, for each $u\in A^\circ_\alpha\Tens A^\circ_\beta$
we have $\| \bar\mu(u)\|_\lambda^{(\gamma)} \le \| u\|_\lambda^{(\alpha,\beta)}$,
where $\|\cdot\|_\lambda^{(\alpha,\beta)}$ is the projective tensor product of the
seminorms $\|\cdot\|_\lambda^{(\alpha)}$ and $\|\cdot\|_\lambda^{(\beta)}$.
Therefore
\begin{equation}
\label{submult_free_2}
\| ab\|_{\lambda,p}=\| \bar\mu(a\otimes b)\|_\lambda^{(\gamma)} p_\gamma
\le\| a\|_\lambda^{(\alpha)} \| b\|_\lambda^{(\beta)} p_\alpha p_\beta
=\| a\|_{\lambda,p} \| b\|_{\lambda,p}.
\end{equation}
Finally, take any $a,b\in A^\alg$ and decompose them as
\[
a=\sum_{\alpha\in I_\infty} a_\alpha,\quad
b=\sum_{\beta\in I_\infty} b_\beta \qquad (a_\alpha\in A^\circ_\alpha,\; b_\beta\in A^\circ_\beta).
\]
By using \eqref{submult_free_1} and \eqref{submult_free_2}, we obtain
\[
\| ab\|_{\lambda,p}
=\bigl\| \sum_{\alpha,\beta} a_\alpha b_\beta\bigr\|_{\lambda,p}
\le \sum_{\alpha,\beta} \| a_\alpha\|_{\lambda,p} \| b_\beta\|_{\lambda,p}
=\| a\|_{\lambda,p} \| b\|_{\lambda,p}.
\]
Thus each seminorm $\|\cdot\|_{\lambda,p}$ is submultiplicative on $A^\alg$,
and so $A^\alg$ is a locally $m$-convex algebra. Since $A$ is the completion of $A^\alg$,
it follows that the multiplication on $A^\alg$ uniquely extends to a continuous
multiplication on $A$ making $A$ into an Arens-Michael algebra.
Moreover, each seminorm $\|\cdot\|_{\lambda,p}$ is submultiplicative on $A$.

Let now $B$ be an Arens-Michael algebra, and let $(\varphi_i\colon A_i\to B)_{i\in I}$
be a family of continuous homomorphisms. Since $A^\alg=\Free_{i\in I} A_i$,
there exists a unique algebra homomorphism $\varphi^\alg\colon A^\alg\to B$
such that $\varphi^\alg|_{A_i}=\varphi_i$ for all $i\in I$. We claim that $\varphi^\alg$
is continuous with respect to the topology inherited from $A$.
Indeed, let $\|\cdot\|$ be a continuous submultiplicative seminorm on $B$.
Then for each $i\in I$ there exists $p_i\ge 1$ and $\lambda_i\in\Lambda_i$ such that
for each $a\in A_i$ we have
\begin{equation}
\label{phi_i_cont}
\|\varphi_i(a)\|\le p_i \| a\|_{\lambda_i}.
\end{equation}
Given $\alpha=(\alpha_1,\ldots ,\alpha_d)\in I_\infty$, let
$\varphi_\alpha^\alg=\varphi^\alg|_{A^\alg_\alpha}$.
By construction (cf. \cite{Nica_Sp}), $\varphi_\alpha^\alg$ is the composition of the maps
\[
A_{\alpha_1}^\circ\Tens\cdots\Tens A_{\alpha_d}^\circ
\xra{\varphi_{\alpha_1}\Tens\cdots\Tens\varphi_{\alpha_d}}
B \Tens \cdots \Tens B \xra{\mu_d} B,
\]
where $\mu_d$ is the product map. Now it follows from~\eqref{phi_i_cont}
and from the submultiplicativity of $\|\cdot\|$ that for each $a\in A^\alg_\alpha$ we have
\begin{equation*}
\| \varphi_\alpha^\alg(a) \| \le p_\alpha \| a\|_\lambda^{(\alpha)},
\end{equation*}
where $\lambda=(\lambda_i)_{i\in I}\in\Lambda$ and $p=(p_i)_{i\in I}\in P(I)$.
Finally, for each $a=\sum_\alpha a_\alpha\in A^\alg$ (where $a_\alpha\in A^\alg_\alpha$)
we obtain
\[
\bigl\| \varphi^\alg\bigl(\sum_\alpha a_\alpha\bigr) \bigr\|
\le\sum_\alpha \|\varphi_\alpha^\alg(a_\alpha) \|
\le \sum_\alpha \| a_\alpha\|_\lambda^{(\alpha)} p_\alpha
=\| a\|_{\lambda,p}.
\]
Therefore $\varphi^\alg$ is continuous, and so it uniquely extends to a continuous
homomorphism $\varphi\colon A\to B$. Thus $A=\Pfree_{i\in I} A_i$, as required.
\end{proof}

Given $\alpha\in I_d\subset I_\infty$, let $|\alpha|=d$.

\begin{corollary}
\label{cor:Pfree_expl_fin}
Under the above assumptions, suppose that $I$ is finite. Then
\begin{equation}
\label{Pfree_expl_fin}
\Pfree_{i\in I} A_i=
\Bigl\{ a=\sum_{\alpha\in I_\infty} a_\alpha : a_\alpha\in A_\alpha,\;
\| a\|_{\lambda,\tau}=\sum_{\alpha\in I_\infty} \| a_\alpha\|_\lambda^{(\alpha)} \tau^{|\alpha|}<\infty
\;\forall \lambda\in\Lambda,\; \forall \tau\ge 1\Bigr\}.
\end{equation}
Moreover, each seminorm
$\|\cdot\|_{\lambda,\tau}\; (\lambda\in\Lambda,\; \tau\ge 1)$ is submultiplicative.
\end{corollary}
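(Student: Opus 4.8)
The plan is to deduce the corollary directly from Proposition~\ref{prop:Pfree_expl} by comparing the two families of seminorms: the family $\{\|\cdot\|_{\lambda,p} : \lambda\in\Lambda,\ p\in P(I)\}$ governing~\eqref{Pfree_expl}, and the family $\{\|\cdot\|_{\lambda,\tau} : \lambda\in\Lambda,\ \tau\ge 1\}$ appearing in~\eqref{Pfree_expl_fin}. I would show these two families are equivalent, i.e., mutually cofinal, so that they determine the same underlying subspace of $\prod_{\alpha\in I_\infty} A_\alpha$ and the same topology on it. Finiteness of $I$ is used precisely to replace the ``vector'' weight $p\in P(I)=[1,+\infty)^I$ by a single scalar weight $\tau\ge 1$.

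First I would observe that the $\tau$-seminorms form a subfamily of the $p$-seminorms. Given $\tau\ge 1$, let $p\in P(I)$ be the constant function $p_i=\tau$ for all $i\in I$; this is admissible since $\tau\ge 1$. Then $p_\alpha=p_{\alpha_1}\cdots p_{\alpha_d}=\tau^d=\tau^{|\alpha|}$ for every $\alpha=(\alpha_1,\ldots ,\alpha_d)\in I_\infty$, whence $\|\cdot\|_{\lambda,\tau}=\|\cdot\|_{\lambda,p}$. Conversely, I would dominate an arbitrary $p$-seminorm by a $\tau$-seminorm: since $I$ is finite we may set $\tau=\max_{i\in I} p_i\ge 1$, and then $p_\alpha=\prod_{k=1}^d p_{\alpha_k}\le\tau^d=\tau^{|\alpha|}$, so that $\|a\|_{\lambda,p}\le\|a\|_{\lambda,\tau}$ for all $a$ and the same index $\lambda$.

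These two estimates show that the families are mutually cofinal. Consequently the conditions ``$\|a\|_{\lambda,p}<\infty$ for all $\lambda,p$'' and ``$\|a\|_{\lambda,\tau}<\infty$ for all $\lambda,\tau$'' are equivalent, so the underlying sets in~\eqref{Pfree_expl} and~\eqref{Pfree_expl_fin} coincide, and the two families induce the same topology; hence the right-hand side of~\eqref{Pfree_expl_fin} is exactly $\Pfree_{i\in I} A_i$. Moreover, since each $\|\cdot\|_{\lambda,\tau}$ equals a particular $\|\cdot\|_{\lambda,p}$, its submultiplicativity follows at once from Proposition~\ref{prop:Pfree_expl}. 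There is no genuine obstacle here; the only point to watch is the constraint $\tau\ge 1$ (respectively $p_i\ge 1$), which is exactly what guarantees that $\tau=\max_i p_i$ lies in $[1,+\infty)$ and that the constant function $\tau$ lies in $P(I)$, so that both directions of the comparison stay within the allowed index sets.
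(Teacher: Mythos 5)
Your proof is correct and follows exactly the paper's argument: the identity $\|\cdot\|_{\lambda,\tau}=\|\cdot\|_{\lambda,p}$ for the constant function $p_i=\tau$, and the domination $\|\cdot\|_{\lambda,p}\le\|\cdot\|_{\lambda,\tau}$ with $\tau=\max_{i\in I}p_i$ (using finiteness of $I$), together with Proposition~\ref{prop:Pfree_expl}. The extra details you supply (mutual cofinality giving equal underlying sets and topologies, and submultiplicativity inherited from the $p$-seminorms) are precisely what the paper leaves implicit.
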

\begin{proof}
For each $\tau\ge 1$ we have $\|\cdot\|_{\lambda,\tau}=\|\cdot\|_{\lambda,p}$, where
$p_i=\tau$ for all $i\in I$. Conversely,
for each $p\in P(I)$ we have $\|\cdot\|_{\lambda,p}\le \|\cdot\|_{\lambda,\tau}$, where
$\tau=\max_{i\in I} p_i$.
Now~\eqref{Pfree_expl_fin} follows from~\eqref{Pfree_expl} and
Proposition~\ref{prop:Pfree_expl}.
\end{proof}

Here is a simple example.

\begin{prop}
\label{prop:Fn-Fm}
For each family $(E_i)_{i\in I}$ of complete locally convex spaces there exists a topological
algebra isomorphism
\[
\Pfree_{i\in I} \wh{T}(E_i)\cong\wh{T}\bigl(\bigoplus_{i\in I} E_i\bigr).
\]
In particular,
\begin{align}
\label{Fn-Fm}
&\cF_m\Pfree\cF_n\cong\cF_{m+n},\\
\label{F_O_free}
&\cF_n \cong \cO(\CC)\Pfree\cdots\Pfree\cO(\CC) \quad\text{($n$ factors)}.
\end{align}
\end{prop}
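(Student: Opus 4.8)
The plan is to establish the main isomorphism by a purely formal argument: both sides represent the same functor on the category $\AM$ of Arens-Michael algebras, so they must be isomorphic by Yoneda's lemma. The two displayed special cases will then follow by specializing the spaces $E_i$.

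First I would fix an arbitrary $B\in\AM$ and compute the functor $B\mapsto\Hom_\AM(\Pfree_{i\in I}\wh T(E_i),B)$ in stages. The defining property \eqref{Pfree_def} of the Arens-Michael free product gives a natural bijection with $\prod_{i\in I}\Hom_\AM(\wh T(E_i),B)$, and the universal property \eqref{univ_T} of the Arens-Michael tensor algebra, applied in each factor, identifies this with $\prod_{i\in I}\cL(E_i,B)$. The key remaining observation is the universal property of the locally convex direct sum: since $\bigoplus_{i\in I}E_i$ carries the finest locally convex topology for which all the canonical inclusions are continuous, restriction along these inclusions yields a natural isomorphism $\cL(\bigoplus_{i\in I}E_i,B)\cong\prod_{i\in I}\cL(E_i,B)$. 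A final application of \eqref{univ_T} rewrites the right-hand side as $\Hom_\AM(\wh T(\bigoplus_{i\in I}E_i),B)$.

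Chaining these bijections, all of which are natural in $B$, I obtain a natural isomorphism
\[
\Hom_\AM\Bigl(\Pfree_{i\in I}\wh T(E_i),B\Bigr)\cong\Hom_\AM\Bigl(\wh T\bigl(\textstyle\bigoplus_{i\in I}E_i\bigr),B\Bigr)\qquad(B\in\AM),
\]
and Yoneda's lemma then produces the asserted topological algebra isomorphism $\Pfree_{i\in I}\wh T(E_i)\cong\wh T(\bigoplus_{i\in I}E_i)$. For the special cases I would simply specialize: using $\wh T(\CC^n)\cong\cF_n$ and $\CC^m\oplus\CC^n\cong\CC^{m+n}$ gives \eqref{Fn-Fm}, while iterating with each $E_i=\CC$ and using $\wh T(\CC)\cong\cF_1\cong\cO(\CC)$ gives \eqref{F_O_free}.

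I do not expect a genuine obstacle, since the entire argument is a diagram chase through representable functors and the real content has already been packaged into \eqref{Pfree_def} and \eqref{univ_T}. The only point deserving a word of care is that $\wh T$ requires a complete locally convex space as input, so one should note that $\bigoplus_{i\in I}E_i$ is complete; for finite $I$ (which covers both applications) this is automatic, and in general it follows from the completeness of locally convex direct sums of complete spaces. Checking that each bijection above is genuinely natural in $B$ is routine and is precisely what legitimizes the concluding Yoneda step.
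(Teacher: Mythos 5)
Your proof is correct and follows essentially the same route as the paper, which likewise derives the isomorphism from \eqref{Pfree_def}, \eqref{univ_T}, and the coproduct property of the locally convex direct sum (with Yoneda implicit); you have merely spelled out the chain of natural bijections and the completeness of $\bigoplus_{i\in I}E_i$ in more detail.
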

\begin{proof}
This is immediate from~\eqref{Pfree_def}, from~\eqref{univ_T}, and from the fact that
the locally convex direct sum is the coproduct in the category of locally convex spaces.
\end{proof}

The next observation shows that the Arens-Michael free product commutes with quotients
(modulo completions).

\begin{prop}
\label{prop:Pfree_quot}
Let $(A_i)_{i\in I}$ be a family of Arens-Michael algebras, and let, for each $i\in I$,
$J_i$ be a closed two-sided ideal of $A_i$. Denote by $J$ the closed two-sided ideal
of $\Pfree_{i\in I} A_i$ generated by all the $J_i$'s. There exists a topological algebra
isomorphism
\[
\Pfree_{i\in I} \bigl((A_i/J_i)^\sim\bigr) \cong \bigl((\Pfree_{i\in I} A_i)/J\bigr)^\sim.
\]
\end{prop}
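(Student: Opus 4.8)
The plan is to prove the isomorphism by showing that both sides represent the same contravariant functor on the category $\AM$ of Arens-Michael algebras and then invoking the Yoneda lemma. Write $P=\Pfree_{i\in I} A_i$ with canonical homomorphisms $j_i\colon A_i\to P$, so that $J$ is the closed two-sided ideal of $P$ generated by all the images $j_i(J_i)$.

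First I would compute the functor represented by the right-hand side. Fix an Arens-Michael algebra $B$. Since $B$ is complete, the universal property of completion gives a natural bijection between continuous homomorphisms $(P/J)^\sim\to B$ and continuous homomorphisms $P/J\to B$; the universal property of the quotient then identifies the latter with those $\varphi\in\Hom_\AM(P,B)$ satisfying $\varphi|_J=0$. By~\eqref{Pfree_def}, each such $\varphi$ corresponds to a family $(\varphi_i\colon A_i\to B)_{i\in I}$ with $\varphi\circ j_i=\varphi_i$.

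The key point is to translate the condition $\varphi|_J=0$ into a condition on the $\varphi_i$. Since $\varphi$ is a continuous homomorphism into an Arens-Michael algebra, $\Ker\varphi$ is a closed two-sided ideal; hence $\varphi$ vanishes on $J$ if and only if it vanishes on the generating set $\bigcup_{i\in I} j_i(J_i)$, that is, if and only if $\varphi(j_i(J_i))=\varphi_i(J_i)=0$ for every $i$. Thus continuous homomorphisms $(P/J)^\sim\to B$ correspond precisely to families $(\varphi_i)$ in which each $\varphi_i$ vanishes on $J_i$. Applying the quotient and completion universal properties once more, this time to each $A_i$ separately, identifies these with families in $\prod_{i\in I}\Hom_\AM((A_i/J_i)^\sim,B)$. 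On the other side, \eqref{Pfree_def} applied to the family $\bigl((A_i/J_i)^\sim\bigr)_{i\in I}$ gives directly $\Hom_\AM\bigl(\Pfree_{i\in I}(A_i/J_i)^\sim,\,B\bigr)\cong\prod_{i\in I}\Hom_\AM((A_i/J_i)^\sim,B)$. All of these bijections are natural in $B$, so the two objects represent the same functor and are therefore canonically isomorphic, the isomorphism being automatically compatible with the canonical structure maps.

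The main obstacle here is more a point requiring care than a genuine difficulty: it is the equivalence $\varphi|_J=0\iff\varphi_i|_{J_i}=0$ for all $i$. This step relies crucially on $J$ being the \emph{closed} two-sided ideal generated by the $j_i(J_i)$ together with the closedness of $\Ker\varphi$, which is exactly what permits passing from vanishing on a set of generators to vanishing on the entire closed ideal. Everything else is a formal chaining of adjunctions, namely the completion functor being left adjoint to the inclusion $\AM\hookrightarrow\Topalg$ as in~\eqref{AM_def}, together with the universal properties of quotients and of the Arens-Michael free product.
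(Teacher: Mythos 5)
Your proposal is correct and follows essentially the same route as the paper's own proof: both establish a chain of natural bijections identifying $\Hom_{\AM}\bigl(\bigl((\Pfree_{i\in I} A_i)/J\bigr)^\sim,B\bigr)$ with $\prod_{i\in I}\Hom_{\AM}\bigl((A_i/J_i)^\sim,B\bigr)$ via the universal properties of completion, quotient, and the Arens-Michael free product, and then conclude by representability. Your explicit justification of the step $\varphi|_J=0\iff\varphi_i(J_i)=0$ for all $i$ (via closedness of $\Ker\varphi$) is exactly the point the paper passes over silently in the second line of its displayed chain, so you have if anything spelled the argument out more carefully.
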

\begin{proof}
For each Arens-Michael algebra $B$ we have natural isomorphisms
\[
\begin{split}
\Hom_\AM\bigl(\bigl((\Pfree_{i\in I} A_i)/J\bigr)^\sim,B\bigr)
&\cong\bigl\{ \varphi\in\Hom_\AM\bigl(\Pfree_{i\in I} A_i,B\bigr) : \varphi(J)=0\bigr\}\\
&\cong\bigl\{ \varphi\in\Hom_\AM\bigl(\Pfree_{i\in I} A_i,B\bigr) : \varphi(J_i)=0\;\forall i\in I\bigr\}\\
&\cong\Bigl\{ \varphi=(\varphi_i)_{i\in I}\in\prod_{i\in I}\Hom_\AM(A_i,B) :
\varphi_i(J_i)=0\;\forall i\in I\Bigr\}\\
&\cong\prod_{i\in I}\Hom_\AM\bigl((A_i/J_i)^\sim,B\bigr). \qedhere
\end{split}
\]
\end{proof}

\begin{corollary}
\label{cor:HFG_free}
If $A_1$ and $A_2$ are HFG algebras, then so is $A_1\Pfree A_2$.
\end{corollary}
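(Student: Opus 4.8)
The plan is to reduce the claim about $A_1 \Pfree A_2$ to the structural results already established for HFG algebras, namely Proposition~\ref{prop:HFG_quot_F}(i) together with the behaviour of the free product under quotients recorded in Proposition~\ref{prop:Pfree_quot}. Since $A_1$ and $A_2$ are HFG, Proposition~\ref{prop:HFG_quot_F}(i) furnishes topological isomorphisms $A_1 \cong \cF_m / I_1$ and $A_2 \cong \cF_n / I_2$ for suitable $m,n \in \Z_+$ and closed two-sided ideals $I_1 \subseteq \cF_m$, $I_2 \subseteq \cF_n$. Both $\cF_m/I_1$ and $\cF_n/I_2$ are already complete (they are Fr\'echet-Arens-Michael algebras), so the completion tildes that appear in Proposition~\ref{prop:Pfree_quot} are harmless.

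The first step is to compute the free product of the two free algebras: by the isomorphism~\eqref{Fn-Fm} of Proposition~\ref{prop:Fn-Fm} we have $\cF_m \Pfree \cF_n \cong \cF_{m+n}$. The second step is to push the quotients through the free product. Applying Proposition~\ref{prop:Pfree_quot} to the family $(\cF_m, \cF_n)$ with the ideals $(I_1, I_2)$, and writing $J$ for the closed two-sided ideal of $\cF_m \Pfree \cF_n$ generated by $I_1$ and $I_2$, I obtain a topological isomorphism
\[
A_1 \Pfree A_2 \cong (\cF_m/I_1) \Pfree (\cF_n/I_2) \cong \bigl((\cF_m \Pfree \cF_n)/J\bigr)^{\sim}.
\]
Combining this with $\cF_m \Pfree \cF_n \cong \cF_{m+n}$, the image of $J$ under this last isomorphism is a closed two-sided ideal $\tilde{J} \subseteq \cF_{m+n}$, and hence $A_1 \Pfree A_2 \cong (\cF_{m+n}/\tilde{J})^{\sim}$. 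Because $\cF_{m+n}$ is a Fr\'echet space, the quotient $\cF_{m+n}/\tilde{J}$ by a closed subspace is again Fr\'echet and hence complete, so the completion is redundant and $A_1 \Pfree A_2 \cong \cF_{m+n}/\tilde{J}$ as topological algebras.

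The final step invokes the ``only if'' direction of Proposition~\ref{prop:HFG_quot_F}(i): a Fr\'echet-Arens-Michael algebra that is topologically isomorphic to a quotient of $\cF_{m+n}$ by a closed two-sided ideal is holomorphically finitely generated. This gives the conclusion. I expect essentially no serious obstacle here, since all the heavy lifting has been done upstream; the one point deserving a line of care is confirming that the completions in Proposition~\ref{prop:Pfree_quot} genuinely disappear in this situation and that $A_1 \Pfree A_2$ is Fr\'echet (so that it qualifies as an HFG algebra rather than merely an Arens-Michael quotient). This follows because the free product of two Fr\'echet-Arens-Michael algebras is metrizable, as its topology is generated by the countable family of seminorms $\|\cdot\|_{\lambda,\tau}$ of Corollary~\ref{cor:Pfree_expl_fin} when the index set $I = \{1,2\}$ is finite and each $\Lambda_i$ is countable.
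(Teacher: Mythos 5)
Your proof is correct and follows exactly the paper's route: Proposition~\ref{prop:HFG_quot_F}(i), Proposition~\ref{prop:Pfree_quot}, and the isomorphism~\eqref{Fn-Fm}, with the paper merely stating these steps more tersely. One remark on your final paragraph: the appeal to Corollary~\ref{cor:Pfree_expl_fin} to establish metrizability is both unnecessary --- Fr\'echet-ness already follows from the isomorphism $A_1\Pfree A_2\cong\cF_{m+n}/\tilde{J}$ you derived, since the quotient of a Fr\'echet space by a closed subspace is Fr\'echet --- and not justified as stated, because that corollary carries the standing hypothesis that each factor decomposes as $\CC 1_{A_i}\oplus A_i^\circ$ for a closed two-sided ideal $A_i^\circ$, which an arbitrary HFG algebra (for instance one admitting no continuous characters) need not satisfy.
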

\begin{proof}
By Proposition~\ref{prop:HFG_quot_F}, each $A_k\; (k=1,2)$ is a quotient of $\cF_{n_k}$
for some $n_k\in \N$. Applying Proposition~\ref{prop:Pfree_quot} and using~\eqref{Fn-Fm},
we see that $A_1\Pfree A_2$ is a quotient of $\cF_{n_1+n_2}$.
Hence $A_1\Pfree A_2$ is an HFG algebra by Proposition~\ref{prop:HFG_quot_F}.
\end{proof}

\section{Smash products}
\label{sect:smash}

In this section we consider two analytic versions of the smash product construction
(see, e.g., \cite{Sweedler,Dasc_Nast,GH,Majid_brd}), the {\em $\Ptens$-smash product}
and the {\em Arens-Michael smash product}. While the former is a special
case of smash products in tensor categories \cite{Majid_brd}, the latter appears to be new.
Let us start by recalling some definitions \cite{Majid_boson,Majid_brd}.

Let $H$ be a $\Ptens$-bialgebra.
Similarly to the algebraic case (see, e.g., \cite{Kassel}), the projective tensor product
of two left $H$-$\Ptens$-modules (respectively, comodules) has a natural structure of
a left $H$-$\Ptens$-module (respectively, comodule).
A {\em left $H$-$\Ptens$-module algebra} is
a $\Ptens$-algebra $A$ endowed with the structure of a left $H$-$\Ptens$-module in such
a way that the product $\mu_A\colon A\Ptens A\to A$ and the unit map
$\eta_A\colon \CC\to A$ are $H$-$\Ptens$-module morphisms.
Similarly, a {\em left $H$-$\Ptens$-comodule algebra} is
a $\Ptens$-algebra $B$ endowed with the structure of a left $H$-$\Ptens$-comodule in such a way that
the product $\mu_B\colon B\Ptens B\to B$ and the unit map
$\eta_B\colon \CC\to B$ are $H$-$\Ptens$-comodule morphisms.
Given a left $H$-$\Ptens$-module algebra $A$ and a left $H$-$\Ptens$-comodule algebra $B$,
the {\em $\Ptens$-smash product} $A\psmash{H} B$
is defined as follows. As a topological vector space, $A\psmash{H} B$ is equal to $A\Ptens B$.
To define multiplication,
denote by $\mu_{H,A}\colon H\Ptens A\to A$ the action of $H$ on $A$,
by $\Delta_{H,B}\colon B\to H\Ptens B$ the coaction of $H$ on $B$,
and define $\tau_{B,A}\colon B\Ptens A\to A\Ptens B$ to be the composition
\begin{equation}
\label{tau}
B\Ptens A \xra{\Delta_{H,B}\otimes\id_A} H\Ptens B\Ptens A \xra{\id_H\otimes c_{B,A}}
H\Ptens A\Ptens B \xra{\mu_{H,A}\otimes\id_B} A\Ptens B
\end{equation}
(here $c_{B,A}$ stands for the flip $B\Ptens A\to A\Ptens B$).
Then the map
\begin{equation}
\label{smashprod}
(A\Ptens B)\Ptens (A\Ptens B) \xra{\id_A\otimes\tau_{B,A}\otimes\id_B}
A\Ptens A\Ptens B\Ptens B \xra{\mu_A\otimes\mu_B} A\Ptens B
\end{equation}
is an associative multiplication on $A\Ptens B$
(the associativity was proved in \cite{Majid_boson} for the special case $B=H$;
the proof extends {\em verbatim} to an arbitrary $B$).
The resulting $\Ptens$-algebra
is denoted by $A\psmash{H} B$ and is called the {\em $\Ptens$-smash product} of $A$ and $B$
over $H$.

To motivate our further constructions, let us show that $A\psmash{H} B$ is characterized
by a universal property. We need a simple lemma.

\begin{lemma}
\label{lemma:id_smash}
Let $H$ be a $\Ptens$-bialgebra, $A$ be a left $H$-$\Ptens$-module algebra, and $B$ be a left
$H$-$\Ptens$-comodule algebra.
We have the following identities in $A\psmash{H} B$:
\begin{align}
\label{b=1}
(a\otimes 1)(a'\otimes b')&=aa'\otimes b',\\
\label{a'=1}
(a\otimes b)(1\otimes b')&=a\otimes bb',\\
\label{tau_prod}
(1\otimes b)(a\otimes 1)&=\tau_{B,A}(b\otimes a),\\
\label{prod_tau}
(a\otimes b)(a'\otimes b')&=(a\otimes 1)\tau_{B,A}(b\otimes a')(1\otimes b').
\end{align}
\end{lemma}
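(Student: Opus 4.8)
The plan is to make the product on $A\psmash{H}B$ fully explicit and then read off all four identities from a single formula. Writing the coaction in Sweedler-type notation, $\Delta_{H,B}(b)=\sum b_{(-1)}\otimes b_{(0)}$ with $b_{(-1)}\in H$ and $b_{(0)}\in B$, and abbreviating the action $\mu_{H,A}(h\otimes a)$ by $h\cdot a$, the definitions \eqref{tau} and \eqref{smashprod} unwind to
\[
(a\otimes b)(a'\otimes b')=\sum a\,(b_{(-1)}\cdot a')\otimes b_{(0)}b'.
\]
So the first step is to derive this formula by composing the three arrows in \eqref{tau} and then the two arrows in \eqref{smashprod}. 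Because the two sides of each asserted identity are values of continuous linear maps assembled from the structural morphisms $\mu_A,\mu_B,\mu_{H,A},\Delta_{H,B}$ and $c_{B,A}$, it is enough to check the identities on elementary tensors; equivalently, they are identities of continuous linear maps that hold on the dense algebraic tensor product and hence everywhere.

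Given the explicit product, identity \eqref{tau_prod} is immediate: the formula applied to $(1_A\otimes b)(a\otimes 1_B)$ gives $\sum(b_{(-1)}\cdot a)\otimes b_{(0)}$, which is exactly $\tau_{B,A}(b\otimes a)$ by \eqref{tau}. For \eqref{b=1} I would use that $B$ is a comodule algebra, so $\eta_B$ is a comodule morphism and therefore $\Delta_{H,B}(1_B)=1_H\otimes 1_B$; feeding this and the module-algebra identity $1_H\cdot a'=a'$ into the formula collapses $(a\otimes 1_B)(a'\otimes b')$ to $aa'\otimes b'$. For \eqref{a'=1} I would dually use that $A$ is a module algebra, so $\eta_A$ is a module morphism and hence $h\cdot 1_A=\eps_H(h)1_A$ for all $h\in H$; with $a'=1_A$ the formula gives $a\otimes\bigl(\sum\eps_H(b_{(-1)})b_{(0)}\bigr)b'$, and the comodule counit axiom $\sum\eps_H(b_{(-1)})b_{(0)}=b$ finishes it as $a\otimes bb'$.

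Finally, \eqref{prod_tau} should follow formally from the two identities just proved, with no further appeal to the structure maps and, in particular, no use of associativity: writing the general element $\tau_{B,A}(b\otimes a')\in A\Ptens B$, I would multiply it on the left by $a\otimes 1_B$ using \eqref{b=1} and on the right by $1_A\otimes b'$ using \eqref{a'=1}, recovering $\sum a(b_{(-1)}\cdot a')\otimes b_{(0)}b'$, which is the left-hand side by the multiplication formula. Here I use only that \eqref{b=1} and \eqref{a'=1} are continuous and linear in every slot, so they apply to the (generally non-elementary) tensor $\tau_{B,A}(b\otimes a')$.

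All the computations are routine, and I do not expect a genuine obstacle; the single point that deserves care is the translation between the algebraic Sweedler manipulations and the topological setting. The safe reading is that each identity is an equation between continuous linear maps obtained by composing the defining morphisms, so that the verification on elementary tensors---identical to the classical Hopf-algebraic computation---determines the maps completely by density. In this sense the argument extends verbatim from the algebraic case.
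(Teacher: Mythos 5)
Your proof is correct and takes essentially the same route as the paper: both unwind the multiplication \eqref{smashprod}, deduce \eqref{b=1}, \eqref{tau_prod}, and \eqref{a'=1} from the unit/counit axioms of the module-algebra and comodule-algebra structures (the paper merely packages your Sweedler computation for \eqref{a'=1} as a commutative diagram of continuous linear maps, which is exactly your ``safe reading''), and then obtain \eqref{prod_tau} from the preceding identities by linearity, continuity, and density. Your explicit remarks on why the Sweedler manipulations are legitimate in the $\Ptens$-setting, and on why no associativity is needed in \eqref{prod_tau}, are accurate and fill in details the paper leaves implicit.
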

\begin{proof}
Identities \eqref{b=1} and \eqref{tau_prod} are immediate from~\eqref{smashprod}.
To prove~\eqref{a'=1}, let $\eps'\colon H\Ptens\CC\to\CC$ be given by
$\eps'(h\otimes\lambda)=\eps_H(h)\lambda$. We have a commutative diagram
\begin{equation}
\label{a'=1_2}
\xymatrix@C+25pt{
B\Ptens A\ar[r]^{\Delta_{H,B}\otimes\id_A} & H\Ptens B\Ptens A \ar[r]^{\id_H\otimes c_{B,A}}
& H\Ptens A\Ptens B \ar[r]^(.6){\mu_{H,A}\otimes\id_B} & A\Ptens B\\
B\Ptens \CC \ar[r]^{\Delta_{H,B}\otimes\id_\CC} \ar[u]_{\id_B\otimes\eta_A}
& H\Ptens B\Ptens \CC \ar[r]^{\id_H\otimes c_{B,\CC}} \ar[u]_{\id_H\otimes\id_B\otimes\eta_A}
& H\Ptens \CC\Ptens B \ar[r]^{\eps'\otimes\id_B} \ar[u]_{\id_H\otimes\eta_A\otimes\id_B}
& \CC\Ptens B \ar[u]_{\eta_A\otimes\id_B}\\
B \ar[r]^{\Delta_{H,B}} \ar[u]_{\can} & H\Ptens B \ar@{=}[r] \ar[u]_{\can}
& H\Ptens B \ar[r]^{\eps_H\otimes\id_B} \ar[u]_{\can} & B \ar[u]_{\can}
}
\end{equation}
where the arrows marked by ``$\can$'' are the canonical isomorphisms.
Since $(\eps_H\otimes\id_B)\Delta_{H,B}=\id_B$, \eqref{a'=1} now follows from~\eqref{a'=1_2}.
Identity~\eqref{prod_tau} is immediate from~\eqref{b=1}--\eqref{tau_prod}.
\end{proof}

Lemma~\ref{lemma:id_smash} implies, in particular, that the maps
\begin{equation}
\label{i_A_i_B}
\begin{split}
i_A&\colon A\to A\psmash{H} B,\quad a\mapsto a\otimes 1,\\
i_B&\colon B\to A\psmash{H} B,\quad b\mapsto 1\otimes b,
\end{split}
\end{equation}
are $\Ptens$-algebra homomorphisms.

\begin{definition}
Given a $\Ptens$-algebra $C$, we say that a pair $(\varphi_A\colon A\to C,\;\varphi_B\colon B\to C)$
of $\Ptens$-algebra homomorphisms is {\em compatible} if
the diagram
\[
\xymatrix{
B\Ptens A \ar[rr]^{\tau_{B,A}} \ar[d]_{\varphi_B\otimes\varphi_A}
&& A\Ptens B \ar[d]^{\varphi_A\otimes\varphi_B} \\
C\Ptens C \ar[dr]_{\mu_C} && C\Ptens C \ar[dl]^{\mu_C}\\
& C
}
\]
is commutative.
\end{definition}

\begin{prop}
\label{prop:Psmash_univ}
Let $H$ be a $\Ptens$-bialgebra, $A$ be a left $H$-$\Ptens$-module algebra, and $B$ be a left
$H$-$\Ptens$-comodule algebra.
Let the homomorphisms $i_A$, $i_B$ be given by~\eqref{i_A_i_B}. Then
\begin{mycompactenum}
\item the pair $(i_A,i_B)$ is compatible;
\item for each $\Ptens$-algebra $C$ and each compatible pair
$(\varphi_A\colon A\to C,\;\varphi_B\colon B\to C)$
of continuous homomorphisms there exists a unique continuous
homomorphism $A\psmash{H} B\to C$ making the diagram
\begin{equation}
\label{Psmash_univ}
\xymatrix@-10pt{
& A\psmash{H} B \ar@{-->}[dd] \\
A \ar[ur]^(.4){i_A} \ar[dr]_{\varphi_A} && B \ar[ul]_(.4){i_B} \ar[dl]^{\varphi_B} \\
& C
}
\end{equation}
commute.
\end{mycompactenum}
In other words, we have a natural isomorphism
\[
\Hom_{\Ptensalg}(A\psmash{H} B,C)\cong\bigl\{ \text{\upshape compatible }
(\varphi_A,\varphi_B)\in\Hom_{\Ptensalg}(A,C)\times \Hom_{\Ptensalg}(B,C)\bigr\},
\]
where $\Ptensalg$ is the category of all $\Ptens$-algebras.
\end{prop}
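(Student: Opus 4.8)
The plan is to obtain part~(i) directly from Lemma~\ref{lemma:id_smash} and to prove part~(ii) by exhibiting the unique possible candidate for the factoring homomorphism, the multiplicativity of which is the only nonformal point.

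For part~(i) the relevant $\Ptens$-algebra is $C=A\psmash{H}B$ itself, with $\mu_C$ its own multiplication. I would first unwind the compatibility square for $(i_A,i_B)$ into the single identity $\mu_{A\psmash{H}B}\circ(i_A\otimes i_B)\circ\tau_{B,A}=\mu_{A\psmash{H}B}\circ(i_B\otimes i_A)$ on $B\Ptens A$. Evaluating the right-hand side on $b\otimes a$ gives $(1\otimes b)(a\otimes 1)$, which equals $\tau_{B,A}(b\otimes a)$ by~\eqref{tau_prod}. For the left-hand side one applies $i_A\otimes i_B$ to $\tau_{B,A}(b\otimes a)\in A\Ptens B$ and then multiplies; since $(a''\otimes 1)(1\otimes b'')=a''\otimes b''$ by~\eqref{a'=1}, this multiplication merely recovers $\tau_{B,A}(b\otimes a)$ as an element of $A\Ptens B$. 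Thus both sides agree and $(i_A,i_B)$ is compatible.

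For part~(ii) I would first note the factorization $a\otimes b=(a\otimes 1)(1\otimes b)=i_A(a)i_B(b)$, a consequence of~\eqref{a'=1}, which shows that the products $i_A(a)i_B(b)$ span a dense subspace of $A\psmash{H}B$. Hence any continuous homomorphism $\Phi$ making~\eqref{Psmash_univ} commute must satisfy $\Phi(a\otimes b)=\varphi_A(a)\varphi_B(b)$, and by continuity this forces uniqueness. For existence I would take this as a definition, setting $\Phi=\mu_C\circ(\varphi_A\otimes\varphi_B)\colon A\Ptens B\to C$; this map is continuous, it is unital, and it satisfies $\Phi i_A=\varphi_A$ and $\Phi i_B=\varphi_B$ precisely because $\varphi_A$ and $\varphi_B$ are unital.

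The main point—and the only place where the compatibility hypothesis enters—is that $\Phi$ is multiplicative. Using~\eqref{prod_tau} together with \eqref{b=1}--\eqref{a'=1}, I would rewrite $(a\otimes b)(a'\otimes b')=(a\otimes 1)\,\tau_{B,A}(b\otimes a')\,(1\otimes b')$, with left multiplication by $a\otimes 1$ and right multiplication by $1\otimes b'$ being continuous linear maps. A check on elementary tensors (where these operations send $a''\otimes b''$ to $aa''\otimes b''b'$) combined with joint continuity of the multiplications and density yields $\Phi((a\otimes b)(a'\otimes b'))=\varphi_A(a)\,\bigl[\mu_C(\varphi_A\otimes\varphi_B)\tau_{B,A}(b\otimes a')\bigr]\,\varphi_B(b')$. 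Now compatibility of $(\varphi_A,\varphi_B)$ turns the bracketed term into $\mu_C(\varphi_B\otimes\varphi_A)(b\otimes a')=\varphi_B(b)\varphi_A(a')$, so the whole expression collapses to $\varphi_A(a)\varphi_B(b)\varphi_A(a')\varphi_B(b')=\Phi(a\otimes b)\Phi(a'\otimes b')$. I expect this multiplicativity verification to be the main obstacle, since it requires transporting the (generally infinite) expansion of $\tau_{B,A}(b\otimes a')$ through $\Phi$ and invoking joint continuity to justify the termwise manipulation. The closing natural isomorphism then follows formally: the assignment $\Phi\mapsto(\Phi i_A,\Phi i_B)$ lands in compatible pairs by part~(i), since compatibility is preserved under post-composition with a homomorphism, and part~(ii) supplies its two-sided inverse, naturality in $C$ being immediate.
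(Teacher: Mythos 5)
Your proof is correct and follows essentially the same route as the paper: part~(i) via the identities $\mu(i_A\otimes i_B)=\id_{A\Ptens B}$ and $\mu(i_B\otimes i_A)=\tau_{B,A}$ from Lemma~\ref{lemma:id_smash}, and part~(ii) by taking $\mu_C\circ(\varphi_A\otimes\varphi_B)$ as the candidate, verifying multiplicativity through~\eqref{prod_tau} together with the one-sided multiplication identities and the compatibility hypothesis, and getting uniqueness from density. The only cosmetic difference is that you merge the paper's two intermediate identities \eqref{psihom1}--\eqref{psihom2} into a single two-sided statement and spell out the continuity-and-density extension more explicitly.
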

\begin{proof}
(i) It is immediate from \eqref{b=1} and \eqref{tau_prod} that
\[
\mu_{A\psmash{H} B}(i_A\otimes i_B)=\id_{A\Ptens B},\quad
\mu_{A\psmash{H} B}(i_B\otimes i_A)=\tau_{B,A}.
\]
This readily implies that the pair $(i_A,i_B)$ is compatible.

(ii) Define $\psi\colon A\psmash{H} B\to C$ by $\psi=\mu_C(\varphi_A\otimes\varphi_B)$.
Clearly, $\psi$ makes \eqref{Psmash_univ} commute. We claim that $\psi$ is an algebra
homomorphism. Indeed, by using~\eqref{b=1}, for each $a,a'\in A$ and each $b'\in B$ we obtain
\[
\psi((a\otimes 1)(a'\otimes b'))=\psi(aa'\otimes b')=\varphi_A(a)\varphi_A(a')\varphi_B(b')
=\varphi_A(a)\psi(a'\otimes b').
\]
By linearity and continuity, we conclude that
\begin{equation}
\label{psihom1}
\psi((a\otimes 1)u)=\varphi_A(a)\psi(u)\qquad (a\in A,\; u\in A\psmash{H} B).
\end{equation}
Similarly, \eqref{a'=1} yields
\begin{equation}
\label{psihom2}
\psi(u(1\otimes b))=\psi(u)\varphi_B(b)\qquad (b\in B,\; u\in A\psmash{H} B).
\end{equation}
Since $(\varphi_A,\varphi_B)$ is a compatible pair, we have
\begin{equation}
\label{psihom3}
\psi(\tau_{B,A}(b\otimes a))=\varphi_B(b)\varphi_A(a) \qquad (b\in B,\; a\in A).
\end{equation}
Finally, by using~\eqref{prod_tau} and \eqref{psihom1}--\eqref{psihom3},
for each $a,a'\in A$ and each $b,b'\in B$
we obtain
\[
\begin{split}
\psi((a\otimes b)(a'\otimes b'))&=\psi((a\otimes 1)\tau_{B,A}(b\otimes a')(1\otimes b'))\\
&=\varphi_A(a)\varphi_B(b)\varphi_A(a')\varphi_B(b')=\psi(a\otimes b)\psi(a'\otimes b').
\end{split}
\]
Thus $\psi$ is an algebra homomorphism. The uniqueness of $\psi$ is immediate from the fact
that $\Im i_A$ and $\Im i_B$ generate a dense subalgebra of $A\psmash{H} B$.
\end{proof}

\begin{remark}
Proposition~\ref{prop:Psmash_univ} has an obvious ``purely algebraic'' version which
characterizes the algebraic smash product $A\Smash^{H} B$.
\end{remark}

Proposition~\ref{prop:Psmash_univ} motivates the following definition.

\begin{definition}
\label{def:AM_smash}
Let $H$ be a $\Ptens$-bialgebra, $A$ be a left $H$-$\Ptens$-module Arens-Michael
algebra, and $B$ be a left $H$-$\Ptens$-comodule Arens-Michael algebra.
The {\em Arens-Michael smash product} of $A$ and $B$ over $H$ is an Arens-Michael algebra
$A\Smash_{\AM}^H B$ together with a compatible pair
$(i_A\colon A\to A\Smash_{\AM}^H B,i_B \colon B\to A\Smash_{\AM}^H B)$
of continuous homomorphisms such that for each Arens-Michael algebra $C$
and each compatible pair $(\varphi_A\colon A\to C,\varphi_B\colon B\to C)$
of continuous homomorphisms there exists a unique continuous homomorphism
$A\Smash_{\AM}^H B\to C$ making the diagram
\begin{equation}
\label{AMsmash_univ}
\xymatrix@-10pt{
& A\Smash_{\AM}^H B \ar@{-->}[dd] \\
A \ar[ur]^(.4){i_A} \ar[dr]_{\varphi_A} && B \ar[ul]_(.4){i_B} \ar[dl]^{\varphi_B} \\
& C
}
\end{equation}
commute.
\end{definition}

In other words, the Arens-Michael smash product is an Arens-Michael
algebra $A\Smash_{\AM}^H B$ together with a natural isomorphism
\[
\Hom_{\AM}(A\Smash\nolimits_{\AM}^H B,C)\cong\bigl\{ \text{\upshape compatible }
(\varphi_A,\varphi_B)\in\Hom_{\AM}(A,C)\times \Hom_{\AM}(B,C)\bigr\}.
\]

Proposition~\ref{prop:Psmash_univ} easily implies that $A\Smash_{\AM}^H B$ exists.
Indeed, we have
\begin{equation}
\label{smash_AMenv}
A\Smash\nolimits_{\AM}^H B = (A\psmash{H} B)\sphat.
\end{equation}
Here is one more characterization of $A\Smash_{\AM}^H B$.

\begin{prop}
\label{prop:smash_free}
Let $H$ be a $\Ptens$-bialgebra, $A$ be a left $H$-$\Ptens$-module Arens-Michael
algebra, and $B$ be a left $H$-$\Ptens$-comodule Arens-Michael algebra. Denote by
\[
j_A\colon A\to A\Pfree B,\quad j_B\colon B\to A\Pfree B
\]
the canonical homomorphisms, and let $J$ be the closed two-sided ideal of
$A\Pfree B$ generated by
\[
\Im\bigl(\mu_{A\Pfree B}(j_B\otimes j_A-(j_A\otimes j_B)\tau_{B,A})\bigr).
\]
Finally, let $S$ be the completion of $(A\Pfree B)/J$, and let the homomorphisms
$i_A\colon A\to S$ and $i_B\colon B\to S$ be the compositions of $j_A,j_B$
with the quotient map $A\Pfree B\to S$. Then $(S,i_A,i_B)$ is the Arens-Michael
smash product of $A$ and $B$.
\end{prop}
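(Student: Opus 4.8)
The plan is to verify directly that $(S,i_A,i_B)$ satisfies the universal property \eqref{AMsmash_univ} defining the Arens-Michael smash product, by a $\Hom$-functor computation entirely parallel to the proof of Proposition~\ref{prop:Pfree_quot}. The guiding observation is that the defining relations of $J$ are precisely the compatibility relations: if $\varphi\colon A\Pfree B\to C$ is a continuous homomorphism into an Arens-Michael algebra $C$, and we set $\varphi_A=\varphi j_A$ and $\varphi_B=\varphi j_B$, then applying $\varphi$ to the generator $\mu_{A\Pfree B}(j_B\otimes j_A-(j_A\otimes j_B)\tau_{B,A})$ of $J$ yields exactly $\mu_C(\varphi_B\otimes\varphi_A)-\mu_C(\varphi_A\otimes\varphi_B)\tau_{B,A}$ (using that $\varphi$ is a homomorphism, so $\varphi\mu_{A\Pfree B}=\mu_C(\varphi\otimes\varphi)$). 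This composite vanishes if and only if the pair $(\varphi_A,\varphi_B)$ is compatible.

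With this in hand, I fix an arbitrary Arens-Michael algebra $C$ and chain together three natural bijections. First, since $C$ is complete and $(A\Pfree B)/J$ is dense in its completion $S$, every continuous homomorphism $(A\Pfree B)/J\to C$ extends uniquely to $S$, so $\Hom_{\AM}(S,C)$ is identified with the continuous homomorphisms $A\Pfree B\to C$ that annihilate $J$. Second, as $\varphi$ is continuous and $C$ is Hausdorff, $\ker\varphi$ is a closed two-sided ideal, so $\varphi$ annihilates $J$ if and only if it annihilates the generating set of $J$, i.e.\ if and only if $(\varphi j_A,\varphi j_B)$ is compatible. Third, the universal property \eqref{Pfree_def} of the Arens-Michael free product identifies $\Hom_{\AM}(A\Pfree B,C)$ with $\Hom_{\AM}(A,C)\times\Hom_{\AM}(B,C)$ via $\varphi\mapsto(\varphi j_A,\varphi j_B)$. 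Composing these, I obtain a bijection, natural in $C$,
\[
\Hom_{\AM}(S,C)\cong\bigl\{\text{compatible }(\varphi_A,\varphi_B)\in\Hom_{\AM}(A,C)\times\Hom_{\AM}(B,C)\bigr\},
\]
which is exactly the universal property that characterizes $A\Smash_{\AM}^H B$. It remains to check that $(i_A,i_B)$ is itself a compatible pair serving as the structure maps: since the quotient-completion map $A\Pfree B\to S$ kills the generator of $J$ and is an algebra homomorphism, we get $\mu_S(i_B\otimes i_A)=\mu_S(i_A\otimes i_B)\tau_{B,A}$, which is the compatibility of $(i_A,i_B)$; and tracing $\id_S$ through the above correspondence shows that $(i_A,i_B)$ is the pair inducing the identity, confirming that the homomorphism produced from a compatible $(\varphi_A,\varphi_B)$ makes \eqref{AMsmash_univ} commute.

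The argument has no genuinely hard step; it is a Yoneda-style bookkeeping exercise mirroring Proposition~\ref{prop:Pfree_quot}. The only points demanding a little care are, first, that $\tau_{B,A}$ is a well-defined continuous map, so that the generating set of $J$ is meaningful---this is clear from \eqref{tau}, since $\tau_{B,A}$ is a composite of continuous maps---and, second, the passage through the completion $S$: one must ensure that testing against complete algebras $C$ suffices. This holds because $S$ is Arens-Michael by construction and the universal property of completion lets continuous homomorphisms out of the dense subalgebra $(A\Pfree B)/J$ into a complete $C$ extend uniquely. Naturality of all three bijections in $C$ then makes the final identification natural, completing the verification.
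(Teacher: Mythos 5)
Your proof is correct and follows essentially the same route as the paper: both reduce the statement to a chain of natural bijections of $\Hom$-functors, using that a continuous homomorphism out of $A\Pfree B$ kills $J$ precisely when it kills the generating set (i.e.\ when the induced pair is compatible), together with the universal properties of the free product, the quotient, and the completion. Your additional explicit check that $(i_A,i_B)$ is itself a compatible pair inducing the identity is a point the paper leaves implicit, but it does not change the argument.
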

\begin{proof}
For each Arens-Michael algebra $C$ we have natural bijections
\[
\begin{split}
\Hom_{\AM}(S,C)
&\cong\bigl\{ \varphi\in\Hom_{\AM}(A\Pfree B,C) : \varphi(J)=0\bigr\}\\
&\cong\bigl\{ \varphi\in\Hom_{\AM}(A\Pfree B,C) : \varphi \mu_{A\Pfree B}
\bigl(j_B\otimes j_A-(j_A\otimes j_B)\tau_{B,A}\bigr)=0\bigr\}\\
&\cong\bigl\{ \varphi\in\Hom_{\AM}(A\Pfree B,C) : \mu_C(\varphi\otimes\varphi)
\bigl(j_B\otimes j_A-(j_A\otimes j_B)\tau_{B,A}\bigr)=0\bigr\}\\
&\cong\bigl\{ \varphi\in\Hom_{\AM}(A\Pfree B,C) : \mu_C
\bigl(\varphi j_B\otimes \varphi j_A-(\varphi j_A\otimes \varphi j_B)\tau_{B,A}\bigr)=0\bigr\}\\
&\cong\bigl\{ \varphi\in\Hom_{\AM}(A\Pfree B,C) : (\varphi j_A,\varphi j_B)
\text{ is compatible}\bigr\}\\
&\cong\bigl\{ \text{\upshape compatible }
(\varphi_A,\varphi_B)\in\Hom_{\AM}(A,C)\times \Hom_{\AM}(B,C)\bigr\}\\
&\cong\Hom_{\AM}(A\Smash\nolimits_{\AM}^H B,C). \qedhere
\end{split}
\]
\end{proof}

\begin{corollary}
\label{cor:smash_HFG}
Let $H$ be a $\Ptens$-bialgebra, $A$ be a left $H$-$\Ptens$-module Arens-Michael
algebra, and $B$ be a left $H$-$\Ptens$-comodule Arens-Michael algebra.
If both $A$ and $B$ are holomorphically finitely generated,
then so is $A\Smash_{\AM}^H B$.
\end{corollary}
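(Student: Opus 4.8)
The plan is to chain together three results already established: Proposition~\ref{prop:smash_free}, Corollary~\ref{cor:HFG_free}, and Proposition~\ref{prop:HFG_quot}. First, since $A$ and $B$ are HFG, Corollary~\ref{cor:HFG_free} shows that the Arens-Michael free product $A\Pfree B$ is again HFG. In particular, being a Fr\'echet-Arens-Michael algebra (indeed, by Proposition~\ref{prop:HFG_quot_F} a quotient of some $\cF_N$), the algebra $A\Pfree B$ is a Fr\'echet space.

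Next, Proposition~\ref{prop:smash_free} identifies $A\Smash_{\AM}^H B$ with the completion $S$ of $(A\Pfree B)/J$, where $J$ is the closed two-sided ideal of $A\Pfree B$ described there. The key observation is that this completion is redundant: since $A\Pfree B$ is Fr\'echet and $J$ is closed, the quotient $(A\Pfree B)/J$ is a Hausdorff quotient of a Fr\'echet space by a closed subspace, hence itself Fr\'echet and in particular complete. Therefore $S=(A\Pfree B)/J$, and so $A\Smash_{\AM}^H B\cong (A\Pfree B)/J$.

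Finally, since $(A\Pfree B)/J$ is the quotient of the HFG algebra $A\Pfree B$ by a closed two-sided ideal, Proposition~\ref{prop:HFG_quot} shows that it is HFG. Hence $A\Smash_{\AM}^H B$ is HFG, as required.

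The only step requiring genuine care is the redundancy of the completion in Proposition~\ref{prop:smash_free}; once one recognizes that the HFG hypothesis forces $A\Pfree B$ to be Fr\'echet, everything else is a direct application of the cited results. Alternatively, this point can be bypassed altogether: by Proposition~\ref{prop:HFG_quot_F} one writes $A\Pfree B$ as a quotient of some $\cF_N$, whence $(A\Pfree B)/J$ is a quotient of $\cF_N$ by a closed ideal and is therefore complete, so the completion changes nothing.
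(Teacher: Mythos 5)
Your proof is correct and takes essentially the same route as the paper, whose entire proof consists of citing the same three results (Proposition~\ref{prop:smash_free}, Corollary~\ref{cor:HFG_free}, and Proposition~\ref{prop:HFG_quot}) and declaring the corollary immediate. The only difference is that you make explicit the detail the paper leaves implicit: the completion in Proposition~\ref{prop:smash_free} is redundant here because $A\Pfree B$, being HFG, is a Fr\'echet algebra (a quotient of some $\cF_N$), so its quotient by the closed ideal $J$ is already a complete (Fr\'echet--Arens--Michael) algebra.
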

\begin{proof}
Immediate from Proposition~\ref{prop:smash_free}, Corollary~\ref{cor:HFG_free},
and Proposition~\ref{prop:HFG_quot}.
\end{proof}

So far we have two constructions of $A\Smash_{\AM}^H B$, the one given by~\eqref{smash_AMenv}
and the one given by Proposition~\ref{prop:smash_free}. However,
both of them are rather implicit.
In particular, they tell us almost nothing about the underlying locally convex space of $A\Smash_{\AM}^H B$.
Fortunately, as we will see in Proposition~\ref{prop:Psmash_stab_AM}
and Corollary~\ref{cor:smash_biarens}, we actually have
$A\Smash_{\AM}^H B=A\psmash{H} H$ under appropriate conditions.
To this end, we need some definitions and a lemma.

\begin{definition}
\label{def:S-stab}
Let $X$ be a vector space, and let $\cT$ be a set of linear operators on $X$.
We say that a seminorm $\|\cdot\|$ on $X$ is {\em $\cT$-stable} \cite{Pir_locsolv} if
for each $T\in\cT$ there exists $C>0$ such that for each $x\in X$ we have
$\| Tx\| \le C\| x\|$. A subset $U\subseteq X$
is said to be {\em $\cT$-stable} if for each $T\in\cT$ there exists $C>0$ such that
$T(U)\subseteq CU$.
\end{definition}

The following is a ``continuous version'' of Definition~\ref{def:S-stab}.

\begin{definition}
\label{def:A-stab}
Let $A$ be a locally convex algebra, and let $X$ be a left $A$-module.
We say that a seminorm $\|\cdot\|_X$ on $X$ is {\em $A$-stable} if
there exists
a continuous seminorm $\|\cdot\|_A$ on $A$ such that for all $a\in A,\; x\in X$
we have $\| a\cdot x\|_X\le \| a\|_A \| x\|_X$. A subset $U\subseteq X$
is said to be {\em $A$-stable} if there exists a $0$-neighborhood $V\subseteq A$
such that $V\cdot U\subseteq U$.
\end{definition}

\begin{remark}
Observe that a seminorm $\|\cdot\|$ is $\cT$-stable (respectively, $A$-stable)
if and only if so is the unit ball $\{ x\in X : \| x\|\le 1\}$.
\end{remark}

\begin{remark}
Clearly, each $A$-stable seminorm on $X$ is stable with respect to the set
$\{ x\mapsto a\cdot x : a\in A\}$ of multiplication operators.
If $A$ is endowed with the strongest locally convex topology,
then the converse is also true. Indeed, suppose that a seminorm
$\|\cdot\|_X$ on $X$ is stable with respect to
$\{ x\mapsto a\cdot x : a\in A\}$. Then
$\| a\|_A=\sup\{ \| a\cdot x\|_X : \| x\|_X\le 1\}$ is a seminorm on $A$
satisfying $\| a\cdot x\|_X\le \| a\|_A \| x\|_X$. Thus $\|\cdot\|_X$ is $A$-stable.
\end{remark}

\begin{definition}
\label{def:m-loc-TS}
Let $B$ be an Arens-Michael algebra, and let $\cT$ be a set of continuous linear operators
on $B$. We say that $\cT$ is {\em $m$-localizable} \cite{Pir_qfree}
if there exists a defining family of submultiplicative, $\cT$-stable
seminorms on $B$.
A linear operator $T\colon B\to B$ is {\em $m$-localizable}
if so is the singleton $\{ T\}$.
\end{definition}

Finally, we have the following ``continuous'' version of Definition~\ref{def:m-loc-TS}.

\begin{definition}
\label{def:m-loc-A}
Let $A$ be a  locally convex algebra, and let $B$ be an Arens-Michael algebra
endowed with a left $A$-module structure.
We say that the action of $A$ on $B$ is
{\em $m$-localizable} if there exists a defining family
of submultiplicative, $A$-stable seminorms on $B$.
Equivalently, the action of $A$ on $B$ is $m$-localizable if there exists a base of
idempotent, $A$-stable $0$-neighborhoods in $B$.
\end{definition}

\begin{remark}
Let $A$ be an Arens-Michael algebra, and let $B$ be an Arens-Michael algebra
endowed with a left $A$-$\Ptens$-module structure.
By~\cite[Proposition 3.4]{Pir_qfree}, there always exists a defining family of
$A$-stable seminorms on $B$. However, it is unclear whether it is possible
to find a defining family of seminorms that are $A$-stable and submultiplicative simultaneously.
\end{remark}

The following lemma (see \cite[Corollary 1.3]{Pir_locsolv})
is a special case of a useful result due to Mitiagin, Rolewicz, and
\.Zelazko \cite{MRZ}.

\begin{lemma}
\label{lemma:MRZ2}
Let $A$ be a topological algebra. Suppose that $A$ has a base
$\mathscr U$ of absolutely convex $0$-neighborhoods with the property that
for each $V\in\mathscr U$ there exist $U\in\mathscr U$ and $C>0$
such that $UV\subset CV$. Then $A$ is locally $m$-convex.
\end{lemma}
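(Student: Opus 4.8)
The plan is to build, directly and explicitly, a defining family of submultiplicative seminorms for $A$, which by the very definition of local $m$-convexity is all that is required. The natural candidates are the seminorms coming from the left regular representation of $A$ on itself. Since the paper's standing convention makes every algebra unital, the unit $1_A$ will be available to recover the original topology, and it is precisely this that makes the regular representation faithful enough for the argument to close.

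Concretely, for each $V\in\mathscr U$ let $q_V$ denote the Minkowski gauge of $V$; as $V$ is an absolutely convex $0$-neighborhood, $q_V$ is a finite-valued continuous seminorm, and the family $\{q_V\}$ defines the topology of $A$. I would then set
\[
p_V(a)=\sup\{\, q_V(ax) : x\in A,\ q_V(x)\le 1\,\}.
\]
The first step is to convert the hypothesis into a multiplicative estimate: choosing $U\in\mathscr U$ and $C>0$ with $UV\subseteq CV$, a routine gauge computation (using absolute convexity and the homogeneity of $q_U,q_V$) gives $q_V(ax)\le C\,q_U(a)\,q_V(x)$ for all $a,x\in A$. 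This shows at once that $p_V(a)\le C\,q_U(a)$, so each $p_V$ is finite-valued and continuous. The same bound, in the form $q_V(ay)\le p_V(a)\,q_V(y)$ (valid for all $y$ by homogeneity), makes $p_V$ submultiplicative: for $q_V(x)\le1$ one has $q_V(abx)\le p_V(a)\,q_V(bx)\le p_V(a)\,p_V(b)$, and taking the supremum over such $x$ yields $p_V(ab)\le p_V(a)\,p_V(b)$.

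It then remains to check that the submultiplicative family $\{p_V:V\in\mathscr U\}$ defines the original topology. Each $p_V$ is continuous by the above, so the topology it generates is no finer than the given one. For the reverse inclusion I would invoke the unit: taking $y=1_A$ in $q_V(ay)\le p_V(a)\,q_V(y)$ gives $q_V(a)=q_V(a\cdot 1_A)\le q_V(1_A)\,p_V(a)$, so every $q_V$ is dominated by $p_V$. Hence the two families are equivalent, $\{p_V\}$ is a defining family of submultiplicative seminorms, and $A$ is locally $m$-convex.

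The only genuine obstacle is conceptual rather than computational: the hypothesis controls multiplication on one side only ($UV\subseteq CV$), whereas submultiplicativity is a two-sided condition; the left regular representation is exactly the device that turns this one-sided bound into an honest submultiplicative (operator) seminorm, and the unit is what guarantees the resulting seminorms still recover the topology. This is the unital specialization of the Mitiagin--Rolewicz--\.Zelazko theorem \cite{MRZ}: in the general non-unital setting the regular representation may fail to be faithful (e.g.\ for the zero multiplication, where every seminorm is trivially submultiplicative yet the $p_V$ vanish), and a more delicate argument is then needed, but under the present conventions the construction above suffices.
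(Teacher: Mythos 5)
Your proof is correct, and it is complete at every step: the gauge estimate $q_V(ax)\le C\,q_U(a)\,q_V(x)$ does follow from $UV\subseteq CV$ together with absolute convexity (including the degenerate cases $q_U(a)=0$ or $q_V(x)=0$), the operator seminorms $p_V$ are finite, continuous and submultiplicative, and the unit recovers the original topology via $q_V(a)\le q_V(1_A)\,p_V(a)$. The comparison with the paper is slightly unusual here: the paper does not prove the lemma at all, but cites it as a special case of a Mitiagin--Rolewicz--\.Zelazko-type result (via Corollary~1.3 of the author's earlier paper on analytic smash products). So your argument is not a different route so much as a self-contained reconstruction of the argument behind the citation, specialized to the unital setting. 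Your closing remark is also the right caveat: the left-regular-representation device genuinely needs the unit (your zero-multiplication example shows the $p_V$ can collapse otherwise), so your proof establishes the lemma exactly as it is used in this paper --- where all algebras are unital by standing convention --- while the cited MRZ result is stronger in that it covers the non-unital case, at the cost of a more delicate argument. One small point worth making explicit if you write this up: the hypothesis only gives separate one-sided control, and it is precisely the passage to operator seminorms $p_V(a)=\sup\{q_V(ax):q_V(x)\le 1\}$ that converts the one-sided estimate into a genuinely submultiplicative family; you identified this correctly as the crux.
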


Recall a notation from the theory of topological vector spaces
(see, e.g., \cite{Groth,Kothe_II,Schaefer}).
Let $E$ and $F$ be complete locally convex spaces, and let $U\subseteq E$ and
$V\subseteq F$ be $0$-neighborhoods. By a standard abuse of notation, we
let $\ol{\Gamma(U\Tens V)}$ denote the closed absolutely convex hull
of the set $U\Tens V=\{ u\otimes v : u\in U,\; v\in V\}\subset E\Ptens F$.
If now $\cU$ (respectively, $\cV$) is a base of $0$-neighborhoods in $E$
(respectively, $F$), then $\{ \ol{\Gamma(U\Tens V)} : U\in\cU,\; V\in\cV\}$
is a base of $0$-neighborhoods in $E\Ptens F$ (loc. cit.).

Now we are in a position to show that, under appropriate conditions,
the $\Ptens$-smash product and the Arens-Michael smash product are the same.

\begin{prop}
\label{prop:Psmash_stab_AM}
Let $H$ be a $\Ptens$-bialgebra, $A$ be a left $H$-$\Ptens$-module Arens-Michael
algebra, and $B$ be a left $H$-$\Ptens$-comodule Arens-Michael algebra. Suppose that
the action of $H$ on $A$ is $m$-localizable. Then $A\psmash{H} B$
is an Arens-Michael algebra. Equivalently, $A\Smash_{\AM}^H B=A\psmash{H} B$.
\end{prop}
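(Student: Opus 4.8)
The plan is to prove that $A\psmash{H} B$ is \emph{already} an Arens-Michael algebra, whence $A\Smash_{\AM}^H B=A\psmash{H} B$ follows at once from~\eqref{smash_AMenv}. The underlying space of $A\psmash{H} B$ is $A\Ptens B$, which is complete, so only local $m$-convexity needs proof. For this I would invoke the Mitiagin--Rolewicz--\.Zelazko criterion in the form of Lemma~\ref{lemma:MRZ2}: the task reduces to exhibiting a base of absolutely convex $0$-neighborhoods in $A\Ptens B$ such that for each member $V$ there are a member $U$ and a constant $C>0$ with $UV\subseteq CV$.

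First I would fix the seminorms. Since the action of $H$ on $A$ is $m$-localizable, I may choose a directed defining family of submultiplicative, $H$-stable seminorms $\|\cdot\|_A$ on $A$; for each such $\|\cdot\|_A$ the $H$-stability supplies a continuous seminorm $\|\cdot\|_H$ on $H$ with $\|h\cdot a\|_A\le\|h\|_H\|a\|_A$, i.e.\ the action map $\mu_{H,A}$ is bounded by $\|\cdot\|_H\otimes\|\cdot\|_A$. On $B$ I take a directed defining family of submultiplicative seminorms $\|\cdot\|_B$. The projective tensor seminorms $p=\|\cdot\|_A\otimes\|\cdot\|_B$ are exactly the gauges of the base $\{\ol{\Gamma(U_A\Tens U_B)}\}$ of $0$-neighborhoods in $A\Ptens B$ recalled before the proposition, so it suffices to verify the criterion for these $p$.

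The main step is to track these seminorms through the multiplication~\eqref{smashprod}, which is the composite $(\mu_A\otimes\mu_B)(\id_A\otimes\tau_{B,A}\otimes\id_B)$ with $\tau_{B,A}$ given by~\eqref{tau}. Using continuity of the coaction $\Delta_{H,B}$, I would bound $(\|\cdot\|_H\otimes\|\cdot\|_B)(\Delta_{H,B}(b))$ by $s(b)$ for a submultiplicative seminorm $s=\|\cdot\|_B'$ taken from the family on $B$; then, since the flip preserves the projective seminorm and $\mu_{H,A}$ is bounded by $\|\cdot\|_H\otimes\|\cdot\|_A$, I obtain $p\bigl(\tau_{B,A}(b\otimes a')\bigr)\le s(b)\,\|a'\|_A$. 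Feeding this into~\eqref{smashprod} and using submultiplicativity of $\|\cdot\|_A$ and $\|\cdot\|_B$ (so that $\mu_A,\mu_B$ are contractive for the corresponding projective seminorms) yields, on elementary tensors,
\[
p\bigl((a\otimes b)(a'\otimes b')\bigr)
\le \|a\|_A\, s(b)\, \|a'\|_A\, \|b'\|_B
= q(a\otimes b)\, p(a'\otimes b'),
\]
where $q=\|\cdot\|_A\otimes s$ is again a member of our family.

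It then remains to promote this elementary-tensor estimate to $p(xy)\le q(x)\,p(y)$ for all $x,y\in A\Ptens B$. I would do this by the standard argument: for $x,y\in A\otimes B$ choose finite representations whose summed seminorms approximate $q(x)$ and $p(y)$ to within $\eps$, apply the elementary estimate termwise together with subadditivity of $p$, and let $\eps\to 0$; density of $A\otimes B$ and joint continuity of the multiplication then extend the bound to the completion. Setting $V=\{p\le 1\}$ and $U=\{q\le 1\}$, both in the base, this gives $UV\subseteq V$, so Lemma~\ref{lemma:MRZ2} applies and $A\psmash{H} B$ is locally $m$-convex, hence Arens-Michael; combined with~\eqref{smash_AMenv} this yields $A\Smash_{\AM}^H B=A\psmash{H} B$. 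I expect the main obstacle to be the seminorm bookkeeping of the third paragraph --- in particular, pinpointing where $m$-localizability is indispensable, namely in bounding $\mu_{H,A}$ by a \emph{submultiplicative} seminorm on $A$, which is exactly what fails for a general action --- rather than the final application of the criterion, which is formal.
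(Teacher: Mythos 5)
Your proof is correct and takes essentially the same route as the paper's: both reduce the claim to local $m$-convexity of $A\psmash{H} B$ via Lemma~\ref{lemma:MRZ2}, use the $m$-localizable action together with continuity of the coaction $\Delta_{H,B}$ to control $\tau_{B,A}$, and then push the resulting estimate through the multiplication~\eqref{smashprod}. The only difference is language: the paper works with idempotent, $H$-stable absolutely convex $0$-neighborhoods and the hulls $\ol{\Gamma(U\Tens V)}$, while you work with their gauges (submultiplicative, $H$-stable seminorms), which is the same argument in dual form.
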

\begin{proof}
Let $\cU$ be a base of idempotent, $H$-stable $0$-neighborhoods in $A$,
and let $\cV$ be a base of idempotent $0$-neighborhoods in $B$.
Then the family
\[
\cB=\{ \ol{\Gamma(U\Tens V)} : U\in\cU,\; V\in\cV\}
\]
is a $0$-neighborhood base in $A\psmash{H} B$.
Given $U\in\cU$ and $V\in\cV$, let $W\subseteq H$ be a $0$-neighborhood
such that $W\cdot U\subseteq U$. Choose $V'\in\cV$
such that $\Delta_{H,B}(V')\subseteq \ol{\Gamma(W\Tens V)}$.
We claim that
\begin{equation}
\label{UV'}
\ol{\Gamma(U\Tens V')}\; \ol{\Gamma(U\Tens V)} \subseteq \ol{\Gamma(U\Tens V)}.
\end{equation}
Indeed, take $u_1,u_2\in U$, $v_1\in V'$, and $v_2\in V$.
Then $\Delta_{H,B}(v_1)\in\ol{\Gamma(W\Tens V)}$. Since $W\cdot U\subseteq U$,
i.e., $\mu_{H,A}(W\Tens U)\subseteq U$, it follows
that
\[
\begin{split}
\tau_{B,A}(v_1\otimes u_2)
&=(\mu_{H,A}\otimes\id_B)(\id_H\otimes c_{B,A})(\Delta_{H,B}(v_1)\otimes u_2)\\
&\in (\mu_{H,A}\otimes\id_B)(\id_H\otimes c_{B,A})\bigl(\ol{\Gamma(W\Tens V\Tens U)}\bigr)\\
&=(\mu_{H,A}\otimes\id_B)\bigl(\ol{\Gamma(W\Tens U\Tens V)}\bigr)
\subseteq\ol{\Gamma(U\Tens V)}.
\end{split}
\]
Therefore,
\[
\begin{split}
(u_1\otimes v_1)(u_2\otimes v_2) &=
(\mu_A\otimes\mu_B)(u_1\otimes\tau(v_1\otimes u_2)\otimes v_2)\\
& \in (\mu_A\otimes\mu_B)(U\Tens \ol{\Gamma(U\Tens V)}\Tens V)\\
&\subseteq (\mu_A\otimes\mu_B)\bigl(\ol{\Gamma(U\Tens U\Tens V\Tens V)}\bigr)
\subseteq
\ol{\Gamma(U\Tens V)}.
\end{split}
\]
This proves \eqref{UV'}. Thus the base $\cB$ satisfies the conditions of Lemma~\ref{lemma:MRZ2},
whence $A\psmash{H} B$ is an Arens-Michael algebra.
\end{proof}

\begin{prop}
\label{prop:auto_mloc}
Let $H$ be a Banach bialgebra, and let $A$ be a left $H$-$\Ptens$-module
Arens-Michael algebra. Then the action of $H$ on $A$ is $m$-localizable.
\end{prop}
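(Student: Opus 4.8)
The plan is to produce, starting from any defining family of submultiplicative seminorms on $A$, a new defining family whose members are \emph{simultaneously} submultiplicative and $H$-stable; by Definition~\ref{def:m-loc-A} this is exactly what $m$-localizability of the action means. Fix a defining family $(\|\cdot\|_\lambda)_{\lambda\in\Lambda}$ of submultiplicative seminorms on $A$ (which exists since $A$ is an Arens-Michael algebra), and write $\|\cdot\|_H$ for the norm of the Banach bialgebra $H$. For each $\lambda$ I would set
\[
p_\lambda(a)=\sup\{\|h\cdot a\|_\lambda : h\in H,\ \|h\|_H\le 1\}.
\]
The bulk of the proof consists in checking that each $p_\lambda$ is a continuous seminorm dominating $\|\cdot\|_\lambda$, that it is $H$-stable, and that it becomes submultiplicative after rescaling by a constant.

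Finiteness and continuity come from continuity of the action map $\mu_{H,A}\colon H\Ptens A\to A$. Since $H$ is Banach, a base of continuous seminorms on $H\Ptens A$ is obtained by forming, for each continuous seminorm $q$ on $A$, the projective tensor product of $\|\cdot\|_H$ and $q$, whose value on an elementary tensor $h\otimes a$ equals $\|h\|_H\,q(a)$. Continuity of $\mu_{H,A}$ therefore yields, for each $\lambda$, a continuous seminorm $q$ on $A$ with $\|h\cdot a\|_\lambda\le\|h\|_H\,q(a)$ for all $h\in H$, $a\in A$; restricting to $\|h\|_H\le1$ gives $p_\lambda(a)\le q(a)<\infty$, so $p_\lambda$ is a continuous seminorm. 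Domination follows by testing the supremum at $h=c^{-1}1_H$, where $c=\|1_H\|_H$, and using $1_H\cdot a=a$: this gives $p_\lambda(a)\ge c^{-1}\|a\|_\lambda$. Hence the family $(p_\lambda)$, suitably rescaled, is again a defining family.

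For $H$-stability I would note that, for fixed $g\in H$, $p_\lambda(g\cdot a)=\sup_{\|h\|_H\le1}\|(hg)\cdot a\|_\lambda$; since $\|hg\|_H\le\|h\|_H\|g\|_H\le\|g\|_H$, the range of $hg$ lies in the ball of radius $\|g\|_H$, and rescaling the supremum gives $p_\lambda(g\cdot a)\le\|g\|_H\,p_\lambda(a)$, so $\|\cdot\|_H$ witnesses $H$-stability. The one genuinely delicate point — and the step I expect to be the main obstacle — is submultiplicativity, because the action is \emph{not} multiplicative: for a module algebra one only has $h\cdot(ab)=\sum (h_{(1)}\cdot a)(h_{(2)}\cdot b)$, where $\Delta_H(h)=\sum h_{(1)}\otimes h_{(2)}$. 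Here the Banach hypothesis is essential. Since $H\Ptens H$ is a Banach space, $\Delta_H$ is bounded, and for any $\eps>0$ one can choose a representation $\Delta_H(h)=\sum_i h_i'\otimes h_i''$ with $\sum_i\|h_i'\|_H\|h_i''\|_H\le(1+\eps)\|\Delta_H\|\,\|h\|_H$. Combining this with submultiplicativity of $\|\cdot\|_\lambda$ and the scaling bound $\|g\cdot a\|_\lambda\le\|g\|_H\,p_\lambda(a)$ established above, I would estimate
\[
\|h\cdot(ab)\|_\lambda\le\sum_i\|h_i'\cdot a\|_\lambda\,\|h_i''\cdot b\|_\lambda
\le(1+\eps)\|\Delta_H\|\,\|h\|_H\,p_\lambda(a)\,p_\lambda(b),
\]
and then let $\eps\to0$ and take the supremum over $\|h\|_H\le1$ to obtain $p_\lambda(ab)\le\|\Delta_H\|\,p_\lambda(a)p_\lambda(b)$. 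Finally, putting $q_\lambda=C\,p_\lambda$ with $C=\max(\|\Delta_H\|,c,1)$ repairs the multiplicative defect (a rescaling by a constant $\ge\|\Delta_H\|$ makes the seminorm genuinely submultiplicative) while preserving both $H$-stability and the domination $q_\lambda\ge\|\cdot\|_\lambda$. Thus $(q_\lambda)_{\lambda\in\Lambda}$ is a defining family of submultiplicative, $H$-stable seminorms, and the action of $H$ on $A$ is $m$-localizable.
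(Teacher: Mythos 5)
Your proposal is correct and follows essentially the same route as the paper's proof: the same auxiliary seminorm $p_\lambda(a)=\sup\{\|h\cdot a\|_\lambda : \|h\|_H\le 1\}$, the same continuity, domination and $H$-stability checks, and the same use of the boundedness of $\Delta_H$ to obtain submultiplicativity up to a constant that is then rescaled away. The only inessential difference is that you carry out the key estimate via an $\eps$-optimal series representation of $\Delta_H(h)$ in $H\Ptens H$, whereas the paper phrases the identical computation in terms of projective tensor seminorms (its estimate $\|u\cdot v\|'_{\lambda,\pi}\le\|u\|_\pi\|v\|'_{\lambda,\pi}$).
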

\begin{proof}
Let $\{ \|\cdot\|_\lambda : \lambda\in\Lambda\}$ be a directed defining family
of submultiplicative seminorms on $A$.
Without loss of generality, we assume that the norm $\|\cdot\|$ on $H$ is submultiplicative,
and that $\| 1_H\|=1$.
Given $\lambda\in\Lambda$, find $\mu\in\Lambda$ and $C_1>0$ such that
\begin{equation}
\label{act_cont}
\| h\cdot a\|_\lambda\le C_1\| h\| \| a\|_\mu \qquad (h\in H,\; a\in A).
\end{equation}
Define a seminorm $\|\cdot\|'_\lambda$ on $A$ by
\[
\| a\|'_\lambda=\sup\{ \| h\cdot a\|_\lambda : \| h\|\le 1\}.
\]
By \eqref{act_cont}, we have $\|\cdot\|'_\lambda\le C_1\|\cdot\|_\mu$,
whence $\|\cdot\|'_\lambda$ is a continuous seminorm on $A$.
Letting $h=1$, we see that $\|\cdot\|_\lambda\le\|\cdot\|'_\lambda$.
Let now $h\in H$, $a\in A$, and assume that $\| h\|\le 1$. Then
\[
\| h\cdot a\|'_\lambda
=\sup\{ \| h_1 h\cdot a\|_\lambda : \| h_1\|\le 1\} \le \| a\|'_\lambda,
\]
due to the submultiplicativity of $\|\cdot\|$. Hence
\begin{equation}
\label{act_stab}
\| h\cdot a\|'_\lambda \le \| h\| \| a\|'_\lambda
\qquad (h\in H,\; a\in A),
\end{equation}
and so $\|\cdot\|'_\lambda$ is $H$-stable.

Let $\|\cdot\|_\pi$ denote the projective tensor norm on $H\Ptens H$, and let
$\|\cdot\|'_{\lambda,\pi}$ denote the projective tensor seminorm on $A\Ptens A$
associated to $\|\cdot\|'_\lambda$.
Observe that~\eqref{act_stab} implies the estimate
\begin{equation}
\label{act_stab_2}
\| u\cdot v\|'_{\lambda,\pi}\le \| u\|_\pi \| v\|'_{\lambda,\pi} \qquad
(u\in H\Ptens H,\; v\in A\Ptens A).
\end{equation}
Choose $C\ge 1$ such that
\begin{equation}
\label{mu_cosub}
\| \Delta_H(h)\|_\pi\le C \| h\| \qquad (h\in H).
\end{equation}
For each $a,b\in A$ we have
\begin{align*}
\| ab\|'_\lambda
&=\sup\{ \| h\cdot ab\|_\lambda : \| h\|\le 1\}\\
&=\sup\{ \| \mu_A(\Delta_H(h)\cdot (a\otimes b))\|_\lambda : \| h\|\le 1\}
&& \text{(since $\mu_A$ is an $H$-module morphism)}\\
&\le\sup\{ \| \Delta_H(h)\cdot (a\otimes b)\|_{\lambda,\pi} : \| h\|\le 1\}
&& \text{(since $\|\cdot\|_\lambda$ is submultiplicative)}\\
&\le\sup\{ \| \Delta_H(h)\cdot (a\otimes b)\|'_{\lambda,\pi} : \| h\|\le 1\}
&& \text{(since $\|\cdot\|_\lambda\le \|\cdot\|'_\lambda$)}\\
&\le\sup\{ \| \Delta_H(h)\|_\pi \| a\otimes b\|'_{\lambda,\pi} : \| h\|\le 1\}
&& \text{(by \eqref{act_stab_2})}\\
&\le C \| a\otimes b\|'_{\lambda,\pi}
&& \text{(by \eqref{mu_cosub})}\\
&=C \| a\|'_\lambda \| b\|'_\lambda.
\end{align*}
Hence $\|\cdot\|''_\lambda=C\|\cdot\|'_\lambda$ is a continuous, $H$-stable,
submultiplicative seminorm on $A$, and $\|\cdot\|_\lambda\le \|\cdot\|''_\lambda$.
Therefore $\{ \|\cdot\|''_\lambda : \lambda\in\Lambda\}$ is a directed defining family
of $H$-stable, submultiplicative seminorms on $A$, and
the action of $H$ on $A$ is $m$-localizable.
\end{proof}

\begin{corollary}
\label{cor:smash_biarens}
Let $H$ be a Banach bialgebra, $A$ be a left $H$-$\Ptens$-module
Arens-Michael algebra, and $B$ be a left $H$-$\Ptens$-comodule Arens-Michael algebra.
Then $A\psmash{H} B$
is an Arens-Michael algebra. Equivalently, $A\Smash_{\AM}^H B=A\psmash{H} B$.
\end{corollary}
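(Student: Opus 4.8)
The plan is to obtain this as a direct specialization of Proposition~\ref{prop:Psmash_stab_AM}, using Proposition~\ref{prop:auto_mloc} to discharge its sole nontrivial hypothesis. First I would invoke Proposition~\ref{prop:auto_mloc}: since $H$ is by assumption a \emph{Banach} bialgebra and $A$ is a left $H$-$\Ptens$-module Arens-Michael algebra, that proposition guarantees that the action of $H$ on $A$ is $m$-localizable. This is exactly the extra assumption required by Proposition~\ref{prop:Psmash_stab_AM}, so the two statements are designed to dovetail.

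With $m$-localizability in hand, I would then apply Proposition~\ref{prop:Psmash_stab_AM} to the same data $(H,A,B)$. It yields at once that $A\psmash{H} B$ is an Arens-Michael algebra. The asserted identity $A\Smash_{\AM}^H B=A\psmash{H} B$ is the equivalent reformulation already carried by Proposition~\ref{prop:Psmash_stab_AM}: once $A\psmash{H} B$ is known to be Arens-Michael, its Arens-Michael envelope coincides with itself, so the defining formula~\eqref{smash_AMenv} collapses to $A\Smash_{\AM}^H B=(A\psmash{H} B)\sphat=A\psmash{H} B$.

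At this level there is essentially no obstacle: the corollary is a formal concatenation of the two preceding propositions, and the only thing to notice is that the Banach hypothesis on $H$ is precisely what feeds Proposition~\ref{prop:auto_mloc}. The genuine analytic content lives one step earlier, in Proposition~\ref{prop:auto_mloc}, where the estimate $\|\Delta_H(h)\|_\pi\le C\|h\|$ (available because $H$ is Banach, so that $\Delta_H$ is a bounded operator into $H\Ptens H$) is what permits manufacturing a defining family of $H$-stable \emph{and} submultiplicative seminorms on $A$ out of a merely submultiplicative one. Thus the Banach assumption here is doing exactly the work of replacing the abstract $m$-localizability hypothesis of Proposition~\ref{prop:Psmash_stab_AM} by a verifiable structural condition.
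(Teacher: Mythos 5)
Your proposal is correct and is exactly the paper's own argument: the paper proves this corollary by combining Proposition~\ref{prop:Psmash_stab_AM} and Proposition~\ref{prop:auto_mloc}, with the Banach hypothesis on $H$ serving precisely to verify the $m$-localizability hypothesis. Your additional remarks on the role of the estimate $\|\Delta_H(h)\|_\pi\le C\|h\|$ and the collapse of~\eqref{smash_AMenv} accurately reflect how the pieces fit together.
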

\begin{proof}
Combine Propositions \ref{prop:Psmash_stab_AM} and \ref{prop:auto_mloc}.
\end{proof}

\begin{remark}
An inspection of the above proof shows that Proposition~\ref{prop:auto_mloc}
holds for each Arens-Michael bialgebra $H$ that can be represented as an inverse limit
of Banach bialgebras. Equivalently, this means that the topology on $H$ can be generated by
a family $\{ \|\cdot\|_\lambda : \lambda\in\Lambda\}$ of submultiplicative seminorms with the
additional property that $\|\Delta_H(h)\|_{\lambda,\pi}\le C_\lambda \| h\|_\lambda\; (h\in H)$,
for suitable constants $C_\lambda>0$. We restrict ourselves to Banach bialgebras
mostly because this is enough for our purposes, and also because
the vast majority of natural nonnormable Arens-Michael bialgebras do not have the above property.
We do not know, however, whether Proposition~\ref{prop:auto_mloc} holds for each
Arens-Michael bialgebra $H$.
\end{remark}

We now specialize to smash products coming from semigroup actions
and semigroup graded algebras. Given a unital semigroup $S$,
we endow the Banach algebra $\ell^1(S)$ with the structure
of a Banach bialgebra by letting
\[
\Delta(s)=s\otimes s,\quad \eps(s)=1 \qquad (s\in S).
\]
It is elementary to check that $\Delta$ and $\eps$ uniquely extend to continuous
comultiplication and counit on $\ell^1(S)$, respectively, and that $(\ell^1(S),\Delta,\eps)$ is indeed
a Banach bialgebra.

\begin{prop}
\label{prop:l1-act}
Let $S$ be a unital semigroup, and let $A$ be a $\Ptens$-algebra.
\begin{mycompactenum}
\item Suppose that $S$ acts on $A$ by endomorphisms in such a way that the
action is equicontinuous.
Then $A$ is a left $\ell^1(S)$-$\Ptens$-module algebra via the map
\begin{equation}
\label{l1_act}
\mu_{S,A}\colon\ell^1(S)\times A\to A,
\quad \Bigl(\sum_{s\in S} c_s s,a\Bigr)\mapsto\sum_{s\in S} c_s (s\cdot a).
\end{equation}
\item Conversely, if $A$ is endowed with a left $\ell^1(S)$-$\Ptens$-module algebra structure,
then the respective action of $S$ on $A$ is equicontinuous.
\end{mycompactenum}
\end{prop}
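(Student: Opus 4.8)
The plan is to set up the standard dictionary between the bialgebra structure on $\ell^1(S)$ and semigroup actions, and then verify the two directions by reducing everything to identities on the basis elements $s\in S\subseteq\ell^1(S)$ and extending by density and continuity. Recall that the linear span of $\{s:s\in S\}$ is dense in $\ell^1(S)$, that $\Delta(s)=s\otimes s$, $\eps(s)=1$, and that $\|s\|_1=1$ for every $s\in S$. Since $\Delta(s)=s\otimes s$ and $\eps(s)=1$, the module-algebra conditions for a single $s$ read $s\cdot(ab)=(s\cdot a)(s\cdot b)$ and $s\cdot 1_A=1_A$; that is, the module-algebra axioms restricted to the generators are exactly the statement that each $s$ acts as a unital endomorphism. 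This elementary computation is what links the two notions, and the whole proof is organized around it.

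For (i), I would first check that $\mu_{S,A}$ is well defined and jointly continuous. Fix a continuous seminorm $p$ on $A$. Equicontinuity of the family $\{a\mapsto s\cdot a:s\in S\}$ furnishes a continuous seminorm $q$ with $p(s\cdot a)\le q(a)$ for all $s\in S$ and $a\in A$. Hence for $\xi=\sum_s c_s s\in\ell^1(S)$ the family $(c_s(s\cdot a))_{s\in S}$ satisfies $\sum_s p(c_s(s\cdot a))\le\|\xi\|_1\,q(a)<\infty$, so it is absolutely summable, hence summable in the complete space $A$, and $p(\mu_{S,A}(\xi,a))\le\|\xi\|_1\,q(a)$. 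This estimate shows $\mu_{S,A}$ is jointly continuous and therefore induces a continuous linear map $\ell^1(S)\Ptens A\to A$. The module axioms are then checked on the dense span of $\{s\}$: the unit $e$ of $S$ acts as $\id_A$, and $(st)\cdot a=s\cdot(t\cdot a)$ gives $(\xi\eta)\cdot a=\xi\cdot(\eta\cdot a)$ on basis elements, which extends by continuity. Finally the module-algebra axioms, namely that $\mu_A$ and $\eta_A$ are $\ell^1(S)$-module morphisms, reduce via the tensor-product module structure $s\cdot(a\otimes b)=(s\cdot a)\otimes(s\cdot b)$ to the identities $s\cdot(ab)=(s\cdot a)(s\cdot b)$ and $s\cdot 1_A=1_A$ recorded above, which hold because each $s$ acts by a unital endomorphism; one then extends to all of $\ell^1(S)$ by continuity.

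For (ii), the associated action of $S$ on $A$ is the one in which $s\in S$ acts through the corresponding basis vector of $\ell^1(S)$, namely $a\mapsto s\cdot a$. The module structure map $\ell^1(S)\Ptens A\to A$ is continuous, so its composite with the canonical bilinear map $\ell^1(S)\times A\to\ell^1(S)\Ptens A$ is jointly continuous. Since $\ell^1(S)$ is normed, joint continuity yields, for each continuous seminorm $p$ on $A$, a constant $C>0$ and a continuous seminorm $q$ on $A$ with $p(\xi\cdot a)\le C\|\xi\|_1\,q(a)$. Specializing to $\xi=s$ and using $\|s\|_1=1$ gives $p(s\cdot a)\le C\,q(a)$ uniformly in $s\in S$, which is precisely equicontinuity of the action.

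The only genuinely delicate point is the passage in (i) from identities holding on the dense span of $\{s:s\in S\}$ to identities on all of $\ell^1(S)\Ptens A$: one must check that each side of the module and module-algebra identities is continuous in the relevant variables, so that density applies. This is routine given the continuity estimate established in the first step, and I expect no real obstacle beyond this bookkeeping.
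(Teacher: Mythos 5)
Your proof is correct and takes essentially the same route as the paper: in (i) the same equicontinuity-to-absolute-summability estimate yielding the jointly continuous bilinear map $\ell^1(S)\times A\to A$, and in (ii) the same specialization of the joint-continuity estimate to $h=s$ using $\|s\|_1=1$. The only difference is that the paper compresses your verification of the module and module-algebra axioms on the dense span of $S$ (via $\Delta(s)=s\otimes s$, $\eps(s)=1$) into the phrase ``readily verified.''
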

\begin{proof}
(i) Let $\{\|\cdot\|_\lambda : \lambda\in\Lambda\}$ be a directed defining family
of seminorms on $A$. Given $\lambda\in\Lambda$, find $\mu\in\Lambda$ and $C>0$ such
that $\| s\cdot a\|_\lambda\le C\| a\|_\mu$ for all $s\in S,\; a\in A$.
Then for each $h=\sum_s c_s s\in\ell^1(S)$ we have
\begin{equation}
\label{l1_act_est}
\sum_{s\in S} \| c_s (s\cdot a)\|_\lambda
\le C\sum_{s\in S} |c_s| \| a\|_\mu=C\| h\| \| a\|_\mu.
\end{equation}
Thus the family $(c_s (s\cdot a))_{s\in S}$ is absolutely summable in $A$, and~\eqref{l1_act}
yields a jointly continuous bilinear map from $\ell^1(S)\times A$ to $A$.
The axioms of an $\ell^1(S)$-$\Ptens$-module algebra are readily verified.

(ii) Let $\{\|\cdot\|_\lambda : \lambda\in\Lambda\}$ be a directed defining family
of seminorms on $A$. Given $\lambda\in\Lambda$, choose $\mu\in\Lambda$ and $C>0$
such that
\begin{equation}
\label{l1_act_est_2}
\| h\cdot a\|_\lambda\le C\| h\| \| a\|_\mu \qquad (h\in\ell^1(S),\; a\in A).
\end{equation}
Letting $h=s\in S$, we see that the action of $S$ on $A$ is equicontinuous.
\end{proof}

To introduce an appropriate ``analytic'' version of
a semigroup graded algebra, we need a definition from the theory of locally convex spaces.

\begin{definition}
\label{def:Schauder_dec}
Let $E$ be a complete locally convex space.
Following \cite[3.3]{Dineen}, we say that a family $\{ E_s : s\in S\}$ of vector subspaces of $E$
is an {\em absolute Schauder decomposition} of $E$ if the following conditions hold:
\begin{mycompactenum}
\item for each $x\in E$ there exists
a unique family $(x_s)_{s\in S}\in\prod_{s\in S} E_s$ such that $x=\sum_{s\in S} x_s$;
\item for each continuous seminorm $\|\cdot\|$ on $E$ we have
$\| x\|'=\sum_{s\in S} \| x_s\|<\infty$ (i.e., the family $(x_s)_{s\in S}$ is absolutely summable
to $x$), and, moreover, the seminorm $\|\cdot\|'$ is continuous.
\end{mycompactenum}
We say that a seminorm $\|\cdot\|$ on $E$ is {\em graded} if for each $x\in E$
we have $\| x\|=\sum_{s\in S} \| x_s\|$. It follows from the above definition that
the family of all continuous graded seminorms generates the original topology on $E$.
\end{definition}

\begin{definition}
\label{def:abs_grad}
Let $S$ be a unital semigroup, and let $A$ be a $\Ptens$-algebra. By an {\em absolute $S$-grading}
on $A$ we mean an absolute Schauder decomposition $\{ A_s : s\in S\}$ of $A$ such that
for each $s,t\in S$ we have $A_s A_t\subseteq A_{st}$.
An {\em absolutely $S$-graded $\Ptens$-algebra} is a $\Ptens$-algebra
endowed with an absolute $S$-grading.
\end{definition}

Similarly to the purely algebraic case, it is easy to see that for each $a,b\in A$ and each $r\in S$
we have $(ab)_r=\sum_{st=r} a_s b_t$. We also have $1\in A_e$, where $e$ is the identity
element of $S$ (cf. \cite[1.1.1]{Nast_methods}).

\begin{prop}
\label{prop:l1-coact}
Let $S$ be a unital semigroup, and let $A$ be a $\Ptens$-algebra.
\begin{mycompactenum}
\item If $A$ is endowed with an absolute $S$-grading,
then $A$ is a left $\ell^1(S)$-$\Ptens$-comodule algebra via the map
\begin{equation}
\label{l1_coact}
\Delta_{S,A}\colon A\to\ell^1(S)\Ptens A,
\quad a\mapsto\sum_{s\in S} s\otimes a_s \qquad (a\in A).
\end{equation}
\item Conversely, if $A$ is endowed with a left $\ell^1(S)$-$\Ptens$-comodule algebra structure,
then the subspaces $A_s=\{ a\in A : \Delta_{S,A}(a)=s\otimes a\}$ form an absolute
$S$-grading on $A$.
\end{mycompactenum}
\end{prop}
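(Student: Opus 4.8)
The plan is to treat both parts as two readings of the standard identification $\ell^1(S)\Ptens A\cong\ell^1(S,A)$, under which an element of $\ell^1(S)\Ptens A$ is a unique absolutely summable family written as $\sum_{s\in S}s\otimes x_s$ with $x_s\in A$, and under which the seminorm attached to a continuous seminorm $\|\cdot\|$ on $A$ is $\sum_{s\in S}s\otimes x_s\mapsto\sum_{s\in S}\|x_s\|$. Granting this, parts (i) and (ii) become mutually inverse translations between the grading data $\{A_s\}$ and the coaction $\Delta_{S,A}$. Throughout I would use only $\Delta(s)=s\otimes s$ and $\eps(s)=1$ for the bialgebra structure of $\ell^1(S)$, together with the fact (recalled before the proposition) that the projective tensor product of two left $\ell^1(S)$-$\Ptens$-comodules carries the coaction $m\otimes n\mapsto\sum_{s,t}st\otimes m_s\otimes n_t$.

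For (i), first I would check that $\Delta_{S,A}(a)=\sum_s s\otimes a_s$ is well defined and continuous: by Definition~\ref{def:Schauder_dec} the family $(a_s)$ is absolutely summable and the graded seminorm $a\mapsto\sum_s\|a_s\|$ is continuous, and this is exactly the value of the tensor seminorm of $\Delta_{S,A}(a)$, so $\Delta_{S,A}$ lands in $\ell^1(S)\Ptens A$ and is continuous. The counit axiom $(\eps\otimes\id_A)\Delta_{S,A}=\id_A$ is immediate from $\eps(s)=1$ and $\sum_s a_s=a$. For coassociativity I would use $\Delta(s)=s\otimes s$ on the left and, on the right, the identity $\Delta_{S,A}(a_s)=s\otimes a_s$ (which holds because $a_s\in A_s$ and the grading decomposition is unique); both sides then equal $\sum_s s\otimes s\otimes a_s$. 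Finally, to see that $A$ is a comodule algebra I would reduce compatibility of $\mu_A$ to the identity $(ab)_r=\sum_{st=r}a_s b_t$ (valid since $A_sA_t\subseteq A_{st}$ and decompositions are unique), which matches the tensor-comodule coaction $\sum_{s,t}st\otimes a_s b_t$; compatibility of $\eta_A$ reduces to $1\in A_e$.

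For (ii), I would read the coaction through the same identification, writing $\Delta_{S,A}(a)=\sum_s s\otimes P_s(a)$ and thereby defining continuous linear maps $P_s\colon A\to A$ (the coordinate projections $\mathrm{pr}_s\circ\Delta_{S,A}$). Evaluated on $a$, coassociativity gives $\sum_s s\otimes s\otimes P_s(a)$ on one side and $\sum_{s,t}s\otimes t\otimes P_t P_s(a)$ on the other; comparing coordinates yields the orthogonal-idempotent relations $P_t P_s=\delta_{s,t}P_s$, while the counit axiom yields $\sum_s P_s=\id_A$ with absolutely summable sum. It is then routine that $A_s=\Im P_s=\{a:\Delta_{S,A}(a)=s\otimes a\}$, that $a=\sum_s P_s(a)$ is the unique decomposition (uniqueness by applying the continuous $P_t$ termwise), and that absolute summability together with continuity of the graded seminorm follow from continuity of $\Delta_{S,A}$ and the explicit form of the tensor seminorm. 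The grading property $A_sA_t\subseteq A_{st}$ comes from compatibility of $\mu_A$: for $a\in A_s$ and $b\in A_t$ the tensor-comodule coaction collapses to $st\otimes ab$, so $ab\in A_{st}$.

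The only step that is not pure bialgebra bookkeeping is the structural identification $\ell^1(S)\Ptens A\cong\ell^1(S,A)$ with its explicit seminorms; this is the classical description of projective tensor products with $\ell^1$-spaces (isometric for Banach $A$, seminorm-by-seminorm in the complete locally convex case), and it is what makes the coordinate projections $P_s$ continuous and matches the graded seminorm with a tensor seminorm. I expect this to be the main, and essentially the only, obstacle; once it is in place, both directions are formal.
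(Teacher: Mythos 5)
Your proposal is correct and follows essentially the same route as the paper: both rest on the identification $\ell^1(S)\Ptens A\cong\ell^1(S,A)$ (the paper cites Pietsch, 7.2.3, for exactly this), after which part (i) reduces to the graded-seminorm computation plus the identities $(ab)_r=\sum_{st=r}a_sb_t$ and $1\in A_e$, and part (ii) extracts the components from coassociativity and the counit axiom. Your packaging of part (ii) via the coordinate projections $P_s$ with $P_tP_s=\delta_{s,t}P_s$ is only a cosmetic rephrasing of the paper's direct derivation of $\Delta_{S,A}(a_s)=s\otimes a_s$, so the two arguments coincide in substance.
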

\begin{proof}
Let $\{ \|\cdot\|_\lambda : \lambda\in\Lambda\}$
be a directed defining family of
seminorms on $A$. For each $\lambda\in\Lambda$, we will use the same notation
$\|\cdot\|_\lambda$ for the respective projective tensor seminorm on $\ell^1(S)\Ptens A$.

(i) Suppose that $A$ is absolutely $S$-graded. Without loss of generality, we may assume that
each seminorm $\|\cdot\|_\lambda$ is graded. For each $a\in A$, we have
\[
\sum_{s\in S} \| s\otimes a_s\|_\lambda=\sum_{s\in S} \| a_s\|_\lambda=\| a\|_\lambda.
\]
Hence the family $(s\otimes a_s)_{s\in S}$ is absolutely summable in $\ell^1(S)\Ptens A$,
and~\eqref{l1_coact} yields a continuous linear map from $A$ to $\ell^1(S)\Ptens A$.
The axioms of a left $\ell^1(S)$-$\Ptens$-comodule
algebra are readily verified.

(ii) Conversely, assume that $A$ is a left $\ell^1(S)$-$\Ptens$-comodule algebra.
Recall \cite[7.2.3]{Pietsch_nucl} that there exists a topological isomorphism between $\ell^1(S)\Ptens A$
and the locally convex space $\ell^1(S,A)$ of all absolutely summable families $(a_s)_{s\in S}$
in $A$. Explicitly, the isomorphism takes each $(a_s)_{s\in S}\in\ell^1(S,A)$
to $\sum_{s\in S} s\otimes a_s\in\ell^1(S)\Ptens A$,
and for all $\lambda\in\Lambda$ we have
$\| \sum_{s\in S} s\otimes a_s\|_\lambda=\sum_{s\in S} \| a_s\|_\lambda$.
Hence for each $a\in A$ there
exists a unique absolutely summable family $(a_s)_{s\in S}$ in $A$ such that
the coaction $\Delta_{S,A}$ of $\ell^1(S)$ on $A$ is given by~\eqref{l1_coact}.
By coassociativity, we have
\[
\sum_{s\in S} s\otimes\Delta_{S,A}(a_s)=\sum_{s\in S} s\otimes s\otimes a_s,
\]
whence $\Delta_{S,A}(a_s)=s\otimes a_s$, i.e., $a_s\in A_s$.
We also have
\[
a=(\eps\otimes\id_A)(\Delta_{S,A}(a))=\sum_{s\in S} a_s\qquad (a\in A).
\]
If $(a'_s)_{s\in S}\in\prod_s A_s$ is another summable family
such that $a=\sum_{s\in S} a'_s$, then applying $\Delta_{S,A}$ yields
$\sum_{s\in S} s\otimes a_s=\sum_{s\in S} s\otimes a'_s$, whence $a'_s=a_s$ for all $s\in S$.
Thus condition (i) of Definition~\ref{def:Schauder_dec} is satisfied.
Clearly, for each $\lambda\in\Lambda$ the seminorm $\| a\|'_\lambda=\|\Delta_{S,A}(a)\|_\lambda$
is continuous on $A$. On the other hand,
$\| a\|'_\lambda=\sum_{s\in S} \| a_s\|_\lambda$.
Thus condition (ii) of Definition~\ref{def:Schauder_dec} is
also satisfied, and so $\{ A_s : s\in S\}$ is an absolute Schauder decomposition of $A$.
Finally, for each $a\in A_s$ and $b\in A_t$ we have
$\Delta_{S,A}(ab)=(s\otimes a)(t\otimes b)=st\otimes ab$, i.e., $ab\in A_{st}$.
This completes the proof.
\end{proof}

\begin{theorem}
\label{thm:S-smash}
Let $A$ be a $\Ptens$-algebra. Suppose that a unital semigroup $S$ acts on $A$
by endomorphisms in such a way that the action is equicontinuous.
Let also $B$ be an absolutely $S$-graded $\Ptens$-algebra.
Then there exists a unique $\Ptens$-algebra structure on $A\Ptens B$ such that
\begin{align}
\label{smash_S_1}
(a_1\otimes 1)(a_2\otimes 1)&=a_1a_2\otimes 1 && (a_1,a_2\in A);\\
(1\otimes b_1)(1\otimes b_2)&=1\otimes b_1 b_2 && (b_1,b_2\in B);\\
\label{smash_S_3}
(a\otimes 1)(1\otimes b)&=a\otimes b && (a\in A,\; b\in B);\\
\label{smash_S_4}
(1\otimes b_s)(a\otimes 1)&=(s\cdot a)\otimes b_s && (a\in A,\; b_s\in B_s,\; s\in S).
\end{align}
The resulting $\Ptens$-algebra $A\psmash{S} B$ is equal to $A\psmash{\ell^1(S)} B$.
If, in addition, $A$ and $B$ are Arens-Michael algebras, then so is $A\psmash{S} B$.
Finally, if both $A$ and $B$ are holomorphically finitely generated, then so is $A\psmash{S} B$.
\end{theorem}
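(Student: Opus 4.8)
The plan is to reduce the whole statement to the $\Ptens$-smash product over the Banach bialgebra $H=\ell^1(S)$ and then quote the results already in place. First I would use Proposition~\ref{prop:l1-act}(i) to convert the equicontinuous action of $S$ on $A$ into a left $\ell^1(S)$-$\Ptens$-module algebra structure, and Proposition~\ref{prop:l1-coact}(i) to convert the absolute $S$-grading on $B$ into a left $\ell^1(S)$-$\Ptens$-comodule algebra structure. These make the $\Ptens$-smash product $A\psmash{\ell^1(S)} B$ well defined, and my claim is that it is the algebra sought in the theorem.

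For existence I would check that the multiplication on $A\psmash{\ell^1(S)} B$ satisfies \eqref{smash_S_1}--\eqref{smash_S_4}. Identities \eqref{smash_S_1}--\eqref{smash_S_3} are immediate from \eqref{b=1} and \eqref{a'=1} of Lemma~\ref{lemma:id_smash}. The one genuine computation is \eqref{smash_S_4}: by \eqref{tau_prod} we have $(1\otimes b_s)(a\otimes 1)=\tau_{B,A}(b_s\otimes a)$, and since the coaction \eqref{l1_coact} sends a homogeneous element $b_s\in B_s$ to $s\otimes b_s$, unwinding the definition \eqref{tau} of $\tau_{B,A}$ (apply $\Delta_{H,B}\otimes\id_A$, then the flip, then $\mu_{H,A}\otimes\id_B$) yields $\tau_{B,A}(b_s\otimes a)=(s\cdot a)\otimes b_s$, which is exactly \eqref{smash_S_4}.

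For uniqueness I would observe that the four identities, together with joint continuity of the multiplication and the absolute Schauder decomposition $b=\sum_{s\in S} b_s$, force the value on elementary tensors to be $(a\otimes b)(a'\otimes b')=\sum_{s\in S} a\,(s\cdot a')\otimes b_s b'$; since such tensors span a dense subspace of $A\Ptens B$, any $\Ptens$-algebra structure obeying \eqref{smash_S_1}--\eqref{smash_S_4} must coincide with the one on $A\psmash{\ell^1(S)} B$. This establishes the first assertion together with the identification $A\psmash{S} B=A\psmash{\ell^1(S)} B$.

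The remaining assertions are then formal consequences. Because $\ell^1(S)$ is a Banach bialgebra, Corollary~\ref{cor:smash_biarens} shows that $A\psmash{\ell^1(S)} B$ is an Arens-Michael algebra whenever $A$ and $B$ are, and that $A\Smash_{\AM}^{\ell^1(S)} B=A\psmash{\ell^1(S)} B$. Finally, if $A$ and $B$ are holomorphically finitely generated they are in particular Arens-Michael, so this identification applies, and Corollary~\ref{cor:smash_HFG} gives that $A\Smash_{\AM}^{\ell^1(S)} B$, and hence $A\psmash{S} B$, is HFG. The only steps demanding real care are the verification of \eqref{smash_S_4} and the summability/continuity bookkeeping underlying the uniqueness argument; the Arens-Michael and HFG parts are direct citations of the preceding corollaries.
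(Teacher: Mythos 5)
Your proof is correct and follows essentially the same route as the paper's: both convert the data into $\ell^1(S)$-module/comodule algebra structures via Propositions~\ref{prop:l1-act} and~\ref{prop:l1-coact}, derive the relations \eqref{smash_S_1}--\eqref{smash_S_4} from Lemma~\ref{lemma:id_smash} together with $\Delta_{S,B}(b_s)=s\otimes b_s$, obtain uniqueness by density, and cite Corollaries~\ref{cor:smash_biarens} and~\ref{cor:smash_HFG} for the Arens-Michael and HFG assertions. Your uniqueness argument is slightly more explicit than the paper's (computing the forced product of elementary tensors rather than just invoking the dense subalgebra generated by $a\otimes 1$ and $1\otimes b_s$), but it is the same idea.
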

\begin{proof}
Applying Propositions~\ref{prop:l1-act} and~\ref{prop:l1-coact}, we obtain a
left $\ell^1(S)$-$\Ptens$-module algebra structure on $A$ and a left
$\ell^1(S)$-$\Ptens$-comodule algebra structure on $B$.
Let $A\psmash{S} B = A\psmash{\ell^1(S)} B$. Relations~\eqref{smash_S_1}--\eqref{smash_S_4}
are immediate from Lemma~\ref{lemma:id_smash} modulo the fact that
$\Delta_{S,B}(b_s)=s\otimes b$.
By~\eqref{smash_S_3}, the elements $a\otimes 1\; (a\in A)$
and $1\otimes b_s\; (s\in S,\; b_s\in B_s)$ generate a dense subalgebra of $A\psmash{S} B$,
whence the multiplication on $A\psmash{S} B$ is uniquely
determined by~\eqref{smash_S_1}--\eqref{smash_S_4}.
If both $A$ and $B$ are Arens-Michael algebras, then
Corollary~\ref{cor:smash_biarens} implies
that $A\psmash{S} B$ is an Arens-Michael algebra too.
The last assertion follows from Corollary~\ref{cor:smash_HFG}.
\end{proof}

\section{Skew holomorphic functions}
\label{sect:skew}

A construction closely related to smash products is that of an Ore extension, or a skew
polynomial ring (see, e.g., \cite{MR_book,Kassel}).
Similarly to free products and smash products, Ore extensions also have natural
``analytic'' counterparts~\cite{Pir_qfree}.

Let $A$ be an algebra, and let $\sigma$ be an endomorphism of $A$. Recall that a linear map
$\delta\colon A\to A$ is an {\em $\sigma$-derivation} if $\delta(ab)=\delta(a)b+\sigma(a)\delta(b)$
for all $a,b\in A$. Suppose now that $A$ is an Arens-Michael algebra, $\sigma$ is an endomorphism
of $A$, and $\delta$ is a $\sigma$-derivation such that $\{ \sigma,\delta\}$ is $m$-localizable.
By~\cite[Proposition 4.3]{Pir_qfree}, there exists a unique continuous multiplication
on $\cO(\CC,A)$ such that the canonical embeddings
\begin{equation}
\label{emb_holvect}
\begin{split}
\cO(\CC)&\hookrightarrow\cO(\CC,A),\quad f\mapsto f\otimes 1,\\
A&\hookrightarrow\cO(\CC,A), \quad a\mapsto 1\otimes a,
\end{split}
\end{equation}
are algebra homomorphisms, and such that
\begin{equation}
\label{Ore_relation}
za=\sigma(a)z+\delta(a) \qquad (a\in A),
\end{equation}
where $z\in\cO(\CC)\subset\cO(\CC,A)$ is the complex coordinate.
The resulting $\Ptens$-algebra is denoted by $\cO(\CC,A;\sigma,\delta)$ and is called the
{\em Arens-Michael Ore extension} (or the {\em analytic Ore extension}) of $A$
via $\{\sigma,\delta\}$.
By~\cite[Proposition 4.5]{Pir_qfree}, $\cO(\CC,A;\sigma,\delta)$ is
an Arens-Michael algebra.

By~\cite[Proposition 4.4 and Remark 4.6]{Pir_qfree},
$\cO(\CC,A;\sigma,\delta)$ has the universal property that
for each Arens-Michael algebra $B$, each continuous homomorphism $\varphi\colon A\to B$,
and each $x\in B$ satisfying
$x\varphi(a)=\varphi(\sigma(a)) x+\varphi(\delta(a))\; (a\in A)$,
there exists a unique continuous homomorphism $\psi\colon\cO(\CC,A;\sigma,\delta)\to B$
such that $\psi|_A=\varphi$ and $\psi(z)=x$. In other words, there is a natural bijection
\begin{equation}
\label{Ore_univ}
\begin{split}
\Hom_{\AM}(\cO(\CC,A;\sigma,\delta),B)
\cong \bigl\{ (\varphi,x)&\in\Hom_{\AM}(A,B)\times B :\\
&x\varphi(a)=\varphi(\sigma(a)) x+\varphi(\delta(a))\;\forall a\in A\bigr\}.
\end{split}
\end{equation}

\begin{remark}
The algebra $\cO(\CC,A;\sigma,\delta)$ can be viewed as an analytic analog of
the algebraic Ore extension $A[z;\sigma,\delta]$ (see, e.g., \cite[1.7]{Kassel}).
Note that $A[z;\sigma,\delta]$ is a dense subalgebra of $\cO(\CC,A;\sigma,\delta)$.
In fact, it is rather easy to deduce from~\eqref{Ore_univ} that, if we endow
$A[z;\sigma,\delta]$ with a suitable topology, then
$\cO(\CC,A;\sigma,\delta)$ becomes the Arens-Michael envelope of $A[z;\sigma,\delta]$.
\end{remark}

\begin{prop}
\label{prop:Ore_free}
Let $A$ be an Arens-Michael algebra, $\sigma$ be an endomorphism
of $A$, and $\delta$ be a $\sigma$-derivation on $A$ such that
$\{ \sigma,\delta\}$ is $m$-localizable.
Identify $\cO(\CC)$ and $A$ with the respective
subalgebras of $\cO(\CC)\Pfree A$ via the
canonical embeddings $j_{\cO(\CC)}$ and $j_A$, and let
$J$ be the closed two-sided ideal of
$\cO(\CC)\Pfree A$ generated by
\[
\{ za-\sigma(a)z-\delta(a) : a\in A\}.
\]
Finally, let $S$ be the completion of $(\cO(\CC)\Pfree A)/J$. Then there exists
a unique topological algebra isomorphism
$S\cong\cO(\CC,A;\sigma,\delta)$ such that $z+J\mapsto z$ and $a+J\mapsto a$ for all $a\in A$.
\end{prop}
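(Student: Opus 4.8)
The plan is to prove this exactly as Propositions~\ref{prop:Pfree_quot} and~\ref{prop:smash_free} were proved: by showing that $S$ and $\cO(\CC,A;\sigma,\delta)$ represent one and the same functor on $\AM$, and then invoking the Yoneda lemma. Note first that $S$ is an Arens-Michael algebra, being the completion of a quotient of the Arens-Michael free product $\cO(\CC)\Pfree A$ by a closed two-sided ideal. So I would fix an arbitrary Arens-Michael algebra $B$ and compute $\Hom_\AM(S,B)$ through a chain of natural bijections. Since $B$ is complete and $J$ is closed, continuous homomorphisms out of $S$ correspond to continuous homomorphisms out of $(\cO(\CC)\Pfree A)/J$, which in turn correspond to those continuous homomorphisms $\Phi\colon\cO(\CC)\Pfree A\to B$ that vanish on $J$. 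By the coproduct property~\eqref{Pfree_def} of the Arens-Michael free product, such a $\Phi$ is the same datum as a pair $(\psi,\varphi)\in\Hom_\AM(\cO(\CC),B)\times\Hom_\AM(A,B)$ with $\psi=\Phi\circ j_{\cO(\CC)}$ and $\varphi=\Phi\circ j_A$. Finally, using $\cF_1\cong\cO(\CC)$ together with~\eqref{univ_F} (equivalently, the entire functional calculus for a single element), $\psi$ is determined by the one element $x=\psi(z)\in B$, so $\Hom_\AM(\cO(\CC),B)\cong B$.

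The crux of the argument is translating the condition $\Phi|_J=0$. Because $\Ker\Phi$ is a closed two-sided ideal and $J$ is \emph{by definition} the closed two-sided ideal generated by the elements $za-\sigma(a)z-\delta(a)$, the requirement $\Phi(J)=0$ is equivalent to $\Phi$ vanishing on these generators alone. As $\Phi$ is an algebra homomorphism with $\Phi(z)=x$ and $\Phi|_A=\varphi$, this says $x\varphi(a)-\varphi(\sigma(a))x-\varphi(\delta(a))=0$ for all $a\in A$, i.e.\ the Ore compatibility condition of~\eqref{Ore_univ}. Assembling the bijections yields a natural isomorphism
\[
\Hom_\AM(S,B)\cong\bigl\{(\varphi,x)\in\Hom_\AM(A,B)\times B : x\varphi(a)=\varphi(\sigma(a))x+\varphi(\delta(a))\;\forall a\in A\bigr\},
\]
which is precisely the universal property~\eqref{Ore_univ} of $\cO(\CC,A;\sigma,\delta)$. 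Since this isomorphism is natural in $B$, the Yoneda lemma produces a topological algebra isomorphism $S\cong\cO(\CC,A;\sigma,\delta)$.

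It then remains to identify the effect on the canonical generators. I would trace the universal elements through the Yoneda correspondence: feeding $\mathrm{id}_S$ into the natural isomorphism evaluated at $B=S$ gives the pair $(a\mapsto a+J,\;z+J)$, while the universal datum of $\cO(\CC,A;\sigma,\delta)$ is $(a\mapsto a,\;z)$; matching them forces the isomorphism to send $z+J\mapsto z$ and $a+J\mapsto a$. I expect no genuine obstacle here, since the whole argument is formal. The only points needing care are routine: that passing to the quotient and then the completion does not alter $\Hom_\AM(-,B)$ (using completeness of $B$ and closedness of $J$), the reduction of $\Phi|_J=0$ to vanishing on the generators, and the observation that a single element of an Arens-Michael algebra always admits an entire functional calculus, so that the commutativity hypothesis in~\eqref{univ_O} plays no role for $\cO(\CC)$.
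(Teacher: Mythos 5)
Your proposal is correct and follows essentially the same route as the paper: the paper omits the proof precisely because it is this chain of natural bijections, remarking that it is ``similar to that of Proposition~\ref{prop:smash_free} modulo~\eqref{Ore_univ} and the natural bijection $\Hom_{\AM}(\cO(\CC),B)\cong B$''. Your additional care about reducing $\Phi|_J=0$ to the generators, the passage through the quotient and completion, and the Yoneda tracing of universal elements supplies exactly the routine details the paper leaves implicit.
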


We omit the proof since it is similar to that of Proposition~\ref{prop:smash_free}
modulo~\eqref{Ore_univ} and the natural bijection $\Hom_{\AM}(\cO(\CC),B)\cong B\; (B\in\AM)$.

\begin{corollary}
Let $A$ be an Arens-Michael algebra, $\sigma$ be an endomorphism
of $A$, and $\delta$ be a $\sigma$-derivation on $A$ such that
$\{ \sigma,\delta\}$ is $m$-localizable. If $A$ is holomorphically finitely generated,
then so is $\cO(\CC,A;\sigma,\delta)$.
\end{corollary}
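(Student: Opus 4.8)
The plan is to read off $\cO(\CC,A;\sigma,\delta)$ as a quotient of an HFG algebra by a closed two-sided ideal and then invoke Proposition~\ref{prop:HFG_quot}. The key input is Proposition~\ref{prop:Ore_free}, which identifies $\cO(\CC,A;\sigma,\delta)$ with the completion of $(\cO(\CC)\Pfree A)/J$, where $J$ is the closed two-sided ideal of $\cO(\CC)\Pfree A$ generated by the Ore relations $za-\sigma(a)z-\delta(a)\;(a\in A)$. So everything reduces to showing that $\cO(\CC)\Pfree A$ is HFG and that the completion in Proposition~\ref{prop:Ore_free} is in fact redundant.

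First I would observe that $\cO(\CC)$ is HFG by Example~\ref{ex:HFG_free} (indeed $\cO(\CC)\cong\cF_1$), while $A$ is HFG by hypothesis. Corollary~\ref{cor:HFG_free} then yields that the Arens-Michael free product $\cO(\CC)\Pfree A$ is an HFG algebra; in particular, being HFG, it is a Fr\'echet-Arens-Michael algebra. Thus $J$ is a closed two-sided ideal of a Fr\'echet algebra, and the quotient $(\cO(\CC)\Pfree A)/J$ is again Fr\'echet, hence already complete. Consequently the completion $S$ appearing in Proposition~\ref{prop:Ore_free} coincides with $(\cO(\CC)\Pfree A)/J$ itself, and the isomorphism of that proposition gives
\[
\cO(\CC,A;\sigma,\delta)\cong (\cO(\CC)\Pfree A)/J,
\]
exhibiting $\cO(\CC,A;\sigma,\delta)$ as a genuine quotient of the HFG algebra $\cO(\CC)\Pfree A$ by a closed two-sided ideal.

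At this stage Proposition~\ref{prop:HFG_quot} applies directly and shows that $\cO(\CC,A;\sigma,\delta)$ is holomorphically finitely generated, completing the argument. I do not expect any serious obstacle here: the statement is essentially a specialization of the free-product and quotient stability already established, and the only nonformal point is the completeness of $(\cO(\CC)\Pfree A)/J$, which is immediate once one records that HFG algebras are Fr\'echet and that quotients of Fr\'echet algebras by closed ideals are Fr\'echet. (The hypothesis that $\{\sigma,\delta\}$ be $m$-localizable is used only to guarantee, via Proposition~\ref{prop:Ore_free}, that $\cO(\CC,A;\sigma,\delta)$ is well defined and has the required presentation; it plays no further role in the HFG conclusion.)
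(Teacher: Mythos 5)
Your proof is correct and is exactly the argument the paper intends: the corollary is stated without proof immediately after Proposition~\ref{prop:Ore_free}, and, just like the parallel Corollary~\ref{cor:smash_HFG} for smash products, it follows at once from that proposition combined with Corollary~\ref{cor:HFG_free} and Proposition~\ref{prop:HFG_quot}. Your additional observation that the completion in Proposition~\ref{prop:Ore_free} is redundant here --- since $\cO(\CC)\Pfree A$ is HFG, hence Fr\'echet, and quotients of Fr\'echet algebras by closed ideals are already complete --- is a point the paper leaves implicit, and you handle it correctly.
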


If $\sigma=\id_A$ (respectively, if $\delta=0$), then the algebra $\cO(\CC,A;\sigma,\delta)$
is denoted by $\cO(\CC,A;\delta)$ (respectively, $\cO(\CC,A;\sigma)$).
These algebras can be interpreted as Arens-Michael smash products as follows
(for details, see \cite[Remarks~4.2--4.4]{Pir_qfree}).
Observe that the Fr\'echet-Arens-Michael algebra $\cO(\CC)$ can be made into
a $\Ptens$-bialgebra in two different ways. The first way is to use the additive
structure on $\CC$ and to define a comultiplication $\Delta_{\add}$ and a counit
$\eps_{\add}$ on $\cO(\CC)$ by
\[
\Delta_{\add}(f)(z,w)=f(z+w),\quad \eps_{\add}(f)=f(0)\qquad (f\in\cO(\CC)).
\]
The resulting Arens-Michael bialgebra (which is in fact an Arens-Michael Hopf algebra)
will be denoted by $\cO(\CC_{\add})$. Alternatively, we can use the multiplicative structure
on $\CC$ to define a comultiplication $\Delta_{\mult}$ and a counit
$\eps_{\mult}$ on $\cO(\CC)$ by
\[
\Delta_{\mult}(f)(z,w)=f(zw),\quad \eps_{\mult}(f)=f(1)\qquad (f\in\cO(\CC)).
\]
The resulting Arens-Michael bialgebra will be denoted by $\cO(\CC_{\mult})$.
If now $A$ is an Arens-Michael algebra and $\delta$ is an $m$-localizable derivation on $A$,
then $A$ can be made into an $\cO(\CC_{\add})$-$\Ptens$-module algebra
in such a way that $z\cdot a=\delta(a)\; (a\in A)$. Moreover, the action of $\cO(\CC_{\add})$
on $A$ is easily seen to be $m$-localizable, and we have topological algebra
isomorphisms
\begin{equation}
\label{Ore_smash_1}
\cO(\CC,A;\delta)\cong A\Psmash\cO(\CC_{\add}) \cong A\Smash\nolimits_{\AM} \cO(\CC_{\add}).
\end{equation}
Similarly, if $\sigma$ is an $m$-localizable endomorphism of $A$,
then $A$ can be made into an $\cO(\CC_{\mult})$-$\Ptens$-module algebra
in such a way that $z\cdot a=\sigma(a)\; (a\in A)$. Moreover, the action of $\cO(\CC_{\mult})$
on $A$ is easily seen to be $m$-localizable, and we have topological algebra
isomorphisms
\begin{equation}
\label{Ore_smash_2}
\cO(\CC,A;\sigma)\cong A\Psmash\cO(\CC_{\mult}) \cong A\Smash\nolimits_{\AM} \cO(\CC_{\mult}).
\end{equation}
Of course, the isomorphisms \eqref{Ore_smash_1} and \eqref{Ore_smash_2}
come from their algebraic versions
$A[z;\delta]\cong A\Smash\cO^\reg(\CC_\add)\cong A\Smash U(\CC)$
(where $U(\CC)$ is the enveloping algebra of the abelian Lie algebra $\CC$) and
$A[z;\sigma]\cong A\Smash\cO^\reg(\CC_\mult)\cong A\Smash\CC\Z_+$
(where $\CC\Z_+$ is the semigroup algebra of the additive semigroup $\Z_+$).

Our next goal is to extend the definition of $\cO(\CC,A;\sigma)$ to the case
of $A$-valued functions on a domain $D\subseteq\CC^n$ satisfying some additional
conditions.
Recall that a subset $D\subseteq\CC^n$ is {\em balanced} if for each $\lambda\in\CC$
with $|\lambda|\le 1$ we have $\lambda D\subseteq D$. Given a balanced domain
$D\subseteq\CC^n$ and $f\in\cO(D)$, let $f_m$ denote the homogeneous polynomial
of total degree $m\in\Z_+$ that appears in the Taylor expansion of $f$ at $0$.
Explicitly,
\[
f_m(z)=\sum_{\substack{k\in\Z_+^n\\ |k|=m}} \frac{D^k f(0)}{k!} z^k.
\]
By \cite[Proposition 3.36]{Dineen},  the series $\sum_{m\in\Z_+}  f_m$ absolutely converges
in $\cO(D)$, and we have $f=\sum_{m\in\Z_+} f_m$.
Moreover (loc. cit.), for each compact set $K\subset D$ the seminorm
\[
\| f\|'_K=\sum_{m\in\Z_+} \| f_m\|_K \qquad (\text{where}\; \| f_m\|_K=\sup_{z\in K} |f_m(z)|)
\]
is continuous on $\cO(D)$. Thus we see that $\cO(D)$ becomes an absolutely $\Z_+$-graded
Fr\'echet algebra (see Definition~\ref{def:abs_grad}).

\begin{prop}
\label{prop:skew_hol_bal}
Let $A$ be a $\Ptens$-algebra, and let $\sigma$ be an endomorphism of $A$
such that the family $\{ \sigma^k : k\in\Z_+\}$ is equicontinuous.
Then for each balanced domain $D\subseteq\CC^n$
there exists a unique $\Ptens$-algebra structure on $\cO(D,A)$ such that
the canonical embeddings
\begin{equation*}
\begin{split}
\cO(D)&\hookrightarrow\cO(D,A),\quad f\mapsto f\otimes 1,\\
A&\hookrightarrow\cO(D,A), \quad a\mapsto 1\otimes a,
\end{split}
\end{equation*}
are algebra homomorphisms, and such that
\begin{equation}
\label{rel_skew_hol_bal}
z_i a=\sigma(a)z_i \qquad (a\in A,\; i=1,\ldots ,n).
\end{equation}
The resulting $\Ptens$-algebra $\cO(D,A;\sigma)$ is equal to
$A\psmash{\Z_+}\cO(D)$.
If, in addition, $A$ is an Arens-Michael algebra, then so is $\cO(D,A;\sigma)$.
Finally, if $A$ is holomorphically finitely generated, then so is $\cO(D,A;\sigma)$.
\end{prop}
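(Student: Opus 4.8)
The plan is to realise $\cO(D,A;\sigma)$ as a semigroup smash product over the additive monoid $\Z_+$ and then read off every assertion from Theorem~\ref{thm:S-smash}. First I would set up the two structures that feed into that theorem. Since $\{\sigma^k:k\in\Z_+\}$ is equicontinuous, the rule $k\mapsto\sigma^k$ is an equicontinuous action of the unital semigroup $\Z_+$ on $A$ by endomorphisms, so Proposition~\ref{prop:l1-act}(i) makes $A$ a left $\ell^1(\Z_+)$-$\Ptens$-module algebra. Independently, the homogeneous Taylor expansion recalled just before the statement exhibits $\cO(D)$ as an absolutely $\Z_+$-graded Fr\'echet algebra, with $\cO(D)_m$ the finite-dimensional space of degree-$m$ homogeneous polynomials restricted to $D$; by Proposition~\ref{prop:l1-coact}(i) this makes $\cO(D)$ a left $\ell^1(\Z_+)$-$\Ptens$-comodule algebra. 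I then set $\cO(D,A;\sigma)=A\psmash{\Z_+}\cO(D)$, whose underlying space $A\Ptens\cO(D)$ carries the canonical multiplication of Theorem~\ref{thm:S-smash}; transporting it along the flip $\cO(D,A)=\cO(D)\Ptens A\cong A\Ptens\cO(D)$ equips $\cO(D,A)$ with the desired structure.

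Next I would check the two required properties. Under the flip, $i_A(a)=a\otimes 1$ corresponds to $1\otimes a$ and $i_{\cO(D)}(f)=1\otimes f$ corresponds to $f\otimes 1$, so the first two of the defining identities~\eqref{smash_S_1}--\eqref{smash_S_4} say exactly that the two canonical embeddings $\cO(D)\hookrightarrow\cO(D,A)$ and $A\hookrightarrow\cO(D,A)$ are algebra homomorphisms. The commutation relation~\eqref{rel_skew_hol_bal} is the special case $s=1$, $b_s=z_i$: since $z_i\in\cO(D)_1$ and the action of $1\in\Z_+$ is $\sigma$, combining~\eqref{smash_S_4} with~\eqref{smash_S_3} yields $z_ia=\sigma(a)z_i$.

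For uniqueness I would mimic the closing argument of Theorem~\ref{thm:S-smash}. By~\eqref{smash_S_3} the images of the two embeddings generate the dense subalgebra $A\otimes\cO(D)$, so it suffices to see that the homomorphism property together with~\eqref{rel_skew_hol_bal} forces all of~\eqref{smash_S_1}--\eqref{smash_S_4}. As each graded piece $\cO(D)_m$ is spanned by the monomials $z^k$ with $|k|=m$, iterating~\eqref{rel_skew_hol_bal} gives $ba=\sigma^m(a)b$ for every homogeneous $b$ of degree $m$; this is precisely~\eqref{smash_S_4}, and the four relations then determine the multiplication by bilinearity and separate continuity.

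Finally, the Arens--Michael and HFG assertions are inherited from Theorem~\ref{thm:S-smash}. Since $\cO(D)$ is a Fr\'echet (hence Arens--Michael) algebra, if $A$ is Arens--Michael then so is $A\psmash{\Z_+}\cO(D)$. For the last claim the one extra input is that $\cO(D)$ is itself HFG for every balanced domain $D$: its envelope of holomorphy $\widehat D$ is a schlicht balanced Stein domain in $\CC^n$, hence a Stein manifold of finite embedding dimension with $\cO(D)\cong\cO(\widehat D)$, so $\cO(D)$ is HFG by Theorem~\ref{thm:comm_HFG}; the last assertion of Theorem~\ref{thm:S-smash} then makes $A\psmash{\Z_+}\cO(D)$ HFG whenever $A$ is. The main obstacle is exactly this HFG-ness of $\cO(D)$: unlike $\cO(\CC^n)$, it need not be holomorphically generated by the coordinates, since restriction $\cO(\CC^n)\to\cO(D)$ is rarely onto, so one must pass through the envelope of holomorphy. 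The remaining step, matching the total-degree grading of $\cO(D)$ with the powers of $\sigma$, is routine bookkeeping.
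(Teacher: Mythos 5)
Your argument is correct and follows the same route as the paper's own proof, which consists of exactly your two inputs (the equicontinuous $\Z_+$-action $k\mapsto\sigma^k$ on $A$ as in Proposition~\ref{prop:l1-act}, and the absolute $\Z_+$-grading of $\cO(D)$ by homogeneous polynomials as in Proposition~\ref{prop:l1-coact}) followed by an appeal to Theorem~\ref{thm:S-smash}, ``modulo the obvious fact that $A$ and $z_1,\ldots,z_n$ generate a dense subalgebra of $\cO(D,A;\sigma)$'' --- which is precisely your step of promoting~\eqref{rel_skew_hol_bal} to $b\,a=\sigma^m(a)\,b$ for homogeneous $b$ of degree $m$ and concluding by density.

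Where you go beyond the paper is the final HFG assertion, and your extra step is genuinely needed: the last claim of Theorem~\ref{thm:S-smash} requires \emph{both} factors to be HFG, so one must know that $\cO(D)$ is HFG for an arbitrary balanced domain $D$. This is not automatic --- $D$ need not be pseudoconvex, so $(D,\cO_D)$ need not be Stein, and the coordinates do not holomorphically generate $\cO(D)$ since $\cO(\CC^n)\to\cO(D)$ is rarely onto --- and the paper's proof is silent on it. Your detour through the envelope of holomorphy ($\wh D$ is schlicht and balanced, hence a Stein open subset of $\CC^n$ of finite embedding dimension, with $\cO(D)\cong\cO(\wh D)$, so Theorem~\ref{thm:comm_HFG} applies) is the correct way to fill this in.

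One caveat, which you share with the paper: the homomorphism property of the two embeddings together with~\eqref{rel_skew_hol_bal} does not literally force the normal-ordering identity~\eqref{smash_S_3}. For instance, with $D=\CC$, $A=\cO(\CC)$, $\sigma=\id$, transporting the multiplication of $\cO(\CC^2)$ along the continuous linear automorphism that doubles every mixed Taylor coefficient $c_{jk}$ with $j,k\ge 1$ yields a second $\Ptens$-algebra structure for which both embeddings are still homomorphisms and $za=az$ still holds, yet the product $z\cdot w$ changes. So the uniqueness must be read as in Theorem~\ref{thm:S-smash}, i.e.\ with~\eqref{smash_S_3} imposed alongside~\eqref{rel_skew_hol_bal}; granted that reading, your iteration-plus-density argument is exactly right.
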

\begin{proof}
We have an equicontinuous action of $\Z_+$ on $A$ given by
$k\cdot a=\sigma^k(a)\; (k\in\Z_+,\; a\in A)$.
As we have already observed, $\cO(D)$ is an absolutely $\Z_+$-graded Fr\'echet algebra.
Now the result follows from Theorem~\ref{thm:S-smash} modulo the obvious fact that
$A$ and $z_1,\ldots ,z_n$ generate a dense subalgebra of $\cO(D,A;\sigma)$.
\end{proof}

Proposition~\ref{prop:skew_hol_bal} can be extended to the case of several commuting
endomorphisms of $A$. To this end, we have to impose more restrictions on the
domain $D$. Recall that a domain $D\subseteq\CC^n$ is a {\em Reinhardt domain}
if for each $z=(z_1,\ldots ,z_n)\in D$ and each $\lambda=(\lambda_1,\ldots ,\lambda_n)\in\CC^n$
such that $|\lambda_i|=1\; (i=1,\ldots ,n)$ we have $(\lambda_1 z_1,\ldots ,\lambda_n z_n)\in D$.
If the above condition holds for all $\lambda=(\lambda_1,\ldots ,\lambda_n)\in\CC^n$
such that $|\lambda_i|\le 1\; (i=1,\ldots ,n)$, then $D$ is a {\em complete Reinhardt domain}.
Equivalently, $D$ is a Reinhardt domain (respectively, a complete Reinhardt domain)
if and only if $D$ is a union of closed polyannuli (respectively, of closed polydisks)
centered at $0$.

Given a Reinhardt domain $D\subseteq\CC^n$, let
\[
N_D=\bigl\{ j\in\{1,\ldots ,n\} : D\cap\{ z\in\CC^n : z_j=0\}\ne\varnothing\bigr\}.
\]
Note that if $0\in D$ (e.g., if $D$ is complete), then $N_D=\{ 1,\ldots ,n\}$.
Clearly, the monomial $z^k$ (where $k\in\Z^n$) is defined everywhere on $D$
if and only if $k_j\ge 0$ for all $j\in N_D$.

The following is essentially a restatement of a well-known fact from complex analysis
(see, e.g., \cite[Chap.~II, \S1.2]{Range}).

\begin{lemma}
For each Reinhardt domain $D\subseteq\CC^n$, the subspaces
\[
\cO(D)_k=
\begin{cases}
\spn\{ z^k\} & \text{if $k_j\ge 0$ for all $j\in N_D$};\\
0 & \text{otherwise}
\end{cases}
\qquad (k\in\Z^n)
\]
form an absolute $\Z^n$-grading on $\cO(D)$. In particular, if $0\in D$, then
$\cO(D)$ is absolutely $\Z_+^n$-graded.
\end{lemma}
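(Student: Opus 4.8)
The plan is to deduce the lemma from the classical theory of Laurent expansions on Reinhardt domains, which is the genuine analytic content here (cf. \cite[Chap.~II, \S1.2]{Range}); everything else is checking the definitions. Recall that every $f\in\cO(D)$ admits a Laurent expansion $f=\sum_{k\in\Z^n} c_k z^k$ converging normally on $D$ (absolutely and uniformly on compact subsets), with coefficients $c_k$ uniquely determined by $f$, and with $c_k=0$ whenever the monomial $z^k$ does not extend holomorphically to all of $D$. As noted just before the lemma, $z^k\in\cO(D)$ exactly when $k_j\ge 0$ for all $j\in N_D$; hence the only nonzero subspaces $\cO(D)_k$ are $\spn\{z^k\}$ for such $k$, and putting $f_k=c_k z^k$ we obtain $f=\sum_{k\in\Z^n} f_k$ with $f_k\in\cO(D)_k$.

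First I would verify condition (i) of Definition~\ref{def:Schauder_dec}: existence of the decomposition is the Laurent expansion, while uniqueness of the family $(f_k)_{k\in\Z^n}$ is precisely the uniqueness of the Laurent coefficients. For condition (ii), fix a compact set $K\subset D$ and write $\|g\|_K=\sup_{z\in K}|g(z)|$. Normal convergence yields $\sum_{k}\|f_k\|_K=\sum_k|c_k|\,\|z^k\|_K<\infty$, so the family $(f_k)$ is absolutely summable to $f$; it then remains to show that the graded seminorm $f\mapsto\|f\|'_K:=\sum_k|c_k|\,\|z^k\|_K$ is continuous on $\cO(D)$, i.e. is dominated by $C\|\cdot\|_{K'}$ for some compact $K'\subset D$.

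The continuity of this graded seminorm is the one technical point, and is the Reinhardt analogue of what \cite[Proposition~3.36]{Dineen} supplies in the balanced case; this is where I expect the real work to lie. Since $|z^k|=|z_1|^{k_1}\cdots|z_n|^{k_n}$ is invariant under the torus action $(z_1,\ldots,z_n)\mapsto(\lambda_1 z_1,\ldots,\lambda_n z_n)$ with $|\lambda_j|=1$, and $D$ is Reinhardt, I may replace $K$ by its (compact) torus-saturation without altering any $\|z^k\|_K$, and so assume $K$ is torus-invariant. By compactness $K$ is covered by finitely many closed polyannuli $\bar P_i$ centered at $0$ with $\bar P_i\subset D$ (allowing inner radius $0$ in those coordinates $j\in N_D$, where only nonnegative powers occur). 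Enlarging each slightly to $\bar P_i'\subset D$ and applying the Cauchy integral formula on the distinguished boundary tori of $\bar P_i'$ — choosing for each exponent $k$ the outer radius in coordinates with $k_j\ge 0$ and the inner radius in coordinates with $k_j<0$ — gives estimates of the form $|c_k|\,\|z^k\|_{\bar P_i}\le \theta_i^{|k|}\,\|f\|_{\bar P_i'}$ with $\theta_i<1$. Using $\|z^k\|_K\le\max_i\|z^k\|_{\bar P_i}\le\sum_i\|z^k\|_{\bar P_i}$ and summing the resulting geometric series in $|k|$ bounds $\|f\|'_K$ by $C\|f\|_{K'}$ with $K'=\bigcup_i\bar P_i'$, establishing (ii); thus $\{\cO(D)_k\}$ is an absolute Schauder decomposition of $\cO(D)$.

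Finally I would check the multiplicativity $\cO(D)_k\,\cO(D)_l\subseteq\cO(D)_{k+l}$ required by Definition~\ref{def:abs_grad}. This is immediate from $z^k z^l=z^{k+l}$: if $k_j,l_j\ge 0$ for all $j\in N_D$ then $(k+l)_j\ge 0$ as well, so $z^{k+l}\in\cO(D)$ and the product lands in $\spn\{z^{k+l}\}=\cO(D)_{k+l}$, while if either factor is the zero subspace the product is $0$. Since the semigroup is $\Z^n$ under addition with identity $0$ and $\cO(D)_0=\spn\{1\}$, this is an absolute $\Z^n$-grading. The last assertion then follows at once: if $0\in D$ then $0\in D\cap\{z_j=0\}$ for every $j$, so $N_D=\{1,\ldots,n\}$, whence $\cO(D)_k=0$ unless $k\in\Z_+^n$ and $\cO(D)$ is absolutely $\Z_+^n$-graded.
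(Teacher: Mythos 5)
Your proof is correct and follows essentially the same route as the paper's: the Laurent expansion on Reinhardt domains, a covering of a compact set by finitely many closed polyannuli contained in $D$, and a geometrically decaying coefficient estimate that makes the graded seminorm $\sum_k\|f_k\|_K$ continuous. The only differences are cosmetic: the paper quotes the estimate $\|f_k\|_{\bar\DD_{r,R}}\le\lambda^{|k|}\|f\|_{\bar\DD_{\lambda r,\lambda^{-1}R}}$ directly from \cite[Chap.~II, \S1.2, (1.11)]{Range} rather than re-deriving it from Cauchy's integral formula as you do, and you spell out the routine verifications (uniqueness of the decomposition, multiplicativity $\cO(D)_k\cO(D)_l\subseteq\cO(D)_{k+l}$, and the case $0\in D$) that the paper leaves implicit.
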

\begin{proof}
By \cite[Chap.~II, \S1.2, Theorem 1.5]{Range}, each $f\in\cO(D)$
has a unique Laurent series representation
\[
f(z)=\sum_{k\in\Z^n} c_k z^k \qquad (z\in D),
\]
and $c_k=0$ whenever $k_j<0$ for some $j\in N_D$. Thus for each $k\in\Z^n$
we have $f_k=c_k z^k\in\cO(D)_k$. Fix a compact polyannulus $\bar\DD^n_{r,R}\subset D$,
and let $\lambda\in (0,1)$ be such that $\bar\DD_{\lambda r,\lambda^{-1}R}\subset D$.
By~\cite[Chap.~II, \S1.2, (1.11)]{Range}, we have
\[
\| f_k \|_{\bar\DD_{r,R}} \le \lambda^{|k|} \| f\|_{\bar\DD_{\lambda r,\lambda^{-1}R}}
\qquad (k\in\Z^n).
\]
Therefore
\begin{equation}
\label{Rnh_grad_est}
\sum_{k\in\Z^n} \| f_k \|_{\bar\DD_{r,R}} \le C \| f\|_{\bar\DD_{\lambda r,\lambda^{-1}R}},
\end{equation}
where
\[
C=\sum_{k\in\Z^n} \lambda^{|k|}=\left(\frac{1+\lambda}{1-\lambda}\right)^n.
\]
Since each compact set $K\subset D$ is covered by finitely many compact polyannuli
contained in $D$, it follows from~\eqref{Rnh_grad_est}
that the seminorm $\| f\|_K''=\sum_k \| f_k\|_K$
is continuous on $\cO(D)$. This completes the proof.
\end{proof}

\begin{prop}
\label{prop:skew_hol_Rnhrdt}
Let $A$ be a $\Ptens$-algebra, and let $\sigma=(\sigma_1,\ldots ,\sigma_n)$ be a
commuting family of endomorphisms of $A$. Suppose that $D\subseteq\CC^n$ is a Reinhardt domain,
and that one of the following conditions holds:
\begin{mycompactenum}
\item $0\in D$, and the subsemigroup of $\End(A)$ generated by $\sigma_1,\ldots ,\sigma_n$
is equicontinuous;
\item $\sigma_1,\ldots ,\sigma_n$ are automorphisms, and the subgroup of $\Aut(A)$
generated by $\sigma_1,\ldots ,\sigma_n$ is equicontinuous.
\end{mycompactenum}
Then there exists a unique $\Ptens$-algebra structure on $\cO(D,A)$ such that
the canonical embeddings
\begin{equation*}
\begin{split}
\cO(D)&\hookrightarrow\cO(D,A),\quad f\mapsto f\otimes 1,\\
A&\hookrightarrow\cO(D,A), \quad a\mapsto 1\otimes a,
\end{split}
\end{equation*}
are algebra homomorphisms, and such that
\begin{equation}
\label{rel_skew_hol_Rnhrdt}
z_i a=\sigma_i(a)z_i \qquad (a\in A,\; i=1,\ldots ,n).
\end{equation}
Let $S=\Z_+^n$ for case {\upshape (i)}, and $S=\Z^n$ for case {\upshape (ii)}. Then
the resulting $\Ptens$-algebra $\cO(D,A;\sigma)$ is equal to $A\psmash{S}\cO(D)$.
If, in addition, $A$ is an Arens-Michael algebra, then so is $\cO(D,A;\sigma)$.
Finally, if $A$ is holomorphically finitely generated, then so is $\cO(D,A;\sigma)$.
\end{prop}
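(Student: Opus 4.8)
The plan is to follow the proof of Proposition~\ref{prop:skew_hol_bal} almost verbatim, replacing the single endomorphism by the whole commuting family and the semigroup $\Z_+$ by $S$, and then to invoke Theorem~\ref{thm:S-smash}. First I would promote $\sigma=(\sigma_1,\ldots,\sigma_n)$ to an action of $S$ on $A$ by endomorphisms via $k\cdot a=\sigma^k(a)$, where $\sigma^k=\sigma_1^{k_1}\cdots\sigma_n^{k_n}$; this is a well-defined monoid homomorphism $S\to\End(A)$ because the $\sigma_i$ commute (and, in case (ii), are automorphisms, so that $\sigma^k$ is defined for all $k\in\Z^n$). The operators $\{\sigma^k:k\in S\}$ are exactly the elements of the subsemigroup (case (i)) or subgroup (case (ii)) generated by $\sigma_1,\ldots,\sigma_n$, together with $\id_A=\sigma^0$, so the equicontinuity hypothesis says precisely that the $S$-action is equicontinuous. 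On the other side, the preceding lemma makes $\cO(D)$ into an absolutely $\Z^n$-graded Fr\'echet algebra, which for $0\in D$ (case (i)) is in fact an absolute $\Z_+^n$-grading; thus in both cases $\cO(D)$ is an absolutely $S$-graded $\Ptens$-algebra.

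With the equicontinuous $S$-action on $A$ and the absolute $S$-grading on $\cO(D)$ in hand, Theorem~\ref{thm:S-smash} applies directly and produces a $\Ptens$-algebra structure on $A\Ptens\cO(D)$, namely the smash product $A\psmash{S}\cO(D)$. I would transport this structure to $\cO(D,A)$ along the canonical isomorphism $A\Ptens\cO(D)\cong\cO(D)\Ptens A\cong\cO(D,A)$; under it the embeddings $a\mapsto a\otimes 1$ and $f\mapsto 1\otimes f$ of Theorem~\ref{thm:S-smash} become the two canonical embeddings of the statement, which are algebra homomorphisms by~\eqref{smash_S_1}--\eqref{smash_S_3}. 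Since the degree-$e_i$ component of $\cO(D)$ is $\spn\{z_i\}$, relation~\eqref{smash_S_4} applied to $b_{e_i}=z_i$ reads $(1\otimes z_i)(a\otimes 1)=\sigma_i(a)\otimes z_i$, which after transport becomes exactly~\eqref{rel_skew_hol_Rnhrdt}. Hence the structure exists and $\cO(D,A;\sigma)=A\psmash{S}\cO(D)$.

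For uniqueness I would argue as in Proposition~\ref{prop:skew_hol_bal}: the embedded copies of $A$ and of $\cO(D)$ generate a dense subalgebra of $\cO(D,A)$, and on it the relation together with the multiplications of the two factors fixes every product. Concretely, iterating $z_ia=\sigma_i(a)z_i$ gives the straightening rule $(z^k\otimes 1)(1\otimes a)=z^k\otimes\sigma^k(a)$, by which every word in the generators is brought to the normal form $\sum_k z^k\otimes a_k$ and any product is computed unambiguously. In case (ii) the monomials $z^k$ with some $k_j<0$ (necessarily $j\notin N_D$) already lie in $\cO(D)$, and they are consistent with the relation because $\sigma_j$ is invertible: the $\cO(D)$-embedding being a homomorphism forces $z_j$ to be invertible with inverse $1\otimes z_j^{-1}$, whence $z_j^{-1}a=\sigma_j^{-1}(a)z_j^{-1}$. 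The Arens-Michael assertion is then immediate, since $\cO(D)$ is a Fr\'echet-Arens-Michael algebra: if $A$ is Arens-Michael, both factors are, and the corresponding clause of Theorem~\ref{thm:S-smash} applies.

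The step that needs a genuinely new input, and which I expect to be the main obstacle, is the final HFG claim. By the last clause of Theorem~\ref{thm:S-smash} it suffices to know that the second factor $\cO(D)$ is itself HFG, and this is \emph{not} visible from the generators $z_1,\ldots,z_n$: their holomorphic closure is only the image of the restriction map $\cO(\CC^n)\to\cO(D)$, which is in general a proper subalgebra. I would instead pass to the envelope of holomorphy $\wh D$ of the Reinhardt domain $D$. Restriction induces a topological isomorphism $\cO(\wh D)\cong\cO(D)$, and $\wh D$ is a logarithmically convex Reinhardt domain, hence a Stein manifold of finite embedding dimension. Theorem~\ref{thm:comm_HFG} then gives that $\cO(D)\cong\cO(\wh D)$ is HFG, and the last clause of Theorem~\ref{thm:S-smash} yields that $\cO(D,A;\sigma)=A\psmash{S}\cO(D)$ is HFG whenever $A$ is. Verifying that $\cO(\wh D)\to\cO(D)$ is a topological isomorphism, equivalently that the Laurent-series topology of a Reinhardt domain coincides with that of its logarithmically convex hull, is the technical heart of this last step.
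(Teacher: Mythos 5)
Your proposal is correct, and for the main body of the statement it is exactly the paper's own argument: the paper omits the proof, declaring it ``similar to that of Proposition~\ref{prop:skew_hol_bal}'', and that proof is precisely your reduction --- promote $\sigma$ to an equicontinuous action of $S$ on $A$, use the preceding lemma to view $\cO(D)$ as an absolutely $S$-graded Fr\'echet algebra, apply Theorem~\ref{thm:S-smash} to get $A\psmash{S}\cO(D)$ together with the Arens--Michael clause, and obtain uniqueness from the density of the subalgebra generated by the two embedded factors. Your treatment of case (ii), where the relation $z_j^{-1}a=\sigma_j^{-1}(a)z_j^{-1}$ is forced by the invertibility of $z_j$ in $\cO(D)$, is a detail that the paper's one-line density remark (stated only for $z_1,\ldots,z_n$ in the balanced case) silently passes over, and you handle it correctly. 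The one place where you genuinely go beyond the paper is the final HFG claim, and your instinct there is right: the last clause of Theorem~\ref{thm:S-smash} requires \emph{both} factors to be HFG, and the paper never verifies that $\cO(D)$ is HFG for a Reinhardt (or balanced) domain --- a fact it uses tacitly here and again in Proposition~\ref{prop:q-prod_bal}. Your envelope-of-holomorphy argument is the natural way to supply this: $\wh D$ is a schlicht logarithmically convex Reinhardt domain of holomorphy, hence a Stein manifold of dimension $n$ and of finite embedding dimension, so Theorem~\ref{thm:comm_HFG} applies. One comment: the step you single out as ``the technical heart'' --- that restriction $\cO(\wh D)\to\cO(D)$ is a topological isomorphism --- is not a real obstacle: it is a continuous bijection between Fr\'echet spaces (surjective by the definition of the envelope of holomorphy, injective by the identity theorem), hence a topological isomorphism by the Open Mapping Theorem, with no need to compare Laurent-series topologies by hand.
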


We omit the proof as it is similar to that of Proposition~\ref{prop:skew_hol_bal}.

\begin{remark}
In practice, the equicontinuity of the subsemigroup of $\End(A)$ generated by
commuting endomorphisms $\sigma_1,\ldots ,\sigma_n$ is often reduced to the following
property. Suppose that there exists a defining family $\{ \|\cdot\|_\lambda : \lambda\in\Lambda\}$
of seminorms on $A$ such that
\begin{equation}
\label{suff_equi_semi}
\| \sigma_i(a)\|_\lambda \le \| a\|_\lambda \qquad (a\in A,\; \lambda\in\Lambda,\; i=1,\ldots ,n).
\end{equation}
Then for each $k\in\Z_+^n$ and each $a\in A$ we have $\| \sigma^k(a)\|_\lambda \le \| a\|_\lambda$,
which obviously implies that the subsemigroup of $\End(A)$ generated by $\sigma_1,\ldots ,\sigma_n$
is equicontinuous. Similarly, if $\sigma_1,\ldots ,\sigma_n$ are automorphisms,
then the subgroup of $\Aut(A)$ generated by $\sigma_1,\ldots ,\sigma_n$
is equicontinuous provided that
there exists a defining family $\{ \|\cdot\|_\lambda : \lambda\in\Lambda\}$
of seminorms on $A$ such that
\begin{equation}
\label{suff_equi_grp}
\| \sigma_i(a)\|_\lambda = \| a\|_\lambda \qquad
(a\in A,\; \lambda\in\Lambda,\; i=1,\ldots ,n).
\end{equation}
\end{remark}

\begin{remark}
Algebras $\cO(D,A;\sigma)$ from Proposition~\ref{prop:skew_hol_Rnhrdt}~(i)
can be viewed as analytic analogs of iterated Ore extensions of the following special form.
Given an algebra $A$ and a commuting $n$-tuple $\sigma=(\sigma_1,\ldots ,\sigma_n)$
of endomorphisms of $A$, we define a chain $A_0\subset\cdots\subset A_n$
of algebras by letting $A_0=A$ and $A_k=A_{k-1}[z_k;\tilde\sigma_k]$, where $\tilde\sigma_k$
is the endomorphism of $A_{k-1}$ uniquely determined by
$\tilde\sigma_k|_A=\sigma_k$ and $\tilde\sigma_k(z_i)=z_i\; (i=1,\ldots ,k-1)$.
The existence of such an endomorphism easily follows from the commutativity of the $\sigma_i$'s.
We have $A_n=A[z_1,\ldots ,z_n]$ with multiplication uniquely determined by~\eqref{rel_skew_hol_Rnhrdt}
and by the requirement that the embeddings
$A\hookrightarrow A_n$ and $\CC[z_1,\ldots ,z_n]\hookrightarrow A_n$ are algebra homomorphisms.
If now $A$, $\sigma$, and $D$ are such as in Proposition~\ref{prop:skew_hol_Rnhrdt}~(i),
then $A_n$ is obviously a dense subalgebra of $\cO(D,A;\sigma)$.
If condition (ii) of Proposition~\ref{prop:skew_hol_Rnhrdt} holds, and if $N_D=\varnothing$
(e.g., if $D$ is a polyannulus), then the situation is similar, but one should use
iterated Laurent extensions instead of iterated Ore extensions.

In the special case where $D=\DD_R^n$ is a polydisk in $\CC^n$,
the algebra $\cO(D,A;\sigma)$ can also be defined by a similar iterative procedure.
Specifically, we have a chain $B_0\subset\cdots\subset B_n$
of Fr\'echet algebras with $B_0=A$ and $B_k=\cO(\DD_{R_k},B_{k-1};\tilde\sigma_k)$,
where
\[
\tilde\sigma_k=\sigma_k\otimes\id\colon
A\Ptens\cO(\DD_{R_1}\times\cdots\times\DD_{R_{k-1}})
\to A\Ptens\cO(\DD_{R_1}\times\cdots\times\DD_{R_{k-1}}).
\]
It is easy to show that $B_n\cong\cO(D,A;\sigma)$. A similar picture holds in the case
where $D$ is a polyannulus.
\end{remark}

\section{Examples}
\label{sect:examples}

\subsection{Arens-Michael envelopes}
\label{subsect:AM}

Let $A$ be a finitely generated algebra, and suppose that $A$ is good enough to be interpreted as the
algebra of ``regular functions'' on a ``noncommutative affine variety''.
According to the point of view adopted in \cite{Pir_qfree}, the Arens-Michael envelope of $A$
is then a natural candidate for the algebra of ``holomorphic
functions'' on the same ``variety''. This agrees with the philosophy of the present paper,
as we will now see.

\begin{prop}
\label{prop:AM_HFG}
If $A$ is a finitely generated algebra, then $\wh{A}$ is holomorphically finitely generated.
\end{prop}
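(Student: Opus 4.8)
The plan is to exhibit $\wh A$ as a quotient of $\cF_n$ by a closed two-sided ideal and then invoke Proposition~\ref{prop:HFG_quot_F}~(i). Since $A$ is finitely generated, I would choose generators $a_1,\ldots,a_n$ and the corresponding surjective algebra homomorphism $\pi\colon F_n\to A$, $\zeta_i\mapsto a_i$, and write $K=\Ker\pi$, so that $A\cong F_n/K$ as algebras. Both $F_n$ and $A$ are equipped with their strongest locally convex topologies, as in the definition of the Arens-Michael envelope of an abstract algebra.

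First I would identify $\wh A$ with $\cF_n/J$, where $J$ is the closed two-sided ideal of $\cF_n$ generated by $i_{F_n}(K)$ (using $\wh{F}_n\cong\cF_n$). The content here is that the Arens-Michael functor commutes with the passage to a quotient, modulo completion, and I would prove it by a Yoneda argument: for every Arens-Michael algebra $C$ there is a chain of bijections, natural in $C$,
\[
\Hom_\AM(\cF_n/J,C)\cong\{\varphi\in\Hom_\AM(\cF_n,C):\varphi(J)=0\}
\cong\{\varphi\in\Hom_\AM(\cF_n,C):\varphi(i_{F_n}(K))=0\},
\]
where the second step uses that a continuous homomorphism out of $\cF_n$ kills the closed ideal $J$ iff it kills the generating set $i_{F_n}(K)$. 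Applying the adjunction~\eqref{AM_def} in the form $\Hom_\AM(\cF_n,C)\cong\Hom_{\Topalg}(F_n,C)$, $\varphi\mapsto\varphi\circ i_{F_n}$, together with the fact that a continuous homomorphism out of an algebra with the strongest locally convex topology is precisely an algebra homomorphism, this set is naturally in bijection with
\[
\{\psi\in\Hom_\Alg(F_n,C):\psi(K)=0\}\cong\Hom_\Alg(F_n/K,C)\cong\Hom_\Alg(A,C)\cong\Hom_\AM(\wh A,C),
\]
the last bijection being~\eqref{AM_def} once more. By Yoneda's lemma this yields a topological algebra isomorphism $\wh A\cong\cF_n/J$.

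It then remains to check that $\cF_n/J$ is a valid input for Proposition~\ref{prop:HFG_quot_F}. Since $\cF_n$ is a Fr\'echet algebra and $J$ is closed, $\cF_n/J$ is again a Fr\'echet algebra, hence complete and locally $m$-convex, so no further completion is needed and $\wh A$ is a Fr\'echet-Arens-Michael algebra. Being a quotient of $\cF_n$ by a closed two-sided ideal, $\wh A$ is then holomorphically finitely generated by Proposition~\ref{prop:HFG_quot_F}~(i) (equivalently, by Example~\ref{ex:HFG_free} together with Proposition~\ref{prop:HFG_quot}); concretely, it is holomorphically generated by the images $i_A(a_1),\ldots,i_A(a_n)$.

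The hard part will be the identification $\wh A\cong\cF_n/J$, that is, the compatibility of the Arens-Michael functor with quotients. The two delicate points are the reduction ``$\varphi(J)=0\iff\varphi(i_{F_n}(K))=0$'', which rests on $J$ being the \emph{closed} ideal generated by $i_{F_n}(K)$ and on continuity of $\varphi$, and the repeated use of the fact that every algebra homomorphism out of an algebra carrying the strongest locally convex topology is automatically continuous. Once these are secured, the chain of natural bijections is routine and the remainder of the argument is purely formal.
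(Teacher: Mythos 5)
Your proposal is correct and follows essentially the same route as the paper: write $A\cong F_n/K$, identify $\wh{A}$ with the quotient of $\cF_n\cong\wh{F}_n$ by a closed two-sided ideal, and conclude via Proposition~\ref{prop:HFG_quot_F}~(i). The only difference is that the paper obtains the identification $\wh{A}\cong\cF_n/J$ by citing Corollary~3.2 of \cite{Pir_qfree} (commutation of the Arens-Michael functor with quotients, with $J$ the closure of the image of $K$ in $\cF_n$ --- which coincides with your closed ideal generated by $i_{F_n}(K)$), whereas you prove this identification from scratch via the adjunction~\eqref{AM_def} and Yoneda's lemma, which is sound and is precisely the chain-of-natural-bijections technique the paper itself employs in Propositions~\ref{prop:Pfree_quot} and~\ref{prop:smash_free}.
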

\begin{proof}
Let $A=F_n/I$, where $I\subset F_n$ is a two-sided ideal.
Applying~\cite[Corollary 3.2]{Pir_qfree}, we conclude that
$\wh{A}=\cF_n/J$, where $J$ is the closure of $I$ in $\cF_n$.
By Proposition~\ref{prop:HFG_quot_F}, $\wh{A}$ is an HFG algebra.
\end{proof}

We have already noticed in Section~\ref{sect:prelim}
that, according to \cite{T2}, the Arens-Michael envelope of the polynomial algebra $\CC[z_1,\ldots ,z_n]$
is the algebra of entire functions $\cO(\CC^n)$, while
the Arens-Michael envelope of the free algebra $F_n$ is the algebra of free entire functions $\cF_n$.
The following example is a slight generalization of \cite[Example 3.6]{Pir_qfree}

\begin{example}
\label{ex:AM_O}
Let $(X,\cO^\reg_X)$ be an affine scheme of finite type over $\CC$, and
let $(X_h,\cO_{X_h})$ be the complex space associated to $(X,\cO^\reg_X)$
(cf. \cite[Appendix~B]{Hart_AG}). We claim that the Arens-Michael envelope of
$A=\cO^\reg(X)$ is $\cO(X_h)$. Indeed, we have $A=\cO^\reg(\CC^n)/I$
for an ideal $I\subset\cO^\reg(\CC^n)$.
Let $f_1,\ldots ,f_q\in\cO^\reg(\CC^n)$ generate $I$; then, by
definition~\cite[Appendix~B]{Hart_AG}, we have $\cO_{X_h}=\cO_{\CC^n}/\cI$,
where $\cI\subset\cO_{\CC^n}$ is the ideal sheaf generated by $f_1,\ldots ,f_q$.
Thus we have an exact sequence
\[
\cO_{\CC^n}^q \xra{(f_1,\ldots ,f_q)} \cO_{\CC^n} \to \cO_{X_h} \to 0
\]
of $\cO_{\CC^n}$-modules. Applying the global section functor, we see that
$\cO(X_h)\cong\cO(\CC^n)/J$, where $J\subset\cO(\CC^n)$ is the ideal generated
by $f_1,\ldots ,f_q$ (note that $J$ is automatically closed in $\cO(\CC^n)$;
see \cite[V.6]{GR_II}). We claim that $J=\ol{I}$, the closure of $I$ in $\cO(\CC^n)$.
Indeed, since $J$ is closed, we have $\ol{I}\subseteq J$. On the other hand,
$\ol{I}$ is an ideal of $\cO(\CC^n)$, because $\cO^\reg(\CC^n)$ is dense in $\cO(\CC^n)$.
Since $J$ is algebraically generated by $f_1,\ldots ,f_q$, we see that $J\subseteq\ol{I}$,
and finally $J=\ol{I}$. By using the fact that the Arens-Michael functor commutes with
quotients~\cite[Corollary 3.2]{Pir_qfree}, we conclude that
\[
\wh{\cO^\reg(X)}
\cong \wh{\cO^\reg(\CC^n)}/\ol{I} \cong \cO(\CC^n)/J \cong \cO(X_h).
\]
\end{example}

Thus we may interpret the Arens-Michael functor as a noncommutative extension
of the functor that takes an affine scheme $(X,\cO^\reg_X)$ of finite type over $\CC$
to the associated complex space $(X_h,\cO_{X_h})$.

\begin{remark}
\label{rem:non-HFG_closed}
The following example shows that a closed subalgebra of an HFG algebra need not be
an HFG algebra. Let $\fg$ be the two-dimensional solvable Lie algebra with basis $\{ x,y\}$
and commutation relation $[x,y]=y$. As was shown in \cite[Proposition 5.2]{Pir_qfree}, the Arens-Michael
envelope of the enveloping algebra $U(\fg)$ is given by
\begin{equation}
\label{AM_U}
\wh{U}(\fg)=
\Bigl\{ a=\sum_{i,j=0}^\infty c_{ij} x^i y^j :
\| a\|_{n,t}=\sum_{i=0}^\infty\sum_{j=0}^n |c_{ij}| t^i \quad \forall t>0,\;\forall n\in\N\Bigr\}.
\end{equation}
The topology on $\wh{U}(\fg)$ is given by the seminorms $\|\cdot\|_{n,t}\; (t>0,\; n\in\N)$.
By Proposition~\ref{prop:AM_HFG}, $\wh{U}(\fg)$ is an HFG algebra.
On the other hand, it immediate from~\eqref{AM_U} that the closed subalgebra of $\wh{U}(\fg)$
generated by $y$ is the algebra $\CC[[y]]$ of formal power series. Clearly,
$\CC[[y]]$ is not isomorphic to $\cO(X)$ for any Stein space $(X,\cO_X)$
and hence is not an HFG algebra.
\end{remark}

The following two examples \cite{Pir_qfree} will motivate our further constructions,
so we reproduce them here.

\begin{example}
\label{ex:q_aff}
Let $\bq=(q_{ij})$ be a complex $n\times n$-matrix such that $q_{ii}=1$ and $q_{ji}=q_{ij}^{-1}$
for all $i,j=1,\ldots ,n$ (such matrices are called {\em multiplicatively antisymmetric}).
Recall that the algebra $\cO_{\bq}^\reg(\CC^n)$ {\em of regular functions on the
quantum affine $n$-space} is generated by
$n$ elements $x_1,\ldots ,x_n$ subject to the relations $x_i x_j=q_{ij}x_j x_i$ for all $i,j$
(see, e.g., \cite{Br_Good}).
If $q_{ij}=1$ for all $i,j$, then $\cO_\bq^\reg(\CC^n)$ is nothing but the
polynomial algebra $\CC[x_1,\ldots ,x_n]=\cO^\reg(\CC^n)$.
Of course, $\cO_\bq^\reg(\CC^n)$ is noncommutative unless $q_{ij}=1$ for all $i,j$,
but the monomials
$x^k=x_1^{k_1}\cdots x_n^{k_n}\; (k\in\Z_+^n)$ still
form a basis of $\cO_\bq^\reg(\CC^n)$. Thus $\cO_\bq^\reg(\CC^n)$ may be viewed as
a ``deformed'' polynomial algebra.

The Arens-Michael envelope of $\cO_\bq^\reg(\CC^n)$
is denoted by $\cO_\bq(\CC^n)$ and is called the
{\em algebra of holomorphic functions on the quantum affine $n$-space}.
If $q_{ij}=1$ for all $i,j$, then $\cO_\bq(\CC^n)\cong\cO(\CC^n)$
(see Section~\ref{sect:prelim} or Example~\ref{ex:AM_O}).
The algebra $\cO_\bq(\CC^n)$ has the following explicit description \cite{Pir_qfree}.
Given $d\in\Z_+$, let us identify each element $\alpha=(\alpha_1,\ldots ,\alpha_d)\in W_{n,d}$
with the function $\alpha\colon \{ 1,\ldots ,d\}\to\{ 1,\ldots ,n\},\; \alpha(i)=\alpha_i$.
The symmetric group $S_d$ acts on $W_{n,d}$ via
$\sigma(\alpha)=\alpha\sigma^{-1}\; (\alpha\in W_{n,d},\; \sigma\in S_d)$.
Clearly, for each $\alpha\in W_{n,d}$ and $\sigma\in S_d$ there exists a unique
$\lambda(\sigma,\alpha)\in\CC^\times$ (where $\CC^\times=\CC\setminus\{ 0\}$)
such that
\begin{equation}
\label{lambda}
x_\alpha=\lambda(\sigma,\alpha)x_{\sigma(\alpha)}.
\end{equation}
Given $k=(k_1,\ldots ,k_n)\in\Z_+^n$, let
\[
\delta(k)=(\underbrace{1,\ldots ,1}_{k_1},\ldots , \underbrace{n,\ldots ,n}_{k_n})\in W_{n,|k|}.
\]
Define a weight function $w_\bq\colon\Z_+^n\to\R_+$ by
\begin{equation}
\label{w_q}
w_\bq(k)=\min\{ |\lambda(\sigma,\delta(k))| : \sigma\in S_{|k|}\}.
\end{equation}
For example \cite[Proposition 5.12]{Pir_qfree}, if $|q_{ij}|\ge 1$ whenever $i<j$,
then $w_\bq(k)\equiv 1$, and if $|q_{ij}|\le 1$ whenever $i<j$, then
$w_\bq(k)=\prod_{i<j} q_{ij}^{k_i k_j}$. In particular, in the single-parameter case
(i.e., in the case where $q_{ij}=q$ for all $i<j$) we have
\begin{equation}
\label{w_q2}
w_q(k)=
\begin{cases}
1 & \text{if } |q|\ge 1,\\
|q|^{\sum_{i<j}k_i k_j} & \text{if } |q|<1.
\end{cases}
\end{equation}
As was shown in \cite[Theorem 5.11]{Pir_qfree}, for each multiplicatively antisymmetric complex
$n\times n$-matrix $\bq$ we have
\begin{equation}
\label{O_q}
\cO_\bq(\CC^n)=
\Bigl\{
a=\sum_{k\in\Z_+^n} c_k x^k :
\| a\|_t=\sum_{k\in\Z_+^n} |c_k| w_\bq(k) t^{|k|}<\infty
\;\forall t>0
\Bigr\}.
\end{equation}
The topology on $\cO_\bq(\CC^n)$ is given by the norms $\|\cdot\|_t \; (t>0)$.
Moreover, each norm $\|\cdot\|_t$ is submultiplicative.
\end{example}

\begin{example}
\label{ex:q_torus}
As in the previous example, let $\bq=(q_{ij})$ be a multiplicatively antisymmetric
complex $n\times n$-matrix.
Recall that the algebra $\cO_{\bq}^\reg((\CC^\times)^n)$ {\em of regular functions on the
quantum affine $n$-torus} is generated by
$2n$ elements $x_1^{\pm 1},\ldots ,x_n^{\pm 1}$ subject to the relations
$x_i x_j=q_{ij}x_j x_i$ for all $i,j$, and $x_i x_i^{-1}=x_i x_i^{-1}=1$ for all $i$
(see, e.g., \cite{Br_Good}).
If $q_{ij}=1$ for all $i,j$, then $\cO_\bq^\reg((\CC^\times)^n)$ is nothing but the
Laurent polynomial algebra $\CC[x_1^{\pm 1},\ldots ,x_n^{\pm 1}]=\cO^\reg((\CC^\times)^n)$.
In the general case, the Laurent monomials $x^k=x_1^{k_1}\cdots x_n^{k_n}\; (k\in\Z^n)$
form a basis of $\cO_\bq^\reg((\CC^\times)^n)$, and so
$\cO_\bq^\reg((\CC^\times)^n)$ may be viewed as
a ``deformed'' Laurent polynomial algebra.

Assume now that $|q_{ij}|=1$ for all $i,j$.
The Arens-Michael envelope of $\cO_\bq^\reg((\CC^\times)^n)$
is denoted by $\cO_\bq((\CC^\times)^n)$ and is called the
{\em algebra of holomorphic functions on the quantum affine $n$-torus}.
If $q_{ij}=1$ for all $i,j$, then $\cO_\bq((\CC^\times)^n)\cong\cO((\CC^\times)^n)$
(see Example~\ref{ex:AM_O}).
As was shown in \cite[Corollary 5.22]{Pir_qfree}, we have
\begin{equation}
\label{O_q_torus}
\cO_\bq((\CC^\times)^n)=
\Bigl\{
a=\sum_{k\in\Z^n} c_k x^k :
\| a\|_t=\sum_{k\in\Z^n} |c_k| t^{|k|}<\infty
\;\forall t>1
\Bigr\}.
\end{equation}
The topology on $\cO_\bq((\CC^\times)^n)$ is given by the norms $\|\cdot\|_t \; (t>1)$.
Moreover, each norm $\|\cdot\|_t$ is submultiplicative. Note that,
if $|q_{ij}|\ne 1$ for at least one pair of indices $i,j$, then the Arens-Michael envelope of
$\cO_\bq^\reg((\CC^\times)^n)$ is trivial \cite[Proposition 5.23]{Pir_qfree}.
\end{example}

We refer to \cite{Pir_qfree,Dos_hdim,Dos_absbas} for explicit descriptions of Arens-Michael envelopes
of some other finitely generated algebras, including quantum Weyl algebras,
the algebra of quantum $2\times 2$-matrices, and universal enveloping algebras.
Note that, in many concrete cases, the Arens-Michael envelope of a ``deformed polynomial
algebra'' can be interpreted as a ``deformed power series algebra'',
similarly to~\eqref{O_q} and~\eqref{O_q_torus}.

\subsection{Free polydisk}
\label{subsect:free_poly}
The following definition is motivated by~\eqref{F_O_free}.

\begin{definition}
\label{def:F_poly}
We define the {\em algebra of holomorphic functions on the free $n$-dimensional polydisk of
polyradius $R=(R_1,\ldots ,R_n)\in (0,+\infty]^n$} to be
\begin{equation}
\label{F_poly}
\cF(\DD^n_R)=\cO(\DD_{R_1})\Pfree\cdots\Pfree\cO(\DD_{R_n}).
\end{equation}
\end{definition}

By Corollary~\ref{cor:HFG_free}, $\cF(\DD^n_R)$ is an HFG algebra.
Letting $R_i=\infty$ for all $i$ and using~\eqref{F_O_free}, we see that $\cF(\CC^n)=\cF_n$.
Note that replacing in \eqref{F_O_free} and \eqref{F_poly} the Arens-Michael free product $\Pfree$
by the projective tensor product $\Ptens$ yields the algebras of holomorphic functions
on $\CC^n$ and $\DD_R^n$, respectively.

We have the canonical ``restriction'' map $\cF_n\to\cF(\DD^n_R)$ defined to be the
free product of the restriction maps $\cO(\CC)\to\cO(\DD_{R_i})\; (i=1,\ldots ,n)$.
For each $i=1,\ldots ,n$, the canonical image of the
free generator $\zeta_i\in\cF_n$ in $\cF(\DD^n_R)$ will also be denoted by $\zeta_i$.
The next result shows that $\cF(\DD^n_R)$ has a universal property similar to~\eqref{univ_F}.
In what follows, given an algebra $A$ and an element $a\in A$, the spectrum of $a$ in $A$
will be denoted by $\sigma_A(a)$.

\begin{prop}
\label{prop:univ_F_poly}
Let $A$ be an Arens-Michael algebra, and let $a=(a_1,\ldots ,a_n)$ be an $n$-tuple in $A^n$
such that $\sigma_A(a_i)\subseteq\DD_{R_i}$ for all $i=1,\ldots ,n$. Then there exists a unique
continuous homomorphism $\gamma_a^\free\colon\cF(\DD^n_R)\to A$
such that $\gamma_a^\free(\zeta_i)=a_i$ for all $i=1,\ldots ,n$. Moreover,
the assignment $a\mapsto\gamma_a^\free$ determines a natural isomorphism
\[
\Hom_\AM(\cF(\DD_R^n),A)\cong\{ a\in A^n : \sigma_A(a_i)\subseteq\DD_{R_i}\;\forall i=1,\ldots ,n\}
\qquad (A\in\AM).
\]
\end{prop}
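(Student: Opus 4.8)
The plan is to reduce the statement to the one-variable case by means of the defining formula $\cF(\DD_R^n)=\cO(\DD_{R_1})\Pfree\cdots\Pfree\cO(\DD_{R_n})$ together with the universal property~\eqref{Pfree_def} of the Arens-Michael free product. The latter yields, naturally in $A$, a bijection $\Hom_\AM(\cF(\DD_R^n),A)\cong\prod_{i=1}^n\Hom_\AM(\cO(\DD_{R_i}),A)$, so it suffices to establish for each single disk a natural identification $\Hom_\AM(\cO(\DD_\rho),A)\cong\{b\in A:\sigma_A(b)\subseteq\DD_\rho\}$ given by $\varphi\mapsto\varphi(z)$. Granting this, the $i$-th component of $\gamma_a^\free$ is the homomorphism $\cO(\DD_{R_i})\to A$ carrying $z$ to $a_i$; since $\zeta_i$ is by construction the image of $z$ under the canonical embedding $\cO(\DD_{R_i})\to\cF(\DD_R^n)$, we get $\gamma_a^\free(\zeta_i)=a_i$, and the product of the one-variable bijections is precisely the asserted isomorphism.

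For the one-variable statement I would argue in three steps. First, uniqueness together with the necessity of the spectral condition: since the Taylor partial sums of any $f\in\cO(\DD_\rho)$ converge to $f$ in the compact-open topology, the coordinate $z$ generates a dense subalgebra of $\cO(\DD_\rho)$, so a continuous homomorphism is determined by the image of $z$. If $\varphi\colon\cO(\DD_\rho)\to A$ is a continuous homomorphism and $b=\varphi(z)$, then for each $\lambda\notin\DD_\rho$ the function $(z-\lambda)^{-1}$ lies in $\cO(\DD_\rho)$, so $\varphi((z-\lambda)^{-1})$ inverts $b-\lambda$ in $A$; thus $\sigma_A(b)\subseteq\DD_\rho$. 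The substantive step is existence: given $b\in A$ with $\sigma_A(b)\subseteq\DD_\rho$, I must produce a continuous homomorphism sending $z$ to $b$. Here I would write $A$ as an inverse limit $A=\varprojlim_\lambda A_\lambda$ of Banach algebras (completions with respect to a defining family of submultiplicative seminorms), with canonical maps $\pi_\lambda\colon A\to A_\lambda$. Since $\pi_\lambda$ carries invertibles to invertibles, $\sigma_{A_\lambda}(\pi_\lambda(b))\subseteq\sigma_A(b)\subseteq\DD_\rho$, and as the spectrum in a Banach algebra is compact, $\sigma_{A_\lambda}(\pi_\lambda(b))$ is a compact subset of $\DD_\rho$. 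The classical Riesz--Dunford holomorphic functional calculus then supplies a continuous unital homomorphism $\gamma_\lambda\colon\cO(\DD_\rho)\to A_\lambda$ with $\gamma_\lambda(z)=\pi_\lambda(b)$, defined by a Cauchy integral over a contour in $\DD_\rho$ enclosing the spectrum.

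By the uniqueness already established, applied inside each $A_\lambda$, the maps $\gamma_\lambda$ are compatible with the connecting homomorphisms (both sides agree on $z$, hence everywhere by density of polynomials), so they assemble into a continuous homomorphism $\gamma\colon\cO(\DD_\rho)\to A$ with $\gamma(z)=b$. Naturality is then routine: for a continuous homomorphism $\psi\colon A\to A'$ one has $\sigma_{A'}(\psi(b))\subseteq\sigma_A(b)$, so the right-hand side is functorial, and $\psi\circ\gamma$ and the homomorphism attached to $\psi(b)$ agree on $z$, hence coincide; taking products over $i$ transports naturality to $\cF(\DD_R^n)$. I expect the main obstacle to be this existence step, and specifically the passage from the Banach-algebra functional calculus to the inverse limit: one must verify that the spectra $\sigma_{A_\lambda}(\pi_\lambda(b))$ stay inside $\DD_\rho$ so that Riesz--Dunford applies in each $A_\lambda$, and that the resulting maps are coherent. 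It is the density of polynomials in $\cO(\DD_\rho)$, underlying both uniqueness and coherence, that makes the compatibility automatic.
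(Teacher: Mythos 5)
Your proposal is correct and takes essentially the same route as the paper: the paper's entire proof consists of the reduction to $n=1$ via the coproduct property \eqref{Pfree_def} of the Arens-Michael free product applied to the defining formula $\cF(\DD^n_R)=\cO(\DD_{R_1})\Pfree\cdots\Pfree\cO(\DD_{R_n})$, with the one-variable identification $\Hom_\AM(\cO(\DD_\rho),A)\cong\{b\in A:\sigma_A(b)\subseteq\DD_\rho\}$ cited as standard (Mallios). Your inverse-limit/Riesz--Dunford argument is exactly the standard proof of that cited one-variable fact, and it is sound, including the two points you flag: compactness of $\sigma_{A_\lambda}(\pi_\lambda(b))\subseteq\sigma_A(b)\subseteq\DD_\rho$ in each Banach quotient, and coherence of the $\gamma_\lambda$ via density of the polynomials.
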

\begin{proof}
For $n=1$, the above result is standard (see, e.g., \cite[VI.3, Theorem 3.2]{Mallios}).
Hence for each $n\in\N$ we have natural isomorphisms
\[
\begin{split}
\Hom_\AM(\cF(\DD_R^n),A))
&\cong\prod_{i=1}^n\Hom_\AM(\cO(\DD_{R_i}),A)\\
&\cong\{ a\in A^n : \sigma_A(a_i)\subseteq\DD_{R_i}\;\forall i=1,\ldots ,n\}. \qedhere
\end{split}
\]
\end{proof}

The algebra $\cF(\DD_R^n)$ can also be described more explicitly as follows.
Given $d\ge 2$ and $\alpha=(\alpha_1,\ldots ,\alpha_d)\in W_n$, let $s(\alpha)$ denote the
cardinality of the set
\[
\bigl\{ i \in \{ 1,\ldots ,d-1\} : \alpha_i\ne\alpha_{i+1} \bigr\}.
\]
If $|\alpha|\in\{ 0,1\}$, we set $s(\alpha)=|\alpha|-1$. Let also
$(0,R)=\prod_{i=1}^n (0,R_i)$.

\begin{prop}
We have
\[
\cF(\DD_R^n)=\Bigl\{ a=\sum_{\alpha\in W_n} c_\alpha\zeta_\alpha :
\| a\|_{\rho,\tau}=\sum_{\alpha\in W_n} |c_\alpha|\rho_\alpha \tau^{s(\alpha)+1}<\infty
\;\forall \rho\in (0,R),\; \forall \tau\ge 1\Bigr\}.
\]
The topology on $\cF(\DD_R^n)$ is given by the norms
$\|\cdot\|_{\rho,\tau}\; (\rho\in (0,R),\; \tau\ge 1)$,
and the multiplication is given by concatenation.
Moreover, each norm $\|\cdot\|_{\rho,\tau}$ is submultiplicative.
\end{prop}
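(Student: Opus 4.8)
The plan is to read off the asserted description directly from the explicit form of a finite Arens--Michael free product established in Corollary~\ref{cor:Pfree_expl_fin}, applied to the factors $A_i=\cO(\DD_{R_i})$ in \eqref{F_poly}. First I would recall the power series description of the one-disk algebra: writing $f=\sum_{m\ge 0}c_m z^m$, the norms $\|f\|_{\rho_i}=\sum_{m\ge 0}|c_m|\rho_i^m\ (0<\rho_i<R_i)$ are submultiplicative and define the compact-open topology on $\cO(\DD_{R_i})$. Taking $A_i^\circ=\{f\in\cO(\DD_{R_i}):f(0)=0\}$ yields the required decomposition $\cO(\DD_{R_i})=\CC1\oplus A_i^\circ$ as locally convex spaces; here $A_i^\circ$ is a closed ideal on which the restricted norms $\|\cdot\|_{\rho_i}$ are submultiplicative, and $\{z^m:m\ge 1\}$ is a basis with $\|z^m\|_{\rho_i}=\rho_i^m$. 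I then identify the defining index $\lambda\in\Lambda=\prod_i\Lambda_i$ of Corollary~\ref{cor:Pfree_expl_fin} with a point $\rho=(\rho_1,\dots,\rho_n)\in(0,R)$, taking $\Lambda_i=(0,R_i)$.

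The main computation is to evaluate, for a reduced word $\alpha=(\alpha_1,\dots,\alpha_d)\in I_\infty$, the summand $A_\alpha=A^\circ_{\alpha_1}\Ptens\cdots\Ptens A^\circ_{\alpha_d}$ together with the projective tensor seminorm $\|\cdot\|_\rho^{(\alpha)}$. Each $A_i^\circ$ is a weighted $\ell^1$ space, and the isometric identity $\ell^1(S_1)\Ptens\ell^1(S_2)\cong\ell^1(S_1\times S_2)$ (with product weights), passed to the projective limit over the $\rho_i$, shows that $A_\alpha$ is the weighted $\ell^1$ space on the index set $\{(m_1,\dots,m_d):m_k\ge 1\}$, with basis $z_{\alpha_1}^{m_1}\Tens\cdots\Tens z_{\alpha_d}^{m_d}$ and
\[
\|z_{\alpha_1}^{m_1}\Tens\cdots\Tens z_{\alpha_d}^{m_d}\|_\rho^{(\alpha)}=\rho_{\alpha_1}^{m_1}\cdots\rho_{\alpha_d}^{m_d};
\]
the cross-norm property of $\Ptens$ already gives the value on elementary tensors, and the $\ell^1$ identity upgrades this to an isometry on all of $A_\alpha$. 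Under the algebra embeddings $A_i^\circ\hookrightarrow\cF(\DD_R^n)$, which send $z^m$ to $\zeta_i^m$, this basis vector becomes the word $\zeta_{\alpha_1}^{m_1}\cdots\zeta_{\alpha_d}^{m_d}$, and the multiplication restricts to concatenation on these words.

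The final step is a change of index. Collapsing consecutive equal letters gives a bijection between $W_n$ and $\bigsqcup_{\alpha\in I_\infty}\{(m_1,\dots,m_{|\alpha|}):m_k\ge 1\}$: a word $\beta$ corresponds to the reduced word $\alpha$ obtained by grouping, together with the block multiplicities $m_k$. Under this bijection $\zeta_\beta=\zeta_{\alpha_1}^{m_1}\cdots\zeta_{\alpha_d}^{m_d}$, the weight is $\rho_{\alpha_1}^{m_1}\cdots\rho_{\alpha_d}^{m_d}=\rho_\beta$, and the number of blocks equals $|\alpha|=s(\beta)+1$ (consistently with the conventions $s(*)=-1$, $A_*=\CC$, and $s(\beta)=0$ for a single letter). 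Substituting into Corollary~\ref{cor:Pfree_expl_fin} yields
\[
\|a\|_{\rho,\tau}=\sum_{\alpha\in I_\infty}\|a_\alpha\|_\rho^{(\alpha)}\,\tau^{|\alpha|}=\sum_{\beta\in W_n}|c_\beta|\,\rho_\beta\,\tau^{s(\beta)+1},
\]
which is exactly the claimed formula; submultiplicativity of each $\|\cdot\|_{\rho,\tau}$ is then inherited verbatim from Corollary~\ref{cor:Pfree_expl_fin}. The only genuinely delicate point is the isometry in the middle paragraph: one must check carefully that the completed projective tensor product of the weighted $\ell^1$ spaces $A_i^\circ$ is computed with \emph{product} weights and with no distortion, so that $\|\cdot\|_\rho^{(\alpha)}$ equals $\rho_\beta$ exactly rather than merely being comparable to it.
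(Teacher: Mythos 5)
Your proof is correct and follows essentially the same route as the paper's: both apply Corollary~\ref{cor:Pfree_expl_fin} to $A_i=\cO(\DD_{R_i})=\CC 1\oplus A_i^\circ$, identify each $A_\alpha$ as a weighted $\ell^1$ space with product weights, and then re-index via the bijection collapsing words into blocks, using $|\alpha|=s(\beta)+1$. The only cosmetic difference is that the paper justifies the key isometric identification of $A_\alpha$ by citing the fact that the projective tensor product of K\"othe sequence spaces is again a K\"othe sequence space, whereas you argue it directly from $\ell^1(S_1)\Ptens\ell^1(S_2)\cong\ell^1(S_1\times S_2)$; these amount to the same thing.
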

\begin{proof}
Let $I=\{ 1,\ldots ,n\}$, and let $A_i=\cO(\DD_{R_i})$ for all $i\in I$.
Note that $A_i=\CC 1_{A_i}\oplus A_i^\circ$, where $A_i^\circ=\{ f\in A_i : f(0)=0\}$.
Hence Corollary~\ref{cor:Pfree_expl_fin} applies, and~\eqref{Pfree_expl_fin} holds.
We have
\[
A_i^\circ\cong\Bigl\{ f=\sum_{k=1}^\infty c_k \zeta_i^k :
\| f\|_{\rho}=\sum_{k=1}^\infty |c_k| \rho_i^k<\infty\; \forall \rho_i\in (0,R_i)\Bigr\}.
\]
Recall the standard fact that the projective tensor product of two K\"othe sequence
spaces is again a K\"othe sequence space (cf. \cite[41.7]{Kothe_II}).
Hence for each $\alpha=(\alpha_1,\ldots ,\alpha_d)\in I_\infty$ with $d>0$ we have
\[
A_\alpha\cong\Bigl\{ u=\sum_{k\in\N^d} c_k \zeta_{\alpha_1}^{k_1}\cdots \zeta_{\alpha_d}^{k_d} :
\| u\|_{\rho}^{(\alpha)}=\sum_{k\in\N^d}
|c_k|\rho_{\alpha_1}^{k_1}\cdots\rho_{\alpha_d}^{k_d}<\infty\; \forall \rho\in (0,R)\Bigr\}.
\]
Now \eqref{Pfree_expl_fin} yields
\begin{equation}
\label{Fball_expl_big}
\begin{split}
\cF(\DD_R^n)
\cong \Bigl\{ a&=\lambda 1+\sum_{d=1}^\infty \sum_{\alpha\in I_d} \sum_{k\in\N^d}
c_{\alpha k} \zeta_{\alpha_1}^{k_1}\cdots \zeta_{\alpha_d}^{k_d} :
\lambda\in\CC,\\
\| a\|_{\rho,\tau}&=|\lambda|+\sum_{d=1}^\infty \sum_{\alpha\in I_d} \sum_{k\in\N^d}
|c_{\alpha k}| \rho_{\alpha_1}^{k_1}\cdots\rho_{\alpha_d}^{k_d} \tau^d < \infty\;
\forall \rho\in (0,R),\; \forall \tau\ge 1\Bigr\}.
\end{split}
\end{equation}
Let $W_n^+=\bigsqcup_{d\ge 1} W_{n,d}$, and let
$S=\bigsqcup_{d\ge 1} (I_d\times\N^d)$.
We have a bijection between $S$ and $W_n^+$ given by
\begin{equation}
\label{bij}
\begin{split}
(\alpha,k)&=((\alpha_1,\ldots ,\alpha_d),(k_1,\ldots ,k_d))\in I_d\times\N^d\\
&\mapsto
\beta(\alpha)=\bigl(\underbrace{\alpha_1,\ldots ,\alpha_1}_{k_1},\ldots ,
\underbrace{\alpha_d,\ldots ,\alpha_d}_{k_d}\bigr)\in W_n^+.
\end{split}
\end{equation}
Note that $|\beta(\alpha)|=|k|$ and that $|\alpha|=s(\beta(\alpha))+1$.
Using~\eqref{bij} and~\eqref{Fball_expl_big}, we obtain
\begin{align*}
\cF(\DD_R^n)
&\cong\Bigl\{ a=\lambda 1+\sum_{\beta\in W_n^+} c_\beta\zeta_\beta : \lambda\in\CC,\\
& \qquad\qquad \| a\|_{\rho,\tau}=|\lambda|+
\sum_{\beta\in W_n^+} |c_\beta|\rho_\beta \tau^{s(\beta)+1}<\infty
\;\forall \rho\in (0,R),\; \forall \tau\ge 1\Bigr\}\\
&\cong \Bigl\{ a=\sum_{\beta\in W_n} c_\beta\zeta_\beta :
\| a\|_{\rho,\tau}=\sum_{\beta\in W_n} |c_\beta|\rho_\beta \tau^{s(\beta)+1}<\infty
\;\forall \rho\in (0,R),\; \forall \tau\ge 1\Bigr\}.
\end{align*}
The submultiplicativity of $\|\cdot\|_{\rho,\tau}$ follows from Corollary~\ref{cor:Pfree_expl_fin}.
\end{proof}

\begin{remark}
Another natural candidate for the algebra of holomorphic functions
on the free polydisk was introduced by J.~L.~Taylor \cite{T2,T3}.
By definition,
\begin{equation}
\label{F_poly_T}
\cF^T(\DD_R^n)=\Bigl\{ a=\sum_{\alpha\in W_n} c_\alpha\zeta_\alpha :
\| a\|_\rho=\sum_{\alpha\in W_n} |c_\alpha|\rho_\alpha<\infty
\;\forall \rho\in (0,R)\Bigr\}.
\end{equation}
Obviously, $\cF(\DD^n_R)\subseteq\cF^T(\DD^n_R)$, and
$\| a\|_\rho\le \| a\|_{\rho,\tau}$ for each $a\in\cF(\DD^n_R)$, each $\rho\in (0,R)$,
and each $\tau\ge 1$. Hence the embedding of $\cF(\DD^n_R)$ into $\cF^T(\DD^n_R)$
is continuous. For $n=1$, we clearly have $\cF^T(\DD_R^1)=\cF(\DD_R^1)=\cO(\DD_R^1)$.
To compare $\cF^T(\DD_R^n)$ with $\cF(\DD_R^n)$ for $n\ge 2$, it is convenient
to consider the following three cases.

(i) Suppose that $R_i=\infty$ for all $i$, i.e., $\DD_R^n=\CC^n$.
Comparing \eqref{Fn} and \eqref{F_poly_T}, we see that
$\cF^T(\CC^n)=\cF_n=\cF(\CC^n)$, both algebraically and topologically.

(ii) Suppose that $R_i<\infty$ for at most one $i\in\{ 1,\ldots ,n\}$.
We claim that $\cF^T(\DD_R^n)=\cF(\DD^n_R)$ in this case.
Without loss of generality,
assume that $R_1<\infty$ and $R_2=\ldots=R_n=\infty$. Given $\rho\in (0,R)$ and $\tau\ge 1$,
let $\rho'=(\rho_1,\tau^2\rho_2,\ldots ,\tau^2\rho_n)$. We clearly have $\rho'\in (0,R)$.
To prove the claim, it suffices to show that for each $\alpha\in W_n$ we have
\begin{equation}
\label{F=FT}
\|\zeta_\alpha\|_{\rho,\tau}\le\tau\|\zeta_\alpha\|_{\rho'}.
\end{equation}
Fix $\alpha\in W_n$ and write $\zeta_\alpha$ in the form
\[
\zeta_\alpha=x_1^{m_1}\zeta_{\alpha(1)}x_1^{m_2}\zeta_{\alpha(2)}
\cdots x_1^{m_k}\zeta_{\alpha(k)} x_1^{m_{k+1}},
\]
where $m_1,\ldots ,m_{k+1}\in\Z_+$, $\alpha(1),\ldots ,\alpha(k)\in W_n$
do not contain $1$, and $|\alpha(i)|\ge 1$ for all $i=1,\ldots ,k$.
We clearly have
\[
s(\alpha)\le 2k+\sum_{i=1}^k s(\alpha(i)).
\]
Since for each $\beta\in W_n$ we have $s(\beta)+1\le |\beta|$, it follows that
\[
s(\alpha)\le 2k+\sum_{i=1}^k s(\alpha(i))
\le k+\sum_{i=1}^k |\alpha(i)|\le 2\sum_{i=1}^k |\alpha(i)|.
\]
Therefore
\begin{equation}
\label{F=FT_est1}
\|\zeta_\alpha\|_{\rho,\tau}
=\rho_\alpha \tau^{s(\alpha)+1}
\le \tau \rho_\alpha \tau^{2\sum |\alpha(i)|},
\end{equation}
while
\begin{equation}
\label{F=FT_est2}
\|\zeta_\alpha\|_{\rho'}
=\rho_1^{\sum m_j} \tau^{2|\alpha(1)|}\rho_{\alpha(1)}\cdots\tau^{2|\alpha(k)|}\rho_{\alpha(k)}
=\rho_\alpha \tau^{2\sum |\alpha(i)|}.
\end{equation}
Comparing \eqref{F=FT_est1} with \eqref{F=FT_est2}, we obtain \eqref{F=FT},
as required. Therefore $\cF^T(\DD_R^n)=\cF(\DD^n_R)$, both algebraically and topologically.

(iii) Finally, suppose that $R_i<\infty$ for at least two $i\in\{ 1,\ldots ,n\}$. As was shown
in~\cite{T2,Lum}, $\cF^T(\DD_R^n)$ is not nuclear in this case.
Hence $\cF^T(\DD_R^n)$ is not an HFG algebra, and so $\cF^T(\DD_R^n)\ne\cF(\DD^n_R)$.
Moreover, the topology on $\cF(\DD^n_R)$
is strictly stronger than that inherited from $\cF^T(\DD_R^n)$.
\end{remark}

\subsection{$q$-products of balanced and Reinhardt domains}
\label{subsect:q-prod}
In the remaining subsections, we will introduce deformed products on some algebras
of holomorphic functions, and we will show that the resulting deformed algebras are HFG algebras.
Let us start with the simplest case of the algebra $\cO(D_1\times D_2)$,
where $D_1\subseteq\CC^m$ and $D_2\subseteq\CC^n$ are balanced domains.

\begin{prop}
\label{prop:q-prod_bal}
Let $D_1\subseteq\CC^m$ and $D_2\subseteq\CC^n$ be balanced domains.
Denote by $z_1,\ldots ,z_m$ (respectively, $w_1,\ldots ,w_n$) the coordinates on $\CC^m$
(respectively, $\CC^n$). Then for each $q\in\CC^\times$ there exists a unique continuous
multiplication on $\cO(D_1\times D_2)$ such that
\begin{align}
z_i z_j&=z_j z_i \qquad (i,j=1,\ldots ,m),\notag\\
\label{q-bal}
w_i w_j&=w_j w_i \qquad (i,j=1,\ldots ,n),\\
z_i w_j&=qw_j z_i \qquad (i=1,\ldots ,m,\; j=1,\ldots ,n).\notag
\end{align}
The resulting Fr\'echet algebra $\cO(D_1\times_q D_2)$ is an HFG algebra.
\end{prop}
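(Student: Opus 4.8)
The plan is to exhibit $\cO(D_1\times_q D_2)$ as an algebra of skew holomorphic functions and then to read off every assertion from Proposition~\ref{prop:skew_hol_bal}. Since $q\in\CC^\times$, at least one of $|q|\le 1$ or $|q^{-1}|\le 1$ holds; I will treat the case $|q|\le 1$, the case $|q|\ge 1$ being entirely analogous after interchanging the roles of the two factors (contracting $D_1$ by $q^{-1}$ in place of contracting $D_2$ by $q$). So suppose $|q|\le 1$. I would take $A=\cO(D_2)$, the factor carrying the coordinates $w_1,\dots,w_n$, and define $\sigma\in\End(\cO(D_2))$ by $\sigma(g)(w)=g(qw)$; on the coordinate functions this gives $\sigma(w_j)=qw_j$. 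Feeding $A$ and $\sigma$ into Proposition~\ref{prop:skew_hol_bal} with variable balanced domain $D_1\subseteq\CC^m$ yields a multiplication with Ore relation $z_i\cdot a=\sigma(a)z_i$; for $a=w_j$ this is exactly $z_iw_j=qw_jz_i$.

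The one genuinely analytic step, and the only place balancedness enters, is the verification of the hypotheses on $\sigma$. Because $|q|\le 1$ and $D_2$ is balanced, $qD_2\subseteq D_2$, so $\sigma$ is a well-defined continuous unital endomorphism of $\cO(D_2)$. For the required equicontinuity of $\{\sigma^k:k\in\Z_+\}$ I would observe that $\sigma^k(g)(w)=g(q^kw)$, so for every compact $K\subset D_2$
\[
\|\sigma^k(g)\|_K=\sup_{w\in K}|g(q^kw)|=\|g\|_{q^kK}\le\|g\|_{\wh K},
\]
where $\wh K=\{\lambda w:|\lambda|\le 1,\ w\in K\}$ is the balanced hull of $K$. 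This $\wh K$ is compact, being the image of $\{\lambda\in\CC:|\lambda|\le 1\}\times K$ under scalar multiplication, and is contained in $D_2$ precisely because $D_2$ is balanced; the bound is therefore uniform in $k$, so $\{\sigma^k\}$ is equicontinuous.

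Granting these hypotheses, Proposition~\ref{prop:skew_hol_bal} provides a unique continuous $\Ptens$-algebra structure on $\cO(D_1,\cO(D_2))$ for which the two canonical embeddings are homomorphisms and $z_i\cdot a=\sigma(a)z_i$ holds. Transporting along the standard isomorphisms $\cO(D_1,\cO(D_2))\cong\cO(D_1)\Ptens\cO(D_2)\cong\cO(D_1\times D_2)$ recalled in Section~\ref{sect:prelim}, this is a multiplication on $\cO(D_1\times D_2)$; commutativity of $\cO(D_1)$ and of $\cO(D_2)$ gives the relations among the $z_i$ and among the $w_j$, and the Ore relation gives $z_iw_j=qw_jz_i$, so \eqref{q-bal} holds. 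Uniqueness is immediate, since the polynomials in $z_1,\dots,z_m,w_1,\dots,w_n$ are dense and \eqref{q-bal} together with separate continuity then determines the product. Finally, Proposition~\ref{prop:skew_hol_bal} also gives that $\cO(D_1\times_q D_2)$ is an Arens-Michael algebra (in particular a Fr\'echet algebra), and that it is HFG once the coefficient algebra $\cO(D_2)$ is HFG.

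It remains to record that $\cO(D)$ is HFG for any balanced domain $D$, which is needed for both factors. By Theorem~\ref{thm:comm_HFG} this is the assertion that $\cO(D)\cong\cO(X)$ for a Stein space $X$ of finite embedding dimension. Writing each $f\in\cO(D)$ through its homogeneous expansion shows that $f$ extends holomorphically to the envelope of holomorphy $\wh D\supseteq D$, which for a balanced domain is again balanced and pseudoconvex, hence Stein, with embedding dimension at most $\dim D$; moreover the restriction $\cO(\wh D)\to\cO(D)$ is a topological isomorphism. Thus $\cO(D)$ is HFG. The main obstacle I anticipate is exactly this interplay with balancedness: one must contract on whichever factor makes the scaling parameter have modulus at most $1$ (hence the preliminary reduction on $|q|$), and the equicontinuity estimate above is what makes Proposition~\ref{prop:skew_hol_bal} applicable in the first place.
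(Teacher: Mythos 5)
Your proof is correct and follows essentially the same route as the paper: the paper normalizes to $|q|\ge 1$ and applies Proposition~\ref{prop:skew_hol_bal} with coefficient algebra $\cO(D_1)$ and $(\sigma f)(z)=f(q^{-1}z)$, verifying equicontinuity through the graded seminorms $\|\cdot\|'_K$ coming from the homogeneous expansion, whereas you normalize to $|q|\le 1$, contract the other factor, and verify equicontinuity with sup-norms over balanced hulls of compact sets --- a mirror image of the same argument, with both equicontinuity checks equally valid. One point in your favor: you make explicit that $\cO(D)$ is HFG for every balanced domain $D$ (via the schlicht balanced envelope of holomorphy and Theorem~\ref{thm:comm_HFG}), a fact the paper's proof uses only tacitly, since the HFG conclusions of Proposition~\ref{prop:skew_hol_bal} and Theorem~\ref{thm:S-smash} require both factors of the smash product to be HFG.
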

\begin{proof}
By interchanging $D_1$ and $D_2$ if necessary, we can assume that $|q|\ge 1$.
Consider the endomorphism $\sigma$ of $\cO(D_1)$ given by $(\sigma f)(z)=f(q^{-1}z)$.
For each compact set $K\subset D_1$ and each $f\in\cO(D_1)$ with homogeneous
expansion $f=\sum_{m\in\Z_+} f_m$, we have
\[
\| \sigma(f) \|_K'
=\sum_{m\in\Z_+} \| \sigma(f_m) \|_K
=\sum_{m\in\Z_+} |q|^{-m} \| f_m \|_K
\le\sum_{m\in\Z_+} \| f_m \|_K = \| f\|_K'.
\]
Hence~\eqref{suff_equi_semi} holds, and the subsemigroup of $\End(\cO(D_1))$ generated by
$\sigma$ is equicontinuous. Now~Proposition~\ref{prop:skew_hol_bal} yields the HFG algebra
$\cO(D_2,\cO(D_1);\sigma)$. Identifying $\cO(D_2,\cO(D_1);\sigma)$ with
$\cO(D_1\times D_2)$ as a Fr\'echet space (see Section~\ref{sect:prelim}),
we see that the relations~\eqref{q-bal} hold
in $\cO(D_2,\cO(D_1);\sigma)$. Letting $\cO(D_1\times_q D_2)=\cO(D_2,\cO(D_1);\sigma)$,
we obtain the required algebra.
\end{proof}

If we assume that $D_1$ and $D_2$ are Reinhardt domains,
then we can generalize the commutation relations~\eqref{q-bal} as follows.

\begin{prop}
Let $D_1\subseteq\CC^m$ and $D_2\subseteq\CC^n$ be Reinhardt domains.
Denote by $z_1,\ldots ,z_m$ (respectively, $w_1,\ldots ,w_n$) the coordinates on $\CC^m$
(respectively, $\CC^n$). Let $\bq=(q_{ij})$ be a complex $m\times n$-matrix, and assume that one of
the following conditions holds:
\begin{mycompactenum}
\item $D_1$ and $D_2$ are complete, and either $|q_{ij}|\ge 1$ for all $i,j$,
or $|q_{ij}|\le 1$ for all $i,j$;
\item $|q_{ij}|=1$ for all $i,j$.
\end{mycompactenum}
Then there exists a unique continuous
multiplication on $\cO(D_1\times D_2)$ such that
\begin{align}
z_i z_j&=z_j z_i \qquad (i,j=1,\ldots ,m),\notag\\
\label{q-Rnhrdt}
w_i w_j&=w_j w_i \qquad (i,j=1,\ldots ,n),\\
z_i w_j&=q_{ij}w_j z_i \qquad (i=1,\ldots ,m,\; j=1,\ldots ,n).\notag
\end{align}
The resulting Fr\'echet algebra $\cO(D_1\times_\bq D_2)$ is an HFG algebra.
\end{prop}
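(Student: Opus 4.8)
The plan is to adapt the proof of Proposition~\ref{prop:q-prod_bal} almost verbatim, replacing the single endomorphism by a commuting family of diagonal coordinate rescalings and invoking the Reinhardt version, Proposition~\ref{prop:skew_hol_Rnhrdt}, in place of the balanced one. The decisive point is to match the sign condition on the $q_{ij}$ to the correct ordering of the two factors, so that the rescaling operators genuinely preserve the relevant domain.

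First I would realise the relations \eqref{q-Rnhrdt} as an Arens-Michael Ore-type construction. Under condition (i) with $|q_{ij}|\ge 1$ (or under condition (ii)) I set $A=\cO(D_1)$ and define, for $j=1,\dots,n$, the endomorphism $\sigma_j$ of $\cO(D_1)$ by $(\sigma_j f)(z)=f(q_{1j}^{-1}z_1,\dots,q_{mj}^{-1}z_m)$, so that $\sigma_j(z_i)=q_{ij}^{-1}z_i$. Then the relation $w_j z_i=\sigma_j(z_i)w_j=q_{ij}^{-1}z_iw_j$ of Proposition~\ref{prop:skew_hol_Rnhrdt} is exactly $z_iw_j=q_{ij}w_jz_i$, while the $z_i$'s commute among themselves and the $w_j$'s commute among themselves. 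Since $|q_{ij}^{-1}|\le 1$ (resp.\ $=1$), the substitution maps the complete (resp.\ arbitrary) Reinhardt domain $D_1$ into itself, so each $\sigma_j$ is a well-defined continuous endomorphism (an automorphism in case (ii)), and the $\sigma_j$ commute because they are diagonal rescalings. Under condition (i) with $|q_{ij}|\le 1$ I instead set $A=\cO(D_2)$ and use $\tau_i$ on $\cO(D_2)$ with $\tau_i(w_j)=q_{ij}w_j$, which preserves the complete Reinhardt domain $D_2$; the roles of the two factors are simply interchanged.

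Next I would verify the equicontinuity hypothesis of Proposition~\ref{prop:skew_hol_Rnhrdt} through the Laurent grading on $\cO(D_1)$ established above. On the homogeneous component $\cO(D_1)_k=\spn\{z^k\}$ the operator $\sigma_j$ acts by the scalar $\prod_i q_{ij}^{-k_i}$, so for each compact $K\subset D_1$ the graded seminorm $\|f\|_K''=\sum_k\|f_k\|_K$ satisfies $\|\sigma_j(f)\|_K''=\sum_k\bigl(\prod_i|q_{ij}|^{-k_i}\bigr)\|f_k\|_K$. In case (i) completeness forces $0\in D_1$, hence $k\in\Z_+^m$ and $\prod_i|q_{ij}|^{-k_i}\le 1$, giving $\|\sigma_j(f)\|_K''\le\|f\|_K''$, which is \eqref{suff_equi_semi}; in case (ii) every factor equals $1$, giving equality, which is \eqref{suff_equi_grp}. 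Thus the subsemigroup (resp.\ subgroup) of $\End(\cO(D_1))$ (resp.\ $\Aut(\cO(D_1))$) generated by $\sigma_1,\dots,\sigma_n$ is equicontinuous, and clause (i) (resp.\ (ii)) of Proposition~\ref{prop:skew_hol_Rnhrdt} applies. This produces the $\Ptens$-algebra $\cO(D_2,\cO(D_1);\sigma)=\cO(D_1)\psmash{S}\cO(D_2)$ (with $S=\Z_+^n$ or $\Z^n$), which is an Arens-Michael algebra. Identifying its underlying space with $\cO(D_1\times D_2)$ via the isomorphism $\cO(D_1)\Ptens\cO(D_2)\cong\cO(D_1\times D_2)$ recorded in Section~\ref{sect:prelim}, I read off \eqref{q-Rnhrdt} and define $\cO(D_1\times_\bq D_2)$ to be this algebra; uniqueness of the multiplication follows because $z_1,\dots,z_m$ and $w_1,\dots,w_n$ generate a dense subalgebra, so the relations determine the product on a dense set and continuity does the rest.

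Finally, the HFG conclusion is the last assertion of Proposition~\ref{prop:skew_hol_Rnhrdt}, once $\cO(D_1)$ (resp.\ $\cO(D_2)$) is known to be HFG; for a Reinhardt domain this holds by Theorem~\ref{thm:comm_HFG}, since its algebra of holomorphic functions coincides with that of its schlicht, logarithmically convex envelope of holomorphy, a Stein Reinhardt domain of embedding dimension at most $m$ (resp.\ $n$). I expect the only genuinely delicate bookkeeping to be the correct pairing of the inequality $|q_{ij}|\ge 1$ or $|q_{ij}|\le 1$ with the choice of which factor supplies the coefficient algebra, since the wrong choice yields a rescaling that fails to preserve completeness; condition (ii) removes this difficulty entirely by making all the operators automorphisms preserving every Reinhardt domain.
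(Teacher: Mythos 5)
Your proof follows the paper's own route almost exactly: the paper, too, realizes the relations via the commuting rescalings $(\sigma_j f)(z)=f(q_{1j}^{-1}z_1,\ldots ,q_{mj}^{-1}z_m)$ on $\cO(D_1)$, verifies \eqref{suff_equi_semi} (resp.\ \eqref{suff_equi_grp}) by evaluating $\sigma_j$ on monomials against the graded seminorms $\|\cdot\|_K''$, and then applies Proposition~\ref{prop:skew_hol_Rnhrdt} together with the Fr\'echet-space identification $\cO(D_2,\cO(D_1);\sigma)\cong\cO(D_1\times D_2)$. Your handling of case (i) with $|q_{ij}|\le 1$ (keeping the domains fixed but letting $\cO(D_2)$ be the coefficient algebra) is the same move as the paper's ``interchange $D_1$ and $D_2$'', and your explicit observation that $\cO(D_1)$ is HFG, because a Reinhardt domain has a schlicht Stein envelope of holomorphy and Theorem~\ref{thm:comm_HFG} applies, fills in a hypothesis of Proposition~\ref{prop:skew_hol_Rnhrdt} that the paper uses silently. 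So the existence part and the HFG conclusion are in order.

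The one genuine flaw is in your uniqueness argument: for general Reinhardt domains the coordinates $z_1,\ldots ,z_m,w_1,\ldots ,w_n$ do \emph{not} generate a dense subalgebra of $\cO(D_1\times D_2)$. The subalgebra they generate consists of polynomials, and if, say, $D_1$ is a polyannulus (a case allowed under condition (ii), and exactly the case needed later for the quantum polyannulus), the closure of the polynomials does not contain $z_1^{-1}$: the negative-index Laurent coefficients are continuous linear functionals that vanish on every polynomial. Density of polynomials holds only under condition (i), where completeness forces $0\in D_1\times D_2$, so your ``dense set plus continuity'' step fails precisely in case (ii). What is true in general, and suffices, is that the \emph{linear span} of all Laurent monomials $z^k w^l$ is dense in $\cO(D_1\times D_2)$ (this is the absolute grading lemma of Section~\ref{sect:skew} applied to the Reinhardt domain $D_1\times D_2\subseteq\CC^{m+n}$), and the relations \eqref{q-Rnhrdt}, together with the implicit requirement that $\cO(D_1)$ and $\cO(D_2)$ sit inside $\cO(D_1\times_\bq D_2)$ as subalgebras with their usual products, determine all products of such monomials: negative powers are reached by multiplying the coordinate relations by $z_i^{-1}$ or $w_j^{-1}$ wherever these inverses exist in the respective algebras. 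Alternatively, uniqueness can simply be inherited from the uniqueness clause of Proposition~\ref{prop:skew_hol_Rnhrdt}, which is how the paper's proof disposes of it implicitly.
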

\begin{proof}
(i) By interchanging $D_1$ and $D_2$ if necessary, we can assume that $|q_{ij}|\ge 1$ for all $i,j$.
Consider the commuting endomorphisms $\sigma_1,\ldots ,\sigma_n$ of $\cO(D_1)$ given by
$(\sigma_j f)(z)=f(q_{1j}^{-1}z_1,\ldots ,q_{mj}^{-1}z_m)$.
For each compact set $K\subset D_1$ and each $f\in\cO(D_1)$ with Taylor
expansion $f=\sum_{k\in\Z_+^m} c_k z^k$, we have
\[
\| \sigma_j(f) \|_K''
=\sum_{k\in\Z_+^m} |c_k| \| \sigma_j(z^k) \|_K
\le\sum_{k\in\Z_+^m} |c_k| \| z^k\|_K = \| f\|_K''.
\]
Hence~\eqref{suff_equi_semi} holds, and the subsemigroup of $\End(\cO(D_1))$ generated by
$\sigma_1,\ldots,\sigma_n$ is equicontinuous.
Now Proposition~\ref{prop:skew_hol_Rnhrdt} (i) yields the HFG algebra
$\cO(D_2,\cO(D_1);\sigma)$. Identifying $\cO(D_2,\cO(D_1);\sigma)$ with
$\cO(D_1\times D_2)$ as a Fr\'echet space (see Section~\ref{sect:prelim}),
we see that the relations~\eqref{q-Rnhrdt} hold
in $\cO(D_2,\cO(D_1);\sigma)$. Letting $\cO(D_1\times_\bq D_2)=\cO(D_2,\cO(D_1);\sigma)$,
we obtain the required algebra.

(ii) The proof is similar to (i), the only difference is that now $\sigma_1,\ldots ,\sigma_n$
are automorphisms satisfying~\eqref{suff_equi_grp}, and
so the subgroup of $\Aut(\cO(D_1))$ generated by
$\sigma_1,\ldots,\sigma_n$ is equicontinuous.
\end{proof}

\subsection{Quantum polydisk}
\label{subsect:q_polydisk}

Our next example is motivated by~\eqref{O_q}.
Let $\bq=(q_{ij})$ be a multiplicatively antisymmetric complex $n\times n$-matrix.
Observe that, for each $t>0$,
the norm $\|\cdot\|_t$ on $\cO_\bq^\reg(\CC^n)$ given by~\eqref{O_q}
is a special case of the following one. Given $\rho\in (0,+\infty)^n$ and
$a=\sum_k c_k x^k\in\cO_\bq^\reg(\CC^n)$, let
\[
\| a\|_\rho=\sum_{k\in\Z_+^n} |c_k| w_\bq(k) \rho^{k},
\]
where the function $w_\bq\colon \Z_+^n\to\R_+$ is given by \eqref{w_q}.
The same argument as in \cite[Lemma 5.10]{Pir_qfree} shows that $\|\cdot\|_\rho$
is submultiplicative.

\begin{definition}
Let $R\in (0,+\infty]^n$.
We define the {\em algebra of holomorphic functions on the
quantum $n$-polydisk of polyradius $R$} by
\[
\cO_\bq(\DD^n_R)=
\Bigl\{
a=\sum_{k\in\Z_+^n} c_k x^k :
\| a\|_\rho=\sum_{k\in\Z_+^n} |c_k| w_\bq(k) \rho^{k}<\infty
\;\forall \rho\in (0,R) \Bigr\}.
\]
The topology on $\cO_\bq(\DD^n_R)$ is given by the norms $\|\cdot\|_\rho\; (\rho\in (0,R))$,
and the multiplication on $\cO_\bq(\DD^n_R)$ is uniquely determined by $x_i x_j=q_{ij}x_j x_i$ for all $i,j$.
\end{definition}

In other words, $\cO_\bq(\DD^n_R)$ is the completion of $\cO_\bq^\reg(\CC^n)$
with respect to the family $\{\|\cdot\|_\rho :\rho\in (0,R)\}$ of submultiplicative norms.
Clearly, if $q_{ij}=1$ for all $i,j$, then $\cO_\bq(\DD^n_R)$ is topologically isomorphic to the algebra
$\cO(\DD^n_R)$ of holomorphic functions on the polydisk $\DD_R^n$.

\begin{theorem}
\label{thm:q_poly_quot_free_poly}
There exists a unique continuous homomorphism
\[
\pi\colon\cF(\DD_R^n)\to\cO_\bq(\DD_R^n)\quad\text{such that}
\quad\pi(\zeta_i)=x_i \quad (i=1,\ldots ,n).
\]
Moreover, $\pi$ is onto, and $\Ker\pi$ coincides with
the closed two-sided ideal of $\cF(\DD_R^n)$ generated by the elements
$\zeta_i\zeta_j-q_{ij}\zeta_j\zeta_i$ for all $i,j=1,\ldots ,n$.
Finally, $\Ker\pi$ is a complemented subspace of $\cF(\DD_R^n)$.
\end{theorem}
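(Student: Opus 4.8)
The plan is to treat the four assertions---existence and uniqueness of $\pi$, surjectivity, the description of $\Ker\pi$, and its complementedness---by first producing $\pi$ from a universal property and then building a continuous linear section of $\pi$ adapted to the grading by content.

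First I would obtain $\pi$ from Proposition~\ref{prop:univ_F_poly}, for which it suffices to verify that $\sigma_A(x_i)\subseteq\DD_{R_i}$ in $A=\cO_\bq(\DD_R^n)$. The monomial $x_i^m$ equals $x^k$ for $k$ having $m$ in the $i$th place and zeros elsewhere, and the only rearrangement of $\delta(k)=(i,\dots,i)$ is itself, so \eqref{w_q} gives $w_\bq(k)=1$ and hence $\|x_i^m\|_\rho=\rho_i^m$ for every $\rho\in(0,R)$. Thus in the Banach completion $A_\rho$ of $\cO_\bq(\DD_R^n)$ with respect to $\|\cdot\|_\rho$ the spectral radius of $x_i$ equals $\rho_i$, so $\sigma_{A_\rho}(x_i)\subseteq\overline{\DD_{\rho_i}}$. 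Writing $\cO_\bq(\DD_R^n)=\varprojlim_{\rho<R}A_\rho$ and using that an element of a reduced projective limit of Banach algebras is invertible precisely when its image is invertible in each factor, I conclude $\sigma_A(x_i)=\bigcup_{\rho<R}\sigma_{A_\rho}(x_i)\subseteq\bigcup_{\rho<R}\overline{\DD_{\rho_i}}=\DD_{R_i}$. This produces the unique continuous homomorphism $\pi$ with $\pi(\zeta_i)=x_i$, and since $\pi(\zeta_i\zeta_j-q_{ij}\zeta_j\zeta_i)=x_ix_j-q_{ij}x_jx_i=0$, the closed two-sided ideal $J$ generated by the relations satisfies $J\subseteq\Ker\pi$.

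Next I would exploit the grading by content. Collecting words of a given content exhibits $\cF(\DD_R^n)$ as an absolutely $\Z_+^n$-graded algebra whose component $V_k$, spanned by the finitely many $\zeta_\alpha$ in which each letter $i$ occurs $k_i$ times, is finite-dimensional; the homomorphism $\pi$ is graded and maps $V_k$ onto the one-dimensional space $\CC x^k$ (by \eqref{lambda}, $\pi(\zeta_\alpha)=x_\alpha$ is a scalar multiple of $x^k$), and the norms $\|\cdot\|_{\rho,\tau}$ and $\|\cdot\|_\rho$ are graded. For each $k$ choose $\sigma_k\in S_{|k|}$ attaining the minimum in \eqref{w_q} and put $\xi_k=\lambda(\sigma_k,\delta(k))\,\zeta_{\sigma_k(\delta(k))}$; then \eqref{lambda} gives $\pi(\xi_k)=x_{\delta(k)}=x^k$, while the explicit description of $\cF(\DD_R^n)$ gives $\|\xi_k\|_{\rho,\tau}=w_\bq(k)\,\rho^k\,\tau^{s(\sigma_k(\delta(k)))+1}$. \emph{The crucial point, and the main obstacle, is to bound the exponent $s(\sigma_k(\delta(k)))+1$ independently of $k$.} For this I would show that the minimum defining $w_\bq(k)$ may always be realized by a sorted monomial $\zeta_{p(1)}^{k_{p(1)}}\cdots\zeta_{p(n)}^{k_{p(n)}}$ for a permutation $p$ of $\{1,\dots,n\}$, so that $\sigma_k(\delta(k))$ has at most $n$ contiguous blocks and $s(\sigma_k(\delta(k)))+1\le n$; this is a combinatorial statement about reorderings governed by a multiplicatively antisymmetric matrix, which I would establish by an exchange/bubble-sort argument (or extract from the analysis of $w_\bq$ in \cite{Pir_qfree}) showing that no interleaved arrangement beats the block arrangements. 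Granting it, $\|\xi_k\|_{\rho,\tau}\le\tau^n\,w_\bq(k)\,\rho^k$.

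Finally I would define $\theta\colon\cO_\bq(\DD_R^n)\to\cF(\DD_R^n)$ by $\theta\bigl(\sum_k c_k x^k\bigr)=\sum_k c_k\xi_k$ and estimate, using \eqref{O_q}, $\|\theta(b)\|_{\rho,\tau}\le\tau^n\sum_k|c_k|w_\bq(k)\rho^k=\tau^n\|b\|_\rho$, so that $\theta$ is a well-defined continuous linear map with $\pi\circ\theta=\id$. Hence $\pi$ is surjective, and $P=\theta\circ\pi$ is a continuous idempotent with $\Im P=\Im\theta$ closed and $\Ker P=\Ker\pi$, giving $\cF(\DD_R^n)=\Im\theta\oplus\Ker\pi$; thus $\Ker\pi$ is complemented. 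To finish $\Ker\pi=J$ it remains to prove $\Ker\pi\subseteq J$: since the content grading is an absolute Schauder decomposition respected by $\pi$, an element lies in $\Ker\pi$ iff each of its homogeneous components does, and the purely algebraic presentation of the quantum affine space shows that $V_k\cap\Ker\pi$ is the degree-$k$ part of the algebraic ideal generated by the relations, which lies in $J$; as $J$ is closed, summing the components gives $\Ker\pi\subseteq J$, whence equality.
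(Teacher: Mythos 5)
Your overall strategy coincides with the paper's: produce $\pi$ from a universal property, build a continuous linear section supported on minimal-cost block words, obtain surjectivity and complementedness from the idempotent $\theta\circ\pi$, and then identify the kernel. Two sub-steps genuinely differ from the paper, and both are sound. For the existence of $\pi$, the paper does not verify the spectral hypothesis of Proposition~\ref{prop:univ_F_poly}; it observes $\| x_i^d\|_\rho=\rho_i^d=\| z_i^d\|_{\rho_i}$, deduces an embedding $\cO(\DD_{R_i})\hookrightarrow\cO_\bq(\DD_R^n)$, $z_i\mapsto x_i$, and invokes the free-product property of Definition~\ref{def:F_poly}; your spectral-radius/projective-limit argument reaches the same conclusion. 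For $\Ker\pi\subseteq J$, the paper argues by passing to the quotient $\cF(\DD_R^n)/I$, noting that the classes $\bar\zeta_{\alpha_k}$ span a dense subspace there, and concluding from $\bar\pi\bar\varkappa=\id$ that $\bar\pi$ is a topological isomorphism; your argument --- that $\pi$ is a graded map between absolutely $\Z_+^n$-graded algebras, that each content component of $\Ker\pi$ equals the corresponding graded piece of the algebraic defining ideal of $\cO_\bq^\reg(\CC^n)$, and that closedness of $J$ allows summing the components --- is a clean and correct alternative.

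However, the step you yourself flag as ``the crucial point'' is exactly where the paper invests nearly all of its effort, and your proposal leaves it as a sketch. The assertion that $w_\bq(k)$ is always attained at a sorted (block) word is the content of the paper's Lemmas~\ref{lemma:Wk}--\ref{lemma:compact_exists}: an explicit formula for $\lambda(\sigma_{s,t},\alpha)$ for cycles, and a compactification step showing that if $\alpha\in W(k)$ has two maximal $j$-blocks separated by a word $\beta_2$, then the two single-letter merging moves carry mutually reciprocal factors $Q^{\pm 1}$ with $Q=\prod_{m\in\beta_2}q_{jm}$, so that by minimality of $\alpha$ both factors have modulus one and merging keeps the word in $W(k)$ without splitting the blocks of other letters; an extremal argument on $|\bc(\alpha)|$ then finishes. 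Your bubble-sort sketch can indeed be made rigorous along these lines (even without minimality: since the two directions give reciprocal factors, one of them never increases $|\lambda|$, and merging $j$-blocks leaves the subsequence of other letters intact, so the total block count strictly decreases and the process terminates). But as written it omits the reciprocity observation, the non-splitting check, and the termination argument, and it cannot simply be ``extracted'' from \cite{Pir_qfree}, which only treats the special cases $|q_{ij}|\ge 1$ or $|q_{ij}|\le 1$ where $w_\bq$ is explicit. Until this lemma is proved, the estimate $\|\xi_k\|_{\rho,\tau}\le\tau^n w_\bq(k)\rho^k$ --- and with it surjectivity, complementedness, and the kernel identification --- remains unestablished; this deferral is the one genuine gap in the proposal.
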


To prove Theorem~\ref{thm:q_poly_quot_free_poly}, we need some preparation.
Given $k\in\Z_+^n$, let
\[
W(k)=\{ \sigma(\delta(k)) : \sigma\in S_{|k|},\; |\lambda(\sigma,\delta(k))|=w_\bq(k)\}
\]
(for notation, see Example~\ref{ex:q_aff}).
Observe that, if $\alpha\in W_n$ and $\sigma_1,\sigma_2\in S_{|\alpha|}$ are such that
$\sigma_1(\alpha)=\sigma_2(\alpha)$, then $\lambda(\sigma_1,\alpha)=\lambda(\sigma_2,\alpha)$
(see \eqref{lambda}). In other words, $\lambda(\sigma,\alpha)$ depends only on $\alpha$
and $\sigma(\alpha)$. As a consequence we obtain the following.

\begin{lemma}
\label{lemma:Wk}
If $\sigma\in S_{|k|}$, then $\sigma(\delta(k))\in W(k)$ if and only if
$|\lambda(\sigma,\delta(k))|=w_\bq(k)$.
\end{lemma}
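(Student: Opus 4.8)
The plan is to read off both implications directly from the definition of $W(k)$, using the well-definedness observation recorded just before the lemma. There is no genuine difficulty here; the statement is essentially a bookkeeping clarification, asserting that membership in $W(k)$ is detected by the numerical condition $|\lambda(\sigma,\delta(k))|=w_\bq(k)$ rather than by the choice of a particular minimizing permutation.

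The ``if'' direction requires nothing new. If $|\lambda(\sigma,\delta(k))|=w_\bq(k)$, then $\sigma(\delta(k))$ lies in $W(k)$ simply because $\sigma$ is one of the permutations allowed in the defining set of $W(k)$. For the ``only if'' direction I would argue as follows. Suppose $\sigma(\delta(k))\in W(k)$. By the definition of $W(k)$ there exists some $\tau\in S_{|k|}$ with $\tau(\delta(k))=\sigma(\delta(k))$ and $|\lambda(\tau,\delta(k))|=w_\bq(k)$. Now I invoke the observation stated immediately before the lemma: since $\lambda(\cdot,\delta(k))$ depends only on $\delta(k)$ and on the permuted tuple, the equality $\sigma(\delta(k))=\tau(\delta(k))$ forces $\lambda(\sigma,\delta(k))=\lambda(\tau,\delta(k))$. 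Taking absolute values gives $|\lambda(\sigma,\delta(k))|=|\lambda(\tau,\delta(k))|=w_\bq(k)$, as required.

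The only point carrying any content is the well-definedness assertion that $\lambda(\sigma,\alpha)$ is determined by $\alpha$ and $\sigma(\alpha)$, but this is already granted in the text preceding the lemma (it follows by writing $x_\alpha=\lambda(\sigma_1,\alpha)x_{\sigma_1(\alpha)}=\lambda(\sigma_2,\alpha)x_{\sigma_2(\alpha)}$ and comparing coefficients of the relevant basis monomial when $\sigma_1(\alpha)=\sigma_2(\alpha)$, using that the $x^k$ form a basis of $\cO_\bq^\reg(\CC^n)$). Hence I expect no real obstacle; the entire proof amounts to unwinding the definition of $W(k)$ and applying this well-definedness once.
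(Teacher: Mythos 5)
Your proof is correct and follows exactly the paper's intended argument: the paper states this lemma as an immediate consequence of the preceding observation that $\lambda(\sigma,\alpha)$ depends only on $\alpha$ and $\sigma(\alpha)$, which is precisely the well-definedness fact you invoke for the ``only if'' direction, with the ``if'' direction being definitional in both cases.
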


Recall \cite[Lemma 5.8]{Pir_qfree}
that, for each $\alpha\in W_n$ and each $\sigma,\tau\in S_{|\alpha|}$ we have
\begin{equation}
\label{cocycle}
\lambda(\sigma\tau,\alpha)=\lambda(\sigma,\tau(\alpha))\lambda(\tau,\alpha).
\end{equation}

\begin{lemma}
\label{lemma:lambda=1}
If $\alpha\in W(k)$, then
\begin{mycompactenum}
\item $|\lambda(\sigma,\alpha)|\ge 1$ for each $\sigma\in S_{|k|}$;
\item $|\lambda(\sigma,\alpha)|=1$ if and only if $\sigma(\alpha)\in W(k)$.
\end{mycompactenum}
\end{lemma}
\begin{proof}
Choose $\tau\in S_{|k|}$ such that $\alpha=\tau(\delta(k))$.
By Lemma~\ref{lemma:Wk}, we have $|\lambda(\tau,\delta(k))|=w_\bq(k)$.
Moreover, $\sigma(\alpha)\in W(k)$ if and only if $|\lambda(\sigma\tau,\delta(k))|=w_\bq(k)$.
Using~\eqref{cocycle}, we see that
\[
w_\bq(k)\le |\lambda(\sigma\tau,\delta(k))|=|\lambda(\sigma,\alpha)| w_\bq(k).
\]
This readily implies both (i) and (ii).
\end{proof}

Let $d\in \Z_+$, and let $d_1,\ldots ,d_p\in \Z_+$ be such that $d_1+\cdots +d_p=d$.
We will identify each $p$-tuple $(\beta_1,\ldots ,\beta_p)$ such that
$\beta_i\in W_{n,d_i}\; (i=1,\ldots ,p)$ with the element of $W_{n,d}$ obtained from
$\beta_1,\ldots ,\beta_p$ by concatenation.

\begin{definition}
Let $\ell,d\in \Z_+$, $\ell\le d$. We say that $\beta\in W_{n,\ell}$ is a {\em subword}
of $\alpha\in W_{n,d}$ if there exist $\ell',\ell''\in \Z_+$ and $\beta'\in W_{n,\ell'}$,
$\beta''\in W_{n,\ell''}$ such that $d=\ell'+\ell+\ell''$ and $\alpha=(\beta',\beta,\beta'')$.
\end{definition}

\begin{definition}
Given $j\in\{ 1,\ldots ,n\}$, a {\em $j$-word} is a word of the form
$(j,\ldots ,j)\in W_{n,d}$ for some $d\in\Z_+$.
\end{definition}

\begin{definition}
Let $\alpha\in W_{n,d}$, and let $\beta$ be a $j$-subword of $\alpha$.
We say that $\beta$ is a {\em maximal $j$-subword} of $\alpha$ if $\beta$ is not a proper
subword any other $j$-subword of $\alpha$.
\end{definition}

Given $\alpha\in W_{n,d}$ and $j\in\{ 1,\ldots ,n\}$, let $N(\alpha,j)$ denote the
number of maximal nonempty $j$-subwords of $\alpha$. Let also
\begin{align*}
\bnc(\alpha)&=\bigl\{ j\in\{ 1,\ldots ,n\} : N(\alpha,j)\ge 2\bigr\},\\
\bc(\alpha)&=\{ 1,\ldots ,n\}\setminus\bnc(\alpha).
\end{align*}

\begin{definition}
We say that $\alpha\in W_{n,d}$ is {\em compact} if $\bnc(\alpha)=\varnothing$.
Equivalently, $\alpha$ is compact if there exist $\sigma\in S_n$ and
$k=(k_1,\ldots ,k_n)\in\Z_+^n$ such that
\[
\alpha=(\underbrace{\sigma(1),\ldots ,\sigma(1)}_{k_1},\ldots ,
\underbrace{\sigma(n),\ldots ,\sigma(n)}_{k_n}).
\]
\end{definition}

Given $s,t,d\in\Z_+$ such that $1\le s< t\le d$, consider the cycle
\[
\sigma_{s,t}=(s\; s+1 \ldots t-1\; t)\in S_d.
\]
Let also $\sigma_{t,s}=\sigma_{s,t}^{-1}$.
We clearly have
\begin{align*}
\sigma_{s,t}&=(s\; s+1)(s+1\; s+2)\cdots (t-1\; t),\\
\sigma_{t,s}&=(t-1\; t)(t-2\; t-1)\cdots (s\; s+1).
\end{align*}

\begin{lemma}
For each $\alpha\in W_{n,d}$ we have
\begin{equation}
\label{lambda_sigma_st}
\lambda(\sigma_{s,t},\alpha)=\prod_{i=s}^{t-1} q_{\alpha_i \alpha_t},\quad
\lambda(\sigma_{t,s},\alpha)=\prod_{i=s+1}^t q_{\alpha_s \alpha_i}.
\end{equation}
\end{lemma}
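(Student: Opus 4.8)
The plan is to reduce everything to the single adjacent-transposition case and then assemble the two products, either by a direct commutation computation in $\cO_\bq^\reg(\CC^n)$ or by iterating the cocycle identity~\eqref{cocycle} along the given factorizations of $\sigma_{s,t}$ and $\sigma_{t,s}$ into adjacent transpositions. First I would record the base case: for an adjacent transposition $(k\;k+1)\in S_d$ one has $\lambda((k\;k+1),\alpha)=q_{\alpha_k\alpha_{k+1}}$. Indeed, $(k\;k+1)$ interchanges the letters in positions $k$ and $k+1$, so $x_\alpha$ and $x_{(k\;k+1)(\alpha)}$ differ only in the order of the factors $x_{\alpha_k}$ and $x_{\alpha_{k+1}}$, and the defining relation $x_{\alpha_k}x_{\alpha_{k+1}}=q_{\alpha_k\alpha_{k+1}}x_{\alpha_{k+1}}x_{\alpha_k}$ yields the claim.

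For the first formula I would compute $\lambda(\sigma_{s,t},\alpha)$ directly by commuting $x_{\alpha_t}$ to the left. Writing
\[
x_\alpha=x_{\alpha_1}\cdots x_{\alpha_{s-1}}\,(x_{\alpha_s}\cdots x_{\alpha_{t-1}})\,x_{\alpha_t}\,x_{\alpha_{t+1}}\cdots x_{\alpha_d},
\]
I move $x_{\alpha_t}$ leftward past $x_{\alpha_{t-1}},\dots,x_{\alpha_s}$ in turn; each step past $x_{\alpha_i}$ (for $i=t-1,\dots,s$) produces the factor $q_{\alpha_i\alpha_t}$, since $x_{\alpha_i}x_{\alpha_t}=q_{\alpha_i\alpha_t}x_{\alpha_t}x_{\alpha_i}$. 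Because $\sigma_{s,t}$ is precisely the permutation carrying the letter in position $t$ to position $s$ while shifting the block $s,\dots,t-1$ one step to the right, the resulting monomial is $x_{\sigma_{s,t}(\alpha)}$, and the accumulated coefficient is $\prod_{i=s}^{t-1}q_{\alpha_i\alpha_t}$, as required.

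The second formula is obtained symmetrically by commuting $x_{\alpha_s}$ to the right past $x_{\alpha_{s+1}},\dots,x_{\alpha_t}$, each step past $x_{\alpha_i}$ contributing $q_{\alpha_s\alpha_i}$ and giving $\prod_{i=s+1}^{t}q_{\alpha_s\alpha_i}$. Alternatively, both identities follow by induction on $t-s$ from the factorizations $\sigma_{s,t}=(s\;s+1)\,\sigma_{s+1,t}$ and $\sigma_{t,s}=(t-1\;t)\,\sigma_{t-1,s}$ together with the cocycle relation~\eqref{cocycle} and the base case above.

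I expect the only real subtlety to be the bookkeeping forced by the action convention $\sigma(\alpha)=\alpha\sigma^{-1}$: one must verify that $\sigma_{s,t}(\alpha)$ really carries $\alpha_t$ into position $s$ (and, in the inductive variant, that after applying $\sigma_{s+1,t}$ the letter in position $s$ is still $\alpha_s$ while position $s+1$ now holds $\alpha_t$, so that the adjacent transposition $(s\;s+1)$ contributes exactly $q_{\alpha_s\alpha_t}$). Once the positions are matched correctly, the products telescope immediately and the two formulas drop out.
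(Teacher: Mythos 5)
Your proposal is correct, and both of your routes work; the interesting point is that your primary argument differs from the paper's. The paper proves the first identity by induction on $p=t-s$, peeling the transposition off the right-hand end via the factorization $\sigma_{s,t}=\sigma_{s,t-1}(t-1\;t)$ and invoking the cocycle identity~\eqref{cocycle} together with the observation that $\alpha'=(t-1\;t)\alpha$ satisfies $\alpha'_i=\alpha_i$ for $i\le t-2$ and $\alpha'_{t-1}=\alpha_t$; the second identity is declared ``similar.'' Your main argument instead computes $\lambda(\sigma_{s,t},\alpha)$ directly in $\cO_\bq^\reg(\CC^n)$ by commuting $x_{\alpha_t}$ leftward (respectively $x_{\alpha_s}$ rightward), which is more elementary: it uses only the defining relations $x_ix_j=q_{ij}x_jx_i$ and the definition of $\lambda$ via~\eqref{lambda}, bypassing the cocycle identity entirely, at the cost of having to verify explicitly that, under the convention $\sigma(\alpha)=\alpha\sigma^{-1}$, the word $\sigma_{s,t}(\alpha)$ is exactly the one obtained by moving $\alpha_t$ to position $s$ and shifting the block $\alpha_s,\ldots,\alpha_{t-1}$ one step right --- the bookkeeping point you correctly single out as the only subtlety. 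Your alternative inductive route, using $\sigma_{s,t}=(s\;s+1)\,\sigma_{s+1,t}$ and $\sigma_{t,s}=(t-1\;t)\,\sigma_{t-1,s}$, is essentially the paper's proof with the adjacent transposition split off on the left rather than the right; your observation that $\sigma_{s+1,t}(\alpha)$ has $\alpha_s$ in position $s$ and $\alpha_t$ in position $s+1$, so that the transposition contributes $q_{\alpha_s\alpha_t}$, is exactly the analogue of the paper's computation with $\alpha'$. In short: the paper's argument stays inside the formal $\lambda$-calculus of~\eqref{cocycle}, while your direct computation makes the combinatorial content (accumulating one factor $q_{\alpha_i\alpha_t}$ per crossing) transparent; both are complete.
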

\begin{proof}
We prove only the first equality, the proof of the second being similar.
We use induction on $p=t-s$. For $p=1$, there is nothing to prove.
Let now $p\ge 2$, and let $\alpha'=(t-1\; t)\alpha$. Since $\sigma_{s,t}=\sigma_{s,t-1} (t-1\; t)$,
the induction hypothesis and \eqref{cocycle} yield
\[
\begin{split}
\lambda(\sigma_{s,t},\alpha)
&=\lambda(\sigma_{s,t-1},\alpha')\lambda((t-1\; t),\alpha)\\
&=\Bigl(\prod_{i=s}^{t-2} q_{\alpha'_i \alpha'_{t-1}}\Bigr) q_{\alpha_{t-1}\alpha_t}
=\Bigl(\prod_{i=s}^{t-2} q_{\alpha_i \alpha_t}\Bigr) q_{\alpha_{t-1}\alpha_t}
=\prod_{i=s}^{t-1} q_{\alpha_i \alpha_t}. \qedhere
\end{split}
\]
\end{proof}

\begin{lemma}
\label{lemma:compactify}
Let $k\in \Z_+^n$. For each noncompact $\alpha\in W(k)$ and each $j\in\bnc(\alpha)$
there exists $\sigma\in S_{|k|}$ such that $\sigma(\alpha)\in W(k)$,
$N(\sigma(\alpha),j)=N(\alpha,j)-1$, and $\bc(\alpha)\subseteq\bc(\sigma(\alpha))$.
\end{lemma}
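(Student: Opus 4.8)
The plan is to realize the required $\sigma$ as the transposition of two adjacent blocks and to use Lemma~\ref{lemma:lambda=1} to control the weight. Since $j\in\bnc(\alpha)$ we have $N(\alpha,j)\ge 2$, so $\alpha$ contains at least two maximal nonempty $j$-subwords. I would choose two of them that are consecutive and write
\[
\alpha=(P,A,\gamma,B,Q),
\]
where $A$ and $B$ are the chosen consecutive maximal $j$-runs and $\gamma=(\gamma_1,\ldots,\gamma_\ell)$ is the block separating them; by maximality and consecutiveness $\gamma$ is nonempty and contains no occurrence of $j$. I then consider two permutations $\sigma_L,\sigma_R\in S_{|k|}$: the first slides $\gamma$ to the left past $A$, turning $\alpha$ into $(P,\gamma,A,B,Q)$, and the second slides $\gamma$ to the right past $B$, turning $\alpha$ into $(P,A,B,\gamma,Q)$. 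In either case the two $j$-runs $A$ and $B$ become adjacent and merge.

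First I would compute the two weights. Writing $P_0=\prod_{i=1}^\ell q_{j\gamma_i}$ and iterating the commutation relation $x_j x_{\gamma_i}=q_{j\gamma_i}x_{\gamma_i}x_j$ (equivalently, using \eqref{lambda_sigma_st} together with the cocycle identity \eqref{cocycle}), and recalling that $A,B$ are $j$-runs of lengths $|A|,|B|$, I obtain
\[
\lambda(\sigma_L,\alpha)=P_0^{\,|A|},\qquad \lambda(\sigma_R,\alpha)=P_0^{-|B|}.
\]
Now comes the key point. Because $\alpha\in W(k)$, Lemma~\ref{lemma:lambda=1}(i) gives $|\lambda(\sigma,\alpha)|\ge 1$ for every $\sigma$; applied to $\sigma_L$ and $\sigma_R$ this yields $|P_0|^{|A|}\ge 1$ and $|P_0|^{-|B|}\ge 1$. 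Since $|A|,|B|\ge 1$, these two inequalities together force $|P_0|=1$, whence $|\lambda(\sigma_L,\alpha)|=|\lambda(\sigma_R,\alpha)|=1$. By Lemma~\ref{lemma:lambda=1}(ii) both $\sigma_L(\alpha)$ and $\sigma_R(\alpha)$ lie in $W(k)$, so either choice already secures the first required property; I take $\sigma=\sigma_R$.

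It then remains to verify the combinatorial claims for $\sigma(\alpha)=(P,A,B,\gamma,Q)$. Merging the two consecutive maximal $j$-runs $A,B$ into a single run leaves the $j$-runs inside $P$ and $Q$ untouched and creates no new ones, since the neighbours of the merged run (if any) are the last letter of $P$ and the first letter of $\gamma$, both $\ne j$; hence $N(\sigma(\alpha),j)=N(\alpha,j)-1$. For a letter $c\ne j$ the move only alters adjacencies at the junctions bordering the $j$-blocks $A$ and $B$, none of which is of type $c$--$c$; the only new $c$--$c$ junction that can appear is at $\gamma\,|\,Q$, which can merge a $c$-run ending $\gamma$ with a $c$-run starting $Q$ but can never split a $c$-run. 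Thus $N(\sigma(\alpha),c)\le N(\alpha,c)$, so every $c\in\bc(\alpha)$ (that is, $c\ne j$ with $N(\alpha,c)\le 1$) still satisfies $N(\sigma(\alpha),c)\le 1$, giving $\bc(\alpha)\subseteq\bc(\sigma(\alpha))$. The main obstacle is precisely this weight estimate: a priori it is unclear that the two $j$-runs can be brought together without increasing the weight, and the resolution is that the two opposite block-moves carry weights $P_0^{|A|}$ and $P_0^{-|B|}$ which, being both of modulus $\ge 1$ by Lemma~\ref{lemma:lambda=1}(i) with $|A|,|B|\ge 1$, can only be compatible when $|P_0|=1$, making both moves weight-preserving.
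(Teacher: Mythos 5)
Your proof is correct, and it takes a genuinely streamlined route compared with the paper's. The key mechanism is the same in both arguments: pit two opposite rearrangements against each other and invoke Lemma~\ref{lemma:lambda=1}, concluding that since both weights must have modulus at least $1$, they in fact both have modulus exactly $1$, so the rearrangement stays in $W(k)$. The difference is the scale at which this trick is applied. The paper moves a single letter $j$ at a time across the separating block, using the explicit cycles of \eqref{lambda_sigma_st}, for which the two opposite weights are exact reciprocals; it then iterates finitely many times, each intermediate word being checked to remain in $W(k)$. You instead move the whole block $\gamma$ past $A$ or past $B$ in one permutation; the resulting weights $P_0^{|A|}$ and $P_0^{-|B|}$ are no longer reciprocals of one another, but your observation that $|P_0|^{|A|}\ge 1$ and $|P_0|^{-|B|}\ge 1$ with $|A|,|B|\ge 1$ forces $|P_0|=1$ is a correct mild generalization of the reciprocity trick, and it eliminates the induction entirely. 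Your weight computation is legitimate because $\lambda(\sigma,\alpha)$ depends only on $\alpha$ and $\sigma(\alpha)$ (the remark preceding Lemma~\ref{lemma:Wk}), so one may evaluate it by iterating the commutation relations, or equivalently by factoring the block move into cycles and using \eqref{cocycle} with \eqref{lambda_sigma_st}. Your run-counting argument is also sound: in $\sigma(\alpha)=(P,A,B,\gamma,Q)$ the only junction not bordered by a $j$-block is $\gamma\,|\,Q$, where $c$-runs ($c\ne j$) can merge but never split, which gives both $N(\sigma(\alpha),j)=N(\alpha,j)-1$ and $\bc(\alpha)\subseteq\bc(\sigma(\alpha))$. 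What you gain is brevity and a single application of Lemma~\ref{lemma:lambda=1}; what the paper's letter-by-letter route gains is that every step uses only the ready-made cycle formula \eqref{lambda_sigma_st}, with no need to consider weights of more general block permutations.
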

\begin{proof}
Since $j\in\bnc(\alpha)$, we can write $\alpha$ in the form
\begin{equation}
\label{alpha_begin}
\alpha=(\beta_1,\gamma_1,\beta_2,\gamma_2,\beta_3)
\end{equation}
where $\gamma_1$ and $\gamma_2$ are maximal nonempty $j$-subwords of $\alpha$,
and $\beta_1,\beta_2,\beta_3$ are subwords of $\alpha$ such that $\beta_2\ne\varnothing$.
Let also
\[
r=|\beta_1|,\quad s=|\beta_1|+|\gamma_1|,\quad t=|\beta_1|+|\gamma_1|+|\beta_2|.
\]
In particular, we have $\alpha_s=\alpha_{t+1}=j$.
Using~\eqref{lambda_sigma_st}, we obtain
\[
\lambda(\sigma_{t,s},\alpha)=\prod_{i=s+1}^t q_{j\alpha_i}, \quad
\lambda(\sigma_{s+1,t+1},\alpha)=\prod_{i=s+1}^t q_{\alpha_i j}.
\]
Hence $\lambda(\sigma_{s+1,t+1},\alpha)=\lambda(\sigma_{t,s},\alpha)^{-1}$.
Applying Lemma~\ref{lemma:lambda=1}, we conclude that
$|\lambda(\sigma_{t,s},\alpha)|=|\lambda(\sigma_{s+1,t+1},\alpha)|=1$,
whence $\sigma_{t,s}(\alpha)\in W(k)$.

Let $\alpha'=\sigma_{t,s}(\alpha)$. We have
\[
\alpha'=(\beta_1,\gamma_1',\beta_2,\gamma_2',\beta_3),
\]
where $\gamma_2'=(j,\gamma_2)$ and $\gamma_1=(\gamma_1',j)$.
If $\gamma_1'\ne\varnothing$, then the same procedure applied to
$\alpha'$ and to the maximal nonempty $j$-subwords $\gamma_1',\gamma_2'$
yields
\[
\alpha''=\sigma_{t-1,s-1}(\alpha')=(\beta_1,\gamma_1'',\beta_2,\gamma_2'',\beta_3)\in W(k),
\]
where $\gamma_2''=(j,\gamma_2')$ and $\gamma_1'=(\gamma_1'',j)$.
After finitely many steps we obtain
\begin{equation}
\label{alpha_end}
\alpha^{(s-r)}=\sigma_{t-s+r+1,r+1}(\alpha^{(s-r+1)})
=(\beta_1,\beta_2,\gamma_1,\gamma_2,\beta_3)\in W(k).
\end{equation}
We clearly have $\alpha^{(s-r)}=\sigma(\alpha)$, where
$\sigma=\prod_{i=1}^{s-r}\sigma_{t-s+r+i,r+i}$. Comparing~\eqref{alpha_end}
with~\eqref{alpha_begin}, we conclude that $N(\alpha^{(s-r)},j)=N(\alpha,j)-1$
and $\bc(\alpha)\subseteq\bc(\alpha^{(s-r)})$.
\end{proof}

\begin{lemma}
\label{lemma:compact_exists}
For each $k\in\Z_+^n$, $W(k)$ contains a compact word.
\end{lemma}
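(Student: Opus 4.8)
The plan is to show that every element of $W(k)$ can be transformed into a compact one by iterating Lemma~\ref{lemma:compactify}, compacting the letters $1,\dots,n$ one at a time. First I would record that $W(k)$ is nonempty: since $S_{|k|}$ is finite, the minimum defining $w_\bq(k)$ in~\eqref{w_q} is attained by some $\sigma_0\in S_{|k|}$, so $|\lambda(\sigma_0,\delta(k))|=w_\bq(k)$ and hence $\sigma_0(\delta(k))\in W(k)$ by Lemma~\ref{lemma:Wk}. Thus I may fix an arbitrary $\alpha\in W(k)$; if it is already compact there is nothing to prove. The heart of the proof is the following inductive claim, to be established for $m=0,1,\dots,n$: there exists $\alpha^{(m)}\in W(k)$ with $\{1,\dots,m\}\subseteq\bc(\alpha^{(m)})$, i.e.\ each of the letters $1,\dots,m$ has at most one maximal block in $\alpha^{(m)}$.

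The base case $m=0$ is immediate (take $\alpha^{(0)}=\alpha$). For the inductive step I would argue as follows. Suppose $\alpha^{(m)}$ is given. If $m+1\notin\bnc(\alpha^{(m)})$, then $m+1$ is already compact and I set $\alpha^{(m+1)}=\alpha^{(m)}$. Otherwise $N(\alpha^{(m)},m+1)\ge 2$, and I apply Lemma~\ref{lemma:compactify} with $j=m+1$ repeatedly: each application yields a new element of $W(k)$ for which $N(\cdot,m+1)$ has dropped by exactly $1$, while the inclusion $\bc(\cdot)\subseteq\bc(\sigma(\cdot))$ furnished by the lemma keeps all previously compacted letters—in particular $1,\dots,m$—compact. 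Since $N(\cdot,m+1)$ is a strictly decreasing nonnegative integer, after $N(\alpha^{(m)},m+1)-1$ applications it reaches $1$, so $m+1$ becomes compact as well; I take the resulting word to be $\alpha^{(m+1)}$, which then satisfies $\{1,\dots,m+1\}\subseteq\bc(\alpha^{(m+1)})$. Specializing to $m=n$ produces $\alpha^{(n)}\in W(k)$ with $\bc(\alpha^{(n)})=\{1,\dots,n\}$, equivalently $\bnc(\alpha^{(n)})=\varnothing$, which is exactly a compact word in $W(k)$.

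The step I expect to be the crux is guaranteeing that compacting a new letter does not refragment the letters treated earlier, and this is precisely what the monotonicity $\bc(\alpha)\subseteq\bc(\sigma(\alpha))$ in Lemma~\ref{lemma:compactify} delivers: the set of compacted letters only grows along the process and therefore never loses a letter already fixed. Everything else is routine—nonemptiness of $W(k)$, termination of the inner loop, and membership in $W(k)$ at each stage all follow directly from the cited lemmas and the finiteness of $S_{|k|}$—so no estimate is required, and the entire content of the argument is the careful bookkeeping of the repeated applications of Lemma~\ref{lemma:compactify}.
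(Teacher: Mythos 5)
Your proof is correct and runs on exactly the same engine as the paper's: iterated application of Lemma~\ref{lemma:compactify}, with the monotonicity $\bc(\alpha)\subseteq\bc(\sigma(\alpha))$ guaranteeing that letters already compacted are never refragmented. The only difference is bookkeeping — the paper chooses $\alpha\in W(k)$ maximizing $|\bc(\alpha)|$ and gets a contradiction from a single pass of your inner loop, whereas you build the compact word directly by induction on the letters (also making explicit the nonemptiness of $W(k)$, which the paper leaves tacit) — so the two arguments are essentially the same.
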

\begin{proof}
Choose $\alpha\in W(k)$ such that $|\bc(\alpha)|\ge |\bc(\beta)|$ for each $\beta\in W(k)$.
Assume, towards a contradiction, that $\alpha$ is noncompact, and
fix any $j\in\bnc(\alpha)$.
Applying Lemma~\ref{lemma:compactify} finitely many times,
we obtain $\beta\in W(k)$ such that $N(\beta,j)=1$ and $\bc(\alpha)\subsetneq\bc(\beta)$.
Thus $|\bc(\beta)|>|\bc(\alpha)|$. The resulting contradiction completes the proof.
\end{proof}

\begin{proof}[Proof of Theorem {\upshape\ref{thm:q_poly_quot_free_poly}}]
Fix $i\in\{1,\ldots ,n\}$, and let $z_i\in\cO(\DD_{R_i})$ denote the complex coordinate.
Let $e_i\in\Z_+^n$ denote the $n$-tuple with $1$ at the $i$th
position, $0$ elsewhere. Clearly, for each $d\in\Z_+$ the element
$\delta(de_i)=(i,\ldots ,i)\in W_{n,d}$ is invariant under the action of $S_d$.
Hence $w_\bq(de_i)=1$, and
\[
\| x_i^d\|_\rho=\| x^{de_i}\|_\rho=\rho_i^d=\| z_i^d\|_{\rho_i} \qquad (\rho\in (0,R)).
\]
This implies that there exists a Fr\'echet algebra embedding
$\cO(\DD_{R_i})\hookrightarrow\cO_\bq(\DD_R^n)$
that sends the complex coordinate $z_i$ to $x_i$. Hence, by Definition~\ref{def:F_poly},
there exists a unique continuous homomorphism
\[
\pi\colon\cF(\DD_R^n)\to\cO_\bq(\DD_R^n),\quad \zeta_i\mapsto x_i \quad (i=1,\ldots ,n).
\]
Let us construct a continuous linear map
\begin{equation}
\label{sec}
\varkappa\colon\cO_\bq(\DD_R^n)\to\cF(\DD_R^n) \quad\text{such that}\quad \pi \varkappa=\id.
\end{equation}
To this end, fix any $k\in\Z_+^n$ and
choose a compact word $\alpha_k\in W(k)$ (see Lemma~\ref{lemma:compact_exists}).
Let $\sigma_k\in S_{|k|}$ be such that $\alpha_k=\sigma_k(\delta(k))$. Then
\begin{equation}
\label{pi(zeta)}
x^k=x_{\delta(k)}=\lambda(\sigma_k,\delta(k))x_{\alpha_k}
=\pi(\lambda(\sigma_k,\delta(k))\zeta_{\alpha_k}).
\end{equation}
We claim that for each $a=\sum_{k\in\Z_+^n} c_k x^k\in\cO_\bq(\DD_R^n)$
the series
\begin{equation}
\label{ser1}
\sum_{k\in\Z_+^n} c_k \lambda(\sigma_k,\delta(k))\zeta_{\alpha_k}
\end{equation}
absolutely converges in $\cF(\DD^n_R)$. Indeed, since $\alpha_k$ is compact,
it follows that $s(\alpha_k)\le n-1$ for each $k\in\Z_+^n$.
By Lemma~\ref{lemma:Wk}, we also have $|\lambda(\sigma_k,\delta(k))|=w_\bq(k)$.
Hence for each $\rho\in (0,R)$ and each $\tau\ge 1$ we obtain
\begin{equation}
\label{absconv1}
\sum_{k\in\Z_+^n} \| c_k \lambda(\sigma_k,\delta(k))\zeta_{\alpha_k}\|_{\rho,\tau}
\le\sum_{k\in\Z_+^n} |c_k| w_\bq(k) \rho^{k} \tau^n =\tau^n \| a\|_\rho.
\end{equation}
Therefore~\eqref{ser1} converges to an element $\varkappa(a)\in\cF(\DD_R^n)$.
It is also immediate from~\eqref{absconv1} that for each $a\in\cO_\bq(\DD_R^n)$
we have $\| \varkappa(a)\|_{\rho,\tau}\le \tau^n \| a\|_\rho$,
whence $\varkappa\colon\cO_\bq(\DD_R^n)\to\cF(\DD_R^n)$ is a continuous linear map.
Using~\eqref{pi(zeta)}, we see that $\pi \varkappa=\id$.
Hence $\pi$ is onto, and $\Ker\pi$ is a complemented subspace of $\cF(\DD^n_R)$.

Let now $I\subset\cF(\DD_R^n)$ denote the closed two-sided ideal generated by the elements
$\zeta_i\zeta_j-q_{ij}\zeta_j\zeta_i$ for all $i,j$. Clearly, $I\subseteq\Ker\pi$, and hence
$\pi$ induces a continuous homomorphism
\[
\bar\pi\colon\cF(\DD_R^n)/I\to\cO_\bq(\DD_R^n),\quad \bar\zeta_i\mapsto x_i \quad (i=1,\ldots ,n),
\]
where $\bar\zeta_i=\zeta_i+I\in\cF(\DD_R^n)/I$.
Let $\bar\varkappa\colon \cO_\bq(\DD_R^n)\to\cF(\DD_R^n)/I$ denote the composition of $\varkappa$
with the quotient map $\cF(\DD_R^n)\to\cF(\DD_R^n)/I$. It is immediate from~\eqref{sec}
that $\bar\pi\bar\varkappa=\id$. On the other hand, it follows from the definition of $I$
that the linear span of the set $\{ \bar\zeta_{\alpha_k} : k\in\Z_+^n\}$
is dense in $\cF(\DD_R^n)/I$. Hence $\Im\bar\varkappa$ is also dense in $\cF(\DD_R^n)/I$.
Together with $\bar\pi\bar\varkappa=\id$, this implies that $\bar\pi$ and $\bar\varkappa$
are topological isomorphisms. Therefore $I=\Ker\pi$. This completes the proof.
\end{proof}

\begin{corollary}
\label{cor:qpoly_HFG}
$\cO_\bq(\DD^n_R)$ is an HFG algebra.
\end{corollary}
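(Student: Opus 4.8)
The plan is to read the corollary off directly from Theorem~\ref{thm:q_poly_quot_free_poly}, which has already done all the real work: it exhibits the quantum polydisk algebra as a continuous homomorphic image of the free polydisk algebra. Since $\cF(\DD_R^n)$ is already known to be holomorphically finitely generated (by Corollary~\ref{cor:HFG_free}, as recorded just after Definition~\ref{def:F_poly}), and since the class of HFG algebras is stable under passing to quotients by closed two-sided ideals (Proposition~\ref{prop:HFG_quot}), it only remains to recognize $\cO_\bq(\DD_R^n)$ as such a quotient.

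First I would invoke Theorem~\ref{thm:q_poly_quot_free_poly} to obtain the surjective continuous homomorphism $\pi\colon\cF(\DD_R^n)\to\cO_\bq(\DD_R^n)$ whose kernel $\Ker\pi$ is a closed two-sided ideal of $\cF(\DD_R^n)$. Both algebras are Fr\'echet: each carries a countable directed cofinal family of submultiplicative norms, so the Open Mapping Theorem applies to the surjection $\pi$ and yields a topological algebra isomorphism $\cF(\DD_R^n)/\Ker\pi\cong\cO_\bq(\DD_R^n)$. Applying Proposition~\ref{prop:HFG_quot} to the HFG algebra $A=\cF(\DD_R^n)$ and the closed ideal $I=\Ker\pi$ then shows that $\cO_\bq(\DD_R^n)$ is holomorphically finitely generated.

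There is no genuine obstacle remaining at this stage: every difficulty has been absorbed into Theorem~\ref{thm:q_poly_quot_free_poly}, whose combinatorial core was the construction of the continuous splitting $\varkappa$ via compact words, which forces $\Ker\pi$ to be exactly the closed ideal generated by the $q$-commutators $\zeta_i\zeta_j-q_{ij}\zeta_j\zeta_i$. The only point I would pause to verify is that $\cO_\bq(\DD_R^n)$ is indeed a Fr\'echet--Arens-Michael algebra, so that the definition of HFG even applies; but this is immediate, since its topology is generated by the submultiplicative norms $\{\|\cdot\|_\rho:\rho\in(0,R)\}$, which admit a countable cofinal subfamily (as $\|\cdot\|_\rho\le\|\cdot\|_{\rho'}$ whenever $\rho\le\rho'$ componentwise), and it is complete by construction as the completion of $\cO_\bq^\reg(\CC^n)$.
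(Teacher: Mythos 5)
Your proposal is correct and follows exactly the paper's own route: the paper proves this corollary by citing Theorem~\ref{thm:q_poly_quot_free_poly}, Corollary~\ref{cor:HFG_free}, and Proposition~\ref{prop:HFG_quot}, which is precisely your argument with the routine details (Open Mapping Theorem, the Fr\'echet--Arens-Michael structure of $\cO_\bq(\DD^n_R)$) written out. No gaps; the verification that the family $\{\|\cdot\|_\rho : \rho\in(0,R)\}$ has a countable cofinal subfamily is a sensible point to make explicit, though the paper leaves it implicit.
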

\begin{proof}
Immediate from Theorem \ref{thm:q_poly_quot_free_poly}, Corollary~\ref{cor:HFG_free},
and Proposition~\ref{prop:HFG_quot}.
\end{proof}

\subsection{Quantum polyannulus}
\label{subsect:q_polyann}

Our last example is motivated by~\eqref{O_q_torus}.
Let $\bq=(q_{ij})$ be a multiplicatively antisymmetric complex $n\times n$-matrix
such that $|q_{ij}|=1$ for all $i,j$.
For each $k\in\Z^n$, define $k^+,k^-\in\Z^n$ by
$k^+_i=\max\{ k_i,0\}$ and $k^-_i=\min\{ k_i,0\}$.
Given $r,R\in [0,+\infty]^n$, we write $r<R$ if $r_i<R_i$ for all $i$. In this case, let
$(r,R)=\prod_{i=1}^n (r_i,R_i)$. Given $\rho,\tau\in (0,+\infty)^n$ with $\rho<\tau$
and $a=\sum_k c_k x^k\in\cO_\bq^\reg((\CC^\times)^n)$, let
\[
\| a\|_{\rho,\tau}=\sum_{k\in\Z^n} |c_k| \rho^{k^-}\tau^{k^+}.
\]
Clearly, $\|\cdot\|_{\rho,\tau}$ is a norm on $\cO_\bq^\reg((\CC^\times)^n)$.
Observe that for each $k\in\Z^n$ we have
\[
\| x^k\|_{\rho,\tau}=\rho^{k^-}\tau^{k^+}=\max\{ |z^k| : z\in\bar\DD^n_{\rho,\tau}\}.
\]
Since $|q_{ij}|=1$ for all $i,j$, it follows that
\[
\| x^k x^\ell\|_{\rho,\tau} =\| x^{k+\ell}\|_{\rho,\tau}
\le \| x^k\|_{\rho,\tau} \| x^\ell\|_{\rho,\tau},
\]
whence the norm $\|\cdot\|_{\rho,\tau}$ is submultiplicative on $\cO_\bq^\reg((\CC^\times)^n)$.
Observe that, for each $t>1$, the norm $\|\cdot\|_t$ given by~\eqref{O_q_torus}
is a special case of the above construction, since
\[
\| \cdot\|_t=\| \cdot\|_{(t^{-1},\ldots ,t^{-1}),(t,\ldots ,t)}.
\]

\begin{definition}
Let $r,R\in [0,+\infty]^n,\; r<R$.
We define the {\em algebra of holomorphic functions on the
quantum $n$-polyannulus of polyradii $r$ and $R$} by
\[
\cO_\bq(\DD^n_{r,R})=
\Bigl\{
a=\sum_{k\in\Z^n} c_k x^k :
\| a\|_{\rho,\tau}=\sum_{k\in\Z^n} |c_k| \rho^{k^-}\tau^{k^+}<\infty
\;\forall \rho,\tau\in (r,R),\; \rho<\tau \Bigr\}.
\]
The topology on $\cO_\bq(\DD^n_{r,R})$ is given by
the norms $\|\cdot\|_{\rho,\tau}\; (\rho,\tau\in (r,R),\; \rho<\tau)$,
and the multiplication on $\cO_\bq(\DD^n_{r,R})$ is uniquely determined
by $x_i x_j=q_{ij}x_j x_i$ for all $i,j$.
\end{definition}

In other words, $\cO_\bq(\DD^n_{r,R})$ is the completion of $\cO_\bq^\reg((\CC^\times)^n)$
with respect
to the family $\{\|\cdot\|_{\rho,\tau} :\rho,\tau\in (r,R),\; \rho<\tau\}$ of submultiplicative norms.
Observe that, as a Fr\'echet space, $\cO_\bq(\DD^n_{r,R})$ is
topologically isomorphic to the space
$\cO(\DD^n_{r,R})$ of holomorphic functions on the polyannulus $\DD_{r,R}^n$.
Explicitly, the isomorphism takes each holomorphic function on $\DD_{r,R}^n$
to its Laurent expansion about $0$.
Clearly, if $q_{ij}=1$ for all $i,j$, then the above isomorphism is a Fr\'echet algebra
isomorphism.

\begin{theorem}
\label{thm:qann_HFG}
$\cO_\bq(\DD^n_{r,R})$ is an HFG algebra.
\end{theorem}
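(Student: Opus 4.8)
The plan is to exhibit $\cO_\bq(\DD^n_{r,R})$ as an iterated analytic Ore (skew-Laurent) extension built one variable at a time over one-dimensional annuli, and then to read off the HFG property from the final assertion of Proposition~\ref{prop:skew_hol_Rnhrdt}~(ii). I would induct on $n$. Write $\bq'$ for the leading $(n-1)\times(n-1)$ block of $\bq$ and put $r'=(r_1,\ldots,r_{n-1})$, $R'=(R_1,\ldots,R_{n-1})$, so that $\cO_{\bq'}(\DD^{n-1}_{r',R'})$ is the quantum polyannulus in the first $n-1$ variables. For the base case $n=1$ the matrix is trivial, so $\cO_\bq(\DD^1_{r_1,R_1})=\cO(\DD^1_{r_1,R_1})$ is the algebra of holomorphic functions on an annulus; this is $\cO(X)$ for the one-dimensional Stein manifold $X=\DD^1_{r_1,R_1}$, which has embedding dimension $1$, so it is HFG by Theorem~\ref{thm:comm_HFG}.

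For the inductive step, I would first construct the twisting automorphism. Define $\sigma$ on the dense subalgebra $\cO_{\bq'}^\reg((\CC^\times)^{n-1})$ by $\sigma(x_i)=q_{ni}x_i$ $(i=1,\ldots,n-1)$, so that $\sigma(x^k)=\bigl(\prod_{i<n}q_{ni}^{k_i}\bigr)x^k$. Because $|q_{ni}|=1$, the scalar attached to each monomial has modulus $1$; hence $\sigma$ is multiplicative, it preserves every defining norm $\|\cdot\|_{\rho',\tau'}$, and it extends to an isometric automorphism of $\cO_{\bq'}(\DD^{n-1}_{r',R'})$ with inverse $x_i\mapsto q_{ni}^{-1}x_i$. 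In particular \eqref{suff_equi_grp} holds, so the cyclic subgroup of $\Aut\bigl(\cO_{\bq'}(\DD^{n-1}_{r',R'})\bigr)$ generated by $\sigma$ is equicontinuous. The domain $\DD^1_{r_n,R_n}\subset\CC$ is a Reinhardt domain with $N_D=\varnothing$, so Proposition~\ref{prop:skew_hol_Rnhrdt}~(ii) applies with this single automorphism and produces the Arens-Michael algebra $\cO(\DD^1_{r_n,R_n},\cO_{\bq'}(\DD^{n-1}_{r',R'});\sigma)$, in which $x_nx_i=\sigma(x_i)x_n=q_{ni}x_ix_n$, i.e.\ $x_ix_n=q_{in}x_nx_i$. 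By the induction hypothesis the base algebra is HFG, whence this skew extension is HFG by the last assertion of Proposition~\ref{prop:skew_hol_Rnhrdt}.

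What remains is to identify this skew extension with $\cO_\bq(\DD^n_{r,R})$ as it is defined. Both algebras contain $\cO_\bq^\reg((\CC^\times)^n)$ as a dense subalgebra with identical commutation relations $x_ix_j=q_{ij}x_jx_i$, so they agree as algebras there; only the topologies must be matched. As a locally convex space the skew extension is $\cO_{\bq'}(\DD^{n-1}_{r',R'})\Ptens\cO(\DD^1_{r_n,R_n})$, whose topology is generated by the projective tensor seminorms of the norms $\|\cdot\|_{\rho',\tau'}$ and the annulus norms. In each factor the monomials form an absolute basis on which the norm is a weighted $\ell^1$-norm, and the projective tensor product of two such K\"othe spaces is again a weighted $\ell^1$-space with product weights (cf.~\cite[41.7]{Kothe_II}); concretely the resulting seminorm of $x^k$ is $\bigl(\prod_{i<n}\rho_i^{k_i^-}\tau_i^{k_i^+}\bigr)\rho_n^{k_n^-}\tau_n^{k_n^+}=\rho^{k^-}\tau^{k^+}=\|x^k\|_{\rho,\tau}$. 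Since both topologies are generated by such graded norms, agreement on monomials forces agreement on all of $\cO_\bq^\reg((\CC^\times)^n)$ and hence on the completions, yielding the desired topological algebra isomorphism and closing the induction.

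I expect this final identification to be the main technical point. The HFG conclusion itself is a formal consequence of Proposition~\ref{prop:skew_hol_Rnhrdt}, but verifying that the smash-product (projective-tensor) seminorms factor through the $\Z$-grading so as to reproduce exactly the explicit norms $\|\cdot\|_{\rho,\tau}$ requires the K\"othe-space computation above, together with a little care in the degenerate cases $r_i=0$ or $R_i=\infty$, where the annulus is a punctured disk, an exterior, or $\CC^\times$.
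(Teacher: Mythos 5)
Your proposal is correct and follows essentially the same route as the paper: induction on $n$, the isometric twisting automorphism $\sigma(x_i)=q_{in}^{-1}x_i=q_{ni}x_i$ of $\cO_{\bq'}(\DD^{n-1}_{r',R'})$ (equicontinuous via \eqref{suff_equi_grp}), and Proposition~\ref{prop:skew_hol_Rnhrdt}~(ii) applied over the annulus $\DD^1_{r_n,R_n}$. The only difference is that you spell out, via the K\"othe-space computation of projective tensor norms on monomials, the identification of the resulting skew extension with $\cO_\bq(\DD^n_{r,R})$, a step the paper dismisses as ``easily seen''; this is a sound and welcome elaboration.
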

\begin{proof}
For $n=1$, there is nothing to prove. Let $n\ge 2$, and assume that the result holds for $n-1$.
Let $r'=(r_1,\ldots ,r_{n-1})$, $R'=(R_1,\ldots ,R_{n-1})$, and $\bq'=(q_{ij})_{i,j\le n-1}$.
Let $\sigma$ be the automorphism of $\cO_{\bq'}^\reg((\CC^\times)^{n-1})$
given by $\sigma(x_i)=q_{in}^{-1} x_i\; (i=1,\ldots ,n-1)$.
Clearly, for each $a\in \cO_{\bq'}^\reg((\CC^\times)^{n-1})$ we have
\begin{equation}
\label{sigma_isometr}
\| \sigma(a)\|_{\rho,\tau}=\| a\|_{\rho,\tau} \qquad (\rho,\tau\in (r',R'),\; \rho<\tau).
\end{equation}
Hence $\sigma$ uniquely extends to a topological automorphism of $\cO_{\bq'}(\DD^{n-1}_{r',R'})$.
Moreover, \eqref{sigma_isometr} holds for each $a\in\cO_{\bq'}(\DD^{n-1}_{r',R'})$.
Hence $\sigma$ satisfies~\eqref{suff_equi_grp}, and the subgroup of
$\Aut(\cO_{\bq'}(\DD^{n-1}_{r',R'}))$ generated by $\sigma$ is equicontinuous.
Applying Proposition~\ref{prop:skew_hol_Rnhrdt} (ii), we obtain the HFG algebra
$\cO(\DD_{r_n,R_n},\cO_{\bq'}(\DD^{n-1}_{r',R'});\sigma)$, which is easily seen
to be topologically isomorphic to $\cO_\bq(\DD^n_{r,R})$.
\end{proof}

\begin{remark}
If $|q_{ij}|\ne 1$ for at least one pair of indices $i,j$, then the only submultiplicative seminorm
on $\cO_\bq^\reg((\CC^\times)^n)$ is identically zero (see Example~\ref{ex:q_torus}).
That is why we make no attempt to define $\cO_\bq(\DD^n_{r,R})$
in this case.
\end{remark}

\begin{remark}
If $\bq=(q_{ij})$ is a multiplicatively antisymmetric complex $n\times n$-matrix satisfying
$|q_{ij}|\ge 1$ for all $i<j$, then Corollary~\ref{cor:qpoly_HFG} can be proved similarly
to Theorem~\ref{thm:qann_HFG}. In the general case, however,
the endomorphism $\sigma$ of $\cO_{\bq'}^\reg(\CC^{n-1})$ given
by $\sigma(x_i)=q_{in}^{-1} x_i\; (i=1,\ldots ,n-1)$ need not extend to a continuous
endomorphism of $\cO_{\bq'}(\DD^{n-1}_{R'})$.
Consider, for example, the simplest case where $n=2$, $R'=1$, and $|q|=|q_{12}|<1$.
We have $\cO_{\bq'}^\reg(\CC^{n-1})=\CC[x_1]$,
$\cO_{\bq'}(\DD^{n-1}_{R'})=\cO(\DD_1)$, and $\sigma\colon\CC[x_1]\to\CC[x_1]$
acts by $\sigma(x_1)=q^{-1}x_1$. If $\sigma$ were continuous with respect to the topology
inherited from $\cO(\DD_1)$, then for each $\rho\in (0,1)$ there would exist $r\in (\rho,1)$ and $C>0$
such that $\| \sigma(a)\|_\rho\le C\| a\|_r$ for each $a\in\CC[x_1]$.
For $a=x_1^k$, this yields $|q|^{-k} \rho^k \le Cr^k$, or, equivalently,
$|q|^{-1}\le C^{1/k} (r/\rho)$. Letting first $k\to\infty$ and then
$\rho\to 1$, we obtain $|q|^{-1}\le 1$.
The resulting contradiction shows that $\sigma$ is not continuous. Thus the above method
cannot be applied to Corollary~\ref{cor:qpoly_HFG} in the general case.
\end{remark}

\end{document}